\documentclass[10pt,a4paper,reqno]{amsart}
\usepackage{a4wide}
\usepackage[utf8x]{inputenc}
\usepackage{ucs}
\usepackage{amsmath,amssymb,paralist,amsthm,array}
\usepackage{hyperref}

\newtheorem{definition}{Definition}[section]
\newtheorem{lemma}[definition]{Lemma}
\newtheorem{theorem}{Theorem}

\newtheorem{proposition}[definition]{Proposition}

\theoremstyle{definition}

\newtheorem{remark}[definition]{Remark}

\newcommand{\ext}{{\mathrm{ext}}}

\newcommand{\bp}{\mathbf{p}}

\title[An entropy formula for a non-self-affine measure]{An entropy formula for a non-self-affine measure with application to Weierstrass-type functions}

\author{Atsuya Otani}
\email{otani@math.fau.de}
\address{Department Mathematik, Universität Erlangen-Nürnberg, Cauerstr. 11, 91058 Erlangen, Germany}

\thanks{This work is funded by DFG grant Ke 514/8-1. This is also supported by the DFG Scientific Network 'Skew Product Dynamics and
Multifractal Analysis'. This is part of my PhD project, supervised by Prof. Dr. G. Keller.}

\date{\today}


\begin{document}

\begin{abstract}
Let $ \tau : [0,1] \rightarrow [0,1] $ be a piecewise expanding map with full branches.
Given $ \lambda : [0,1] \rightarrow (0,1) $ and $ g : [0,1] \rightarrow \mathbb{R} $ satisfying $ \tau ' \lambda > 1 $, we study the Weierstrass-type function
\[
	\sum _{n=0} ^\infty \lambda ^n (x) \, g ( \tau ^n (x) ) ,
\]
where $ \lambda ^n (x) := \lambda(x) \lambda (\tau (x)) \cdots \lambda ( \tau ^{n-1} (x)) $.
Under certain conditions, Bedford proved in \cite{Bedford89} that the box counting dimension of its graph is given as the unique zero of the topological pressure function
\[
	s \mapsto P ( (1-s) \log \tau ' + \log \lambda ) .
\]
We give a sufficient condition under which the Hausdorff dimension also coincides with this value.
We adopt a dynamical system theoretic approach which is used among others in \cite{Ledrappier92} and \cite{Baranski14} to investigate special cases including the classical Weierstrass functions.
For this purpose we prove a new Ledrappier-Young entropy formula, which is a conditional version of Pesin's formula, for non-invertible dynamical systems.
Our formula holds for all lifted Gibbs measures on the graph of the above function, which are generally not self-affine.
\end{abstract}

\maketitle

\section{Introduction}
\subsection{Motivation and preceding results}

Let $ ( I _i ) _{i=0} ^{\ell - 1} $ be a partition of $ [0,1]$ into intervals with $ I _i ^\circ $ and $ \overline{I _i} $ being the interiors and closures, respectively.
Then we consider the map $ \tau : [0,1] \rightarrow [0,1] $ such that the restrictions $ \tau _{| I _i ^\circ} : I _i ^\circ \rightarrow ( 0 , 1 ) $ are $ C ^{1 + } $-diffeomorphisms with  $ \inf (\tau _{| I _i ^\circ})' > 1 $ for $ i \in \{0, \ldots , \ell -1 \} $,
where $ C ^{1 + } $ means that a Hölder continuous derivative exists, without specifying the Hölder exponent.
In addition, let $ \lambda : [0,1] \rightarrow (0,1) $ and $ g : [0,1] \rightarrow \mathbb{R} $ be maps which are $ C ^{1+} $ on each $ I _i ^\circ$ satisfying $ \lambda \tau' > 1 $.

We study the Weierstrass-type function
\[
	W _{\tau , \lambda} ( x ) := \sum _{n=0} ^\infty \lambda ^n (x) g ( \tau ^n (x) )
\]
from a dimension theoretic point of view, where  $ \lambda ^n ( x ) := \lambda (x) \lambda ( \tau (x) ) \cdots \lambda (\tau ^{n-1} (x)) $.

We recall a pair of selected results from the literature.
To be precise, we assume during the citation that $ \tau , \lambda $ and $ g $ can be extended to $ C ^{1+} $-functions on $ \mathbb{R}/ \mathbb{Z} $. 
Let $ s ( \tau , \lambda ) \in \mathbb{R} $ be the unique zero of the Bowen equation
\begin{equation}
	P ( (1-s) \log \tau ' + \log \lambda ) = 0 	\label{eq:Pressure}
\end{equation}
and $ \nu _{\tau , \lambda} \in \mathcal{P} ([0,1]) $ the equilibrium measure, where $ P $ denotes the topological pressure.
Moreover, let $ h _{ \nu _{\tau, \lambda} } $ denote the associated KS-entropy.

First, the box counting dimension of the graph of $ W _{\tau ,\lambda}  $ is proved by T. Bedforld in \cite{Bedford89} to be
\begin{equation}
	s ( \tau, \lambda ) = 1 + \frac{h _{ \nu _{\tau, \lambda} } + \int \log \lambda \, d \nu _{\tau , \lambda} }{\int \log \tau ' \, d \nu _{\tau, \lambda}} ,	\label{eq:s_formula}
\end{equation}
whenever the function $ W _{\tau, \lambda} $ is not differentiable.
Note that he generalised a result of J. Kaplanan, J. Mallet-Pareta and J. Yorkea in \cite{Kaplan84} for a significantly larger class.

Second, A. Moss and C. P. Walkden constructed in \cite{Moss12} a randomized version of the function for a large class of $ g $ including trigonometric functions, in a similar fashion to B. Hunt's work \cite{Hunt98}, for which the Hausdorff and the box counting dimension of the graph coincide with the same $ s ( \tau , \lambda ) $ almost surely.

These results lead to the conjecture that both dimensions may always be identical.
Nevertheless, this general conjecture is denied by a counterexample which M. Urbanski and F. Przytycki constructed in \cite{Urbanski89} for a so-colled limit Rademacher function related to a Pisot number, making use of some combinatoric properties of that algebraic number.

Our Theorems \ref{thm:mu_dim}, \ref{thm:main_theorem}, \ref{thm:example2} give a partial positive answer to this problem by providing sufficient conditions.
We employ a dynamical system theoretic approach making use of the new dimension and entropy formula stated in Theorem \ref{thm:entropy_formula}.

\begin{remark}
Similar approaches were taken among others in \cite{Kaplan84}, \cite{Urbanski89} and \cite{Ledrappier92}.
More recently, K. Barański, B. Bárány and J. Romanowska studied in \cite{Baranski14} the classical Weierstrass function, which is given by choosing $ \tau (x) = \ell x \bmod 1 $, $ g (x) = \cos ( 2 \pi x ) $ and $ \lambda $ to be constant on each $ I _i $.
Combining the results of \cite{Ledrappier92} and \cite{Tsujii01}, they discovered an explicit parameter region, for which the Hausdorff and the box counting dimension of the graph coincide.\footnote{Meanwhile a result which covers all parameters was published, see \cite{Shen15}. He modified the sufficient condition provided in \cite{Tsujii01}.}
Our Theorem \ref{thm:example2} extends their results.
\end{remark}

\begin{remark}
We also mention that an alternative shorter proof for the work \cite{Baranski14} is given by G. Keller in \cite{Keller14}, which can also be modified for our Theorems \ref{thm:example}, \ref{thm:example2} by using large deviation results.
This is, however, not the content of this note.
\end{remark}

\subsection{Main results}
We present our main results, some of whose proofs can be found in Sections \ref{sec:LY}, \ref{sec:L} and \ref{sec:Tsujii}.
Given $ \xi \in [0,1] $, the function $ q _\xi : [ 0 , 1 ] \rightarrow \mathbb{R} $ is defined by $ q _\xi ( x ) := \pi _\xi ^{ss} ( x , W(x) ) $, where $ \pi ^{ss} _\xi : [ 0,1] \times \mathbb{R} \rightarrow \mathbb{R} $ is the projection on the the hyperplane $ \{0\} \times \mathbb{R} $ along the strong stable fibres with respect to $ \xi $ which will be introduced in Subsection \ref{subsec:strong_stable} after a suitable dynamical system is constructed.
The precise definitions are listed at the beginning of Section \ref{sec:LY}.
Then we can disintegrate\footnote{See Proposition \ref{prop:decomposition}.} the lift $ \mu \in \mathcal{P} ( [0,1] \times \mathbb{R} ) $ of a $ \tau $-invariant $ \nu \in \mathcal{P} ( [0,1] ) $ on the graph of $ W _{\tau, \lambda} $ as
\[
	\mu := (\mathrm{Id} , W _{\tau , \lambda}  ) ^\ast \nu  = \int \mu _{(\xi , y )} \, d \left( \nu \circ q _\xi ^{-1} \right) (y)
\]
for each $ \xi \in [0,1] $, where $ \mu _{(\xi , y )} \in \mathcal{P} ( [0,1] \times \mathbb{R} ) $ is to be interpreted as the conditional measure on the strong stable fibre through $ ( \xi , 0 , y ) $.
Moreover, $ \nu $ can be extended to $ \nu ^\ext \in \mathcal{P} ([0,1] ^2) $ naturally as \eqref{eq:ext_def}.

\begin{theorem}[Dimension and entropy formula]	\label{thm:entropy_formula}
Suppose that $ \nu \in \mathcal{P} ( [0,1] ) $ is a Gibbs measure.
Then $ \mu $, $ \mu _{(\xi, q _\xi (x))} $ and $ \nu \circ q _\xi ^{-1} $ are exact dimensional and the dimensions are constant for $ \nu  ^\ext  $-a.a. $ ( \xi, x ) \in [0,1] ^2 $, satisfying $ \dim _H \left( \mu \right) 	 = 	 \dim _H \left(  \mu _{ (\xi, q _\xi (x))} \right) +  \dim _H \left(  \nu  \circ q _\xi ^{-1} \right) $ and
\[
	h _\nu 	 =    \dim _H \left(  \mu _{ (\xi, q _\xi (x))} \right) \cdot	 \int \log \tau ' \, d \nu - \dim _H \left(  \nu \circ q _\xi ^{-1} \right) \cdot \int \log \lambda \, d \nu .
\]
\end{theorem}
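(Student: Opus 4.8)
The plan is to reduce the statement to a Ledrappier--Young type computation on an invertible uniformly hyperbolic model, with the Gibbs property of $\nu$ substituting for the $C^2$ regularity of the classical theory. Recall the skew product $T(\xi,x,y)=\bigl(\tau(\xi),\tau_{i(\xi)}(x),\lambda(\tau_{i(\xi)}x)\,y+g(\tau_{i(\xi)}x)\bigr)$, where $\tau_i$ is the inverse branch on $I_i$ and $\xi\in I_{i(\xi)}$; its base $(\xi,x)\mapsto(\tau(\xi),\tau_{i(\xi)}x)$ is the inverse of the natural extension of $\tau$, so the lift $\hat\mu$ of $\nu^\ext$ under $(\xi,x)\mapsto(\xi,x,W_{\tau,\lambda}(x))$ is $T$-invariant and sits on the attractor $\Lambda=\{(\xi,x,W_{\tau,\lambda}(x))\}$. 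One checks that $DT$ is block lower-triangular with diagonal $\tau'(\xi),\ 1/\tau'(\tau_{i(\xi)}x),\ \lambda(\tau_{i(\xi)}x)$; hence $(T,\hat\mu)$ is uniformly hyperbolic with Lyapunov exponents $\chi_1:=\int\log\tau'\,d\nu>0$ (unstable $\xi$-direction), $-\chi_1$ (strong stable, $\approx\partial_x$) and $-\chi_2$ with $\chi_2:=-\int\log\lambda\,d\nu>0$ (weak stable, exactly $\partial_y$), and the hypothesis $\tau'\lambda>1$ is precisely what gives $\chi_1>\chi_2>0$. The measures in the statement are the induced data of $\hat\mu$: forgetting $\xi$ yields $\mu$, disintegrating along the $\xi$-dependent strong stable foliation of a fibre $\{\xi\}\times[0,1]\times\mathbb{R}$ yields the $\mu_{(\xi,y)}$, and the strong stable holonomy onto $\{0\}\times\mathbb{R}$ --- that is $\pi^{ss}_\xi$ --- pushes $\nu$ to $\nu\circ q_\xi^{-1}$; the decomposition of $\mu$ over these is Proposition~\ref{prop:decomposition}.

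Next I would prove exact dimensionality. The starting point is the classical fact that a Gibbs measure $\nu$ on the piecewise expanding full-branch system $([0,1],\tau)$ is exact dimensional with $\dim_H\nu=h_\nu/\chi_1$: this follows from the Gibbs bound $\nu(J_n(x))\asymp e^{S_n\varphi(x)-nP}$ (for $J_n(x)$ the $n$-cylinder at $x$ and $\varphi$ the H\"older potential of $\nu$), the bounded-distortion estimate $|J_n(x)|\asymp e^{-S_n\log\tau'(x)}$, and the Shannon--McMillan--Breiman and Birkhoff theorems. Transporting these Gibbs estimates through $T$, one controls $\hat\mu$ on generation-$n$ dynamical rectangles, which have side lengths $\asymp e^{-n\chi_1}$ in $\xi$ and $x$ and $\asymp e^{-n\chi_2}$ in $y$ and mass $\asymp e^{-nh_\nu+o(n)}$; because $\chi_1>\chi_2$ these rectangles are thin in the strong stable direction and thick in the weak stable one. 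A covering argument at the scale dictated by whichever exponent governs the direction under consideration then yields that $\mu$, each $\mu_{(\xi,q_\xi(x))}$, and $\nu\circ q_\xi^{-1}$ are exact dimensional, and that the dimensions --- a priori functions of $(\xi,x)$ --- are $\nu^\ext$-a.e.\ constant by ergodicity of the Gibbs measure $\nu$.

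The partial dimensions and the entropy formula then come from the same accounting. Write $\gamma_1:=\dim_H\mu_{(\xi,q_\xi(x))}$ and $\gamma_2:=\dim_H(\nu\circ q_\xi^{-1})$. Iterating $T$ and using the Gibbs estimates, the strong stable conditional $\mu_{(\xi,\cdot)}$ on a ball of radius $r\asymp e^{-n\chi_1}$ is comparable to the $\hat\mu$-mass of the generation-$n$ rectangle restricted to that strong stable plaque, which exhibits $\gamma_1$ as the ratio of the corresponding conditional entropy to $\chi_1$; likewise $\nu\circ q_\xi^{-1}$ on a ball of radius $r\asymp e^{-m\chi_2}$ in $\{0\}\times\mathbb{R}$ is comparable to the mass that has survived to scale $r$ in the weak stable direction, exhibiting $\gamma_2$ as the complementary conditional entropy divided by $\chi_2$. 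Since in the Ledrappier--Young chain of partitions these two conditional entropies sum to $h_{\hat\mu}(T^{-1})=h_\nu$, one obtains $h_\nu=\gamma_1\chi_1+\gamma_2\chi_2=\gamma_1\int\log\tau'\,d\nu-\gamma_2\int\log\lambda\,d\nu$. The additivity $\dim_H\mu=\gamma_1+\gamma_2$ is the Fubini-type statement that along a fibre $\mu$ is locally equivalent to a product of a strong stable conditional with the transverse measure $\nu\circ q_\xi^{-1}$; given exact dimensionality of all three it reduces to an estimate $\mu(B(z,r))\asymp\mu_{(\xi,\cdot)}(B(z,r))\cdot(\nu\circ q_\xi^{-1})(B(z,r))$ for $\mu$-typical $z$ (each factor read as the mass of the corresponding plaque, resp.\ transversal slice), which holds because the strong stable holonomies of the $C^{1+}$ uniformly hyperbolic map $T$ are at least Lipschitz.

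The main obstacle is, as always in Ledrappier--Young arguments, the exact dimensionality of the conditional measures $\mu_{(\xi,\cdot)}$ along the strong stable foliation: the classical route to this passes through $C^2$ regularity and Pesin theory, so here it must be redone from the Gibbs property alone, extracting the Shannon--McMillan--Breiman estimates with uniform error terms by hand, and dealing with the discontinuities of $\tau$ along the partition boundaries --- where the stable foliation and the distortion bounds could degenerate --- as well as with the fact that the attractor $\Lambda$ is the graph of the nowhere-differentiable $W_{\tau,\lambda}$, so that the relevant transversals carry no smooth structure. A secondary difficulty is matching the three intrinsically defined objects $\mu_{(\xi,q_\xi(x))}$, $\nu\circ q_\xi^{-1}$ and $\mu$ with the conditionals and projections of $\hat\mu$ used in the computation, and checking in particular that forgetting the $\xi$-coordinate collapses no strong stable dimension and that the resulting $\gamma_1,\gamma_2$ are genuinely independent of $\xi$.
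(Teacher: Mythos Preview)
Your overall plan is the paper's plan: pass to the invertible skew product, identify the two contracting exponents $\chi_1=\int\log\tau'\,d\nu>\chi_2=-\int\log\lambda\,d\nu$, define a strong-stable conditional entropy $h^{ss}$, and read off $\gamma_1=h^{ss}/\chi_1$, $\gamma_2=(h_\nu-h^{ss})/\chi_2$, $\dim_H\mu=\gamma_1+\gamma_2$. Your identification of the measures and of the main obstacle (exact dimensionality of the strong-stable conditionals without $C^2$) is accurate.

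Where your sketch is too optimistic is the additivity step. You write that $\dim_H\mu=\gamma_1+\gamma_2$ ``reduces to an estimate $\mu(B(z,r))\asymp\mu_{(\xi,\cdot)}(B(z,r))\cdot(\nu\circ q_\xi^{-1})(B(\cdot,r))$'' and that this ``holds because the strong stable holonomies \ldots\ are at least Lipschitz''. Lipschitz holonomy (which the paper does use, Proposition~\ref{prop:l_parallel}) is far from giving such a pointwise product comparison; in general no such two-sided $\asymp$ is available, and the paper does not attempt one. Its Lemma~\ref{lem:mu_product} treats the two inequalities by completely different mechanisms: the lower bound for $\underline d_\mu$ integrates the conditional measures over the transversal and uses Egorov to restrict to a set where the strong-stable conditional and the transversal measure both obey the expected power laws; the upper bound for $\overline d_\mu$ is a counting argument---one covers a strong-stable ball by cylinders $I_N$, bounds the number of cylinders needed via the conditional measure, and then uses Proposition~\ref{prop:Moss} to trap the graph over each cylinder in a small box. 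Neither direction is a product formula.

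For the transversal piece the paper's engine is not a covering argument on dynamical rectangles but a telescoping identity (Proposition~\ref{prop:ledrappier_telescoping}):
\[
\mu\bigl(\Sigma_{\lambda^{MN}(x)}(\xi,x)\bigr)\ \in\ [C_\phi^{-3N},C_\phi^{3N}]\cdot\mu(\Sigma_1)\cdot\prod_{k=0}^{N-1}\frac{\nu(I_M(\tau^{Mk}x))}{d_{M,\lambda^{M(N-k)}(\tau^{Mk}x)}(B^{-Mk}(\xi,x))},
\]
where $d_{M,r}$ is the ratio $\mu(\Sigma_r\cap R_M)/\mu(\Sigma_r)$. The Gibbs property enters precisely here, turning the quasi-Bernoulli estimate of Proposition~\ref{prop:Gibbs} into the multiplicative error $C_\phi^{\pm3N}$. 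One then needs (i) that $d_{M,r}\to\nu_{(\xi,q_\xi(x))}(I_M(x))$ as $r\to0$ (Proposition~\ref{prop:d_conv}), and (ii) a maximal inequality $\int\sup_r|\log d_{M,r}|\,d\nu^{\mathrm{ext}}<\infty$ (Proposition~\ref{prop:maximal_ineq}) so that Birkhoff along $B^{-M}$ applies to the logarithm of the product. The maximal inequality is obtained from the generalised Besicovitch covering lemma (Lemma~\ref{lem:Besicovitch}) because the ``balls'' $\pi^{ss}_\xi(\Sigma_r(\xi,x))$ are centred at $q_\xi(x)$ only up to a bounded distortion depending on $x$; this is exactly the kind of non-ball geometry your sketch does not confront. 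Letting $M\to\infty$ then yields Lemma~\ref{lem:pre-YL}, i.e.\ exact dimensionality of $\nu\circ q_\xi^{-1}$ with dimension $(h_\nu-h^{ss}_\mu)/\chi_2$.

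In short: your roadmap is correct and coincides with the paper's, but two of your reductions---the pointwise product comparability for additivity, and the unspecified ``covering argument'' for the transversal dimension---are not valid as stated. The paper replaces the first by separate Egorov/counting arguments and the second by the telescoping identity plus a Besicovitch-type maximal inequality; both rely in an essential way on the Gibbs quasi-Bernoulli bound (Proposition~\ref{prop:Gibbs}) rather than on smoothness.
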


\begin{remark}
The dimension and entropy formula were originally introduced by F. Ledrappier and L.-S. Young in \cite{YL285} for diffeomorphisms on a compact Riemannian manifold.
Then Ledrappier established in \cite{Ledrappier92} their counterparts for non-invertible models through an invertible extension.
He sketched the proof for the case $ \tau (x) = 2 x \bmod 1 $ and Lebesgue measure, which we extend in Section \ref{sec:LY}.
\end{remark}
\begin{remark}
Bárány proved in \cite{Barani14} the corresponding formulas for self-affine measures.
Although the measures we investigate are not self-affine, some of his results are fairly similar to ours.
Instead of the self-affinity, we make use of not only the Gibbs property of the marginal measures but also the structure of the function $ W _{\tau, \lambda} $.
\end{remark}

With additional information, the Hausdorff dimension of $ \mu $ can be derived from the above formulas.
Here the random variable $ \Theta ( \cdot , x ) : [0,1] \rightarrow \mathbb{R} $, which will be defined in \eqref{eq:Theta_Def}, describes the 'random' strong stable direction at the point $ ( x , W _{\tau, \lambda}(x) ) $ in sense of \eqref{eq:ss_eigenvalue} together with \eqref{eq:Theta_Def}.
Note that the marginal measure $ \nu ^- := \nu ^\ext ( \, \cdot \, \times [0,1]  ) $ satisfies the relation \eqref{eq:disintegral_vert}.

\begin{theorem}	\label{thm:mu_dim}
Suppose that $ \nu \in \mathcal{P} ( [0,1] ) $ is a Gibbs measure and let $ \mu $ be its lift on the graph of $ W _{\tau, \lambda} $.
If the distribution of $ \Theta ( \cdot , x ) $ under $ \nu ^- $ has Hausdorff dimension $ 1 $ for $ \nu $-a.a. $x \in [0,1] $, then we have
\[
	\dim _H ( \mu )	=  \min \left\{  1 + \frac{ h _\nu +  \int \log \lambda \, d \nu }{\int \log \tau ' \, d \nu} , \frac{h _\nu}{- \int \log \lambda \, d \nu} \right\} ,
\]
where the first value is taken if and only if $ \dim _H ( \mu ) \geqslant 1 $.

Moreover, we have $ \dim _H ( \mu ) \geqslant 1 $ if and only if $ h _\nu \geqslant - \int \log \lambda \, d \nu $.
\end{theorem}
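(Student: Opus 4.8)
The plan is to combine the dimension and entropy formula of Theorem~\ref{thm:entropy_formula} with a projection estimate furnished by the hypothesis on $\Theta$. Set $L_\tau := \int \log \tau'\, d\nu$ and $L_\lambda := \int \log\lambda\, d\nu$; since $\lambda\tau' > 1$ pointwise, integration gives $L_\tau > -L_\lambda > 0$. By Theorem~\ref{thm:entropy_formula}, for $\nu^\ext$-a.a. $(\xi,x)$ the quantities $a := \dim_H \mu_{(\xi,q_\xi(x))}$ and $b := \dim_H(\nu \circ q_\xi^{-1})$ are constant, non-negative, and satisfy
\[
	\dim_H \mu = a + b, \qquad h_\nu = a\,L_\tau - b\,L_\lambda .
\]
Since $\nu \circ q_\xi^{-1}$ is a probability measure on the line, $b \le 1$; hence $b \le \min\{1,\dim_H\mu\}$ trivially.

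The heart of the proof is to promote this inequality to the equality $b = \min\{1,\dim_H\mu\}$, equivalently to rule out the degenerate regime in which the strong stable conditionals are non-trivial ($a > 0$) while the projection loses dimension ($b < 1$). This is where the hypothesis on $\Theta$ enters. The map $q_\xi$ is the projection of the graph of $W_{\tau,\lambda}$ along the strong stable fibres associated with the parameter $\xi$, and its infinitesimal direction at $(x,W_{\tau,\lambda}(x))$ is the random slope $\Theta(\xi,x)$ in the sense of \eqref{eq:ss_eigenvalue}--\eqref{eq:Theta_Def}. Straightening the strong stable foliation by the change of coordinates implicit in the definition of $\pi^{ss}_\xi$ turns the fibres into affine ones, so that, for $\nu$-a.a. fixed base point $x$, the projections $q_\xi$ obtained as the random parameter $\xi$ varies form a one-parameter family of linear projections whose directions are distributed according to the law of $\Theta(\cdot,x)$ under $\nu^-$; by hypothesis this law has Hausdorff dimension $1$, which is exactly the transversality input needed for a Marstrand/Ledrappier-type projection theorem to yield $\dim_H(\nu\circ q_\xi^{-1}) = \min\{1,\dim_H\mu\}$ for $\nu^\ext$-a.a. $(\xi,x)$. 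I expect this projection step to be the main obstacle: one must run a projection argument along the dynamically defined, a priori non-affine family of fibres, taking the one-dimensionality of the $\Theta$-distribution as the sole transversality assumption, and verify that the ``generic'' direction produced by that argument is generic with respect to $\nu^\ext$.

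Granting the equality for $b$, the conclusion is elementary. If $\dim_H\mu \ge 1$, then $b = 1$, and the entropy identity gives $a = (h_\nu + L_\lambda)/L_\tau$, whence $\dim_H\mu = a + 1 = 1 + (h_\nu + L_\lambda)/L_\tau$; since $a \ge 0$ this case occurs precisely when $h_\nu \ge -L_\lambda$. If $\dim_H\mu < 1$, then $b = \dim_H\mu = a+b$, so $a = 0$ and $h_\nu = -b\,L_\lambda = -\dim_H(\mu)\,L_\lambda$, i.e. $\dim_H\mu = h_\nu/(-\int\log\lambda\,d\nu)$, which forces $h_\nu < -L_\lambda$. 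These two cases together give the final assertion $\dim_H\mu \ge 1 \iff h_\nu \ge -\int\log\lambda\,d\nu$. Lastly, abbreviate $v_1 := 1 + (h_\nu + L_\lambda)/L_\tau$ and $v_2 := h_\nu/(-L_\lambda)$; clearing denominators and using $L_\tau + L_\lambda > 0$ one checks directly that $v_1 \le v_2$ if and only if $h_\nu \ge -L_\lambda$. Combined with the dichotomy above, this shows $\dim_H\mu = \min\{v_1,v_2\}$ with the first value attained exactly when $\dim_H\mu \ge 1$.
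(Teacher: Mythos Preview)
Your proposal is correct and follows the same route as the paper: invoke Theorem~\ref{thm:entropy_formula} to obtain the two relations $\dim_H\mu=a+b$ and $h_\nu=a\,L_\tau-b\,L_\lambda$, feed in the projection identity $b=\min\{1,\dim_H\mu\}$ (equivalently, $b=1$ if $\dim_H\mu\ge1$ and $a=0$ otherwise), and finish with the elementary algebra. The projection step you isolate as the main obstacle is exactly what the paper packages as Lemma~\ref{lem:L} (Ledrappier's Lemma) and proves in Section~\ref{sec:L}; your heuristic of ``straightening to affine projections'' is made precise there via the linear approximation of Proposition~\ref{prop:L_and_l} combined with the Marstrand-type estimate of Proposition~\ref{prop:Marstrand}.
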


\begin{proof}
By Theorem \ref{thm:entropy_formula} together with Lemma \ref{lem:L} we have
\[
\dim _H ( \mu ) =
	\begin{cases}
	  1 + \frac{ h _\nu +  \int \log \lambda \, d \nu }{\int \log \tau ' \, d \nu}	& \mbox{if }  \dim _H ( \mu ) \geqslant 1 \\
	 \frac{h _\nu}{- \int \log \lambda \, d \nu}	 & \mbox{if } \dim _H ( \mu ) < 1 
	\end{cases} .
\]
Observe that in case $ \dim _H ( \mu ) \geqslant 1 $ we have $ h _\nu + \int \log \lambda \, d \nu \geqslant 0 $, while in the other case we have $ h _\nu + \int \log \lambda \, d \nu < 0 $.
Thus we can finish the proof, rewriting
\[
	\min \left\{  1 + \frac{ h _\nu +  \int \log \lambda \, d \nu }{\int \log \tau ' \, d \nu} , \frac{h _\nu}{- \int \log \lambda \, d \nu} \right\} = \frac{h _\nu}{\int \log \tau ' \, d \nu} + \min \left\{1 , \frac{h _\nu}{- \int \log \lambda \, d \nu} \right\} \cdot \left( 1 + \frac{\int \log \lambda \, d \nu}{\int \log \tau ' \, d \nu}\right) .
\]
\end{proof}

\begin{remark}	\label{rem:degenerate}
Without the assumption on $ \Theta $, generally, the statement of the above theorem is by no means true since the value on the right hand side does not depend on $ g $, although the one on the other side heavily does.
However, we mention that our assumption is much stronger than the non-degenerate case in sense of \cite{Bedford89}, i.e. $ W $ is not differentiable.\footnote{In fact, $ W $ is either $ C ^1 $ or nowhere continuous.}
For instance, in case $ g $ is non-zero constant and $ \lambda $ is piecewise\footnote{Our 'piecewise' is always related to $ ( I _i  ^\circ ) _{i= 0} ^{\ell -1} $.} constant (but not trivial), $ W _{\tau, \lambda}$ is not differentiable despite $ \Theta \equiv 0 $.
In particular, the box counting dimension of the graph is still $ s ( \tau ,  \lambda ) $ according to \cite{Bedford89}.
\end{remark}

\begin{remark}
In the above proof we applied Lemma \ref{lem:L}.
This is a key lemma, whose prototype appears in Ledrappier's note \cite{Ledrappier92} and is also referred to in \cite{Baranski14}.
Because of its importance we will deal with it in Section \ref{sec:L} separately.
There we will give a complete proof for a slightly more general situation, which is based on some ideas in \cite{Ledrappier92} and \cite{YL88}.
\end{remark}

The next result is a consequence of the preceding theorem.
Recall that $ s ( \tau , \lambda ) \in \mathbb{R} $ and $ \nu _{\tau, \lambda} \in \mathcal{P} ([0,1] ) $ are defined in \eqref{eq:Pressure}.

\begin{theorem}	\label{thm:main_theorem}
If the distribution of $ \Theta ( \cdot , x ) $ under $ \nu _{\tau, \lambda} ^- $ has Hausdorff dimension $ 1 $ for $ \nu _{\tau, \lambda} $-a.a. $ x \in [0,1]  $, then $ \dim _H ( \mathrm{graph} ( W _{\tau , \lambda} ) ) = \dim _B ( \mathrm{graph} ( W _{\tau ,\lambda} ) ) = s ( \tau , \lambda ) $.
\end{theorem}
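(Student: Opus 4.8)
The plan is to feed the equilibrium measure $\nu := \nu_{\tau,\lambda}$ into Theorem~\ref{thm:mu_dim}: since $\nu$ is a Gibbs measure and the hypothesis on $\Theta$ here is precisely the one demanded there (with $\nu^- = \nu_{\tau,\lambda}^-$), we obtain a closed expression for $\dim_H(\mu)$, where $\mu = (\mathrm{Id},W_{\tau,\lambda})^\ast\nu$ is the lift on $\mathrm{graph}(W_{\tau,\lambda})$. I then want to identify this expression with $s(\tau,\lambda)$ and finally sandwich $\dim_H(\mathrm{graph}(W_{\tau,\lambda}))$ between $\dim_H(\mu)$ from below and Bedford's box-counting dimension $\dim_B(\mathrm{graph}(W_{\tau,\lambda})) = s(\tau,\lambda)$ from above.

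For the computation, recall that $\nu$ is the equilibrium state of $(1-s(\tau,\lambda))\log\tau' + \log\lambda$ with topological pressure $0$, so the variational principle yields
\[
	h_\nu + \int\log\lambda\,d\nu \;=\; \big( s(\tau,\lambda) - 1 \big)\int\log\tau'\,d\nu .
\]
I would next check that $s(\tau,\lambda) > 1$. Let $\nu_{\mathrm{ac}}$ be the absolutely continuous invariant probability measure of $\tau$; Rokhlin's formula gives $h_{\nu_{\mathrm{ac}}} = \int\log\tau'\,d\nu_{\mathrm{ac}}$, whence, using $\tau'\lambda > 1$,
\[
	P(\log\lambda) \;\geq\; h_{\nu_{\mathrm{ac}}} + \int\log\lambda\,d\nu_{\mathrm{ac}} \;=\; \int\log(\tau'\lambda)\,d\nu_{\mathrm{ac}} \;>\; 0 .
\]
Since $\log\tau'$ is bounded below by a positive constant, $s\mapsto P((1-s)\log\tau' + \log\lambda)$ is strictly decreasing, so from $P(\log\lambda) > 0 = P((1-s(\tau,\lambda))\log\tau'+\log\lambda)$ we conclude $s(\tau,\lambda) > 1$. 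Hence the right-hand side of the displayed identity is positive, i.e.\ $h_\nu > -\int\log\lambda\,d\nu$, which by the last assertion of Theorem~\ref{thm:mu_dim} means $\dim_H(\mu)\geq 1$, so the first value in the minimum is attained. Substituting the identity,
\[
	\dim_H(\mu) \;=\; 1 + \frac{h_\nu + \int\log\lambda\,d\nu}{\int\log\tau'\,d\nu} \;=\; 1 + \big( s(\tau,\lambda) - 1 \big) \;=\; s(\tau,\lambda) .
\]

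It remains to close the sandwich. Since $\mu$ is a Borel probability measure carried by $\mathrm{graph}(W_{\tau,\lambda})$ and, by Theorem~\ref{thm:mu_dim}, exact dimensional with $\dim_H(\mu) = s(\tau,\lambda)$, the mass distribution principle gives $\dim_H(\mathrm{graph}(W_{\tau,\lambda})) \geq \dim_H(\mu) = s(\tau,\lambda)$. For the reverse inequality, the hypothesis on $\Theta$ forces $W_{\tau,\lambda}$ to be non-differentiable (cf.\ Remark~\ref{rem:degenerate}; heuristically, if $W_{\tau,\lambda}$ were $C^1$ the strong stable direction would be deterministic, so $\Theta(\cdot,x)$ would have a point-mass distribution of Hausdorff dimension $0$), so Bedford's result~\cite{Bedford89}, i.e.\ \eqref{eq:s_formula}, applies and yields $\dim_B(\mathrm{graph}(W_{\tau,\lambda})) = s(\tau,\lambda)$; combining with the general inequality $\dim_H \leq \dim_B$,
\[
	s(\tau,\lambda) \;\leq\; \dim_H(\mathrm{graph}(W_{\tau,\lambda})) \;\leq\; \dim_B(\mathrm{graph}(W_{\tau,\lambda})) \;=\; s(\tau,\lambda),
\]
so all three quantities coincide.

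The bulk of the work is thus already carried out in Theorems~\ref{thm:entropy_formula} and~\ref{thm:mu_dim}, and this statement is essentially a corollary. The only points that I expect to require genuine care are the verification that the equilibrium measure $\nu_{\tau,\lambda}$ of the merely piecewise-$C^{1+}$ potential $(1-s(\tau,\lambda))\log\tau'+\log\lambda$ is a Gibbs measure in the precise sense used in Theorem~\ref{thm:entropy_formula}, and making the implication ``$\Theta$-hypothesis $\Rightarrow$ $W_{\tau,\lambda}$ non-differentiable'' rigorous (cf.\ Remark~\ref{rem:degenerate}); the thermodynamic identities and the dimension inequalities themselves are routine.
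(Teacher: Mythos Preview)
Your proposal is correct and follows essentially the same route as the paper: feed the equilibrium measure $\nu_{\tau,\lambda}$ (which is Gibbs) into Theorem~\ref{thm:mu_dim}, use the equilibrium identity \eqref{eq:s_formula} to identify $\dim_H(\mu)$ with $s(\tau,\lambda)$, and sandwich using Bedford's upper bound together with Lemma~\ref{lem:mass_d_p} (your ``mass distribution principle'').

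One simplification: you do not need to argue that $W_{\tau,\lambda}$ is non-differentiable in order to invoke \cite{Bedford89}. Only the upper estimate $\dim_B(\mathrm{graph}(W_{\tau,\lambda})) \le s(\tau,\lambda)$ is required here, and that direction is an unconditional covering argument (cf.\ the footnotes in the paper citing \cite{Moss12} and the Moran-cover alternative). Once you have $\dim_H(\mu)=s(\tau,\lambda)$ from below, the chain $s(\tau,\lambda)\le\dim_H\le\dim_B\le s(\tau,\lambda)$ closes without any non-degeneracy hypothesis. The paper itself simply asserts $s(\tau,\lambda)\ge 1$ and reads off $h_{\nu_{\tau,\lambda}}\ge -\int\log\lambda\,d\nu_{\tau,\lambda}$ from \eqref{eq:s_formula}; your pressure argument via the absolutely continuous invariant measure is a clean self-contained justification of the same inequality (and in fact gives the strict inequality $s(\tau,\lambda)>1$). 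The concern you flag about the Gibbs property of $\nu_{\tau,\lambda}$ is standard thermodynamic formalism for piecewise H\"older potentials on a full-branch expanding map and is taken for granted in the paper as well.
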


\begin{proof}
As mentioned, $ \dim _H ( \mathrm{graph} ( W _{\tau, \lambda} ) ) \leqslant \dim _B ( \mathrm{graph} ( W _{\tau ,\lambda} ) ) = s ( \tau , \lambda ) $ is proved in \cite{Bedford89}.\footnote{To be precise, in \cite{Bedford89} all functions are assumed to be $ C ^{1+} $ on $ \mathbb{T} ^1 $, instead of the piecewise regularity. However, the difference is not essential in that proof. Alternatively, it is not difficult to derive the upper bound by means of the Gibbs property of $ \nu _{\tau , \lambda} $, constructing Moran covers.}

\footnote{Note also that a direct proof for $ \dim _H ( \mathrm{graph} ( W _{\tau, \lambda} ) ) \leqslant s ( \tau , \lambda ) $ is given as proof of \cite[Proposition 2.2]{Moss12}.}

In order to prove $ \dim _H ( \mathrm{graph} ( W _{\tau, \lambda} ) ) \geqslant s ( \tau , \lambda ) $, it suffices to show $ \dim _H ( \mu _{\tau , \lambda} )  \geqslant s ( \tau , \lambda ) $ since $ \dim _H ( \mathrm{graph} ( W _{\tau, \lambda} ) ) \geqslant \dim _H ( \mu _{\tau , \lambda} ) $ by Lemma \ref{lem:mass_d_p}.
Observe that $ h _{\nu _{\tau , \lambda}} \geqslant - \int \log \lambda \, d \nu _{\tau , \lambda} $ follows from \eqref{eq:s_formula} as $ s ( \tau , \lambda ) \geqslant 1 $.
Concerning the fact that the equilibrium measure $ \nu _{\tau, \lambda} $ is a Gibbs measure, the claim follows from Theorem \ref{thm:mu_dim}.
\end{proof}

Although the last theorem is a powerful tool to determine the Hausdorff dimension of the graph, the verification of the assumption on $ \Theta ( \cdot, x ) $ is in many cases quite challenging.
Finally, we give two examples.
The details will be discussed in Section \ref{sec:Tsujii}.

\begin{theorem}	\label{thm:example}
Let $ \gamma _0 , \gamma _1 \in (0,1) $ and $ a _0, a _1 \in \mathbb{R} $ satisfy $ \gamma _0 a _0 \neq \gamma _1 a _ 1 $.
Suppose $ l = 2 $ and that $ \tau $ and $ g $ are piecewise linear.
Furthermore, suppose that $ \lambda (x) := \gamma _i \, | I _i | $ and $ g ' (x) = a _i $ for $ x \in I _i $, $ i = 0 , 1 $.
Then there is a set $ E \subset \mathbb{R} $ of Hausdorff dimension $ 0 $ such that $ \dim _H ( \mathrm{graph} ( W _{\tau, t \lambda } ) ) = \dim _B ( \mathrm{graph} ( W _{\tau, t \lambda } ) ) = s ( \tau , t \lambda ) $ for all $ t \in \left( \max\{ \gamma _0 , \gamma _1 \} , \min  \{ \frac{\gamma _0}{\sqrt{| I _0 |}} , \frac{\gamma _1}{\sqrt{| I _1 |}}  \} \right] \setminus E $.
\end{theorem}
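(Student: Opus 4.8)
The plan is to reduce everything to verifying the hypothesis of Theorem~\ref{thm:main_theorem} and then apply that theorem. First I would record the Bernoulli structure. Since $\tau$ is piecewise linear with full branches, $|\tau'|\equiv|I_i|^{-1}$ on $I_i$, and since $\lambda$ is piecewise constant with $\lambda|_{I_i}=t\gamma_i|I_i|$, the potential $(1-s)\log\tau'+\log(t\lambda)$ is constant on each $I_i$. Hence $\nu_{\tau,t\lambda}$ is the Bernoulli measure (in the coding by $(I_i)_{i=0,1}$) with weights $\bp(t)=(p_0(t),p_1(t))$, $p_i(t)=|I_i|^{s(\tau,t\lambda)}\,t\gamma_i$, and both $s(\tau,t\lambda)$ and $\bp(t)$ depend real-analytically on $t$; its backward marginal $\nu^-_{\tau,t\lambda}$ is the corresponding Bernoulli measure on the space of prehistories. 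It therefore suffices to prove that, for all $t$ in the stated interval outside a set $E$ with $\dim_H E=0$, the distribution of $\Theta(\cdot,x)$ under $\nu^-_{\tau,t\lambda}$ has Hausdorff dimension $1$.

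Next I would compute $\Theta$. Because $\tau$ and $g$ are piecewise linear and $\lambda$ is piecewise constant, the linear transport of tangent directions along the strong stable fibres over $I_i$ depends only on the index $i$ and on $t$, not on the base point. Iterating it along a prehistory $\omega$ (the symbolic counterpart of $\xi$) and passing to the limit, as in Subsection~\ref{subsec:strong_stable}, shows that $\Theta(\omega,x)$ is independent of $x$ and equals the natural projection $\pi_t(\omega)$ of the contracting affine iterated function system $\Phi_t=\{f_0^{(t)},f_1^{(t)}\}$ on $\mathbb{R}$, where $f_i^{(t)}(\theta)=(\theta-a_i)/(t\gamma_i)$; this is a genuine contraction because the standing hypothesis $\tau'\lambda>1$ forces $t\gamma_i>1$. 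Consequently the distribution of $\Theta(\cdot,x)$ under $\nu^-_{\tau,t\lambda}$ is the self-similar measure $\rho_t$ of $\Phi_t$ with weights $\bp(t)$. The hypothesis $\gamma_0a_0\neq\gamma_1a_1$ ensures $f_0^{(t)}\neq f_1^{(t)}$, so that $\Phi_t$ is non-degenerate and $\rho_t$ is non-atomic; without it (for instance $g$ constant) $\Theta$ can be deterministic and the conclusion fails, compare Remark~\ref{rem:degenerate}.

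It remains to show $\dim_H\rho_t=1$ on the interval off a zero-dimensional set. A short computation with the pressure equation~\eqref{eq:Pressure} (equivalently with the entropy formula of Theorem~\ref{thm:entropy_formula}) shows that the similarity dimension of $\rho_t$, i.e. $h_{\nu_{\tau,t\lambda}}$ divided by the mean logarithmic contraction rate $\sum_i p_i(t)\log(t\gamma_i)$ of $\Phi_t$, is at least $1$ throughout the interval; since $\rho_t$ is supported on $\mathbb{R}$, it then suffices to obtain the matching lower bound $\dim_H\rho_t\geq1$. For this I would apply the transversality method to the real-analytic family $t\mapsto(\Phi_t,\bp(t))$, following \cite{Tsujii01} (as used for the classical Weierstrass function in \cite{Baranski14}): one verifies that $t\mapsto\pi_t(\omega)-\pi_t(\omega')$ has only simple zeros for distinct prehistories $\omega\neq\omega'$ on the whole interval — this is exactly where the upper bound $t\leq\gamma_i/\sqrt{|I_i|}$ enters, being what keeps the relevant power series transversal — and the quantitative transversality estimate then yields a set $E$ with $\dim_H E=0$ such that $\dim_H\rho_t=\min\{1,\dim_{\mathrm{sim}}\rho_t\}=1$ for $t$ in the interval outside $E$. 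This is carried out in Section~\ref{sec:Tsujii}, and combined with Theorem~\ref{thm:main_theorem} it finishes the proof.

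The hard part is the transversality verification together with the extraction of the sharp interval: one has to show that the transversality order stays equal to $1$ — so that the exceptional set has Hausdorff dimension $0$ rather than merely Lebesgue measure $0$, which requires the full quantitative form of the argument of \cite{Tsujii01} and not just a bare almost-everywhere statement — and that this persists precisely up to $t=\gamma_i/\sqrt{|I_i|}$. A more routine but still necessary point is to match the abstract strong stable direction $\Theta$ constructed in Section~\ref{sec:LY} with the concrete affine system $\Phi_t$, in particular the $x$-independence claimed above and the identification of $\nu^-_{\tau,t\lambda}$ as a Bernoulli measure.
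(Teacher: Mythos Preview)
Your overall framework is right: reduce to the hypothesis of Theorem~\ref{thm:main_theorem}, identify $\nu_{\tau,t\lambda}$ as a Bernoulli measure (Lemma~\ref{lem:bernoulli}), observe that $\Theta(\cdot,x)$ is $x$-independent and that its law under $\nu^-$ is a self-similar measure $\rho_t$ on $\mathbb{R}$, and finally show that $\dim_H\rho_t=1$ for all $t$ in the stated interval outside a set $E$ with $\dim_H E=0$. Up to here you match the paper.

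The genuine gap is in the last step. You propose to obtain the zero-dimensional exceptional set via the transversality method of \cite{Tsujii01}, claiming that simple zeros of $t\mapsto\pi_t(\omega)-\pi_t(\omega')$ suffice. They do not: first-order transversality in a one-parameter family of self-similar measures yields an exceptional parameter set of Lebesgue measure zero, not of Hausdorff dimension zero, and no ``quantitative form'' of \cite{Tsujii01} changes this. The paper instead invokes Hochman's theorem \cite[Theorems~1.7 and 1.8]{Hochman14} (see Proposition~\ref{prop:Hochman}): the hypothesis $\gamma_0 a_0\neq\gamma_1 a_1$ is precisely the non-degeneracy needed in \cite[Theorem~1.8]{Hochman14}, and \cite[Theorem~1.7]{Hochman14} then gives $\dim_H\rho_t=\min\{1,\dim_{\mathrm{sim}}\rho_t\}$ for all $t$ outside a set of Hausdorff dimension $0$. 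In the paper's layout the Tsujii-style transversality argument of Section~\ref{sec:Tsujii} is reserved for Theorem~\ref{thm:example2}, where the goal is an explicit open parameter region with no exceptional set at all, a different kind of conclusion.

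A related misattribution: the upper endpoint $t\le\min_i\gamma_i/\sqrt{|I_i|}$ has nothing to do with transversality. You yourself note that the similarity dimension of $\rho_t$ is at least $1$ on the interval; in the paper this is exactly what the upper endpoint is for. Concretely, it is the range on which the inequality $h_{\nu_{\tau,t\lambda}}\ge -\int\log(t^{-1}\gamma)\,d\nu_{\tau,t\lambda}$ of Proposition~\ref{prop:Hochman} can be verified from the pressure equation, so that $\min\{1,\dim_{\mathrm{sim}}\rho_t\}=1$. The Hochman exceptional set $E$ itself is produced on the whole half-line and does not depend on this bound.
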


\begin{remark}
In Theorem \ref{thm:example}, if additionally $ | I _0 | = | I _1 | $ and $ \gamma _0 = \gamma _1 $ are satisfied, and if $ g (x) = \mathrm{dist} ( x , \mathbb{Z} ) $, then $ W _{\tau , t \lambda} $ are Takagi functions which are studied in \cite{Ledrappier92}.
\end{remark}

\begin{theorem}	\label{thm:example2}
Suppose $ \ell \geqslant 2 $ and that $ \tau $ is a piecewise linear.
Furthermore, suppose $ g (x) := \cos ( 2 \pi x ) $ and $ \lambda  := ( \tau ' ) ^{-\theta} $ for a $ \theta \in ( 0 , 1 ) $.
If both
\begin{equation}
	\frac{| I _i|}{| I _j |} < | I _j | ^{\frac{- \theta}{2 - \theta}}  \quad ( \forall i \neq j ) \quad \mbox{and}	\label{eq:cond1}
\end{equation}
\begin{equation}
 G \left( ( \min _i | I _i | ) ^{1 - \theta} ,  ( \max _i | I _i | ) ^{1 - \theta}  \right)  + G  \left( ( \min _i | I _i | ) ^{2 - \theta} ,   ( \max _i | I _i | ) ^{2 - \theta}  \right) 
< \delta _0 ,	\label{eq:cond2}
\end{equation}
are satisfied, then $ \dim _H ( \mathrm{graph} ( W _{\tau, \lambda} ) ) = \dim _B ( \mathrm{graph} ( W _{\tau,\lambda} ) ) = 2 - \theta $, where
\begin{equation}
	\delta _0 := \inf _{i \neq j} \inf _x \sin ^2 \left( \pi ( \rho _i (x) - \rho _j (x) )  \right) , \quad  \mbox{and} \quad  G ( s , t ) := \left(  s ^{-1} \left( \frac{t ^2}{1 - t} + \frac{t -s}{2}\right) \right) ^2	\label{eq:G}
\end{equation}
for $ s , t \in ( 0 , 1 ) $.
\end{theorem}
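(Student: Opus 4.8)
The strategy is to derive Theorem~\ref{thm:example2} from Theorem~\ref{thm:main_theorem} by verifying that, for $\nu_{\tau,\lambda}$-a.a.\ $x$, the law of $\Theta(\cdot,x)$ under $\nu_{\tau,\lambda}^-$ has Hausdorff dimension $1$. First one clears away the soft part. Since $\tau$ is piecewise linear with full branches, $|\tau'|\equiv|I_i|^{-1}$ on $I_i^\circ$, so $\log\tau'$ is locally constant; with $\lambda=(\tau')^{-\theta}$ the potential in \eqref{eq:Pressure} becomes $(1-s-\theta)\log\tau'$ and the Perron formula gives $P\big((1-s)\log\tau'+\log\lambda\big)=\log\sum_i|I_i|^{\,s+\theta-1}$, which vanishes precisely at $s=2-\theta$ because $\sum_i|I_i|=1$. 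The equilibrium state $\nu_{\tau,\lambda}$ is then the equilibrium state of $-\log\tau'$, namely Lebesgue measure, with entropy $-\sum_i|I_i|\log|I_i|=\int\log\tau'\,d\nu_{\tau,\lambda}$; plugging these values into Theorem~\ref{thm:mu_dim} one checks $h_{\nu_{\tau,\lambda}}\geqslant-\int\log\lambda\,d\nu_{\tau,\lambda}$ (so $\dim_H\mu\geqslant1$ and the first value is attained) and that the displayed minimum equals $2-\theta$. As $W_{\tau,\lambda}$ is easily seen to be nowhere differentiable for $g=\cos(2\pi\cdot)$, Bedford's theorem yields $\dim_B(\mathrm{graph}(W_{\tau,\lambda}))=2-\theta$. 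So only the assertion on $\Theta$ remains.

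Next I would make $\Theta$ explicit. In the piecewise-linear model the invertible extension acts by affine branches with diagonal linear part $\mathrm{diag}\big(|I_{\omega_1}|,|I_{\omega_1}|^{\theta}\big)$ and an off-diagonal entry proportional to $g'$, and the past is coded by a one-sided sequence $\omega=(\omega_1,\omega_2,\ldots)$ tracking the backward orbit $\rho_{\omega_1}(x),\rho_{\omega_2}\rho_{\omega_1}(x),\ldots$ of $x$ under the inverse branches $\rho_i$ of $\tau$. Iterating the induced (affine) action on slopes produces an absolutely convergent series for $\Theta(\omega,x)$, built from the values $e^{2\pi\mathrm{i}\rho_{\omega_1}(x)},\ e^{2\pi\mathrm{i}\rho_{\omega_2}\rho_{\omega_1}(x)},\ldots$, in which the first-order contribution has per-step scale ratio in $\big[(\min_i|I_i|)^{1-\theta},(\max_i|I_i|)^{1-\theta}\big]$ and the second-order ($\cos$-curvature) contribution an extra horizontal factor, so ratio in $\big[(\min_i|I_i|)^{2-\theta},(\max_i|I_i|)^{2-\theta}\big]$ --- which is exactly why these four numbers enter $G$ in \eqref{eq:cond2}. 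Condition \eqref{eq:cond1} is the quantitative domination that makes this work: it forces the $n=1$ term to sit at a strictly coarser scale than the whole tail and keeps the relevant cone field invariant, so $\Theta$ and the projections $\pi^{ss}_\xi$ are well defined and depend Lipschitz-continuously on $\omega$.

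The heart of the matter is a Tsujii-type transversality estimate. Decomposing $\Theta(\omega,x)$ into its $n=1$ term plus a tail, the leading term separates two distinct first symbols $i\neq j$ by an amount comparable to $|e^{2\pi\mathrm{i}\rho_i(x)}-e^{2\pi\mathrm{i}\rho_j(x)}|=2|\sin(\pi(\rho_i(x)-\rho_j(x)))|$, whose square is at least $4\delta_0$ (note $\delta_0>0$ holds automatically here, since $\rho_i(x)-\rho_j(x)=(a_i-a_j)+(|I_i|-|I_j|)x$ stays in a compact subset of $(-1,0)\cup(0,1)$ as the $I_i$ are disjoint), while the total fluctuation of $\Theta(\cdot,x)$ over pasts with a common first symbol splits into a first- and a second-order part whose squared sizes are bounded by the two $G$-terms --- the $t^2/(1-t)$ factor being the geometric sum $\sum_{n\ge2}t^n$ of the deeper scales and $(t-s)/2$ the residual spread at scale $1$. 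Hence \eqref{eq:cond2} is exactly the inequality ``(squared separation) $>$ (sum of squared fluctuations)'', which gives a uniform transversality constant: $|\Theta(\omega,x)-\Theta(\omega',x)|$ is bounded below whenever $\omega_1\neq\omega_1'$, uniformly over all common refinements of $\omega,\omega'$. Feeding this transversality, together with the Bernoulli/Gibbs structure of $\nu_{\tau,\lambda}^-$, into the absolute-continuity criterion of \cite{Tsujii01} (transported from the self-similar to the present inhomogeneous affine setting) shows that the law of $\Theta(\cdot,x)$ is absolutely continuous with respect to Lebesgue measure, hence of Hausdorff dimension $1$, for a.a.\ $x$; Theorem~\ref{thm:main_theorem} then finishes the proof.

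I expect the main obstacle to be this last step. Tsujii's criterion is formulated for self-similar systems, and carrying it over to a non-self-similar piecewise-linear $\tau$ forces one to control the non-constant branch ratios and the distortion they create --- this is where \eqref{eq:cond1} has to be used carefully --- while keeping the estimates sharp enough that the crude transversality encoded in $\delta_0$ and the two $G$-terms still suffices (in particular, that the cross-terms in the triangle-inequality bookkeeping really collapse to the clean sum $G(\cdot,\cdot)+G(\cdot,\cdot)$). A secondary task is to derive the closed-form series for $\Theta$ rigorously and to verify that it is the Lipschitz fluctuation of the entire stable foliation --- not merely of the $n=1$ term --- that is dominated by the $G$-terms, and that the two exponents $1-\theta,\,2-\theta$ genuinely capture its first- and second-order behaviour.
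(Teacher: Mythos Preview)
Your global plan --- reduce to Theorem~\ref{thm:main_theorem} by showing that the law of $\Theta(\cdot,x)$ under $\nu_{\tau,\lambda}^-$ (which is Lebesgue measure here) is absolutely continuous via a Tsujii-type $L^2$-argument, with \eqref{eq:cond2} providing the transversality --- is exactly the paper's route, and the soft reductions in your first paragraph are correct (the paper packages them as Lemma~\ref{lem:bernoulli}). There are, however, two concrete misidentifications that would make the argument as you wrote it fail.

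First, you attribute \eqref{eq:cond1} to well-definedness of $\Theta$ and invariance of a cone field. Neither is needed: $\Theta$ is well defined from $\gamma=(\tau'\lambda)^{-1}<1$ alone, with no further smallness. The actual role of \eqref{eq:cond1} is purely in the $L^2$-energy recursion. With $\gamma_i=|I_i|^{1-\theta}$ and $\lambda_i=|I_i|^{\theta}$, the self-similarity relation for $\zeta_{x}:=m\circ\Theta(\cdot,x)^{-1}$ (Proposition~\ref{prop:selfsimilar_measure}) yields an inequality $I(r)\leqslant\beta\,I(r/\min\gamma)+C$ for the averaged energies $I(r)=r^{-2}\int\|\zeta_{x}\|_r^2\,dm(x)$, where the constant $C$ comes from the transversal off-diagonal terms and
\[
\beta=\frac{\max_i|I_i|^{2}/\lambda_i}{(\min_i\gamma_i)^2}=\frac{(\max_i|I_i|)^{2-\theta}}{(\min_i|I_i|)^{2(1-\theta)}}.
\]
Condition \eqref{eq:cond1} is equivalent to $\beta<1$; this is what makes the iteration contract and gives $\liminf_{r\to0}I(r)<\infty$, hence $L^2$-densities for $\zeta_x$ for a.a.\ $x$ (Lemma~\ref{lem:abs_one_tsujii}). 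Without $\beta<1$ the transversality bound on the cross terms is useless.

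Second, the transversality you describe --- ``$|\Theta(\omega,x)-\Theta(\omega',x)|$ bounded below whenever $\omega_1\neq\omega'_1$'' --- is false in this model: since $g'(x)=-2\pi\sin(2\pi x)$ vanishes, the leading terms of $\Theta(\xi,x)$ and $\Theta(\eta,x)$ can coincide even when $k(\xi)\neq k(\eta)$. What \eqref{eq:cond2} actually delivers is $(\delta,\delta)$-transversality in Tsujii's sense: for $\xi\in I_i$, $\eta\in I_j$, $i\neq j$, \emph{either} $|\Theta(\xi,x)-\Theta(\eta,x)|>\delta$ \emph{or} $|\partial_x\Theta(\xi,x)-\partial_x\Theta(\eta,x)|>\delta$ (Lemma~\ref{lem:transversal_cos}). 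The two $G$-terms bound the tails of $\Theta$ (contraction scale $\gamma_i=|I_i|^{1-\theta}$) and of $\partial_x\Theta$ (scale $\gamma_i|I_i|=|I_i|^{2-\theta}$) respectively, and the identity $\sin^2+\cos^2=1$ applied to the leading terms converts the pair of lower bounds into the single inequality $G(\cdot,\cdot)+G(\cdot,\cdot)<\delta_0$. The either/or structure is essential for the off-diagonal energy estimate $I(r;i,j)\leqslant\mathrm{const}$ (Proposition~\ref{prop:transversal_ineq}), since one integrates in $x$ and uses that the set where both alternatives fail is empty.
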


\begin{remark}
Let us consider the special case of Theorem \ref{thm:example2}, where $ \tau ( x ) := \ell x \bmod 1  $.
Furthermore, let $ \lambda \in ( 1 / \ell , 1) $ be a constant function.
Then the condition \eqref{eq:cond1} is trivial, while the other condition \eqref{eq:cond2} is satisfied, if $ \ell \geqslant 3 $ and $ ( \tau ' ) ^{- \theta } = \ell ^{- \theta} \in ( \tilde{\lambda _\ell} , 1 ) $.
Here $ \tilde{\lambda} _\ell $ is the unique zero of
\[
	h _\ell ( \lambda ) := \frac{1}{(\ell \lambda  - 1) ^2} + \frac{1}{(\ell ^2 \lambda -  1) ^2} - \sin ^2 \left( \frac{\pi}{\ell} \right) .
\]
This is a part of \cite[Theorem A]{Baranski14} which we technically extended.
Notice that the cited theorem provides another condition for the case $ \ell = 2 $ which apparently relies on the specific choice of $ \tau $.
\end{remark}

\section{Preliminaries}	\label{sec:pre}

As long as the parameters are fixed, we simply write $ W $ instead of $ W _{\tau , \lambda} $.

Here is the general notation of this note:
\begin{itemize}
 \item Let $ \rho _i : [0,1] \rightarrow \overline{I _i} $ be the continuous extension of the inverse map of the branch $ \tau _{| I _i ^\circ} $ for each $ i \in \{ 0 , \ldots , \ell -1 \} $.
 \item Let $ \rho _{(\omega _1 , \ldots , \omega _n)} := \rho _{\omega _n} \circ \cdots \circ \rho _{\omega _1}$ for $ (\omega _1 , \ldots , \omega _n ) \in \{ 0 , \ldots , \ell -1 \} ^n $ and $ n \in \mathbb{N} $.
 \item Let $ k (x) := i $ for $ x \in I _i $.
 \item Let $ [ x ] _n := ( k (x ) , k (\tau (x) ) , \ldots , k (\tau ^{n-1} (x) ) ) \in \{ 0 , \ldots , \ell -1 \} ^n $ for $ x \in [0,1] $ and $ n \in \mathbb{N} $.
 \item Let $ I _N ( x ) $ be the monotonicity interval of $ \tau ^N $ containing $ x $ for $ x \in [0,1] $ and $ N \in \mathbb{N} $.
 \item Let $ \mathcal{S} _\tau := \{ I _N (x  ) : x \in [0,1] \mbox{ and } N \in \mathbb{N}  \} \cup \{ \emptyset \} $.
 \item For a function $ \phi $ on $[0,1] $ we write $ \phi ^n := \prod _{i=0} ^{n-1} \phi \circ \tau ^i $ and $ \phi _n := \sum _{i=0} ^{n-1} \phi  \circ \tau ^i  $.
 \item We use expressions like $ \tau ' $, when the non-differential points can be ignored.
\end{itemize}

For a metric space $ E $ let $ \mathcal{P} ( E ) $ be the set of Borel probability measures and $ \mathcal{B} ( E ) $ be the Borel algebra.
Further, for a family $ \mathcal{F} \subseteq \mathcal{B} ( E ) $ let $ \sigma ( \mathcal{F} ) $ denote the smallest $ \sigma $-algebra containing $ \mathcal{F} $.
Note that $ \mathcal{S} _\tau $ is a semiring, which generates $ \mathcal{B} ( [0,1] ) $.
Thus if two finite Borel measures on $ [0,1] $ have identical values on $ \mathcal{S} _\tau $, they must be the same measure.
Indeed, we have the following theorem.

\begin{lemma}[Corollary of the approximation theorem (cf. {\cite[Theorem 1.65(ii)]{Klenke}})]	\label{lem:approximation}
For any $ \nu \in \mathcal{P} ([0,1] ) $, $ A \in \mathcal{B} ( [0,1] ) $ and $ \varepsilon > 0 $ there are $ N \in \mathbb{N} $ and mutually disjoint $ I _1 , \ldots ,  I _N \in \mathcal{S} _\tau $ such that
\[
	\nu \left( A \bigtriangleup \bigcup _{i=1} ^N I _i \right) < \varepsilon .
\]
\end{lemma}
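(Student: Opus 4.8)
The plan is to obtain this as a direct consequence of Klenke's approximation theorem \cite[Theorem 1.65(ii)]{Klenke}, applied to the semiring $\mathcal{S}_\tau$. The first step is to record that $\mathcal{S}_\tau$ really is a semiring generating $\mathcal{B}([0,1])$, as already noted above: for each fixed $N$ the monotonicity intervals $I_N(x)$ of $\tau^N$ form a finite partition of $[0,1]$ into intervals, and these partitions refine as $N$ grows, so that any $I_N(x)$ is a finite disjoint union of the level-$(N+1)$ intervals it contains. Consequently the intersection of two members of $\mathcal{S}_\tau$ again lies in $\mathcal{S}_\tau$, and the set difference of two members is a finite disjoint union of members of $\mathcal{S}_\tau$. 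Since $\nu$ is a probability measure it is finite, hence in particular $\sigma$-finite, so the hypotheses of the approximation theorem are satisfied.

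Applying the theorem to the given $A \in \mathcal{B}([0,1]) = \sigma(\mathcal{S}_\tau)$ and $\varepsilon > 0$ then produces a finite union $B = \bigcup_{j=1}^M J_j$ with $J_j \in \mathcal{S}_\tau$ and $\nu(A \bigtriangleup B) < \varepsilon$. The only remaining task is to make the defining intervals mutually disjoint, which is automatic for a semiring: processing $J_1, \ldots, J_M$ in turn and replacing each $J_j \setminus (J_1 \cup \cdots \cup J_{j-1})$ by a finite disjoint family in $\mathcal{S}_\tau$, one rewrites $B$ as a finite disjoint union of members of $\mathcal{S}_\tau$; relabelling these as $I_1, \ldots, I_N$ gives $\bigcup_{i=1}^N I_i = B$, hence $\nu(A \bigtriangleup \bigcup_{i=1}^N I_i) < \varepsilon$, as required.

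I do not anticipate any real obstacle. The one point deserving a line of justification is the semiring property of $\mathcal{S}_\tau$ — in particular that a set difference of monotonicity intervals decomposes into finitely many monotonicity intervals — and this is precisely the nestedness of the partitions generated by the iterates $\tau^N$. As an alternative to invoking \cite{Klenke}, one can argue directly: the family of $A \in \mathcal{B}([0,1])$ admitting an approximation as in the statement contains every finite disjoint union of members of $\mathcal{S}_\tau$ (in particular $[0,1]$ itself, being $I_0 \cup \cdots \cup I_{\ell-1}$), is closed under complementation because $A^c \bigtriangleup B^c = A \bigtriangleup B$, and is closed under countable increasing unions by a truncation argument; a monotone-class argument then forces it to be all of $\mathcal{B}([0,1])$. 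Citing the approximation theorem is merely the shorter route.
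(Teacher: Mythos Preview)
Your proposal is correct and matches the paper's approach: the paper states this lemma as a direct corollary of \cite[Theorem 1.65(ii)]{Klenke} without further proof, having already remarked just before the lemma that $\mathcal{S}_\tau$ is a semiring generating $\mathcal{B}([0,1])$. Your write-up simply spells out the standard verification of the semiring property and the disjointification step, which the paper leaves implicit.
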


\subsection{The dynamical system and its inverse}

We introduce a dynamical system which characterises the graph of $ W _{\tau , \lambda} $ as a repeller.
Then we construct an invertible extension which enables us to work with an attractor.

If we consider the skew product dynamical system $ G : [0,1] \times \mathbb{R} \rightarrow [0,1] \times \mathbb{R} $ defined by $ G ( x , y ) := \left( \tau (x) , \frac{y - g(x)}{\lambda(x)} \right) $, the graph of $ W  $ can be characterised as its unique repeller.
Indeed, the graph is an invariant set, i.e. we have
\[
	G ( x , W  (x) ) = (\tau (x) , W  ( \tau (x) ))
\]
for all $ x \in [0,1] $, while all other points are driven to infinity because of the uniform expansion in the vertical direction.

Now, we construct an inverse system of this as follows.
First, we extend the basis dynamics $ \tau : [0,1] \rightarrow [0,1] $ to the second component of the inverse of the following non-linear Baker map.
We define the non-linear Baker map $ B : [0,1] ^2 \rightarrow [0,1] ^2 $ by
\[
	B ( \xi , x ) := ( \tau (\xi ) , \rho _{k(\xi)} (x) ) .
\]
Notice that our definition of the Baker map may be unusual, especially when $ \tau $ preserves Lebesgue measure.
We just adopt the simplest one.
Observe that $ B ^{-1} ( \xi , x ) = ( \rho _{k(x)} (\xi ) , \tau ( x )  ) $.
Then we consider the skew product system $ F : [0,1] ^2 \times \mathbb{R} \rightarrow [0,1] ^2 \times \mathbb{R} $ by
\[
	F ( \xi , x , y ) := \left( B ( \xi , x ) , \,  \lambda \left( \rho _{k(\xi)} (x) \right) \cdot y + g \left( \rho _{k(\xi)} ( x ) \right) \right) .
\]
Clearly, the second and third coordinates of $ F $ is the inverse to $ G $.
Moreover, it is straightforward to verify that
\begin{equation}
	F ( \xi , x , W  ( x ) ) = \left( B (\xi, x ) , W \left( \rho _{k(\xi)} (x) \right) \right)	\label{eq:W_inv_F}
\end{equation}
holds for all $ ( \xi , x ) \in [0,1] $, i.e. the graph of $ (\xi , x ) \mapsto W (x) $ is invariant under $ F $.
Moreover, this graph is the global attractor of the system due to the uniform contraction in the vertical direction.

Observe that any iterates are also skew-products, i.e. we can define for $ n \in \mathbb{N} $ the fibre map $  F _{(\xi , x )} ^n ( y ) : \mathbb{R} \rightarrow \mathbb{R} $ with respect to $ F ^n $ by
\[
	F ^n ( \xi , x , y ) =: \left( B ^n ( \xi , x ) , F _{(\xi, x)} ^n (y)  \right) .
\]
Indeed, one can calculate
\begin{equation}
	F ^n _{(\xi, x)} (y) = \lambda ^n \left( \rho _{[\xi] _n} (x) \right) \cdot y + W _n \left( \rho _{[\xi] _n} (x) \right) ,	\label{eq:Fn}
\end{equation}
where $ 
	W _n ( x ) := \sum _{j=0} ^{n-1} \lambda ^j (x) \, g ( \tau ^j x ) $.

We define the (KS-)entropy w.r.t. a $ \tau $-invariant probability measure $ \nu \in \mathcal{P} ( [0,1]  ) $ by
\begin{equation}
	h _\nu := - \lim _{N \rightarrow \infty} \frac{1}{N} \int \log \nu ( I _N (x) ) \, d \nu (x) .	\label{eq:entropy_def}
\end{equation}
Recall that, if $ \nu $ is ergodic, then the Shannon-McMillan-Breiman theorem guarantees the convergence
\begin{equation}
	h _\nu =  \lim _{N \rightarrow \infty} \frac{- \log \nu ( I _N (x) ) }{N}	\label{eq:SMB}
\end{equation}
for $ \nu $-a.a. $ x \in [0,1] $.

We also note the following facts.
\begin{proposition}	\label{prop:log_I}
If $ \nu \in \mathcal{P} ( [0,1] ) $ is ergodic, then we have
\[
	\lim _{N \rightarrow \infty} \frac{- \log | I _N (x) |}{N} = \int \log \tau ' \, d \nu
\]
for $ \nu $-a.a. $ x \in [0,1] $.
In particular, $ \int \log \tau ' \, d \nu = - \int \log | I _{k(\cdot)} | \, d \nu $.
\end{proposition}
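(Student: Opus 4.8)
The plan is to reduce the statement to Birkhoff's ergodic theorem for the observable $x\mapsto\log\tau'(x)$ by establishing a uniform bounded-distortion estimate. First note that $\log\tau'$ is a bounded function: there are finitely many branches, each uniformly expanding, so there is $\theta>1$ with $\tau'\geqslant\theta$, and on each branch $\tau'$ is Hölder-continuous on a bounded interval and hence bounded, giving $0<\log\theta\leqslant\log\tau'\leqslant M<\infty$. In particular $\log\tau'\in L^1(\nu)$, and since $\nu$ is ergodic Birkhoff's theorem gives $\frac1N\sum_{j=0}^{N-1}\log\tau'(\tau^j x)\to\int\log\tau'\,d\nu$ for $\nu$-a.e.\ $x$. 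It therefore suffices to prove
\[
	\Bigl|\,{-}\log|I_N(x)|\;-\;\sum_{j=0}^{N-1}\log\tau'(\tau^j x)\,\Bigr|\;\leqslant\;C
\]
for a constant $C$ independent of $x\in[0,1]$ and $N\in\mathbb{N}$; dividing by $N$ and letting $N\to\infty$ then yields the first assertion for $\nu$-a.e.\ $x$.

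For the distortion estimate, recall that by the full-branch assumption $\tau^N$ restricted to the monotonicity interval $I_N(x)$ is a $C^1$ monotone bijection onto $(0,1)$, so $\int_{I_N(x)}(\tau^N)'\,dt=1$ and the mean value theorem provides $\xi=\xi_N(x)\in I_N(x)$ with $(\tau^N)'(\xi)=|I_N(x)|^{-1}$; by the chain rule $-\log|I_N(x)|=\log(\tau^N)'(\xi)=\sum_{j=0}^{N-1}\log\tau'(\tau^j\xi)$. Now fix $j$: both $\tau^j\xi$ and $\tau^j x$ lie in $I_{N-j}(\tau^j x)$, which is contained in the single branch $I_{k(\tau^j x)}^\circ$, on which $\log\tau'$ is $\alpha$-Hölder with a constant $L$ that may be chosen uniformly over the finitely many branches; moreover every monotonicity interval of $\tau^{N-j}$ has length at most $\theta^{-(N-j)}$. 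Hence $|\log\tau'(\tau^j\xi)-\log\tau'(\tau^j x)|\leqslant L\,\theta^{-\alpha(N-j)}$, and summing the geometric series over $j=0,\ldots,N-1$ gives the claimed bound with $C=L(\theta^\alpha-1)^{-1}$. (The intervals $I_N(x)$ and the points $\tau^j\xi$ are well defined as soon as the orbit of $x$ avoids the branch endpoints, a countable and hence $\nu$-null set for any ergodic $\nu$, such a measure being either nonatomic or carried by a periodic orbit inside the open branches.)

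Finally, the second assertion. When $\tau$ is piecewise linear---the case relevant to Theorems \ref{thm:example} and \ref{thm:example2}---each branch has constant slope $|I_{k(\cdot)}|^{-1}$, so $\log\tau'\equiv-\log|I_{k(\cdot)}|$ as functions and $\int\log\tau'\,d\nu=-\int\log|I_{k(\cdot)}|\,d\nu$ is immediate; in general, integrating the displayed distortion inequality with respect to $\nu$ and dividing by $N$ already gives $\int\log\tau'\,d\nu=\lim_{N\to\infty}\frac1N\int\bigl({-}\log|I_N(x)|\bigr)\,d\nu(x)$. I expect the only genuine obstacle to be the uniform distortion bound: one must check that $\log\tau'$ extends to an $\alpha$-Hölder function on each branch closure with a constant independent of the branch, so that the geometric summation is legitimate, and that the auxiliary points furnished by the mean value theorem stay within a single branch at every step---which is precisely why one works with the nested monotonicity intervals $I_{N-j}(\tau^j x)$ rather than with arbitrary small balls.
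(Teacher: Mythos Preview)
Your argument for the main limit is correct and matches the paper's: mean-value theorem plus bounded distortion of $\log\tau'$ reduces everything to Birkhoff for the observable $\log\tau'$. The paper's proof is a two-sentence sketch of exactly this.

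For the ``in particular'', however, your general argument stops short: you derive $\int\log\tau'\,d\nu=\lim_{N}\frac1N\int(-\log|I_N|)\,d\nu$, but this is just the integrated first assertion and does not connect to $-\int\log|I_{k(\cdot)}|\,d\nu$. In fact that identity fails for nonlinear $\tau$: if some branch $I_i$ has a fixed point $p$ with $\tau'(p)\neq|I_i|^{-1}$, then for the ergodic measure $\nu=\delta_p$ one gets $\int\log\tau'\,d\nu=\log\tau'(p)\neq-\log|I_i|=-\int\log|I_{k(\cdot)}|\,d\nu$. So the second clause should be read as a remark valid in the piecewise-linear case---precisely the setting of Theorems~\ref{thm:example} and~\ref{thm:example2}, the only places the paper invokes it---and there your observation that $\log\tau'\equiv-\log|I_{k(\cdot)}|$ pointwise is already the whole story. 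The paper's own proof does not treat the second clause separately either.
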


\begin{proof}
By the mean-value theorem and the distortion estimate of $ \log \tau ' $ there is a $ C > 0 $ such that
\begin{equation*}
	C ^{-1} \leqslant (\tau ^N )' ( x ) \cdot | I _N (x) | \leqslant C
\end{equation*}
for all $ x \in [0,1] $ and $ N \in \mathbb{N} $.
Now, the claim follows by Birkhoff ergodic theorem.
\end{proof}

The following elementary property of the function $ W $ describes its local behaviour accurately.

\begin{proposition}[{\cite[Proposition 3.1]{Moss12}}\footnote{Note that the additional assumption of that paper that all functions are $ C ^{1+} $ on $ \mathbb{T} ^1 $ is not used in the proof of \cite[Proposition 3.1]{Moss12}.}]	\label{prop:Moss}
There is a $ C _m > 0 $ such that
\[
	\sup _{u,v \in I _N (x)} | W (u) - W(v) | \leqslant C _m \, \lambda ^N (x)
\]
holds for all $ x \in [0,1] $ and $ N \in \mathbb{N} $.
\end{proposition}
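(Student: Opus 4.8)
The plan is to unroll the functional equation $W = g + \lambda\cdot(W\circ\tau)$ (which is just the $G$-invariance of the graph of $W$, restated) along the orbit segment $x,\tau x,\dots,\tau^N x$. Fix $x$ and $N$, take $u,v\in I_N(x)$, and write $u_n:=\tau^n u$, $v_n:=\tau^n v$, $\Delta_n:=W(u_n)-W(v_n)$; then $u_n,v_n\in\tau^n(I_N(x))=I_{N-n}(\tau^n x)$, an interval which is contained in the single branch domain $I_{k(\tau^n x)}$ whenever $n<N$. From $W=g+\lambda\cdot(W\circ\tau)$ one gets, for $0\le n<N$,
\[
	\Delta_n=\lambda(u_n)\,\Delta_{n+1}+\varepsilon_n,\qquad
	\varepsilon_n:=\big(\lambda(u_n)-\lambda(v_n)\big)\,W(v_{n+1})+\big(g(u_n)-g(v_n)\big),
\]
and iterating this down from $n=0$ produces the telescoping identity
\[
	W(u)-W(v)=\Delta_0=\sum_{n=0}^{N-1}\lambda^n(u)\,\varepsilon_n\;+\;\lambda^N(u)\,\big(W(\tau^N u)-W(\tau^N v)\big).
\]
It then remains to bound the right-hand side by $C_m\lambda^N(x)$ uniformly in $u,v,x,N$.

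Next I would dispose of the routine estimates. Since $\lambda$ is bounded away from $0$ and $1$ and $g$ is bounded, $W$ is bounded. Bounded distortion of $\lambda$ along cylinders — established exactly as the distortion estimate for $\tau'$ used in the proof of Proposition \ref{prop:log_I} — gives a constant $C_1\ge1$ with $\lambda^n(u)/\lambda^n(x)\le C_1$ for all $n\le N$ and $u\in I_N(x)\subseteq I_n(x)$; in particular the last term above is at most $2C_1\|W\|_\infty\lambda^N(x)$. For the error terms, $\lambda$ and $g$ are $C^1$, hence Lipschitz with some common constant $L_0$ on each of the finitely many branches, and $u_n,v_n$ lie in the same branch, so $|\varepsilon_n|\le L\,|u_n-v_n|\le L\,|I_{N-n}(\tau^n x)|$ with $L:=L_0(\|W\|_\infty+1)$. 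Combining, $|W(u)-W(v)|\le C_1L\sum_{n=0}^{N-1}\lambda^n(x)\,|I_{N-n}(\tau^n x)|+2C_1\|W\|_\infty\lambda^N(x)$.

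The crux — and the one place where the hypothesis $\lambda\tau'>1$ enters essentially — is the estimate
\[
	\sum_{n=0}^{N-1}\lambda^n(x)\,|I_{N-n}(\tau^n x)|\;\le\;\frac{C_2}{\beta-1}\,\lambda^N(x),\qquad \beta:=\inf(\lambda\tau')>1 .
\]
Indeed, bounded distortion of $\tau^{N-n}$ on $I_{N-n}(\tau^n x)$ (again the estimate already invoked for Proposition \ref{prop:log_I}) together with the mean value theorem gives $|I_{N-n}(\tau^n x)|\le C_2\big((\tau^{N-n})'(\tau^n x)\big)^{-1}=C_2\,(\tau^n)'(x)\big((\tau^N)'(x)\big)^{-1}$; multiplying by $\lambda^n(x)$ and using $\lambda^n(x)(\tau^n)'(x)=\prod_{j=0}^{n-1}(\lambda\tau')(\tau^j x)$ reduces the claim to
\[
	\sum_{n=0}^{N-1}\prod_{j=0}^{n-1}(\lambda\tau')(\tau^j x)=\Big(\prod_{j=0}^{N-1}(\lambda\tau')(\tau^j x)\Big)\sum_{n=0}^{N-1}\ \prod_{j=n}^{N-1}\frac{1}{(\lambda\tau')(\tau^j x)}\le\Big(\prod_{j=0}^{N-1}(\lambda\tau')(\tau^j x)\Big)\sum_{m=1}^{N}\beta^{-m},
\]
and $\sum_{m\ge1}\beta^{-m}=(\beta-1)^{-1}$; finally $\prod_{j=0}^{N-1}(\lambda\tau')(\tau^j x)=\lambda^N(x)(\tau^N)'(x)$ cancels the distortion factor. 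Putting the three displays together gives the proposition with $C_m:=C_1\big(LC_2/(\beta-1)+2\|W\|_\infty\big)$, and since the bound does not depend on $u,v$ it passes to the supremum over $u,v\in I_N(x)$.

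The only genuine subtlety is this geometric-series step: a merely pointwise $\lambda\tau'>1$ would leave a factor $N$, whereas the uniform gap $\beta>1$ — valid because $\lambda\tau'$ is continuous on each of the finitely many branch closures — makes the telescoped error geometrically summable and produces the sharp power $\lambda^N(x)$. Everything else is standard (boundedness of $W$; the two bounded-distortion estimates, for $\lambda$ and for $\tau$). An equivalent route would be to observe that $W_N:=\sum_{n<N}\lambda^n\,(g\circ\tau^n)$ is $C^1$ on $I_N(x)$, bound $\sup_{I_N(x)}|W_N'|$ by $C\prod_{j<N}(\lambda\tau')(\tau^j x)=C\lambda^N(x)(\tau^N)'(x)$ using the same geometric-series trick, and then apply the mean value theorem and distortion; I find the telescoping version cleaner to present.
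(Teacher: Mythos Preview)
The paper does not supply its own proof of this proposition; it simply cites \cite[Proposition~3.1]{Moss12} and notes in a footnote that the extra $C^{1+}(\mathbb{T}^1)$ hypothesis there is not used. Your argument is a correct, self-contained proof: the telescoping via the functional equation $W=g+\lambda\cdot(W\circ\tau)$, the bounded-distortion comparisons $\lambda^n(u)\asymp\lambda^n(x)$ and $|I_{N-n}(\tau^n x)|\asymp\big((\tau^{N-n})'(\tau^n x)\big)^{-1}$, and the geometric-series summation using $\beta=\inf(\lambda\tau')>1$ are all sound and combine exactly as you claim to give the bound $C_m\,\lambda^N(x)$. The one point worth making explicit is that you need $\lambda\tau'$ bounded \emph{uniformly} away from $1$, not just pointwise; you handle this by invoking continuity on branch closures, which is the standard reading of the piecewise-$C^{1+}$ hypotheses in this setting and is already implicit in the paper's own distortion arguments (e.g.\ Proposition~\ref{prop:log_I}).
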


\subsection{Strong stable fibres}	\label{subsec:strong_stable}

Observe that $ F $ has the derivative matrix
\[
	D F ( \xi , x , y )	=	\left(	\begin{matrix}
		\tau ' (\xi)	&	0	&	0 \\
		0	&	(1 / \tau ') ( \rho _{k(\xi)} (x) )	&	0 \\
		0	&	((y \lambda ' + g') / \tau ') ( \rho _{k(\xi)} (x) )	&	\lambda ( \rho _{k (\xi)} (x) )
	\end{matrix} \right) ,
\]
where we consider only those $ x \in [0,1] $ for which all derivatives exist.

As this is a triangular matrix, one can see in its diagonal the characteristic contracting and expanding scales of this uniformly hyperbolic system.
Especially, the middle value contributes to the strongest contraction.
In order to determine the corresponding direction, which we call the strong stable direction, we define
\begin{eqnarray}
	X _3 ( \xi , x , y ) 	& := &	  - \sum _{n=1} ^\infty  \gamma ^n \left( \rho _{[\xi] _n} (x) \right) \left( F ^{n-1} _{(\xi , x )} ( y ) \cdot \lambda ' \left( \rho _{[\xi] _n} (x) \right) + g ' \left( \rho _{[\xi] _n} (x) \right) \right) , \mbox{ and}	\label{eq:X3_def}
 \\
	 \Theta ( \xi , x ) 	& := &	 X _3 ( \xi , x , W(x) ) \label{eq:Theta_Def} \end{eqnarray}
for $ ( \xi ,x , y ) \in [0,1] ^2 \times \mathbb{R} $, where $ \gamma (x) := 1 / ( \tau ' \lambda ) (x) $.
Since we have
\begin{equation}
	D F ( \xi , x , y )
	\left[
		\begin{matrix}
			0 \\
			1 \\
			X _3 ( \xi , x , y )
		\end{matrix}
	\right]
	= \frac{1}{\tau ' \left( \rho _{k (\xi)} (x) \right)}  
	\left[
		\begin{matrix}
			0 \\
			1 \\
			X _3 \circ F ( \xi , x , y )
		\end{matrix}
	\right]	\label{eq:ss_eigenvalue}
\end{equation}
for each $ ( \xi , x , y ) \in [0,1] ^2 \times \mathbb{R} $, the vector $ ( 0 , 1 , X _3 ( \xi , x , y ) ) ^T $ indicates the strong stable direction at $ ( \xi , x , y ) $.
More precisely, let $ \ell ^{ss} _{(\xi, x , y)} $ be the solution of the initial vale problem
\[
	\left\{
		\begin{array}{rcl}
		\left( \ell ^{ss} _{(\xi,x,y)} \right) ' (v)	& = &	X _3 \left( \xi , v , \ell ^{ss} _{(\xi,x,y)} (v) \right) \\
		 \ell ^{ss} _{(\xi,x,y)} (x) & = & y  
		 \end{array} \right.  .
\]
Clearly, the curve $ v \mapsto \left( \xi , v ,  \ell ^{ss} _{(\xi, x, y)} ( v ) \right ) $ represents the strong stable fibre through $ (\xi, x , y) $, which in particular satisfies
\begin{equation}
	F \left( \xi, v , \ell ^{ss} _{(\xi, x, y)} (v) \right) = \left( B ( \xi , v ) , \ell ^{ss} _{F ( \xi , x , y )} \left( \rho _{k(\xi)} (v) \right) \right) .	\label{eq:lss_inv}
\end{equation}
Slightly abusing notation, we call the function $  \ell ^{ss} _{(\xi, x, y)} $ also the strong stable fibre through $ ( \xi , x , y ) $.

The next proposition shows that the strong stable fibres are almost parallel to each other.

\begin{proposition}	\label{prop:l_parallel}
There is a $ C _s > 0 $ such that
\[
	C _s  ^{-1} \, | y - y' | \leqslant	\left| \ell ^{ss} _{(\xi, x, y)} (v) - \ell ^{ss} _{(\xi, x, y')} (v) \right| \leqslant C _s \, | y - y' |
\]
holds for all $ \xi, x , v \in [0,1] , \; y , y' \in \mathbb{R}$ and $ r > 0 $.
\end{proposition}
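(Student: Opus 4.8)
The plan is to exploit that, for fixed $(\xi,x)$, the map $y \mapsto X_3(\xi,x,y)$ is \emph{affine}. By \eqref{eq:Fn} the fibre map $F^{n-1}_{(\xi,x)}(y) = \lambda^{n-1}(\rho_{[\xi]_{n-1}}(x))\,y + W_{n-1}(\rho_{[\xi]_{n-1}}(x))$ is affine in $y$ with slope $\lambda^{n-1}(\rho_{[\xi]_{n-1}}(x)) \in (0,1)$, so substituting into \eqref{eq:X3_def} one obtains
\[
	X_3(\xi,x,y) = A(\xi,x)\,y + B_0(\xi,x), \qquad A(\xi,x) := - \sum_{n=1}^\infty \gamma^n\!\left(\rho_{[\xi]_n}(x)\right) \lambda^{n-1}\!\left(\rho_{[\xi]_{n-1}}(x)\right) \lambda'\!\left(\rho_{[\xi]_n}(x)\right),
\]
where $B_0(\xi,x)$ collects the $y$-independent terms; in particular the $y$-derivative $A(\xi,x)$ does not depend on $y$.

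First I would solve for the difference of two strong stable fibres explicitly. Fix $\xi,x\in[0,1]$ and $y,y'\in\mathbb{R}$, and set $D(v) := \ell^{ss}_{(\xi,x,y)}(v) - \ell^{ss}_{(\xi,x,y')}(v)$. Subtracting the two initial value problems defining $\ell^{ss}_{(\xi,x,y)}$ and $\ell^{ss}_{(\xi,x,y')}$ and using that $X_3$ is affine in its last argument, $D$ solves the linear homogeneous ODE $D'(v) = A(\xi,v)\,D(v)$ with $D(x) = y-y'$; hence
\[
	\ell^{ss}_{(\xi,x,y)}(v) - \ell^{ss}_{(\xi,x,y')}(v) = (y-y')\,\exp\!\left(\int_x^v A(\xi,u)\,du\right).
\]
Existence and uniqueness of $\ell^{ss}_{(\xi,x,y)}$ on all of $[0,1]$ follow from standard ODE theory, $X_3$ being globally Lipschitz in its third argument with Lipschitz constant $\|A\|_\infty$, which is bounded uniformly in $(\xi,v)$ by the estimate below.

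It then suffices to bound $\int_x^v A(\xi,u)\,du$ uniformly in $\xi,x,v$, for which I bound $A$ itself. By the standing hypotheses $\kappa := \inf \lambda\tau' > 1$, so $|\gamma^n| \le \kappa^{-n}$ pointwise; moreover $\lambda^{n-1}(\cdot) < 1$ and $\|\lambda'\|_\infty < \infty$. Hence the series for $A$ is dominated by $\|\lambda'\|_\infty \sum_{n\ge1}\kappa^{-n}$, giving $\|A\|_\infty \le M := \|\lambda'\|_\infty/(\kappa-1) < \infty$. Consequently $\left|\int_x^v A(\xi,u)\,du\right| \le M|v-x| \le M$, and the displayed identity yields $e^{-M}|y-y'| \le \left|\ell^{ss}_{(\xi,x,y)}(v) - \ell^{ss}_{(\xi,x,y')}(v)\right| \le e^{M}|y-y'|$, so $C_s := e^{M}$ works.

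The one genuinely technical point is the bookkeeping forced by the merely piecewise $C^{1+}$ regularity: $\tau'$ may be unbounded and $\lambda,g$ need not be continuous across the partition $(I_i)$, so one must verify that the quantities actually entering the estimate, namely $\inf\lambda\tau'$ and $\|\lambda'\|_\infty$, behave as claimed. They do, because the argument $\rho_{[\xi]_n}(x)$ always lies in one of the finitely many closed branch intervals $\overline{I_i}$, on each of which the relevant Hölder-continuous extensions of $\tau'$, $\lambda$ and $\lambda'$ are bounded and $\lambda\tau'$ stays above $1$. Apart from this, the proof is the one-line Grönwall-type estimate made available by the affine dependence of $X_3$ on its third variable.
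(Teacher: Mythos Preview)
Your proof is correct and follows essentially the same route as the paper's: both exploit that $X_3$ is affine in its third argument to reduce the difference $\ell^{ss}_{(\xi,x,y)}(v)-\ell^{ss}_{(\xi,x,y')}(v)$ to the explicit solution $(y-y')\exp\!\big(\int_x^v A(\xi,u)\,du\big)$ of a linear ODE, then bound the exponent by $\|A\|_\infty$. Your version is slightly more explicit in bounding $\|A\|_\infty$ by $\|\lambda'\|_\infty/(\kappa-1)$, whereas the paper simply writes $e^{\pm\|A\|_\infty}$; note also that your worry about $\tau'$ being unbounded is unnecessary, since the piecewise $C^{1+}$ hypothesis forces $\tau'$ to extend continuously to each $\overline{I_i}$.
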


\begin{proof}
From \eqref{eq:Fn} and \eqref{eq:X3_def} follows
\begin{eqnarray*}
	\ell ^{ss} _{(\xi, x, y)} (v) - \ell ^{ss} _{(\xi, x, y')} (v)	& = &	y - y' + \int _x ^v X _3 \left( \xi , s , \ell ^{ss} _{(\xi, x, y)} (s) \right) - X _3 \left( \xi , s , \ell ^{ss} _{(\xi, x, y')} (s) \right) \, d s \\
	& = &	y - y' - \int _x ^v A ( \xi, s ) \, \left( \ell ^{ss} _{(\xi, x, y)} (s) - \ell ^{ss} _{(\xi, x, y')} (s) \right) \, d s ,
\end{eqnarray*}
where
\[
	A ( \xi, s ) :=  \sum _{n=1} ^\infty \gamma ^n \left( \rho _{[\xi] _n} (s) \right) \, \lambda ^{n-1} \left( \rho _{[\xi] _{n-1}} (s) \right) \, \lambda ' \left( \rho _{[\xi] _n} (s) \right) .
\]
Thus we have
\[
	\ell ^{ss} _{(\xi, x, y)} (v) - \ell ^{ss} _{(\xi, x, y')} (v) = ( y - y ' ) \, e ^{ - \int _x ^v A (\xi, s) \, d s } \in [ e ^{- \|A \| _\infty},  e ^{\| A \| _\infty} ] \cdot  | y - y' | .
\]
\end{proof}

Any $ \tau $-invariant $ \nu \in \mathcal{P} ([0,1] )$ has the unique $B$-invariant extension $ \nu  ^\ext \in \mathcal{P} ( [0,1] ^2 ) $ which is determined by
\begin{equation}
	\nu  ^\ext \left( I _M \left( \rho _{[\xi] _M} (x) \right) \times I _N ( x ) \right) := \nu  \left( I _{M + N} \left( \rho _{[\xi] _M} (x) \right) \right)	\label{eq:ext_def}
\end{equation}
for $ M, N \in \mathbb{N} $ and $ (\xi ,x ) \in [0,1] ^2 $.

Furthermore, we define the ($ \tau $-invariant) marginal measure $ \nu ^- := \nu ^\ext ( \, \cdot \, \times [0,1 ] ) $.
Then there is a conditional distribution with respect to the vertical partition $ \{ \{ \xi \} \times [0,1] : \xi \in [0,1] \} $ which can be written as product measures $ \delta _\xi \otimes \nu ^+ _\xi $ with $ \nu ^+ _\xi \in \mathcal{P} ( [0,1] ) $ and satisfies
\begin{equation}
	\nu ^\ext = \int \delta _\xi \otimes \nu _\xi ^+ \, d \nu ^- ( \xi ) ,	\label{eq:disintegral_vert}
\end{equation}
see \cite[Example 5.16]{Einsiedler11}.
In addition, let $ \mu ^\ext  := ( \mathrm{Id} ,  W ) ^\ast \nu ^\ext  $ and $ \mu ^+ _\xi := ( \mathrm{Id} , W ) ^\ast \nu ^+ _\xi $.

\begin{remark}
The preceding disintegration of the measure is a prototypical example of Rokhlin's work \cite{Rokhlin52} (see also \cite[Theorem 5.14]{Einsiedler11}).
We mention that some statements in Section \ref{sec:LY} can be alternatively verified by applying this theorem.
\end{remark}

\subsection{Gibbs measure}	\label{subsec:Gbbs}

We call a $ \tau $-invariant $ \nu  \in \mathcal{P} ( [0,1] ) $ Gibbs measure for the potential $ \phi $ which is Hölder continuous on $ I _i $ for each $ i \in \{ 0 , \ldots , \ell -1 \} $, if there are $ C _\phi > 0$ and $ P ( \phi ) \in \mathbb{R} $ such that
\begin{equation}
	\frac{1}{C _\phi}	\leqslant	\frac{\nu  ( I _N ( x ) )}{e ^{\phi _N ( x ) - N P ( \phi )}} \leqslant C _\phi	\label{eq:Gibbs}
\end{equation}
for all $ x \in [0,1] $ and $ N \in \mathbb{N} $.

Note that our potential is always supposed to be Hölder continuous in the above sense.

We prove several properties of Gibbs measures.
In this subsection, let $ \nu $ be a Gibbs measure.

\begin{proposition}	\label{prop:Gibbs}
We have
\[
	C _\phi ^{-3} \leqslant \frac{\nu \left( I _N (x) \cap \tau ^{-N} A  \right)}{\nu ( I _N ( x ) ) \cdot \nu ( A ) } \leqslant C _\phi ^3
\]
for all $ x \in [0,1] $, $ N  \in \mathbb{N} $ and $ A \in \mathcal{B} ( [0,1] ) $.
\end{proposition}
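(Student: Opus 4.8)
The plan is to establish the inequality first for $A$ ranging over the semiring $\mathcal{S}_\tau$, then extend it to finite disjoint unions of such sets by additivity, and finally to arbitrary Borel $A$ by the approximation lemma. The structural heart of the argument is the observation that intersecting a depth-$N$ cylinder with a $\tau^{-N}$-preimage of a cylinder again produces a cylinder. Concretely, fix $x\in[0,1]$, $N\in\mathbb N$ and a set $I_M(z)\in\mathcal{S}_\tau$ with $I_N(x)\cap\tau^{-N}I_M(z)\neq\emptyset$, and pick $y$ in the interior of this intersection. Because $\tau$ has full branches, $\tau^N$ maps $I_N(x)$ homeomorphically onto $(0,1)$, so $I_N(x)\cap\tau^{-N}I_M(z)$ is an interval on which $\tau^{N+M}=\tau^M\circ\tau^N$ is monotone; since it contains $y$ it is contained in $I_{N+M}(y)$. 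Conversely $\tau^N(I_{N+M}(y))$ is an interval containing $\tau^N y$ on which $\tau^M$ is monotone, hence $\tau^N(I_{N+M}(y))\subseteq I_M(\tau^N y)=I_M(z)$, giving the reverse inclusion. Thus $I_N(x)\cap\tau^{-N}I_M(z)=I_{N+M}(y)$, while $I_N(y)=I_N(x)$ and $I_M(\tau^N y)=I_M(z)$. (If the intersection is empty, both sides of the asserted estimate vanish, so there is nothing to prove; the case $I_M(z)=\emptyset$ is likewise trivial.)

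Now I would apply the Gibbs inequality \eqref{eq:Gibbs} three times. Using $\phi_{N+M}(y)=\phi_N(y)+\phi_M(\tau^N y)$ one writes
\[
	e^{\phi_{N+M}(y)-(N+M)P(\phi)}=e^{\phi_N(y)-NP(\phi)}\cdot e^{\phi_M(\tau^N y)-MP(\phi)},
\]
and then \eqref{eq:Gibbs} applied to $\nu(I_{N+M}(y))$ and, in inverted form, to $e^{\phi_N(y)-NP(\phi)}$ and to $e^{\phi_M(\tau^N y)-MP(\phi)}$ yields
\[
	C_\phi^{-3}\;\leqslant\;\frac{\nu\bigl(I_N(x)\cap\tau^{-N}I_M(z)\bigr)}{\nu(I_N(x))\,\nu(I_M(z))}\;\leqslant\;C_\phi^{3},
\]
since $\nu(I_N(y))=\nu(I_N(x))$ and $\nu(I_M(\tau^N y))=\nu(I_M(z))$; each use of \eqref{eq:Gibbs} contributes one factor $C_\phi$, which is exactly how the exponent $3$ arises. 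For a finite disjoint union $B=\bigcup_{i=1}^k J_i$ with $J_i\in\mathcal{S}_\tau$, the sets $I_N(x)\cap\tau^{-N}J_i$ are disjoint, so summing the previous estimate over $i$ gives the same two-sided bound with $B$ in place of the single cylinder.

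For the final step, given an arbitrary $A\in\mathcal{B}([0,1])$ and $\varepsilon>0$, I would invoke Lemma \ref{lem:approximation} to choose a finite disjoint union $B$ of elements of $\mathcal{S}_\tau$ with $\nu(A\bigtriangleup B)<\varepsilon$. Since $\bigl(I_N(x)\cap\tau^{-N}A\bigr)\bigtriangleup\bigl(I_N(x)\cap\tau^{-N}B\bigr)\subseteq\tau^{-N}(A\bigtriangleup B)$ and $\nu$ is $\tau$-invariant, both $\nu(I_N(x)\cap\tau^{-N}A)$ and $\nu(A)$ differ from their $B$-counterparts by at most $\nu(A\bigtriangleup B)<\varepsilon$; feeding this into the bound for $B$ and letting $\varepsilon\to0$ produces the claimed inequality for $A$. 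The only point that needs genuine care is the structural claim of the first paragraph — that $I_N(x)\cap\tau^{-N}(\text{cylinder})$ is again a cylinder, which is precisely where the full-branch hypothesis on $\tau$ is used; everything after that is soft measure theory and bookkeeping of the constant.
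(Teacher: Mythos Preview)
Your proof is correct and follows essentially the same route as the paper: first establish the bound for cylinders $A=I_M(z)$ via the identity $I_N(x)\cap\tau^{-N}I_M(z)=I_{N+M}(y)$ and three applications of the Gibbs inequality, then pass to general Borel $A$ via Lemma~\ref{lem:approximation}. Your write-up is simply more explicit than the paper's about the intermediate disjoint-union step and the use of $\tau$-invariance in the approximation argument.
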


\begin{proof}
Let $ I _N(x) , I _M (\tilde{x} ) \in \mathcal{S} _\tau $ be arbitrary.
Observe that $ I _N (x) \cap \tau ^{-N} \left( I _M (\tilde{x}) \right) = I _{N + M} ( x' ) $ and $ I _M ( \tau ^N x' ) = I _M ( \tilde{x} ) $ for any $ x' \in I _N (x) \cap \tau ^{-N} I _M (\tilde{x}) $.
Since such a $ x' $ always exists, by \eqref{eq:Gibbs} we have
\begin{eqnarray*}
	\nu \left( I _N (x) \cap \tau ^{-N} \left( I _M (\tilde{x}) \right)  \right)	& \in &	[C _\phi ^{-1} , C _\phi  ] \cdot e ^{\phi _{N + M} (x') - (N + M) P ( \phi )} \\
	& \subseteq &	[C _\phi ^{-2}, C _\phi ^2] \cdot e ^{\phi _N (x') - N P ( \phi )} \cdot \nu ( I _M ( \tau ^N x')) \\
	& \subseteq &	[C _\phi ^{-3} , C _\phi ^3] \cdot \nu ( I _N ( x ) ) \cdot \nu ( I _M (\tilde{x}) ) .
\end{eqnarray*}
As this consequence is true for all $ I _M ( \tilde{x} ) \in \mathcal{S} _\tau $, the claim follows by Lemma \ref{lem:approximation}.
\end{proof}

\begin{proposition}	\label{prop:Gibbs_comparison}
We have
\[
	\frac{\nu _\xi ^+}{\nu} , \, \frac{\mu _\xi ^+}{\mu}  \in [C _\phi ^{-3} , C _\phi ^3] 
\]
for $ \nu ^- $-a.a. $ \xi $.

In particular, we have
\[
	\frac{\nu ^\ext}{\nu ^- \otimes \nu} \in [C _\phi ^{-3} , C _\phi ^3] ,
\]
where $ \nu ^- \otimes \nu $ denotes the product measure.
\end{proposition}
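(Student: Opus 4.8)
The plan is to first establish the ``in particular'' statement by comparing $\nu^\ext$ with $\nu^-\otimes\nu$ on the generating semiring of rectangles of cylinders and then bootstrapping to all Borel sets, and afterwards to read off the assertions on $\nu_\xi^+$ and $\mu_\xi^+$ from the disintegration \eqref{eq:disintegral_vert}. Throughout, I would use that for a full-branch map the level-$M$ cylinder $[\omega]=\{z:[z]_M=\omega\}$ coincides with the monotonicity interval $I_M(z)$ of $\tau^M$ for $z\in[\omega]$, and that taking $N=0$ in \eqref{eq:ext_def} shows that $\nu^-$ agrees with $\nu$ on every cylinder, hence $\nu^-=\nu$.

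The core step is the estimate on rectangles. Given cylinders $[\omega]$ of length $M$ and $[\eta]$ of length $N$, choosing $x\in[\eta]$ and $\xi\in[\omega]$ in \eqref{eq:ext_def} produces a point $\rho_{[\xi]_M}(x)\in[\omega]$ with $[\,\rho_{[\xi]_M}(x)\,]_{M+N}=\omega\eta$, so that
\[
  \nu^\ext([\omega]\times[\eta]) \;=\; \nu([\omega\eta]) \;=\; \nu\big([\omega]\cap\tau^{-M}[\eta]\big).
\]
Since $[\omega]=I_M(\cdot)$, Proposition~\ref{prop:Gibbs}, applied with this cylinder in the role of $I_N(x)$ and with $A=[\eta]$, yields $C_\phi^{-3}\le \nu([\omega\eta])\big/\big(\nu([\omega])\nu([\eta])\big)\le C_\phi^3$, that is,
\[
  C_\phi^{-3}\,(\nu^-\otimes\nu)([\omega]\times[\eta]) \;\le\; \nu^\ext([\omega]\times[\eta]) \;\le\; C_\phi^3\,(\nu^-\otimes\nu)([\omega]\times[\eta])
\]
for every rectangle of cylinders.

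Next I would upgrade this to arbitrary $E\in\mathcal{B}([0,1]^2)$. Rectangles of cylinders form a semiring generating $\mathcal{B}([0,1]^2)$, so for any countable cover $E\subseteq\bigcup_i R_i$ by such rectangles, countable subadditivity gives $\nu^\ext(E)\le\sum_i\nu^\ext(R_i)\le C_\phi^3\sum_i(\nu^-\otimes\nu)(R_i)$; taking the infimum over all such covers — which computes $(\nu^-\otimes\nu)(E)$ — yields $\nu^\ext(E)\le C_\phi^3(\nu^-\otimes\nu)(E)$, and the reverse inequality $(\nu^-\otimes\nu)(E)\le C_\phi^3\,\nu^\ext(E)$ follows by the symmetric argument (alternatively one invokes the two-dimensional analogue of Lemma~\ref{lem:approximation}). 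Hence $\nu^\ext$ and $\nu^-\otimes\nu$ are mutually absolutely continuous with density $\psi:=d\nu^\ext/d(\nu^-\otimes\nu)\in[C_\phi^{-3},C_\phi^3]$ almost everywhere, which is the ``in particular'' claim.

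Finally, to obtain the statement on conditional measures, I would equate the two expressions for $\nu^\ext(A\times B)$: on one hand \eqref{eq:disintegral_vert} gives $\nu^\ext(A\times B)=\int_A\nu_\xi^+(B)\,d\nu^-(\xi)$, on the other hand Fubini applied to $\psi$ gives $\nu^\ext(A\times B)=\int_A\big(\int_B\psi(\xi,x)\,d\nu(x)\big)\,d\nu^-(\xi)$. Since this holds for all Borel $A$, for each fixed Borel $B$ one has $\nu_\xi^+(B)=\int_B\psi(\xi,\cdot)\,d\nu$ for $\nu^-$-a.a.\ $\xi$; letting $B$ range over the countable algebra generated by the cylinders and invoking uniqueness of measures, it follows that for $\nu^-$-a.a.\ $\xi$ the identity $\nu_\xi^+=\psi(\xi,\cdot)\,\nu$ holds, so $d\nu_\xi^+/d\nu=\psi(\xi,\cdot)\in[C_\phi^{-3},C_\phi^3]$ $\nu$-a.e.\ (Fubini again). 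The corresponding statement for $\mu_\xi^+$ and $\mu$ follows by pushing this identity forward under the injective graph map $(\mathrm{Id},W)$, which gives $d\mu_\xi^+/d\mu=\psi(\xi,\cdot)\circ(\mathrm{Id},W)^{-1}$ on the graph, again with values in $[C_\phi^{-3},C_\phi^3]$. I expect the main obstacle to be the measure-theoretic bookkeeping in the last two paragraphs — passing cleanly from cylinder-rectangles to all Borel sets, and then disintegrating — while the substantive content, the comparison on rectangles, is immediate from Proposition~\ref{prop:Gibbs}.
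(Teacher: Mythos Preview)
Your proof is correct and reaches the same conclusion, but reverses the logical order of the paper's argument. The paper first establishes the bound on $\nu_\xi^+/\nu$ directly: it computes the ratio $\nu^\ext(I_N(\xi)\times I_M(\tilde x))/\nu^\ext(I_N(\xi)\times[0,1])$ via the Gibbs property, interprets this as a conditional expectation with respect to the filtration $\mathcal I_N=\sigma(\{I_N(\xi)\times[0,1]\})$, and passes to the limit using the martingale convergence theorem to obtain $\nu_\xi^+(I_M(\tilde x))\in[C_\phi^{-3},C_\phi^3]\cdot\nu(I_M(\tilde x))$; the product-measure comparison is then read off from \eqref{eq:disintegral_vert} as a corollary. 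You instead prove the product comparison first --- invoking Proposition~\ref{prop:Gibbs} as a black box on rectangles of cylinders and upgrading by an outer-measure argument --- and only afterwards identify $\nu_\xi^+$ with $\psi(\xi,\cdot)\,\nu$ via Fubini and uniqueness of disintegrations over a countable generating family. Your route is slightly more elementary in that it sidesteps the martingale convergence theorem and recycles Proposition~\ref{prop:Gibbs} rather than redoing the Gibbs chain of inclusions; the paper's route is more direct about what the conditional measure actually \emph{is}, reading it off as the martingale limit. Both hinge on the same cylinder-rectangle estimate, so the substantive content is identical.
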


\begin{proof}
Consider the $ \sigma $-algebras $ \mathcal{I} _N := \sigma \left(  \{ I _N (\xi) \times [0,1] : \xi \in [0,1] \} \right) $ for $ N \in \mathbb{N} $.
Observe that the filtration $ ( \mathcal{I} _N ) _{N  \in \mathbb{N}}$ has the limit $ \sigma $-algebra $ \mathcal{I} _\infty := \sigma ( \bigcup _N \mathcal{I} _N ) = \mathcal{B} ( [0,1] ) \times [0,1] $.
Let $ \tilde{x} \in [0,1] $ and $ M \in \mathbb{N} $ be fixed.
Then, by \eqref{eq:ext_def} and \eqref{eq:Gibbs} we have
\begin{eqnarray*}
	\frac{\nu ^\ext ( I _N ( \xi ) \times I _M (\tilde{x}) )}{\nu ^\ext ( I _N ( \xi ) \times [0, 1] )}	& = &	 \frac{\nu (I _{M + N} ( \rho _{[\xi] _N} (\tilde{x} )))}{\nu (I _N ( \rho _{[\xi ] _N} (\tilde{x} )))} \\
	& \in &	[ C _\phi ^{-2} , C _\phi ^2 ] \cdot \frac{e ^{\phi _{M+N} (\rho _{[\xi] _N} (\tilde{x} ) )- (M+N) P (\phi)}}{e ^{\phi _N (\rho _{[\xi] _N} (\tilde{x} ) )- N P (\phi)}} \\
	& \subseteq	&	[ C _\phi ^{-3} , C _\phi ^3 ] \cdot \nu ( I _M ( \tilde{x} ) ) .
\end{eqnarray*}
Furthermore, since $ ( \delta _\xi \otimes \nu ^+ _\xi ) _{\xi \in [0,1]} $ is a conditional distribution of $ \nu ^\ext $ with respect to $ \mathcal{I} _\infty $, by the martingale convergence theorem we obtain
\begin{eqnarray*}
	\nu ^+ _\xi ( I _M ( \tilde{x} ) )	& = &	 \nu ^\ext ( [0,1] \times I _M ( \tilde{x} )  | \mathcal{I} _\infty ) ( \xi , x ) \\
	& = &	\lim _{N \rightarrow \infty} \nu ^\ext ( [0,1] \times I _M (\tilde{x}) | \mathcal{I} _N ) ( \xi , x )	\\
	& = &	\lim _{N \rightarrow \infty}	\frac{\nu ^\ext ( I _N ( \xi ) \times I _M (\tilde{x}) )}{\nu ^\ext ( I _N ( \xi ) \times [0, 1] )} \quad \in	 [ C _\phi ^{-3} , C _\phi ^3 ] \cdot \nu ( I _M ( \tilde{x} ) )
\end{eqnarray*}
for $ \nu ^\ext $-a.a. $ ( \xi, x ) $.
As this consequence is true for all $ I _M ( \tilde{x} ) \in \mathcal{S} _\tau $, the inclusion of $ \nu ^+ _\xi / \nu $ in the first claim follows by Lemma \ref{lem:approximation}, which implies also the inclusion of $ \mu ^+ _\xi / \mu $.
Finally, the second claim now follows by virtue of \eqref{eq:disintegral_vert}.
\end{proof}

\begin{proposition}	\label{prop:helper2}
Let $ t \in ( 0 , 1 ) $.
Then, for $ \nu  $-a.a. $ x \in [0, 1] $ there is a $ N _x \in \mathbb{N} $ such that
\[
	B _{| I _N (x) | t ^N}( x ) \subseteq I _N (x)
\]
holds for all $ N \geqslant N _x $.
\end{proposition}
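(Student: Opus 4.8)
The plan is to reduce the set inclusion to a statement about how often the $\tau$-orbit of $x$ visits small neighbourhoods of the endpoints $\{0,1\}$, and then to invoke the Borel--Cantelli lemma, using the Gibbs property of $\nu$ to bound its mass near those endpoints.

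Write $d_N(x):=\mathrm{dist}(x,\partial I_N(x))$, so that it suffices to show that $\nu$-a.e.\ $x$ satisfies $d_N(x)\geqslant |I_N(x)|\,t^N$ for all large $N$. The map $\tau^N$ sends $I_N(x)$ diffeomorphically onto $(0,1)$ and the endpoints of $\overline{I_N(x)}$ onto $\{0,1\}$; since $\tau$ is $C^{1+}$ with uniform expansion, $\tau^N$ obeys the usual distortion estimate on $I_N(x)$ with a constant $C_d>0$ independent of $N$ (cf.\ the proof of Proposition~\ref{prop:log_I}). Applying this to the subinterval of $I_N(x)$ joining $x$ to its nearer endpoint---whose $\tau^N$-image has length at least $\mathrm{dist}(\tau^N(x),\{0,1\})$---gives
\[
	\frac{d_N(x)}{|I_N(x)|}\ \geqslant\ C_d^{-1}\,\mathrm{dist}\!\left(\tau^N(x),\{0,1\}\right).
\]
It is therefore enough to show that $\nu$-a.e.\ $x$ belongs to only finitely many of the sets $A_N:=\tau^{-N}\bigl([0,C_d t^N)\cup(1-C_d t^N,1]\bigr)$.

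By $\tau$-invariance $\nu(A_N)=\nu([0,C_d t^N))+\nu((1-C_d t^N,1])$, so by Borel--Cantelli it suffices that $\nu$ assigns exponentially small mass (in $m$) to the generation-$m$ cylinder at each of $0$ and $1$. I would treat the endpoint $0$; the point $1$ is symmetric, and the general situation reduces to the model case below after passing to a suitable iterate $\tau^p$, since $0$ and $1$ are preperiodic with periodic orbit contained in $\{0,1\}$. Assume then $\rho_0(0)=0$, so the relevant cylinders are $J_m:=\rho_0^m([0,1])=\bigcap_{j<m}\tau^{-j}I_0$; these are monotonicity intervals of $\tau^m$ with $J_{2m}=J_m\cap\tau^{-m}J_m$, whence Proposition~\ref{prop:Gibbs} gives $\nu(J_{2m})\leqslant C_\phi^3\,\nu(J_m)^2$. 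Since $m\mapsto\nu(J_m)$ is nonincreasing, as soon as $\nu(J_{m_0})<C_\phi^{-3}$ for some $m_0$ this bootstraps to $\nu(J_m)\leqslant C_\phi^{-3}q^{m}$ for some $q\in(0,1)$; and such an $m_0$ exists because $\nu(\{0\})=0$. This non-atomicity in turn follows from Proposition~\ref{prop:Gibbs} and the bound $\nu(I_i)>0$ (immediate from \eqref{eq:Gibbs}): if $\nu(\{0\})>0$, then for any $i\neq k(0)$ the point $z:=\rho_i(0)$ would have every generation-$m$ cylinder $I_i\cap\tau^{-1}J_{m-1}$ of $\nu$-measure at least $C_\phi^{-3}\,\nu(I_i)\,\nu(\{0\})>0$, contradicting that these cylinders decrease to $\{z\}$, whose measure vanishes by $\tau$-invariance.

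To finish, the mean value theorem gives $|J_m|\geqslant(\sup\tau')^{-m}$, so $[0,C_d t^N)\subseteq J_{m(N)}$ for some $m(N)$ of order $N\log(1/t)/\log\sup\tau'$, i.e.\ growing linearly in $N$; hence $\nu([0,C_d t^N))\leqslant\nu(J_{m(N)})\leqslant C_\phi^{-3}q^{m(N)}$ is summable in $N$, and likewise near $1$, so $\sum_N\nu(A_N)<\infty$, and Borel--Cantelli completes the proof. The main obstacle is the previous paragraph---showing that the Gibbs measure decays exponentially near $0$ and $1$---which is where the quasi-Bernoulli estimate of Proposition~\ref{prop:Gibbs} (and the non-atomicity of $\nu$ that it yields) does the real work; the distortion and Borel--Cantelli steps are routine.
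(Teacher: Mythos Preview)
Your argument is correct and follows the paper's overall strategy: Borel--Cantelli driven by the exponential smallness of $\nu$ near the fixed endpoints $0$ and $1$. The paper works symbolically, setting $E_N=\tau^{-N}\bigl([0_{M_N}]\cup[(\ell{-}1)_{M_N}]\bigr)$ with $M_N\sim N\log t/\log\delta$ and reading off $\nu(E_N)\leqslant C_\phi\bigl(e^{M_N(\phi(0)-P(\phi))}+e^{M_N(\phi(1)-P(\phi))}\bigr)$ directly from the Gibbs bound~\eqref{eq:Gibbs} together with the strict inequality $\phi(0),\phi(1)<P(\phi)$ at the fixed points; the inclusion $B_{|I_N(x)|t^N}(x)\subseteq I_N(x)$ for $x\notin E_N$ is then read off from the position of the two extreme level-$(N{+}M_N)$ subcylinders of $I_N(x)$. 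You instead make the geometry explicit via bounded distortion, reducing to $\mathrm{dist}(\tau^Nx,\{0,1\})$, and obtain the decay by the quasi-Bernoulli doubling $\nu(J_{2m})\leqslant C_\phi^3\,\nu(J_m)^2$ bootstrapped from the non-atomicity of $\nu$---a slightly longer but more self-contained derivation that never names the potential or the pressure. Your caveat about $0,1$ being merely preperiodic is harmless but unnecessary: the standing hypothesis $\inf\tau'>1$ forces each branch to be increasing, so $\tau(0)=0$ and $\tau(1)=1$ automatically.
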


\begin{proof}
We introduce the symbolic notation $ I _{\omega _1 , \ldots , \omega _N} := I _{\omega _1} \cap \tau ^{-1} ( I _{\omega _2} ) \cap \cdots \cap \tau ^{-(N-1)} ( I _{\omega _N} ) $ for $ \omega _1 , \ldots , \omega _N \in \{ 0 , \ldots , \ell - 1 \} $.
Furthermore, let
\[
	E _{M , N} :=	\bigcup _{\omega _1 , \ldots , \omega _N \in  \{ 0 , \ldots , \ell - 1 \} } I _{(\omega _1 , \ldots , \omega _N ) \ast 0 _M }  \cup I _{(\omega _1 , \ldots , \omega _N ) \ast 1 _M } ,
\]
where $ \ast $ is the concatenation operator, and $ 0 _M $ and $ 1 _M $ are the sequences of $ 0 $ and $ 1 $ with length $ M $, respectively.
Let $ \delta := \min \{ | I _0 | , | I _{\ell -1} |\} $.
Then, for $ \alpha := \frac{\log t}{\log \delta} $ let $ M _N := \lfloor \alpha ( N - 1) \rfloor $ and $ E _N := E _{M _N ,N} $.
Note that $ \phi (0) < P ( \phi ) $ and $ \phi (1)< P(\phi) $ are satisfied due to the Gibbs property, as $ 0 $ and $ 1 $ are fix points of $ \tau $.
Since
\[
	\nu  ( E _N ) = \nu ( [0 _{M _N} ]) + \nu  ( [1 _{M _N}] ) \leqslant C _\phi \, \left( e ^{((N-1) \alpha - 1)  (\phi (0)- P(\phi) )} + e ^{((N-1) \alpha - 1 )(\phi (1)- P(\phi) )} \right)
\]
is summable, by Borel-Cantelli Lemma for $ \nu $-a.e. $x $ there is a $ N _x $ such that $ x \not \in E _N $ holds for all $ N \geqslant N _x $, which means especially that
\[
	B _{| I _N ( x ) | t ^N }( x ) = B _{| I _N ( x ) | \delta ^{N \alpha}} ( x  ) \subseteq	B _{| I _N ( x ) |  \delta ^{M _N + \alpha}}  ( x ) \subseteq I _N ( x ) .
\]
\end{proof}

\subsection{Dimension of measures}	\label{subsec:dim}

We review some facts from the dimension theory.
For a metric space $ E $ the lower and upper pointwise dimension of $ \mu \in \mathcal{P} (E) $ at $ x \in E $ is defined by
\begin{equation*}
	\underline{d} _\mu  (x)	 =	\liminf _{r \rightarrow 0} \frac{\log  \mu \left( B _r (x) \right)}{\log r}  \quad  \mbox{and}  \quad
	\overline{d} _\mu  (x)	 =	\limsup _{r \rightarrow 0} \frac{\log  \mu \left( B _r (x) \right)}{\log r}  ,
\end{equation*}
where $ B _r (x)  $ denotes the closed ball with radius $ r > 0 $ and centre $ x \in E $.
If the limit exists, we can also define the pointwise dimension
\[
		d _\mu  (x)	 =	\lim _{r \rightarrow 0} \frac{\log  \mu \left( B _r (x) \right)}{\log r}  .
\]
Moreover, the measure $ \mu $ is called exact dimensional, if $ d _\mu ( x ) $ exists and is constant for $ \mu $-a.a. $ x \in E $.
In this case, we call $ \dim  _H ( \mu ) := d _\mu $ the Hausdorff dimension of $ \mu $.

We give slightly more flexible forms.

\begin{proposition}	\label{prop:pwd_helper}
Let $ \mu $ be a Borel measure on a metric space $ E $.
Let $ K > 0 $ and $ ( \beta _N ) _{N \in \mathbb{N}}  \subset ( 0 , \infty ) $ be such that $ \beta _N \searrow 0 $ monotonically by $ N \rightarrow \infty $ and
\[
	\lim _{N \rightarrow \infty} \frac{\log \beta _{N+1} }{\log  \beta _N } = 1 .
\]
Then we have
\[
	\underline{d} _{\mu } ( x ) = \liminf _{N \rightarrow \infty } \frac{\log \mu  \left( B  _{K \beta _N } ( x  )  \right)}{\log \beta _N}		\quad \mbox{and} \quad		\overline{d} _{\mu } ( x ) = \limsup _{N \rightarrow \infty } \frac{\log \mu \left( B  _{K \beta _N } ( x  )  \right)}{\log \beta _N }
\]
for each $ x \in E $.
\end{proposition}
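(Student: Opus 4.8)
The plan is a routine sandwiching argument. Fix $x\in E$. Every small radius $r$ lies between two consecutive terms $K\beta_{N+1}$ and $K\beta_N$ of the auxiliary sequence, and the hypothesis $\log\beta_{N+1}/\log\beta_N\to 1$ will guarantee that the logarithmic rescaling factors introduced by this comparison tend to $1$, so replacing the continuum $r\to 0$ by the subsequence $(K\beta_N)_N$ costs nothing in the $\liminf$ and $\limsup$.

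For $r>0$ small enough that $r\le K\beta_1$, let $N=N(r)\in\mathbb{N}$ be the unique index with $K\beta_{N+1}<r\le K\beta_N$; then $N(r)\to\infty$ as $r\to 0$, and every sufficiently large integer is attained. From $B_{K\beta_{N+1}}(x)\subseteq B_r(x)\subseteq B_{K\beta_N}(x)$ and monotonicity of $\mu$ and of $\log$, and since $\log(K\beta_{N+1})<\log r\le\log(K\beta_N)<0$ once $r$ is small, dividing by $\log r<0$ (which reverses the inequalities) yields
\[
\frac{\log\mu(B_{K\beta_N}(x))}{\log(K\beta_N)}\cdot\frac{\log(K\beta_N)}{\log r}
\;\le\;\frac{\log\mu(B_r(x))}{\log r}\;\le\;
\frac{\log\mu(B_{K\beta_{N+1}}(x))}{\log(K\beta_{N+1})}\cdot\frac{\log(K\beta_{N+1})}{\log r}.
\]
Because $\log(K\beta_{N+1})<\log r\le\log(K\beta_N)$, each of the two rescaling factors lies between $1$ and the ratio $\log(K\beta_{N+1})/\log(K\beta_N)$ (or its reciprocal), and this ratio tends to $1$: indeed $\log K$ is asymptotically negligible against $\log\beta_N\to-\infty$, so $\log(K\beta_{N+1})/\log(K\beta_N)$ has the same limit as $\log\beta_{N+1}/\log\beta_N$, namely $1$. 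Hence both rescaling factors converge to $1$ as $r\to 0$.

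Now I would take $\liminf_{r\to 0}$ and $\limsup_{r\to 0}$ throughout the displayed chain. Since $N(r)$ exhausts a tail of $\mathbb{N}$ and the rescaling factors tend to $1$, the left-most term contributes, in the limit, exactly $\liminf_N\big(\log\mu(B_{K\beta_N}(x))/\log(K\beta_N)\big)$ and likewise $\limsup_N$ of the same expression, while the right-most term gives the same two quantities after the harmless reindexing $N\mapsto N+1$. This squeezes $\underline{d}_\mu(x)$ and $\overline{d}_\mu(x)$ between equal values and proves the two identities with $\log(K\beta_N)$ in place of $\log\beta_N$ in the denominators; passing to $\log\beta_N$ is the same observation once more, as $\log(K\beta_N)/\log\beta_N\to 1$.

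The steps needing a little attention — none of them substantial — are the sign bookkeeping when dividing by $\log r<0$; the elementary fact that multiplying a sequence (of extended reals) by a positive sequence converging to $1$ leaves its $\liminf$ and $\limsup$ unchanged, which in particular covers the degenerate cases where $\mu(B_r(x))$ stays bounded below or vanishes for small $r$; and checking that $N(\cdot)$ is eventually onto $\mathbb{N}$ so that limits along $r\to 0$ and along $N\to\infty$ genuinely coincide. I do not expect any real obstacle here.
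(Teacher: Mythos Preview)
Your proof is correct and follows essentially the same approach as the paper: sandwich $r$ between consecutive terms $K\beta_{N+1}$ and $K\beta_N$, use monotonicity of the measure, and exploit $\log\beta_{N+1}/\log\beta_N\to 1$ to absorb the resulting discrepancies. The paper records the key inequality in the slightly more compact form
\[
\frac{\log\mu(B_{K\beta_N}(x))}{\log K\beta_{N+1}}\;\le\;\frac{\log\mu(B_r(x))}{\log r}\;\le\;\frac{\log\mu(B_{K\beta_{N+1}}(x))}{\log K\beta_N},
\]
whereas you split off the rescaling factors explicitly; these are equivalent, and your version has the minor advantage of making the limiting step (multiplying by a factor tending to $1$) more transparent and of flagging the degenerate cases.
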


\begin{proof}
Let $ x \in E $ be fixed.
The claim follows from the fact that
\[
	\frac{\log \mu  \left( B _{ K \beta _N } (x) \right)}{\log K \beta _{N + 1}  } 	 \leqslant	\frac{\log \mu  \left(  B _r ( x  )  \right)}{\log r} 	 \leqslant 	\frac{\log \mu  \left( B _{ K \beta _{N  + 1 }  } (x) \right)}{\log K \beta _N  }
\]
holds for all $ r > 0 $ and $ N \in \mathbb{N} $ such that $ K \beta _{N+1} \leqslant r < K \beta _N $.
\end{proof}

In this note, for a given ($\tau$-invariant or not) $ \nu \in \mathcal{P} ([0,1]) $ we study lift $ \mu \in \mathcal{P} ([0,1] \times \mathbb{R}) $ on the graph of $ W $, i.e. $ \mu := (\mathrm{Id} , W  ) ^\ast \nu $.
This lifted measure $ \mu $ plays a crucial roll since its lower pointwise dimension delivers the lower bound of the Hausdorff dimension of the graph of $ W  $ as the next lemma shows.

\begin{lemma}	\label{lem:mass_d_p}
Let $ \mu $ be the lift of a $ \nu \in \mathcal{P} ( [0,1] ) $ on $ W $.
If $ \underline{d} _\mu \geqslant d $ $ \mu $-a.s. for a $ d \in \mathbb{R} $, then $ \dim _H ( \mathrm{graph} ( W )) \geqslant d $.
\end{lemma}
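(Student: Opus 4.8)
This is an instance of the mass distribution principle, so the plan is to reduce it to a standard Frostman-type lemma and then check the hypotheses. The key observation is that $\mu$ is carried by $\mathrm{graph}(W)$: since $\mu = (\mathrm{Id}, W)^\ast \nu$ we have $\mu(A) = \nu\bigl(\{x \in [0,1] : (x, W(x)) \in A\}\bigr)$ for every Borel $A \subseteq [0,1] \times \mathbb{R}$, and since $W = \lim_n W_n$ is a pointwise limit of the Borel maps $W_n$, the graph is a Borel set with $\mu(\mathrm{graph}(W)) = \nu([0,1]) = 1$. Thus it is enough to show the following: a finite Borel measure $\mu$ on a metric space with $\underline{d}_\mu(x) \geq d$ for $\mu$-a.a.\ $x$ satisfies $\dim_H(A) \geq d$ for every Borel $A$ with $\mu(A) > 0$; applying this with $A = \mathrm{graph}(W)$ gives the lemma.

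To prove that statement I would fix an arbitrary $d' < d$ and, for $n \in \mathbb{N}$, set
\[
	A_n := A \cap \bigl\{ x : \mu(B_r(x)) \leq r^{d'} \text{ for all } 0 < r \leq \tfrac1n \bigr\} .
\]
Because $\underline{d}_\mu(x) = \liminf_{r \to 0} \frac{\log \mu(B_r(x))}{\log r} \geq d > d'$, $\mu$-a.e.\ point of $A$ belongs to $A_n$ for all large $n$; hence the $A_n$ increase to $A$ up to a $\mu$-null set and $\mu(A_{n_0}) > 0$ for some fixed $n_0$. Given any countable cover $(U_i)_i$ of $A_{n_0}$ with $\mathrm{diam}(U_i) \leq 1/n_0$, I would discard the $U_i$ disjoint from $A_{n_0}$, choose $x_i \in U_i \cap A_{n_0}$ for the rest, and use $U_i \subseteq B_{\mathrm{diam}(U_i)}(x_i)$ to get $\mu(U_i) \leq (\mathrm{diam}(U_i))^{d'}$. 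Summing, $\mu(A_{n_0}) \leq \sum_i (\mathrm{diam}(U_i))^{d'}$, so taking the infimum over such covers yields $\mathcal{H}^{d'}(A_{n_0}) \geq \mu(A_{n_0}) > 0$. Therefore $\dim_H(\mathrm{graph}(W)) \geq \dim_H(A_{n_0}) \geq d'$, and letting $d' \uparrow d$ finishes the argument.

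I do not expect a genuine obstacle here. The only two points that merit a line of justification are the Borel measurability of $\mathrm{graph}(W)$ and of the sets $A_n$ (handled via $W = \lim_n W_n$ with each $W_n$ Borel, together with the very definition of $\underline{d}_\mu$), and the passage from covers by arbitrary sets to covers by balls, which inflates diameters only by a bounded factor and so leaves the Hausdorff dimension unchanged. Alternatively, one could simply quote the mass distribution principle from a standard reference in fractal geometry and omit the last two paragraphs.
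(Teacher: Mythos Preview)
Your proof is correct and follows exactly the same approach as the paper: observe that $\mu(\mathrm{graph}(W))=1$ and invoke the mass distribution principle. The paper simply cites \cite[Theorem~2.1.5]{Barreira08} for the latter, whereas you have spelled out a self-contained proof of that theorem; either version is fine.
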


\begin{proof}
As $ \mu \left( \mathrm{graph} ( W  ) \right) = 1 $, the claim follows from \cite[Theorem 2.1.5]{Barreira08}.
\end{proof}

In order to calculate pointwise dimensions making use of the underlying dynamics, we often need to deal with bad-shaped objects instead of balls.
For this purpose, the following general version of Besicovitch covering theorem, which is originally proved in \cite{Morse47}, is a very powerful tool.

\begin{lemma}[Besicovitch covering theorem (cf. {\cite[P.6 Remarks (4)]{Guzman75}})]	\label{lem:Besicovitch}
Let $ A $ be a bounded subset of $ \mathbb{R} ^n $.
For each $ x \in A $  a set $ H(x) $ is given satisfying the two following properties: {\rm(a)} there exists a fixed number $ M > 0 $, independent of $ x $, and two closed Euclidean balls centered at $ x $, $ B _{r(x)} (x) $ and $ B _{Mr(x)} (x) $ such that $ B _{r(x)} (x)  \subseteq H(x) \subseteq B _{Mr(x) } (x) $; {\rm(b)} for each $ z \in H(x) $, the set $ H(x) $ contains the convex hull of the set $ \{ z \} \cup  B _{r (x)} (x) $.
Then one can select from among $ (H(x)) _{x \in A} $ a sequence $ ( H _k ) _k $ satisfying
\begin{enumerate}
\item The set $ A $ is covered by the sequence, i.e. $ A \subseteq \bigcup _k H _k $.
\item No point of $ \mathbb{R} ^n $ is in more than $ \theta _{n, M} $ (a number that only depends on $ n , M $) elements of the sequence $ ( H _k ) _k $.
\item The sequence $ ( H _k ) _k $ can be divided in $ \xi _{n, M} $ (a number that depends only on $ n, M $) families of disjoint elements.
\end{enumerate}
\end{lemma}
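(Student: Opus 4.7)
The plan is to adapt the classical Besicovitch selection argument for Euclidean balls, using the star-shape hypothesis (b) in place of the radial symmetry that balls enjoy automatically. Since $A$ is bounded and each $H(x)$ contains $x$, we may either reduce to a single covering set (if some $r(x)$ exceeds $\mathrm{diam}(A)$) or assume $r(x) \leqslant R$ for some finite $R > 0$; only the latter case is non-trivial.

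First I would partition the points according to dyadic scales $A_j := \{ x \in A : R\, 2^{-(j+1)} < r(x) \leqslant R\, 2^{-j} \}$ and, processing $j = 0, 1, 2, \ldots$ in order, perform within each stratum a transfinite greedy selection of a maximal subcollection $(x_{j,k})_k$ with the property that no later chosen centre lies in any $H(x_{j,k'})$ for $k' < k$. Concatenating these strata yields the candidate sequence $(x_k)$. Maximality together with the inclusion $B_{r(x)}(x) \subseteq H(x)$ from (a) yields the covering property (1): any $x \in A$ that is not itself selected must lie in some previously selected $H(x_k)$, else it could be appended to the selected family.

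The main work is the bounded-overlap claim (2). Suppose $z \in H(x_{k_1}) \cap \cdots \cap H(x_{k_N})$. By (a) and (b), the convex hull of $\{z\} \cup B_{r(x_{k_j})}(x_{k_j})$ is contained in $H(x_{k_j}) \subseteq B_{M r(x_{k_j})}(x_{k_j})$, which forces $|z - x_{k_j}| \leqslant M r(x_{k_j})$ and gives an angular half-aperture at $z$ bounded below by $\arcsin(1/M)$. After grouping the $k_j$ according to the dyadic scale in which each originally lay, the selection rule prevents two centres at the same scale from being too close relative to their common radius; a standard $\arcsin(1/M)$-packing estimate on the unit sphere $S^{n-1}$ (inflated by the finite number of adjacent scales that can reach $z$) then bounds the total count by a constant $\theta_{n,M}$.

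Claim (3) follows from (2) by a routine greedy colouring of the intersection graph of $(H_k)$: since every vertex has degree at most $\theta_{n,M} - 1$, the graph is $\theta_{n,M}$-colourable, and each colour class forms a disjoint subfamily, so we may take $\xi_{n,M} := \theta_{n,M}$. I expect the main technical obstacle to be the trigonometric packing estimate producing $\theta_{n,M}$, since it must rely solely on the weakened star-shape hypothesis (b) rather than the full radial symmetry of balls; the remainder is standard bookkeeping inside the Besicovitch scheme.
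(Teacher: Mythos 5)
You should first note that the paper does not prove this lemma at all: it is quoted, essentially verbatim, from de Guzmán \cite[p.~6, Remark (4)]{Guzman75}, which in turn goes back to Morse \cite{Morse47}, so your argument has to stand on its own. Your outline follows the standard Besicovitch--Morse selection scheme (dyadic strata, greedy choice of centres not lying in previously chosen sets, covering from maximality), and that part is fine, but the two steps that carry all the weight are not established. For (2), the estimate you invoke is not the right one: hypothesis (b) together with $H(x)\subseteq B_{Mr(x)}(x)$ gives, at a point $z\in H(x)$, a cone with vertex $z$ of half-aperture at least $\arcsin(1/M)$ pointing towards $x$ and contained in $H(x)$ --- but that is a statement about each set separately, and it does not yield a lower bound on the angle at $z$ between two \emph{different} selected centres. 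Even for genuine balls such a pairwise angular separation can fail: in $\mathbb{R}^1$ take $z=0$, $x_1=0.5$ with $r_1=1$ and $x_2=2$ with $r_2=2$; then $z\in B_{r_1}(x_1)\cap B_{r_2}(x_2)$, $x_2\notin B_{r_1}(x_1)$, the radii are within the allowed factor $2$, yet $x_1$, $x_2$, $z$ are collinear. The genuine proof must combine the angular estimate with a separate argument excluding long ``chains'' of selected sets reaching $z$ from nearly the same direction (this is precisely where the convexity hypothesis (b) enters in Morse's argument), and your reduction to ``the finite number of adjacent scales that can reach $z$'' is also unjustified: sets from arbitrarily small scales, selected much later, can still contain $z$, so the count cannot be organized scale by scale in the way you indicate.

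The deduction of (3) from (2) is also flawed: bounded overlap does not bound the degree of the intersection graph. One large $H_k$ can meet arbitrarily many pairwise disjoint small ones while no point of $\mathbb{R}^n$ lies in more than two of the sets, so ``every vertex has degree at most $\theta_{n,M}-1$'' is false in general, and greedy colouring does not apply as stated. In the classical theorem, (3) requires its own argument --- colour the sets in decreasing order of $r(x_k)$ and bound, by a volume/packing estimate based on (a) and the disjointness within each colour class, the number of earlier, larger sets that can meet a given one --- and the constant $\xi_{n,M}$ obtained this way is in general larger than $\theta_{n,M}$. As it stands, then, both constants $\theta_{n,M}$ and $\xi_{n,M}$, i.e.\ the entire content of the lemma beyond the covering property, are left unproved; your text is an outline of the known strategy rather than a proof, whereas the paper simply treats the lemma as a black box from the literature.
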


As the first application, we derive a flexible version of Lebesgue density theorem.

\begin{lemma}[Borel density theorem]	\label{lem:Boreldensity}
Let $ \nu \in \mathcal{P} ( \mathbb{R} ^n ) $ and $ g \in L ^1 _\nu $.
Suppose that for all $ x \in \mathbb{R} ^n $ and $ \delta > 0 $ there is a measurable set $ H _\delta (x) $ such that:
\begin{itemize}
 \item $ ( H _\delta ( x ) ) _{x \in \mathbb{R} ^n} $ satisfies (a) and (b) of Lemma \ref{lem:Besicovitch} (with $ M > 0 $ independent of $ \delta $), and
 \item $ \lim _{\delta \rightarrow 0} \mathrm{diam} ( H _\delta (x) ) = 0 $ for each $ x \in \mathbb{R} ^n $.
\end{itemize}
Then we have
\[
	\lim _{\delta \rightarrow 0} \frac{1}{\nu ( H _\delta (x))}	\int _{H _\delta (x)} g \, d \nu = g ( x ) 
\]
for $ \nu $-a.a. $ x $.
\end{lemma}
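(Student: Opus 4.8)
The plan is to follow the classical route to differentiation theorems: reduce the statement to a weak-type $(1,1)$ bound for an associated maximal operator, which is precisely what the Besicovitch covering theorem (Lemma~\ref{lem:Besicovitch}) is designed to supply. Since the set $\mathbb{R}^n \setminus \mathrm{supp}(\nu)$ is $\nu$-null, we may restrict attention to $x \in \mathrm{supp}(\nu)$, so that $\nu(B_r(x)) > 0$ for every $r > 0$; together with property (a), which furnishes a ball $B_{r_\delta(x)}(x) \subseteq H_\delta(x)$ and hence $2 r_\delta(x) \leq \mathrm{diam}(H_\delta(x)) \to 0$ as $\delta \to 0$, this shows that $0 < \nu(H_\delta(x)) < \infty$ and $r_\delta(x) \to 0$, so every average appearing below is well defined.

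First I would set up the maximal function $M h(x) := \sup_{\delta > 0} \nu(H_\delta(x))^{-1} \int_{H_\delta(x)} |h|\,d\nu$ for $h \in L^1_\nu$ and establish the estimate $\nu(\{Mh > \alpha\}) \leq \alpha^{-1}\,\xi_{n,M}\,\|h\|_{L^1_\nu}$ for every $\alpha > 0$, where $\xi_{n,M}$ is the constant from Lemma~\ref{lem:Besicovitch}. Fix $\alpha$ and $R > 0$, and for each $x$ in the bounded set $A_R := \{Mh > \alpha\} \cap B_R(0)$ pick $\delta(x) > 0$ with $\int_{H_{\delta(x)}(x)} |h|\,d\nu > \alpha\,\nu(H_{\delta(x)}(x))$. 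Applying Lemma~\ref{lem:Besicovitch} to the family $(H_{\delta(x)}(x))_{x \in A_R}$ — whose hypotheses hold with the uniform constant $M$ — yields a countable subcover that splits into at most $\xi_{n,M}$ subfamilies of pairwise disjoint sets; summing over the subcover and using disjointness within each subfamily gives
\[ \nu(A_R) \leq \sum_k \nu(H_k) < \frac{1}{\alpha}\sum_k \int_{H_k} |h|\,d\nu \leq \frac{\xi_{n,M}}{\alpha}\,\|h\|_{L^1_\nu}, \]
and letting $R \to \infty$ produces the claimed weak-type bound. (The measurability of $\{Mh > \alpha\}$ is the usual minor technicality, dealt with either by passing to an inner-regular version of the set or by working with outer measure; I would dispatch it in a footnote.)

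Next I would treat a dense class: for $g \in C_c(\mathbb{R}^n)$, continuity of $g$ together with $\mathrm{diam}(H_\delta(x)) \to 0$ gives $\nu(H_\delta(x))^{-1}\int_{H_\delta(x)} g\,d\nu \to g(x)$ for every $x$, so the oscillation $\Omega g(x) := \limsup_{\delta \to 0}\bigl|\nu(H_\delta(x))^{-1}\int_{H_\delta(x)} g\,d\nu - g(x)\bigr|$ vanishes identically. For general $g \in L^1_\nu$, choose $g_k \in C_c(\mathbb{R}^n)$ with $\|g - g_k\|_{L^1_\nu} \to 0$ (possible since $\nu$ is a Borel probability measure on $\mathbb{R}^n$); subadditivity of $\Omega$ and $\Omega g_k \equiv 0$ give $\Omega g(x) \leq \Omega(g-g_k)(x) \leq M(g-g_k)(x) + |g(x)-g_k(x)|$, whence for every $\alpha > 0$
\[ \nu(\{\Omega g > \alpha\}) \leq \nu\bigl(\{M(g-g_k) > \tfrac{\alpha}{2}\}\bigr) + \nu\bigl(\{|g-g_k| > \tfrac{\alpha}{2}\}\bigr) \leq \frac{2(\xi_{n,M}+1)}{\alpha}\,\|g-g_k\|_{L^1_\nu}. \]
Letting $k \to \infty$ shows $\nu(\{\Omega g > \alpha\}) = 0$ for all $\alpha > 0$, and taking $\alpha \downarrow 0$ along a sequence gives $\Omega g = 0$ $\nu$-a.e., which is exactly the assertion.

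The main obstacle is the weak-type estimate, and inside it the bookkeeping required to invoke Lemma~\ref{lem:Besicovitch}: one must verify that hypotheses (a) and (b) hold uniformly for the selected sets $H_{\delta(x)}(x)$ (true by assumption, with $M$ independent of the choice) and that the exhaustion $R \to \infty$ loses nothing (it does not, since $\nu$ is finite and $\nu(A_R) \uparrow \nu(\{Mh > \alpha\})$). The remaining ingredients — density of $C_c(\mathbb{R}^n)$ in $L^1_\nu$, pointwise convergence of the averages for continuous $g$, and subadditivity of $\Omega$ — are all routine.
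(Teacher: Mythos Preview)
Your proposal is correct and follows essentially the same route as the paper: reduce to continuous functions, establish the weak-type $(1,1)$ maximal inequality via the generalized Besicovitch covering theorem (Lemma~\ref{lem:Besicovitch}), and then pass from the dense class to all of $L^1_\nu$ by bounding the oscillation $\Omega g$ through $M(g-g_k)+|g-g_k|$. The paper's own proof is only a sketch that points to \cite{YL185} and to the argument in Proposition~\ref{prop:mu_stabil}, so your write-up is in fact more detailed than what appears in the paper, but the strategy is identical.
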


\begin{proof}
As this result is surely not new, we only sketch the proof mimicking that of \cite[Lemma 4.1.2]{YL185}.
Since the claim is clearly true for continuous $ g $, it suffices to show that the set of the functions $ g $ which satisfy the above condition is norm closed in $ L ^1 _\nu $.
The closedness follows immediately if we show the maximal inequality
\[
	\mu \left\{ x \in [0,1]  : \sup _{\delta >0} \frac{1}{\nu ( H _\delta (x))}	\int _{H _\delta (x)} g \, d \nu > \lambda  \right\} \leqslant \frac{\xi _{n, M}}{\lambda} \int g \, d \nu ,
\]
for $ \lambda > 0 $, as we demonstrate at the end of the proof of Proposition \ref{prop:mu_stabil} for a slightly different situation.

Finally, the proof of the maximal inequality is the same as that of \cite[Lemma 4.1.1(a)]{YL185} if we replace the classical Besicovitch covering theorem by Lemma \ref{lem:Besicovitch}.
\end{proof}

\section{Ledrappier-Young theory}	\label{sec:LY}

We state and prove several formulas about the entropies and dimensions.
The goal is to prove Theorem \ref{thm:entropy_formula}.

In this section we use the following convention.
\begin{itemize}
 \item Let $ \nu \in \mathcal{P} ([0,1]) $ be a Gibbs measure for a potential $ \phi $.
 \item Let $ C _\phi  $ denote the constant of the Gibbs measure $ \nu $.
 \item Let $ \mu \in \mathcal{P} ([0,1] \times \mathbb{R}) $ be the lift of $ \nu  $ on the graph of $ x \mapsto W (x) $.
 \item Let $ \nu ^\ext \in \mathcal{P} ( [0,1] ^2 ) $ be the $ B $-invariant extension of $ \nu $.
 \item Let $ \mu ^\ext \in \mathcal{P} ([0,1] ^2 \times \mathbb{R}) $ be the lift of $ \nu ^\ext $ on the graph of $ (\xi , x ) \mapsto W (x) $.
 \item Let $ I _{(\omega _1 , \ldots , \omega _N )} := I _{\omega _1} \cap \tau ^{-1} I _{\omega _2} \cdots \cap \tau ^{-{N-1}} I _{\omega _N} $ for $ \omega _1 , \ldots , \omega _N \in \{ 0 , \ldots , \ell -1 \} $ and $ N \in \mathbb{N} $.
 \item Let $  R _{(\omega _1 , \ldots , \omega _N )} :=  I _{(\omega _1 , \ldots , \omega _N )} \times \mathbb{R} $ and $ R _N ( x ) := I _N (x) \times \mathbb{R} $.
 \item Let $ Z _{(\omega _1 , \ldots , \omega _N )} := \left\{ ( \eta _i ) _{\mathbb{N}} \in \{ 0, \ldots , \ell -1 \} ^{\mathbb{N}}  : \eta _1 = \omega _1 , \ldots , \eta _N = \omega _N \right\} $ be the cylinder set.
 \item Let $ \mathcal{Z} := \left\{ Z _{(\omega _1 , \ldots , \omega _N )} : \omega _1 , \ldots , \omega _N \in \{ 0 , \ldots , \ell -1 \} \mbox{ and } N \in \mathbb{N} \right\} \cup \{ \emptyset\} $.
   \item Let $ \chi : \{ 0, \ldots , \ell -1 \} ^{\mathbb{N}} \rightarrow [0,1] $ be the coding defined by $ \bigcap _{N \in \mathbb{N}} \overline{I _{(\omega _1 , \ldots , \omega _N )} } = \{ \chi ( \omega ) \}$.
 \item Let $ \pi _\xi ^{ss} ( x , y ) := \ell ^{ss} _{(\xi, x, y)} ( 0 ) $ and $ \Pi ^{ss} ( \xi , x , y ) := ( \xi , \pi _\xi ^{ss} ( x , y) ) $.
 \item Let $ q _\xi ( x ) := \pi _\xi ^{ss} \left( x , W(x) \right) $ and $ q ( \xi , x ) := ( \xi, q _\xi (x ) ) $.
 \item Let $ B ^T _{\xi , r } ( y ) := ( \pi _\xi ^{ss} ) ^{-1} \left( [y - r , y + r ] \right) $.
  \item Let $ B ^T _r ( \xi , y ) := ( \Pi ^{ss} ) ^{-1} \left( [ \xi - r , \xi + r ] \times [y - r , y + r ] \right) $.
   \item Let $ \Sigma _r ( \xi, x ):= \left\{ ( v, y )  \in [0 ,1] \times \mathbb{R} : | \ell ^{ss} _{(\xi, v, y)} (x) - W(x) | \leqslant r \right\} $.
  \item Let $ \delta _x $ denote the Dirac measure on a point $ x $.
\end{itemize}
In addition, we simply write $ B _r (x,y) $ instead of $ B _r ((x,y)) $.

As usual, we consider the probability measures $ \mathcal{P} ( \{ 0, \ldots , \ell -1 \} ^{\mathbb{N}} ) $ with respect to the the cylinder topology.
The following fact is well-known.

\begin{lemma}[Corollary of the extension theorem]	\label{lem:ext_thm}
Suppose that the set function $ \zeta : \mathcal{Z} \rightarrow [0,1] $ is additive and $ \zeta ( \emptyset ) = 0 $.
Then there is a unique $ \tilde{\zeta} \in \mathcal{P} ( \{ 0, \ldots , \ell -1 \} ^{\mathbb{N}} ) $ such that  $\tilde{\zeta} _{| \mathcal{Z} } = \zeta $.
\end{lemma}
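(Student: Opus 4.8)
The plan is to recognize $\mathcal{Z}$ as a semiring generating the Borel $\sigma$-algebra of $\Omega:=\{0,\ldots,\ell-1\}^{\mathbb{N}}$ and to invoke Carathéodory's extension and uniqueness theorem (in the form of \cite{Klenke}); the only point that is not purely formal is the passage from finite additivity to $\sigma$-additivity, which I would settle by a compactness argument.

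\emph{Step 1: $\mathcal{Z}$ is a semiring, and $\sigma(\mathcal{Z})=\mathcal{B}(\Omega)$.} It contains $\emptyset$. The intersection of two cylinders $Z_{(\omega_1,\ldots,\omega_N)}$ and $Z_{(\eta_1,\ldots,\eta_M)}$ is again a cylinder (namely the longer one) if one defining word is a prefix of the other, and $\emptyset$ otherwise. The difference of two cylinders is a finite disjoint union of cylinders: if $Z_{(\eta_1,\ldots,\eta_M)}\subseteq Z_{(\omega_1,\ldots,\omega_N)}$ with $N<M$, then
\[
	Z_{(\omega_1,\ldots,\omega_N)}\setminus Z_{(\eta_1,\ldots,\eta_M)}
	=\bigsqcup_{j=N+1}^{M}\ \bigsqcup_{a\neq\eta_j} Z_{(\eta_1,\ldots,\eta_{j-1},a)},
\]
and the remaining cases ($Z_{(\omega)}\cap Z_{(\eta)}=\emptyset$ or $Z_{(\omega)}\subseteq Z_{(\eta)}$) are trivial. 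In particular $\mathcal{Z}$ is a $\pi$-system. Since the cylinders form a countable base of the second-countable product topology, every open subset of $\Omega$ is a countable union of elements of $\mathcal{Z}$, whence $\mathcal{B}(\Omega)\subseteq\sigma(\mathcal{Z})$; the reverse inclusion is clear because cylinders are open.

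\emph{Step 2: finite additivity of $\zeta$ upgrades to $\sigma$-additivity on $\mathcal{Z}$.} This is the crucial observation. The space $\Omega$ is compact and each cylinder is clopen, hence compact. Therefore, if a cylinder $A\in\mathcal{Z}$ is written as a countable disjoint union $A=\bigsqcup_{k}A_k$ with all $A_k\in\mathcal{Z}$, the $A_k$ form an open cover of the compact set $A$, so finitely many of them already cover $A$, and by disjointness $A_k=\emptyset$ for all but finitely many $k$. Together with $\zeta(\emptyset)=0$ and finite additivity this gives $\zeta(A)=\sum_k\zeta(A_k)$, i.e. $\zeta$ is a premeasure on the semiring $\mathcal{Z}$.

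\emph{Step 3: extension and uniqueness.} Carathéodory's theorem now yields a measure $\tilde{\zeta}$ on $\sigma(\mathcal{Z})=\mathcal{B}(\Omega)$ with $\tilde{\zeta}_{|\mathcal{Z}}=\zeta$. Since $\Omega=\bigsqcup_{i=0}^{\ell-1}Z_{(i)}$, additivity gives $\tilde{\zeta}(\Omega)=\sum_{i=0}^{\ell-1}\zeta(Z_{(i)})$, which equals $1$ (this normalization being implicit in the requirement that $\zeta$ admit a probability extension), so $\tilde{\zeta}\in\mathcal{P}(\Omega)$. Finally, $\tilde{\zeta}$ is finite and $\mathcal{Z}$ is a $\pi$-system generating $\mathcal{B}(\Omega)$, so any two probability measures agreeing on $\mathcal{Z}$ agree on $\mathcal{B}(\Omega)$ by the uniqueness clause of the extension theorem (equivalently Dynkin's $\pi$–$\lambda$ argument); this gives uniqueness of $\tilde{\zeta}$. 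There is no genuinely hard step here — the one thing one must not skip is the compactness argument of Step 2, since only finite additivity is hypothesized and $\sigma$-additivity would otherwise be unavailable.
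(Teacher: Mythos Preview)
Your proof is correct and follows essentially the same approach as the paper: recognize $\mathcal{Z}$ as a semiring, use compactness of cylinder sets to pass from finite additivity to $\sigma$-additivity (the paper phrases it as $\sigma$-subadditivity, which is equivalent here), and then invoke the Carathéodory extension theorem. You supply considerably more detail --- the explicit verification of the semiring axioms, the generation of $\mathcal{B}(\Omega)$, and the uniqueness via the $\pi$--$\lambda$ argument --- and you rightly flag that the normalization $\sum_i\zeta(Z_{(i)})=1$ is implicit in the statement; the paper's proof leaves all of this tacit.
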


\begin{proof}
Evidently, the family $ \mathcal{Z} $ is a semiring.
As every cylinder set is compact, from the additivity follows the $ \sigma $-subadditivity.
Therefore we can apply the extension theorem (cf. \cite[Theorem 1.53]{Klenke}).
\end{proof}

\subsection{Conditional measures for $ \mu $}

The purpose of this subsection is to define the conditional measures of $ \mu $ with respect to the induced $ \xi $-strong stable fibres.

Let $ \xi \in [0,1] $ be fixed arbitrarily.
Let us consider
\[
	\mathcal{G} _\xi := \left\{ z \in \mathbb{R} : \begin{array}{l}
	 \lim _{r \rightarrow 0} \frac{\mu \left( B ^T _{\xi, r} (z) \cap R _{(\omega _1 , \ldots , \omega _N )}  \right)}{\mu \left( B ^T _{\xi, r} (z) \right)}  \mbox{ exists } \\ \mbox{for all } \omega _1 , \ldots , \omega _N \in \{ 0 , \ldots , \ell -1 \} \mbox{ and } N \in \mathbb{N} 
	\end {array}	\right\} .
\]
We have $ \mu \circ ( \pi ^{ss} _\xi )^{-1} ( \mathcal{G} _\xi ) = 1 $ since by Lemma \ref{lem:Boreldensity} we have for all $ \omega _1 , \ldots , \omega _N \in \{ 0 , \ldots , \ell -1 \} $ and $ N \in \mathbb{N} $ that
\begin{equation}
	\lim _{r \rightarrow \infty} \frac{\mu \left( B ^T _{\xi, r} (z) \cap R _{(\omega _1 , \ldots , \omega _N )}  \right)}{\mu \left( B ^T _{\xi, r} (z) \right)}   = \rho _{\xi, (\omega _1 , \ldots , \omega _N )} ( z ) \label{eq:density}
\end{equation}
holds for $ \mu \circ ( \pi ^{ss} _\xi) ^{-1} $-a.a. $ z $, where $ \rho _{\xi, (\omega _1 , \ldots , \omega _N )} ( z ) $ denotes the Radon–Nikodym derivative
\[
	 \frac{d \left( \mu \left( \left( (\pi _\xi ^{ss}) ^{-1} \, \cdot\,  \right) \cap R _{(\omega _1 , \ldots , \omega _N )} \right) \right)}{d ( \mu \circ ( \pi _\xi ^{ss} ) ^{-1} )}  .
\]
Let $ z \in \mathcal{G} _\xi $.
We define
\begin{equation}
	\tilde{\nu} _{(\xi, z )} \left( Z _{(\omega _1 , \ldots , \omega _N )} \right) := \rho _{ \xi , (\omega _1 , \ldots , \omega _N )} ( z )	\label{eq:nu_def}
\end{equation}
for all $ Z _{(\omega _1 , \ldots , \omega _N )} \in \mathcal{Z} $.
As this set function is clearly additive on $ \mathcal{Z} $, by Lemma \ref{lem:ext_thm} we can extend it to a measure $ \tilde{\nu} _{(\xi, z)}  \in \mathcal{P} ( \{ 0, \ldots , \ell -1 \} ^{\mathbb{N}}  ) $ uniquely.
Now, we define $ \nu _{(\xi, z)} := \tilde{\nu} _{(\xi, z)} \circ \chi ^{-1} $.
For $ z \not \in \mathcal{G} _\xi $ let simply $ \nu _{(\xi, z)} := \nu $.
Now, we can define the family of conditional measures\footnote{Lemma \ref{lem:ss_conditional} implies that, given $  \xi \in [0,1] $, the family $ \{ \mu  _{(\xi, \pi ^{ss} _\xi (x,y))} : (x,y) \in [0,1] \times \mathbb{R} \} $ is actually a conditional distribution of $ \mu $ with respect to the $ \sigma $-algebra of the $\xi$-strong stable fibres, i.e. $ (\pi ^{ss} _\xi) ^{-1} \mathcal{B} (\mathbb{R})$.} $ \{ \mu  _{(\xi, z)} : (\xi, z) \in [0,1] \times \mathbb{R} \} $, where
\[
	\mu _{(\xi, z )} := ( \mathrm{Id} , \ell _{(\xi, 0, z)} ^{ss} ) ^\ast \nu _{(\xi , z)} .
\]

\begin{proposition}	\label{prop:mu_atomfree}
We have
\[
	\int  \tilde{\nu} _{(\xi, z)} \left( \{ \omega \} \right)  \, d \left( \mu \circ ( \pi ^{ss} _\xi ) ^{-1} \right) ( z ) = 0
\]
for any $ \xi \in [0,1] $ and $ \omega \in \{ 0, \ldots , \ell -1 \} ^{\mathbb{N}} $.
\end{proposition}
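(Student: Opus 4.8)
The plan is to unwind the definitions so that the integral collapses to a limit of cylinder masses of $\nu$, and then to kill that limit with the Gibbs property. I would begin from the fact that, for $z\in\mathcal{G}_\xi$, the cylinder sets $Z_{(\omega_1,\ldots,\omega_N)}$ shrink to $\{\omega\}$ as $N\to\infty$, so by continuity from above of the probability measure $\tilde\nu_{(\xi,z)}$ together with the defining formula \eqref{eq:nu_def},
\[
	\tilde\nu_{(\xi,z)}(\{\omega\}) \;=\; \lim_{N\to\infty}\tilde\nu_{(\xi,z)}\bigl(Z_{(\omega_1,\ldots,\omega_N)}\bigr) \;=\; \lim_{N\to\infty}\rho_{\xi,(\omega_1,\ldots,\omega_N)}(z),
\]
the convergence being monotone decreasing because the numerator measures $A\mapsto\mu\bigl((\pi^{ss}_\xi)^{-1}(A)\cap R_{(\omega_1,\ldots,\omega_N)}\bigr)$ are nested and dominated by $\mu\circ(\pi^{ss}_\xi)^{-1}$, whence $0\le\rho_{\xi,(\omega_1,\ldots,\omega_N)}\le 1$ a.e.\ and in particular $z\mapsto\tilde\nu_{(\xi,z)}(\{\omega\})$ is measurable. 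Since $\mu\circ(\pi^{ss}_\xi)^{-1}(\mathcal{G}_\xi)=1$, dominated convergence then yields
\[
	\int\tilde\nu_{(\xi,z)}(\{\omega\})\,d\bigl(\mu\circ(\pi^{ss}_\xi)^{-1}\bigr)(z) \;=\; \lim_{N\to\infty}\int\rho_{\xi,(\omega_1,\ldots,\omega_N)}\,d\bigl(\mu\circ(\pi^{ss}_\xi)^{-1}\bigr).
\]

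The next step is to evaluate each integral on the right. By the very definition of $\rho_{\xi,(\omega_1,\ldots,\omega_N)}$ as a Radon--Nikodym derivative, integrating it over all of $\mathbb{R}$ returns the total mass of its numerator measure, so
\[
	\int_{\mathbb{R}}\rho_{\xi,(\omega_1,\ldots,\omega_N)}\,d\bigl(\mu\circ(\pi^{ss}_\xi)^{-1}\bigr) \;=\; \mu\bigl(R_{(\omega_1,\ldots,\omega_N)}\bigr) \;=\; \nu\bigl(I_{(\omega_1,\ldots,\omega_N)}\bigr),
\]
the last equality because $\mu=(\mathrm{Id},W)^\ast\nu$ is carried by the graph of $W$. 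Hence the whole quantity equals $\lim_{N\to\infty}\nu\bigl(I_{(\omega_1,\ldots,\omega_N)}\bigr)$, the $\nu$-mass of a nested sequence of cylinders along $\omega$ (collapsing via the coding $\chi$ to the single point $\chi(\omega)$), and it remains to see this limit is $0$.

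This last point is where the Gibbs hypothesis enters essentially. As in the proof of Proposition \ref{prop:Gibbs}, $\nu\bigl(I_{(\omega_1,\ldots,\omega_N)\ast i}\bigr)\le C_\phi^{3}\nu(I_i)\,\nu\bigl(I_{(\omega_1,\ldots,\omega_N)}\bigr)$ for every symbol $i$; summing this over $i\neq\omega_{N+1}$, using the matching lower Gibbs bound on those sub-cylinders, and subtracting from $\nu\bigl(I_{(\omega_1,\ldots,\omega_N)}\bigr)=\sum_i\nu\bigl(I_{(\omega_1,\ldots,\omega_N)\ast i}\bigr)$ gives $\nu\bigl(I_{(\omega_1,\ldots,\omega_{N+1})}\bigr)\le\theta\,\nu\bigl(I_{(\omega_1,\ldots,\omega_N)}\bigr)$ with $\theta:=1-C_\phi^{-3}\bigl(1-\max_j\nu(I_j)\bigr)<1$; here $\max_j\nu(I_j)<1$ because $\ell\ge 2$ and $\nu(I_j)>0$ for every $j$ by \eqref{eq:Gibbs}. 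Iterating yields $\nu\bigl(I_{(\omega_1,\ldots,\omega_N)}\bigr)\le\theta^{N-1}\to 0$, which finishes the proof. I expect this geometric-decay estimate, together with the routine-but-necessary justification of the interchange of limit and integral through the monotonicity of the $\rho_{\xi,(\omega_1,\ldots,\omega_N)}$, to be the only genuine content; everything else is unwinding definitions.
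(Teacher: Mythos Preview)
Your proof is correct and follows essentially the same route as the paper: continuity from above on the nested cylinders, monotone/dominated convergence to exchange limit and integral, identification of the integral of the Radon--Nikodym derivative with $\nu\bigl(I_{(\omega_1,\ldots,\omega_N)}\bigr)$, and finally the Gibbs property to kill the limit. The only difference is that the paper dismisses the last step with a single phrase (``due to the Gibbs property''), whereas you spell out the geometric-decay argument explicitly via Proposition~\ref{prop:Gibbs}; your expanded justification is correct and arguably clearer.
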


\begin{proof}
By the monotone convergence theorem, we have
\begin{eqnarray*}
	\int  \tilde{\nu} _{(\xi, z)} \left( \{ \omega \} \right) \, d \left( \mu \circ ( \pi ^{ss} _\xi ) ^{-1} \right) ( z ) &  =&		\lim _{N \rightarrow \infty} \int  \tilde{\nu} _{(\xi, z)} \left( Z _{\omega _1 , \ldots , \omega _N} \right)  \, d \left( \mu \circ ( \pi ^{ss} _\xi ) ^{-1} \right) ( z )	\\
	& = &	\lim _{N \rightarrow \infty}  \int  \rho _{ \xi , (\omega _1 , \ldots , \omega _N )} ( z )	  \, d \left( \mu \circ ( \pi ^{ss} _\xi ) ^{-1} \right) ( z ) \\
	& = &	\lim _{N \rightarrow \infty}  \nu \left( I _{ (\omega _1 , \ldots , \omega _N )} \right) \quad =  \quad 0  ,
\end{eqnarray*}
where the last equality is due to the Gibbs property.
\end{proof}

\begin{proposition}	\label{prop:decomposition}
Let $ \xi \in [0,1] $.
Then we have
\[
	\mu = \int \mu _{(\xi, q _\xi (x))} \, d \nu (x) .
\]

In particular, for $ \nu $-a.a. $ x $ we have
\begin{equation}
	\mu _{(\xi, q _\xi (x))} \left( \mathrm{graph} ( W ) \right) = 1 .	\label{eq:cond_on_graph}
\end{equation}
\end{proposition}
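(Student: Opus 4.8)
The plan is to verify the disintegration formula $\mu = \int \mu_{(\xi, q_\xi(x))}\, d\nu(x)$ by testing it against the generating semiring of rectangles, and then to deduce \eqref{eq:cond_on_graph} as an immediate corollary. First I would fix $\xi \in [0,1]$ and observe that, via the map $q_\xi = \pi_\xi^{ss}(\cdot, W(\cdot))$ and the definition $\mu_{(\xi,z)} = (\mathrm{Id}, \ell_{(\xi,0,z)}^{ss})^\ast \nu_{(\xi,z)}$, it suffices to check that both sides assign the same mass to sets of the form $\Pi^{ss}$-preimages intersected with the cylinder-type sets $R_{(\omega_1,\ldots,\omega_N)}$, since these generate the relevant $\sigma$-algebra on the graph. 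Concretely, I would compute $\int \mu_{(\xi, q_\xi(x))}(R_{(\omega_1,\ldots,\omega_N)} \cap S)\, d\nu(x)$ for $S$ a $\pi_\xi^{ss}$-saturated set; the inner measure is, by construction \eqref{eq:nu_def}, controlled by $\rho_{\xi,(\omega_1,\ldots,\omega_N)}(q_\xi(x))$, and pushing $\nu$ forward along $q_\xi$ turns the integral into $\int \rho_{\xi,(\omega_1,\ldots,\omega_N)}(z)\, d(\mu \circ (\pi_\xi^{ss})^{-1})(z)$ restricted to the appropriate $z$-set, because $\mu \circ (\pi_\xi^{ss})^{-1} = (\nu \circ q_\xi^{-1})$ by definition of $q_\xi$ and $\mu$. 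By the defining property of the Radon–Nikodym derivative $\rho_{\xi,(\omega_1,\ldots,\omega_N)}$, this equals $\mu\big((\pi_\xi^{ss})^{-1}S \cap R_{(\omega_1,\ldots,\omega_N)}\big)$, which is exactly $\mu$ evaluated on the test set.

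The key technical points to nail down are: (i) that $\mu$-a.e. the conditional measure $\mu_{(\xi,q_\xi(x))}$ is well-defined via the density limit \eqref{eq:density}, which holds because $q_\xi(x) \in \mathcal{G}_\xi$ for $\mu\circ(\pi_\xi^{ss})^{-1}$-a.a.\ $x$ by Lemma \ref{lem:Boreldensity}; (ii) that the atom-free statement of Proposition \ref{prop:mu_atomfree} guarantees $\nu_{(\xi,z)} = \tilde\nu_{(\xi,z)} \circ \chi^{-1}$ is a genuine Borel probability measure on $[0,1]$ for a.e.\ $z$, so that $\mu_{(\xi,z)}$ makes sense as a measure on $[0,1]\times\mathbb{R}$; and (iii) that the measurability in $z$ of $z \mapsto \mu_{(\xi,z)}$ (inherited from measurability of the Radon–Nikodym derivatives) lets us form the integral $\int \mu_{(\xi,q_\xi(x))}\, d\nu(x)$ and apply Fubini. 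With these in hand, the matching of masses on the semiring extends to all Borel sets by Lemma \ref{lem:approximation} (or the monotone class / uniqueness of measures argument), giving the first assertion.

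For the "in particular" clause, I would argue as follows. From the first part, $1 = \mu(\mathrm{graph}(W)) = \int \mu_{(\xi,q_\xi(x))}(\mathrm{graph}(W))\, d\nu(x)$. Since each $\mu_{(\xi,z)}$ is a probability measure, the integrand is bounded above by $1$, so it must equal $1$ for $\nu$-a.a.\ $x$. Alternatively — and this is really why it is true structurally — $\mu_{(\xi,z)} = (\mathrm{Id}, \ell_{(\xi,0,z)}^{ss})^\ast \nu_{(\xi,z)}$ is supported on the strong stable fibre $\{(v, \ell_{(\xi,0,z)}^{ss}(v)) : v \in [0,1]\}$, and when $z = q_\xi(x) = \pi_\xi^{ss}(x, W(x))$, this fibre passes through $(x, W(x))$; but one checks directly from the ODE defining $\ell^{ss}$ together with \eqref{eq:X3_def}–\eqref{eq:Theta_Def} and the invariance \eqref{eq:W_inv_F} of the graph of $W$ under $F$ that the strong stable fibre through a point on $\mathrm{graph}(W)$ is contained in $\mathrm{graph}(W)$ — equivalently, $\ell_{(\xi, x, W(x))}^{ss}(v) = W(v)$ for all $v$ — so $\mu_{(\xi, q_\xi(x))}$ lives on $\mathrm{graph}(W)$ outright.

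The main obstacle I anticipate is bookkeeping rather than conceptual: carefully justifying the change of variables $\int f(q_\xi(x))\, d\nu(x) = \int f(z)\, d(\mu\circ(\pi_\xi^{ss})^{-1})(z)$ and the interchange of the $\nu$-integral with the limit defining $\tilde\nu_{(\xi,z)}$ on full cylinders (not just finite ones), and ensuring the null sets where $q_\xi(x)\notin\mathcal{G}_\xi$ — where one has set $\nu_{(\xi,z)} := \nu$ by fiat — do not corrupt the identity. The atom-freeness from Proposition \ref{prop:mu_atomfree} is exactly what is needed to rule out pathologies in the extension of $\tilde\nu_{(\xi,z)}$ from $\mathcal{Z}$ to the full product $\sigma$-algebra, and the Gibbs property (through the uniform comparison constants $C_\phi$) keeps all the densities bounded, which makes the dominated convergence steps routine.
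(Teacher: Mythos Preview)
Your main argument is correct and matches the paper's proof closely: both test the identity on the $\cap$-stable generating family $\mathcal{C} = \{R_{(\omega_1,\ldots,\omega_N)} \cap (\pi_\xi^{ss})^{-1} A\}$, use the defining property of the Radon--Nikodym derivatives $\rho_{\xi,(\omega_1,\ldots,\omega_N)}$, and invoke Proposition~\ref{prop:mu_atomfree} to pass from $\tilde\nu_{(\xi,z)}(Z_{(\omega_1,\ldots,\omega_N)})$ to $\nu_{(\xi,z)}(I_{(\omega_1,\ldots,\omega_N)})$ (the point being that $\chi(Z_{(\omega_1,\ldots,\omega_N)}) \triangle I_{(\omega_1,\ldots,\omega_N)}$ is finite, so atom-freeness kills the discrepancy). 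The paper additionally checks explicitly that $\sigma(\mathcal{C}) = \mathcal{B}([0,1]\times\mathbb{R})$ by exhibiting the sets $V^{ss}_r(x,y) \in \mathcal{C}$, a step you gloss over. For \eqref{eq:cond_on_graph} the paper uses only your first (integration) argument.

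Your ``alternative'' structural argument, however, is wrong. The claim that $\ell_{(\xi,x,W(x))}^{ss}(v) = W(v)$ for all $v$ --- i.e., that the strong stable fibre through a graph point coincides with the graph of $W$ --- is false in general. The function $v \mapsto \ell^{ss}_{(\xi,x,W(x))}(v)$ is $C^1$ by construction (it solves an ODE with derivative $X_3$), whereas $W$ is a Weierstrass-type function that is typically nowhere differentiable; moreover the strong stable fibre depends on $\xi$ while $W$ does not, so the fibres through $(x,W(x))$ for different $\xi$ are distinct smooth curves and cannot all equal the graph. The correct picture is that the intersection of a strong stable fibre with $\mathrm{graph}(W)$ is typically a fractal set, and \eqref{eq:cond_on_graph} asserts only that $\mu_{(\xi,q_\xi(x))}$ is concentrated on that intersection --- a genuinely measure-theoretic statement with no pointwise analogue. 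The invariance \eqref{eq:W_inv_F} of $\mathrm{graph}(W)$ under $F$ does not imply it is a union of strong stable leaves.
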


\begin{proof}
Evidently, the family
\[
	\mathcal{C} := \left\{  R _{(\omega _1 , \ldots , \omega _N)} \cap ( \pi ^{ss} _\xi ) ^{-1} A :  \omega _1 , \ldots , \omega _N  \in \{ 0 , \ldots , \ell -1 \} , \,  N \in \mathbb{N}  \mbox{ and } A \in \mathcal{B} ( \mathbb{R} ) \right\}
\]
is $ \cap $-stable.
We consider
\[
	V ^{ss} _r ( x , y ) := \left\{ ( \tilde{x} , \tilde{y}) : \tilde{x} \in I _{N _r (x)} (x) \mbox{ and } \left| \ell ^{ss} _{(\xi , \tilde{x}, \tilde{y})} ( x ) - y \right| \leqslant r   \right\} ,
\]
where $ N _r ( x ) := \min \{ N : | I _N (x) | \leqslant r \} $.
As $ V ^{ss} _r ( x , y ) \in \mathcal{C} $ for all $ (  x , y ) \in [0,1] \times \mathbb{R} $ and $ r > 0 $, we have $ \sigma ( \mathcal{C} ) = \mathcal{B} ( [0,1] \times \mathbb{R} ) $.
Observe that $ \chi ( Z _{(\omega _1 , \ldots , \omega _N)}) \triangle I _{(\omega _1 , \ldots , \omega _N)} $ is a finite set.
Thus, by \eqref{eq:density}, \eqref{eq:nu_def} and Proposition \ref{prop:mu_atomfree} we have
\begin{eqnarray*}
	\mu \left( R _{(\omega _1 , \ldots , \omega _N)} \cap ( \pi _\xi ^{ss} ) ^{-1} A \right)	&  =&	\int _A \rho _{\xi , (\omega _1 , \ldots , \omega _N)}  \, d \left( \mu \circ ( \pi ^{ss} _\xi ) ^{-1} \right)  \\
	& = &	\int _A \tilde{\nu} _{(\xi, z)} \left( Z_ {(\omega _1 , \ldots , \omega _N)} \right)  \, d \left( \mu \circ ( \pi ^{ss} _\xi ) ^{-1} \right) ( z ) \\
	& = &	\int _A \nu _{(\xi, z)} \left( I_ {(\omega _1 , \ldots , \omega _N)} \right)  \, d \left( \mu \circ ( \pi ^{ss} _\xi ) ^{-1} \right) ( z ) \\
		& = &	\int \mu _{(\xi, q _\xi (x))} \left( R _{(\omega _1 , \ldots , \omega _N)} \cap ( \pi _\xi ) ^{-1}  A \right) \, d \nu (x)
\end{eqnarray*}
for all $ R _{(\omega _1 , \ldots , \omega _N)} \cap ( \pi _\xi ) ^{-1} A  \in \mathcal{C} $ since $ \mu _{(\xi, z)} \left( ( \pi ^{ss} _\xi ) ^{-1} (z) \right) = 1 $.
This finishes the proof of the first claim.

Finally, the remaining claim is true as $ \int \mu _{(\xi, q _\xi (x))} \left( \mathrm{graph} ( W ) \right) \, d \nu ( x ) =	\mu \left( \mathrm{graph} ( W ) \right)	 = 1 $.
\end{proof}

\begin{lemma}	\label{lem:ss_conditional}
We have
\begin{equation*}
	\int f \left( x , q _\xi (x) \right) \, d \nu ( x ) = \int \left[ \int f \left( \tilde{x} , q _\xi (x) \right) \, d \nu _{\left( \xi ,  q _\xi ( x ) \right)} (\tilde{x} ) \right] d \nu (x)
\end{equation*}
for any bounded measurable $ f : [0,1] \times \mathbb{R} \rightarrow \mathbb{R} $ and $ \xi \in [0,1] $.

\end{lemma}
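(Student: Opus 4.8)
The plan is to recognize that the asserted identity is exactly the statement that $\{\mu_{(\xi, q_\xi(x))}\}$, or rather its pushforward structure on the base, provides a conditional distribution of $\nu$ with respect to the $\sigma$-algebra $q_\xi^{-1}\mathcal{B}(\mathbb{R})$, and to deduce it from Proposition \ref{prop:decomposition} by pushing everything down to the base coordinate. Concretely, I would first reduce to the case where $f(x,y) = \varphi(x)\psi(y)$ with $\varphi,\psi$ bounded measurable, since such functions are dense (in the appropriate sense) and both sides are linear and continuous under bounded pointwise limits by dominated convergence; a monotone-class / functional version of the $\pi$--$\lambda$ argument then extends the identity to all bounded measurable $f$. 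For a product function the right-hand side becomes $\int \psi(q_\xi(x)) \big[\int \varphi(\tilde x)\, d\nu_{(\xi, q_\xi(x))}(\tilde x)\big]\, d\nu(x)$, so it suffices to show
\[
	\int \varphi(x)\psi(q_\xi(x))\, d\nu(x) = \int \psi(q_\xi(x)) \Big[ \int \varphi(\tilde x)\, d\nu_{(\xi, q_\xi(x))}(\tilde x) \Big] d\nu(x).
\]

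The key step is to obtain this from Proposition \ref{prop:decomposition}. Apply that proposition to the test function $(x,y)\mapsto \varphi(x)\,\psi(\pi_\xi^{ss}(x,y))$ — but note $\pi_\xi^{ss}$ is constant along each $\xi$-strong stable fibre, and $\mu_{(\xi,z)}$ is supported on the fibre $\{\pi_\xi^{ss}=z\}$, so under $\mu_{(\xi,q_\xi(x))}$ the value $\psi(\pi_\xi^{ss})$ equals the constant $\psi(q_\xi(x))$. Thus integrating $\varphi(x')\psi(\pi_\xi^{ss}(x',y'))$ against $\mu = \int \mu_{(\xi,q_\xi(x))}\, d\nu(x)$ gives, on the one hand, $\int \varphi(x')\psi(q_\xi(x'))\, d\mu'$ where $\mu' $ is $\mu$ written in the base variable — i.e. $\int \varphi(x')\psi(q_\xi(x'))\, d\nu(x')$ after recalling $\mu = (\mathrm{Id},W)^\ast\nu$ and $q_\xi(x) = \pi_\xi^{ss}(x,W(x))$ — and on the other hand $\int \big[\int \varphi(\tilde x)\psi(q_\xi(x))\, d\nu_{(\xi,q_\xi(x))}(\tilde x)\big] d\nu(x)$, using that $\mu_{(\xi,z)} = (\mathrm{Id},\ell^{ss}_{(\xi,0,z)})^\ast\nu_{(\xi,z)}$ so that the base marginal of $\mu_{(\xi,z)}$ is $\nu_{(\xi,z)}$ and the $\pi_\xi^{ss}$-coordinate is identically $z$. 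Comparing the two expressions yields the displayed identity, hence the lemma for product $f$.

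The main obstacle I anticipate is bookkeeping the measurability in $x$ of the inner integral $x\mapsto \int f(\tilde x, q_\xi(x))\, d\nu_{(\xi,q_\xi(x))}(\tilde x)$, so that Fubini/Tonelli and the monotone-class extension are legitimate: one needs $z\mapsto \nu_{(\xi,z)}$ to be measurable (which follows from \eqref{eq:nu_def}, \eqref{eq:density} and Lemma \ref{lem:ext_thm}, since each $\rho_{\xi,(\omega_1,\dots,\omega_N)}$ is a Radon--Nikodym derivative hence measurable, and the extension is canonical), and then $x\mapsto \nu_{(\xi,q_\xi(x))}$ is measurable as a composition. A secondary point is to make sure the replacement of $\chi(Z_{(\omega_1,\dots,\omega_N)})$ by $I_{(\omega_1,\dots,\omega_N)}$ and of $\tilde\nu_{(\xi,z)}$ by $\nu_{(\xi,z)}$ costs nothing — this is exactly the atom-free estimate of Proposition \ref{prop:mu_atomfree} together with the finiteness of the symmetric difference noted in the proof of Proposition \ref{prop:decomposition} — so that no boundary contributions appear. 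Once these measurability and null-set issues are settled, the argument is a direct unwinding of Proposition \ref{prop:decomposition}.
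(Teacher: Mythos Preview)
Your proposal is correct and follows essentially the same route as the paper: lift $f(x,q_\xi(x))$ to $f(x,\pi_\xi^{ss}(x,y))$ on $\mu$, apply the disintegration of Proposition~\ref{prop:decomposition}, and use that $\pi_\xi^{ss}$ is constant (equal to $q_\xi(x)$) on the support of $\mu_{(\xi,q_\xi(x))}$. The paper carries this out in three lines directly for arbitrary bounded measurable $f$, without your preliminary reduction to product functions --- since Proposition~\ref{prop:decomposition} is an identity of measures, it already integrates any bounded measurable test function, so the monotone-class step is unnecessary overhead (though not incorrect).
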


\begin{proof}
By Proposition \ref{prop:decomposition} we have
\begin{eqnarray*}
	\int f \left( x , q _\xi (x) \right) \, d \nu ( x )	& =&	\int f \left( x , \pi _\xi ^{ss} (x , y) \right) \, d \mu ( x , y ) \\
		& =&	\iint f \left( \tilde{x} , \pi _\xi ^{ss} \left( \tilde{x} , \ell ^{ss} _{(\xi, 0, q _\xi (x))} (\tilde{x}) \right) \right) \, d \nu _{(\xi, q _\xi (x))} ( \tilde{x} )  \, d \nu ( x ) \\
		& =&	\iint f \left( \tilde{x} , q _\xi ( x ) \right) \, d \nu _{(\xi, q _\xi (x))} ( \tilde{x} )  \, d \nu ( x ) .
\end{eqnarray*}

\end{proof}

\subsection{Conditional measures for $ \mu ^\ext $}
The purpose of this subsection is to define the conditional measures of $ \mu ^\ext $ with respect to the strong stable fibres on each $ \xi $-hyperplane.
We also define the associated conditional entropy.

Let us consider the set
\[
	\mathcal{F} := \left\{ ( \xi , z ) \in  [0,1] \times \mathbb{R} : \begin{array}{l}
	 \lim _{r \rightarrow 0} \frac{\mu ^\ext \left( B ^T _r (\xi, z) \cap [0,1] \times R _{(\omega _1 , \ldots , \omega _N )}  \right)}{\mu ^\ext \left( B ^T _r ( \xi , z) \right)}  \\ \mbox{ exists for all } \omega _1 , \ldots , \omega _N \in \{ 0 , \ldots , \ell -1 \} \mbox{ and } N \in \mathbb{N} 
	\end {array}	\right\} .
\]
We have $ \mu ^\ext \circ ( \Pi ^{ss} )^{-1} ( \mathcal{F} ) = 1 $ since by Lemma \ref{lem:Boreldensity} we have for all $ \omega _1 , \ldots , \omega _N \in \{ 0 , \ldots , \ell -1 \} $ and $  N \in \mathbb{N} $ that
\begin{equation}
	\lim _{r \rightarrow \infty} \frac{\mu ^\ext \left( B ^T _r (\xi, z) \cap [0,1] \times R _{(\omega _1 , \ldots , \omega _N )}  \right)}{\mu ^\ext \left( B ^T _r ( \xi , z) \right)}  =	\rho ^\ext _{(\omega _1 , \ldots , \omega _N )} ( \xi, z)  \label{eq:density_ext}
\end{equation}
holds for $ \mu ^\ext \circ ( \Pi ^{ss}) ^{-1} $-a.a. $ ( \xi, z ) $, where $ \rho ^\ext _{(\omega _1 , \ldots , \omega _N )} $ denotes the Radon–Nikodym derivative
\[
	 \frac{d \left( \mu ^\ext \left( \left( ( \Pi ^{ss} ) ^{-1} \, \cdot\,  \right) \cap [0,1] \times R _{(\omega _1 , \ldots , \omega _N )} \right) \right)}{d ( \mu ^\ext \circ ( \Pi ^{ss}) ^{-1} )} .
\]
Let $ ( \xi, z ) \in \mathcal{F} $.
We define
\begin{equation}
	\tilde{\nu} ^\ext _{(\xi, z )} \left( Z _{(\omega _1 , \ldots , \omega _N )} \right) := \rho ^\ext _{ (\omega _1 , \ldots , \omega _N )} ( \xi,  z )	\label{eq:nu_def_ext}
\end{equation}
for all $ Z _{(\omega _1 , \ldots , \omega _N )} \in \mathcal{Z} $.
As this set function is clearly additive on $ \mathcal{Z} $, by Lemma \ref{lem:ext_thm} we can extend it to a measure $ \tilde{\nu} ^\ext _{(\xi, z)}  \in \mathcal{P} ( \{ 0, \ldots , \ell -1 \} ^{\mathbb{N}}  ) $ uniquely.
Now, we define $ \nu ^\ext _{(\xi, z)} := \tilde{\nu} ^\ext _{(\xi, z)} \circ \chi ^{-1} $.
For $ (\xi , z ) \not \in \mathcal{F} $ let simply $ \nu ^\ext _{(\xi, z)} := \nu $.
Now, we can define the family of conditional measures\footnote{Lemma \ref{lem:ss_conditional_ext} implies that, the family $ \{ \mu ^\ext _{\Pi ^{ss} (\xi, x ,y)} : (\xi, x ,y) \in [0,1] ^2 \times \mathbb{R} \} $ is a conditional distribution of $ \mu ^\ext $ with respect to the $ \sigma $-algebra of the strong stable fibres, i.e. $ (\Pi ^{ss}) ^{-1} \mathcal{B} ([0,1] \times \mathbb{R})$.} $ \{ \mu ^\ext _{(\xi, z)} : (\xi, z) \in [0,1] \times \mathbb{R} \} $ by letting
\[
	\mu _{(\xi, z )} ^\ext := ( \mathrm{Id} , \ell _{(\xi, 0, z)} ^{ss} ) ^\ast \nu _{(\xi , z)} ^\ext .
\]

\begin{proposition}	\label{prop:mu_atomfree_ext}
We have
\[
	\int  \tilde{\nu} ^\ext _{(\xi, z)} \left( \{ \omega \} \right)  \, d \left( \mu ^\ext \circ ( \pi ^{ss} _\xi ) ^{-1} \right) ( z ) = 0
\]
for each $ \omega \in \{ 0, \ldots , \ell -1 \} ^{\mathbb{N}} $.
\end{proposition}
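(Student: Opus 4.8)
The plan is to transcribe the proof of Proposition~\ref{prop:mu_atomfree} with $\mu^\ext$, $\Pi^{ss}$ and $\rho^\ext_{(\omega_1,\ldots,\omega_N)}$ in place of $\mu$, $\pi^{ss}_\xi$ and $\rho_{\xi,(\omega_1,\ldots,\omega_N)}$, the integral being understood against the reference measure $\mu^\ext\circ(\Pi^{ss})^{-1}$ on the $(\xi,z)$-space. First I would use continuity from above of the probability measure $\tilde\nu^\ext_{(\xi,z)}$: since $\{\omega\}=\bigcap_N Z_{(\omega_1,\ldots,\omega_N)}$ is a decreasing intersection of cylinders, \eqref{eq:nu_def_ext} gives, for $\mu^\ext\circ(\Pi^{ss})^{-1}$-a.e.\ $(\xi,z)$,
\[
	\tilde\nu^\ext_{(\xi,z)}(\{\omega\}) \;=\; \lim_{N\to\infty}\tilde\nu^\ext_{(\xi,z)}\bigl(Z_{(\omega_1,\ldots,\omega_N)}\bigr) \;=\; \lim_{N\to\infty}\rho^\ext_{(\omega_1,\ldots,\omega_N)}(\xi,z).
\]
Each $\rho^\ext_{(\omega_1,\ldots,\omega_N)}$ takes values in $[0,1]$ --- it is a density, against $\mu^\ext\circ(\Pi^{ss})^{-1}$, of a measure dominated by it --- so dominated convergence lets me pull the limit out of the integral.

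It then remains to check that $\int\rho^\ext_{(\omega_1,\ldots,\omega_N)}\,d(\mu^\ext\circ(\Pi^{ss})^{-1})\to 0$ as $N\to\infty$. Integrating this Radon--Nikodym derivative over the whole $(\xi,z)$-space recovers, directly from its definition, the total mass $\mu^\ext\bigl([0,1]\times R_{(\omega_1,\ldots,\omega_N)}\bigr)$; since $\mu^\ext=(\mathrm{Id},W)^\ast\nu^\ext$ is carried by the graph of $(\xi,x)\mapsto W(x)$, this equals $\nu^\ext\bigl([0,1]\times I_{(\omega_1,\ldots,\omega_N)}\bigr)$, which by \eqref{eq:ext_def} and the $\tau$-invariance of $\nu$ is just $\nu\bigl(I_{(\omega_1,\ldots,\omega_N)}\bigr)$ (alternatively, it is at most $C_\phi^3\,\nu(I_{(\omega_1,\ldots,\omega_N)})$ by \eqref{eq:disintegral_vert} and Proposition~\ref{prop:Gibbs_comparison}). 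Finally, the monotonicity intervals $I_{(\omega_1,\ldots,\omega_N)}$ decrease to the single point $\chi(\omega)$ and the Gibbs measure $\nu$ is non-atomic, so $\nu(I_{(\omega_1,\ldots,\omega_N)})\to 0$, exactly as in the last line of the proof of Proposition~\ref{prop:mu_atomfree}; combining with the previous paragraph proves the claim.

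I do not expect a genuine obstacle: this is the $\mu^\ext$-counterpart of Proposition~\ref{prop:mu_atomfree} and follows routinely from that proposition, Proposition~\ref{prop:Gibbs_comparison} and the constructions of this section. The only two points deserving a line of justification are the interchange of limit and integral --- which rests on the uniform bound $0\le\rho^\ext_{(\omega_1,\ldots,\omega_N)}\le 1$ --- and the identity $\mu^\ext([0,1]\times R_{(\omega_1,\ldots,\omega_N)})=\nu^\ext([0,1]\times I_{(\omega_1,\ldots,\omega_N)})$, which is just unwinding that $\mu^\ext$ is supported on the graph of $(\xi,x)\mapsto W(x)$.
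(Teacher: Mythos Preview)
Your proof is correct and follows essentially the same route as the paper's: pass from $\{\omega\}$ to the decreasing cylinders $Z_{(\omega_1,\ldots,\omega_N)}$, interchange limit and integral, identify the resulting integral as $\nu^\ext([0,1]\times I_{(\omega_1,\ldots,\omega_N)})=\nu(I_{(\omega_1,\ldots,\omega_N)})$, and use the Gibbs property to get $0$. The only cosmetic difference is that the paper invokes the monotone convergence theorem (the cylinder masses are decreasing in $N$) where you invoke dominated convergence with the bound $1$; both are valid here.
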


\begin{proof}
By the monotone convergence theorem, we have
\begin{eqnarray*}
	\int  \tilde{\nu} ^\ext _{(\xi, z)} \left( \{ \omega \} \right) \, d \left( \mu ^\ext \circ ( \Pi ^{ss}  ) ^{-1} \right) ( \xi, z ) &  =&		\lim _{N \rightarrow \infty} \int  \tilde{\nu} ^\ext _{(\xi, z)} \left( Z _{\omega _1 , \ldots , \omega _N} \right)  \, d \left( \mu ^\ext \circ ( \Pi ^{ss}  ) ^{-1} \right) ( \xi, z )  \\
	& = &	\lim _{N \rightarrow \infty}  \int  \rho ^\ext _{ (\omega _1 , \ldots , \omega _N )} ( \xi, z )	  \, d \left( \mu ^\ext \circ ( \Pi ^{ss}  ) ^{-1} \right) ( \xi, z )  \\
	&  =& 	\lim _{N \rightarrow \infty}  \nu ^\ext \left( [0,1] \times I _{ (\omega _1 , \ldots , \omega _N )} \right) \\
	& = &	\lim _{N \rightarrow \infty}  \nu \left( I _{ (\omega _1 , \ldots , \omega _N )} \right) \quad =  \quad 0  ,
\end{eqnarray*}
where the last equality is due to the Gibbs property.
\end{proof}

\begin{proposition}	\label{prop:decomposition_ext}
We have
\[
	\mu ^\ext = \int \delta _\xi \otimes  \mu ^\ext _{(\xi, q _\xi (x))} \, d \nu ^\ext ( \xi, x ).
\]
\end{proposition}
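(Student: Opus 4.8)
The plan is to imitate the proof of Proposition~\ref{prop:decomposition}, replacing $\mu,\nu,\pi^{ss}_\xi$ by $\mu^\ext,\nu^\ext,\Pi^{ss}$ and dragging along the extra $\delta_\xi$-factor. First I would introduce the $\cap$-stable family
\[
	\mathcal{C}^\ext := \left\{ \bigl([0,1]\times R_{(\omega_1,\ldots,\omega_N)}\bigr)\cap(\Pi^{ss})^{-1}(D) : \omega_1,\ldots,\omega_N\in\{0,\ldots,\ell-1\},\ N\in\mathbb{N},\ D\in\mathcal{B}([0,1]\times\mathbb{R}) \right\}
\]
and check that $\sigma(\mathcal{C}^\ext)=\mathcal{B}([0,1]^2\times\mathbb{R})$. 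This goes exactly as with the sets $V^{ss}_r$ in Proposition~\ref{prop:decomposition}: $\Pi^{ss}$ records both $\xi$ and the strong stable coordinate $\pi^{ss}_\xi(x,y)=\ell^{ss}_{(\xi,x,y)}(0)$, and $y\mapsto\pi^{ss}_\xi(x,y)$ is bi-Lipschitz uniformly in $(\xi,x)$ by Proposition~\ref{prop:l_parallel}; hence constraining $\xi$ to a small interval, $x$ to a small monotonicity interval, and $\pi^{ss}_\xi(x,y)$ to a small interval produces a neighbourhood basis of an arbitrary point consisting of members of $\sigma(\mathcal{C}^\ext)$.

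Next I would evaluate both sides on a generic $E=\bigl([0,1]\times R_{(\omega_1,\ldots,\omega_N)}\bigr)\cap(\Pi^{ss})^{-1}(D)\in\mathcal{C}^\ext$. On the left, \eqref{eq:density_ext} identifies $\rho^\ext_{(\omega_1,\ldots,\omega_N)}$ as the Radon–Nikodym derivative with respect to $\mu^\ext\circ(\Pi^{ss})^{-1}$, and then \eqref{eq:nu_def_ext}, the finiteness of $\chi(Z_{(\omega_1,\ldots,\omega_N)})\triangle I_{(\omega_1,\ldots,\omega_N)}$, and Proposition~\ref{prop:mu_atomfree_ext} (which kills the atoms) let me rewrite
\[
	\mu^\ext(E)=\int_D \nu^\ext_{(\xi,z)}\bigl(I_{(\omega_1,\ldots,\omega_N)}\bigr)\,d\bigl(\mu^\ext\circ(\Pi^{ss})^{-1}\bigr)(\xi,z).
\]
Since $\Pi^{ss}\circ(\mathrm{Id},W)=q$, we have $\mu^\ext\circ(\Pi^{ss})^{-1}=\nu^\ext\circ q^{-1}$, so a change of variables turns this into $\int_{q^{-1}(D)}\nu^\ext_{(\xi,q_\xi(x))}\bigl(I_{(\omega_1,\ldots,\omega_N)}\bigr)\,d\nu^\ext(\xi,x)$.

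On the right, the $\{\xi\}$-section of $E$ is $R_{(\omega_1,\ldots,\omega_N)}\cap(\pi^{ss}_\xi)^{-1}(D_\xi)$ with $D_\xi:=\{z:(\xi,z)\in D\}$. As in Proposition~\ref{prop:decomposition}, $\mu^\ext_{(\xi,z)}=(\mathrm{Id},\ell^{ss}_{(\xi,0,z)})^\ast\nu^\ext_{(\xi,z)}$ is carried by the single fibre $(\pi^{ss}_\xi)^{-1}(z)$ (uniqueness of solutions of the strong stable initial value problem) and $\mu^\ext_{(\xi,z)}(R_{(\omega_1,\ldots,\omega_N)})=\nu^\ext_{(\xi,z)}(I_{(\omega_1,\ldots,\omega_N)})$, so $\bigl(\delta_\xi\otimes\mu^\ext_{(\xi,q_\xi(x))}\bigr)(E)=\mathbf{1}_{q^{-1}(D)}(\xi,x)\cdot\nu^\ext_{(\xi,q_\xi(x))}(I_{(\omega_1,\ldots,\omega_N)})$. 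Integrating against $\nu^\ext$ reproduces precisely the integral obtained for $\mu^\ext(E)$. Hence the two probability measures agree on $\mathcal{C}^\ext$, which is $\cap$-stable and contains sets increasing to $[0,1]^2\times\mathbb{R}$, and therefore they coincide everywhere by Dynkin's lemma.

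The bookkeeping of the $\delta_\xi$-section and the change of variables along $q$ are routine; the only genuinely load-bearing step is the equality $\sigma(\mathcal{C}^\ext)=\mathcal{B}([0,1]^2\times\mathbb{R})$, i.e.\ that the $\xi$-coordinate, the monotonicity intervals, and the strong stable fibres together separate points. This rests on the uniform transversality of the fibres already established in Proposition~\ref{prop:l_parallel}, so it is no harder than the corresponding step in the proof of Proposition~\ref{prop:decomposition}, and no new difficulty arises.
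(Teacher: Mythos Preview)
Your proposal is correct and follows essentially the same route as the paper: the paper introduces the identical $\cap$-stable family (called $\tilde{\mathcal{C}}$ there), verifies $\sigma(\tilde{\mathcal{C}})=\mathcal{B}([0,1]^2\times\mathbb{R})$ via the analogous sets $\tilde{V}^{ss}_r(\xi,x,y)$, and then runs the same chain of equalities using \eqref{eq:density_ext}, \eqref{eq:nu_def_ext}, Proposition~\ref{prop:mu_atomfree_ext}, and the fact that $\delta_\xi\otimes\mu^\ext_{(\xi,z)}$ is supported on a single fibre of $\Pi^{ss}$. Your write-up is slightly more explicit about the change of variables $\mu^\ext\circ(\Pi^{ss})^{-1}=\nu^\ext\circ q^{-1}$ and the $\{\xi\}$-section computation, but there is no substantive difference.
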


\begin{proof}
Evidently, the family
\[
	\tilde{\mathcal{C}} := \left\{ [0,1] \times R _{(\omega _1 , \ldots , \omega _N)} \cap ( \Pi ^{ss} ) ^{-1} A :  \omega _1 , \ldots , \omega _N  \in \{ 0 , \ldots , \ell -1 \} , \,  N \in \mathbb{N}  \mbox{, and } A \in \mathcal{B} ( [0,1] \times \mathbb{R} ) \right\}
\]
is $ \cap $-stable.
We consider
\[
	\tilde{V} ^{ss} _r ( \xi , x , y ) := \left\{ (\tilde{\xi} , \tilde{x} , \tilde{y}) : \tilde{x} \in I _{N _r (x)} (x) , \, | \xi - \tilde{\xi} | \leqslant r  \mbox{ and } \left| \ell ^{ss} _{(\tilde{\xi}, \tilde{x}, \tilde{y})} ( x ) - y \right| \leqslant r   \right\} ,
\]
where $ N _r ( x ) := \min \{ N : | I _N (x) | \leqslant r \} $.
As $ \tilde{V} ^{ss} _r ( \xi ,x , y ) \in \mathcal{C} $ for all $ ( \xi, x , y ) \in [0,1] ^2 \times \mathbb{R} $ and $ r > 0 $, we have $ \sigma ( \mathcal{C} ) = \mathcal{B} ( [0,1] ^2 \times \mathbb{R} ) $.
Observe that $ \chi ( Z _{(\omega _1 , \ldots , \omega _N)}) \triangle I _{(\omega _1 , \ldots , \omega _N)} $ is a finite set.
Thus, by \eqref{eq:density_ext}, \eqref{eq:nu_def_ext} and Proposition \ref{prop:mu_atomfree_ext} we have
\begin{eqnarray*}
	&&	\mu ^\ext \left( [0,1] \times R _{(\omega _1 , \ldots , \omega _N)} \cap ( \Pi ^{ss} ) ^{-1} A \right)	\\ 
		& = &	\int _A  \rho ^\ext _{ (\omega _1 , \ldots , \omega _N )} \, d \left( \mu ^\ext \circ ( \Pi ^{ss} ) ^{-1} \right)  \\
		& = &	\int _A   \tilde{\nu} ^\ext _{(\xi, z )} \left(  Z _{(\omega _1 , \ldots , \omega _N )} \right) \, d \left( \mu ^\ext \circ ( \Pi ^{ss} ) ^{-1} \right) ( \xi , z )  \\
		& = &	\int _A   \nu ^\ext _{(\xi, z )} \left(  I _{(\omega _1 , \ldots , \omega _N )} \right) \, d \left( \mu ^\ext \circ ( \Pi ^{ss} ) ^{-1} \right) ( \xi , z )  \\
	& = &	\int _A  \delta _\xi \otimes \mu ^\ext _{(\xi, z )} \left( [0,1 ] \times R _{(\omega _1 , \ldots , \omega _N )} \right) \, d \left( \mu ^\ext \circ ( \Pi ^{ss} ) ^{-1} \right) ( \xi , z )  \\
		& = &	\int  \delta _\xi \otimes \mu ^\ext _{(\xi, q _\xi (x) )} \left( [0,1] \times R _{(\omega _1 , \ldots , \omega _N)} \cap ( \Pi ^{ss} ) ^{-1} A  \right) \, d \nu ^\ext ( \xi , x )  \\
\end{eqnarray*}
for all $ [0,1] \times R _{(\omega _1 , \ldots , \omega _N)} \cap ( \Pi ^{ss} ) ^{-1} A \in \tilde{\mathcal{C}} $ since $ \delta _\xi \otimes \mu _{(\xi, z)} ^\ext \left(  ( \Pi ^{ss} ) ^{-1} \left(  (\xi ,y) \right) \right) = 1 $.
This finishes the proof.
\end{proof}

\begin{lemma}	\label{lem:ss_conditional_ext}
We have
\begin{equation*}
	\int f \left(  x , \xi , q _\xi (x) \right) \, d \nu ^\ext (  \xi , x ) = \int \left[ \int f \left( \tilde{x} , \xi , q _\xi (x) \right) \, d \nu ^\ext _{\left( \xi , q _\xi ( x ) \right)} (\tilde{x} ) \right] d \nu ^\ext (\xi , x )
\end{equation*}
for all bounded measurable $ f : [0,1] ^2 \times \mathbb{R} \rightarrow \mathbb{R} $.
\end{lemma}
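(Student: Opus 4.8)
The plan is to reproduce the argument used for Lemma~\ref{lem:ss_conditional}, simply replacing the disintegration of $\mu$ in Proposition~\ref{prop:decomposition} by that of $\mu^\ext$ in Proposition~\ref{prop:decomposition_ext}. First I would use that $\mu^\ext = (\mathrm{Id}, W)^\ast \nu^\ext$ together with $q_\xi(x) = \pi^{ss}_\xi(x, W(x))$ to transfer the left-hand side onto $\mu^\ext$, writing
\[
	\int f\bigl(x, \xi, q_\xi(x)\bigr) \, d\nu^\ext(\xi, x) = \int f\bigl(x, \xi, \pi^{ss}_\xi(x, y)\bigr) \, d\mu^\ext(\xi, x, y).
\]

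Next I would insert $\mu^\ext = \int \delta_\xi \otimes \mu^\ext_{(\xi, q_\xi(x))} \, d\nu^\ext(\xi, x)$ from Proposition~\ref{prop:decomposition_ext}. Since the $\delta_\xi$-factor pins the first coordinate to $\xi$ and $\mu^\ext_{(\xi, z)} = (\mathrm{Id}, \ell^{ss}_{(\xi, 0, z)})^\ast \nu^\ext_{(\xi, z)}$, the inner integral against $\delta_\xi \otimes \mu^\ext_{(\xi, q_\xi(x))}$ equals
\[
	\int f\bigl(\tilde x, \xi, \pi^{ss}_\xi(\tilde x, \ell^{ss}_{(\xi, 0, q_\xi(x))}(\tilde x))\bigr) \, d\nu^\ext_{(\xi, q_\xi(x))}(\tilde x).
\]
Here I would invoke the same observation already used in the proof of Lemma~\ref{lem:ss_conditional}, namely $\pi^{ss}_\xi(\tilde x, \ell^{ss}_{(\xi, 0, z)}(\tilde x)) = z$ for every $\tilde x$: indeed $\pi^{ss}_\xi(\tilde x, \cdot) = \ell^{ss}_{(\xi, \tilde x, \cdot)}(0)$, and by uniqueness of solutions of the defining initial value problem the strong stable fibre through the point $(\xi, \tilde x, \ell^{ss}_{(\xi, 0, z)}(\tilde x))$ coincides with $v \mapsto (\xi, v, \ell^{ss}_{(\xi, 0, z)}(v))$, whose value at $v = 0$ is $z$. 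Taking $z = q_\xi(x)$ reduces the integrand to $f(\tilde x, \xi, q_\xi(x))$, which is exactly the right-hand side of the claim.

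Essentially this is pure bookkeeping, and I do not expect a genuine obstacle; the one point requiring care is tracking which of the two $[0,1]$-coordinates and the $\mathbb{R}$-coordinate plays which role, in particular that the $\delta_\xi$-factor in Proposition~\ref{prop:decomposition_ext} has to be matched with the $\xi$ occupying the second slot of $f$, so that on both sides of the asserted equality $\xi$ is one and the same variable integrated against $\nu^\ext$. No measurability or integrability issue beyond what is already in place arises, as $f$ is bounded measurable and $\pi^{ss}_\xi$, $\ell^{ss}$ and $q_\xi$ depend continuously on their arguments by the construction in Subsection~\ref{subsec:strong_stable} and Proposition~\ref{prop:l_parallel}.
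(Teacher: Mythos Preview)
Your proposal is correct and follows essentially the same argument as the paper's own proof: rewrite the left-hand side as an integral against $\mu^\ext$ via $\Pi^{ss}$, apply the disintegration from Proposition~\ref{prop:decomposition_ext}, and then use that $\pi^{ss}_\xi(\tilde x, \ell^{ss}_{(\xi,0,z)}(\tilde x)) = z$ to collapse the integrand. The only cosmetic difference is that the paper packages the pair $(\xi, \pi^{ss}_\xi(x,y))$ as $\Pi^{ss}(\xi,x,y)$ throughout, whereas you unpack the components explicitly; your added justification of the identity via uniqueness of solutions to the defining initial value problem is a welcome clarification.
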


\begin{proof}
By Proposition \ref{prop:decomposition_ext} we have
\begin{eqnarray*}
	\int f \left( x , \xi ,  q _\xi (x) \right) \, d \nu ^\ext ( \xi, x )	& =&	 \int f \left( x , \Pi ^{ss} (\xi , x , y) \right) \, d \mu ^\ext ( \xi,  x , y ) \\
		& =& 	\iint f \left( \tilde{x} , \Pi ^{ss} \left( \xi ,\tilde{x} , \ell ^{ss} _{(\xi, 0 , q _\xi (x))} (\tilde{x}) \right) \right) \, d \nu ^\ext _{(\xi, q _\xi (x))} ( \tilde{x} )  \, d \nu ^\ext ( \xi ,  x ) \\
		& =&	\iint f \left( \tilde{x} , \xi , q _\xi (x) \right) \, d \nu ^\ext _{(\xi, q _\xi (x))} ( \tilde{x} )  \, d \nu ^\ext( \xi,  x ) .
\end{eqnarray*}
\end{proof}

The next proposition can be interpreted as the tower rule, regarding the family $ \{ \mu _\xi ^+ : \xi \in [0,1] \} $ as a conditional distribution w.r.t. the partition into the stable hyperplanes, i.e. $ \{ \{ \xi \} \times [0,1] \times \mathbb{R} : \xi \in [0,1] \} $.

\begin{proposition}	\label{prop:mu_+_ext}
We have
\[
	\nu _\xi ^+ = \int \nu ^\ext _{(\xi, q_\xi (x) )} \, d \nu ^+ _\xi (x) 
\]
for $ \nu ^- $-a.a. $ \xi $.

In particular, we have for $ \nu ^\ext $-a.a. $ ( \xi, x ) $ that
\begin{equation}
	\lim _{r \rightarrow 0} \frac{\mu ^+ _\xi \left( B ^T _{\xi, r} ( q _\xi (x) ) \cap R _{(\omega _1 , \ldots , \omega _N )} \right)}{\mu ^+ _\xi \left( B ^T _{\xi, r} ( q _\xi (x) ) \right)} = \nu ^\ext _{(\xi, q_\xi (x) )} \left(  I _{(\omega _1 , \ldots , \omega _N )} \right) \label{eq:density_+}
\end{equation}
holds for all $ \omega _1 , \ldots , \omega _N \in \{ 0 , \ldots , \ell -1 \} $ and $ N \in \mathbb{N}  $.
\end{proposition}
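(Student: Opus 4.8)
The plan is to establish the disintegration identity first and then deduce the density formula \eqref{eq:density_+}. For the disintegration I would feed product test functions into Lemma \ref{lem:ss_conditional_ext}: fixing bounded measurable $g,h:[0,1]\to\mathbb R$ and taking $f(x,\xi,z):=g(x)\,h(\xi)$ there yields
\[
\int g(x)\,h(\xi)\, d\nu^\ext(\xi,x)=\int h(\xi)\left[\int g(\tilde x)\, d\nu^\ext_{(\xi,q_\xi(x))}(\tilde x)\right] d\nu^\ext(\xi,x).
\]
Rewriting both sides with the vertical disintegration \eqref{eq:disintegral_vert} — the left side becomes $\int h(\xi)\bigl(\int g\, d\nu^+_\xi\bigr)\, d\nu^-(\xi)$, and the outer $\nu^\ext$-integral on the right splits as $\int\!\int(\cdots)\,d\nu^+_\xi(x)\,d\nu^-(\xi)$ — and using that $h$ is arbitrary, I obtain $\int g\, d\nu^+_\xi=\int\bigl(\int g\, d\nu^\ext_{(\xi,q_\xi(x))}\bigr)d\nu^+_\xi(x)$ for $\nu^-$-a.a.\ $\xi$, with an a priori $g$-dependent exceptional set. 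Running $g$ through the countable collection $\{\mathbf 1_I:I\in\mathcal S_\tau\}$ removes the dependence on $g$, and since two probability measures agreeing on the semiring $\mathcal S_\tau$ coincide, this gives $\nu^+_\xi=\int\nu^\ext_{(\xi,q_\xi(x))}\, d\nu^+_\xi(x)$ for $\nu^-$-a.a.\ $\xi$.

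The structural input needed for \eqref{eq:density_+} is that $\nu^\ext_{(\xi,z)}$ is carried by the level set $q_\xi^{-1}(z)$ for a.a.\ $(\xi,z)$. By construction $\mu^\ext_{(\xi,z)}=(\mathrm{Id},\ell^{ss}_{(\xi,0,z)})^\ast\nu^\ext_{(\xi,z)}$ is supported on the single strong stable fibre $(\Pi^{ss})^{-1}(\xi,z)$, whereas Proposition \ref{prop:decomposition_ext} together with the fact that $\mu^\ext$ is carried by $\{(\xi,x,W(x))\}$ forces $\mu^\ext_{(\xi,q_\xi(x))}(\mathrm{graph}(W))=1$ for $\nu^\ext$-a.a.\ $(\xi,x)$. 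Intersecting the two supports, for $\nu^\ext_{(\xi,z)}$-a.a.\ $v$ one has $\ell^{ss}_{(\xi,0,z)}(v)=W(v)$, hence $q_\xi(v)=\pi^{ss}_\xi\bigl(v,\ell^{ss}_{(\xi,0,z)}(v)\bigr)=\ell^{ss}_{(\xi,0,z)}(0)=z$ by uniqueness of strong stable fibres; the negligible discrepancy between $\chi(Z_{(\omega_1,\ldots,\omega_N)})$ and $I_{(\omega_1,\ldots,\omega_N)}$ is handled by Proposition \ref{prop:mu_atomfree_ext} exactly as in the proof of Proposition \ref{prop:decomposition_ext}.

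Combining these two facts — pushing $\nu^+_\xi=\int\nu^\ext_{(\xi,q_\xi(x))}\, d\nu^+_\xi(x)$ forward by $q_\xi$ and using that $\mathbf 1_A(q_\xi(v))=\mathbf 1_A(q_\xi(x))$ for $\nu^\ext_{(\xi,q_\xi(x))}$-a.a.\ $v$ — identifies $z\mapsto\nu^\ext_{(\xi,z)}\bigl(I_{(\omega_1,\ldots,\omega_N)}\bigr)$ with the Radon--Nikodym derivative of $A\mapsto\nu^+_\xi\bigl(I_{(\omega_1,\ldots,\omega_N)}\cap q_\xi^{-1}A\bigr)$ with respect to $\nu^+_\xi\circ q_\xi^{-1}\in\mathcal P(\mathbb R)$. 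Applying Lemma \ref{lem:Boreldensity} on $\mathbb R$ to this derivative, with $H_r(z)=[z-r,z+r]$, gives
\[
\lim_{r\to0}\frac{\nu^+_\xi\bigl(I_{(\omega_1,\ldots,\omega_N)}\cap q_\xi^{-1}[z-r,z+r]\bigr)}{\nu^+_\xi\bigl(q_\xi^{-1}[z-r,z+r]\bigr)}=\nu^\ext_{(\xi,z)}\bigl(I_{(\omega_1,\ldots,\omega_N)}\bigr)
\]
for $(\nu^+_\xi\circ q_\xi^{-1})$-a.a.\ $z$. Since $\mu^+_\xi=(\mathrm{Id},W)^\ast\nu^+_\xi$, the left-hand quotient equals $\mu^+_\xi\bigl(B^T_{\xi,r}(q_\xi(x))\cap R_{(\omega_1,\ldots,\omega_N)}\bigr)/\mu^+_\xi\bigl(B^T_{\xi,r}(q_\xi(x))\bigr)$ evaluated at $z=q_\xi(x)$, which is exactly \eqref{eq:density_+}. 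A union over the countably many cylinders and over the relevant $\nu^-$-null set of $\xi$, together with the fact that ``$\nu^-$-a.a.\ $\xi$ and $\nu^+_\xi$-a.a.\ $x$'' is the same as ``$\nu^\ext$-a.a.\ $(\xi,x)$'' by \eqref{eq:disintegral_vert}, completes the proof.

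The step I expect to be the main obstacle is the support statement in the second paragraph — that $\nu^\ext_{(\xi,z)}$ genuinely lives on $q_\xi^{-1}(z)$ — since these conditional measures are produced abstractly from Radon--Nikodym derivatives of $\mu^\ext$ along $(\Pi^{ss})^{-1}$, and one has to feed Proposition \ref{prop:decomposition_ext} back in to see that they are simultaneously carried by the graph of $W$ and by a single strong stable fibre before the fibre equation can be invoked. The rest is routine measure theory; the only other point requiring care is the bookkeeping of null sets, so that \eqref{eq:density_+} holds for all cylinders at once on a set of full $\nu^\ext$-measure.
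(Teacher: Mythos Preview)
Your proof is correct and essentially parallel to the paper's. For the first identity the paper packages the argument a little differently: it sets $\tilde\nu_\xi:=\int\nu^\ext_{(\xi,q_\xi(x))}\,d\nu^+_\xi(x)$ and verifies directly, using \eqref{eq:disintegral_vert} and Proposition~\ref{prop:decomposition_ext}, that $(\delta_\xi\otimes\tilde\nu_\xi)_\xi$ is a conditional distribution of $\nu^\ext$ with respect to the vertical partition, then invokes uniqueness of disintegrations. Your route through Lemma~\ref{lem:ss_conditional_ext} with product test functions $f(x,\xi,z)=g(x)h(\xi)$ is the same computation unwound, since that lemma is itself an immediate consequence of Proposition~\ref{prop:decomposition_ext}.

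For \eqref{eq:density_+} the paper does exactly what you outline: it rewrites $\mu^+_\xi\bigl(B^T_{\xi,r}(z)\cap R_{(\omega_1,\ldots,\omega_N)}\bigr)$ as $\int_{B_r(z)}\nu^\ext_{(\xi,\tilde z)}(I_{(\omega_1,\ldots,\omega_N)})\,d(\nu^+_\xi\circ q_\xi^{-1})(\tilde z)$ and applies Lemma~\ref{lem:Boreldensity}. The support statement you flag as the main obstacle---that $\nu^\ext_{(\xi,z)}$ is carried by $q_\xi^{-1}(z)$, equivalently that $(\mathrm{Id},W)^\ast\nu^\ext_{(\xi,z)}=(\mathrm{Id},\ell^{ss}_{(\xi,0,z)})^\ast\nu^\ext_{(\xi,z)}$ almost surely---is indeed needed to pass from the disintegration of $\nu^+_\xi$ to one of $\mu^+_\xi$ in terms of the measures $\mu^\ext_{(\xi,\tilde z)}$; the paper uses it implicitly in the first displayed equality of its second paragraph without isolating it, so your making it explicit is a clarification rather than a deviation.
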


\begin{proof}
Let $ \tilde{\nu} _\xi := \int \nu ^\ext _{(\xi, q_\xi (x) )} \, d \nu ^+ _\xi (x)  $ for $ \xi \in [0,1] $.
In view of the uniqueness of the conditional distribution, for the first part of the claim it suffices to show that the family $ \{ \delta _\xi \otimes \tilde{\nu} _\xi : \xi \in [0,1] \} $ is a conditional distribution of $ \nu ^\ext $ with respect to the vertical partition $ \eta := \{ \{ \xi \} \times [0,1] : \xi \in [0,1] \} $ as $ \{ \delta _\xi \otimes \nu ^+ _\xi : \xi \in [0,1] \} $ is.
This is true since by \eqref{eq:disintegral_vert} and Proposition \ref{prop:decomposition_ext} we have
\begin{eqnarray*}
	\int  \delta _\xi \otimes \tilde{\nu} _\xi ( A \times B ) \, \mathbf{1} _{C \times [0,1]} ( \xi , x ) \, d \nu ^\ext ( \xi , x )	&  = &	\iint \delta _{\xi} ( A \cap C ) \cdot \nu ^\ext _{(\xi, q _\xi (x) )} ( B ) \, d \nu ^+ _\xi ( x ) \, d \nu ^- (\xi ) \\
	&  = &	\int \delta _\xi \otimes \mu ^\ext _{(\xi, q _\xi (x))} ( ( A \cap C ) \times ( B \times \mathbb{R} ) ) \, d \nu ^\ext ( \xi ,x ) \\
	& = &	\mu ^\ext ( ( A \cap C ) \times ( B \times \mathbb{R} ) ) \\
	&  = &	\nu ^\ext ( ( A \times B ) \cap (C \times [0,1]) )
\end{eqnarray*}
for all $ A , B , C \in \mathcal{B} ( [0,1] ) $, where $ \mathbf{1} _{C \times [0,1]} $ denotes the indicator function of $ C \times [0,1] $.
Indeed, we have $ \sigma ( \{ A \times B : A , B \in \mathcal{B} ( [0,1] ) \} ) = \mathcal{B} ( [0,1] ^2 ) $ and $ \sigma ( \{ C \times [0,1] : C \in \mathcal{B} ( [0,1] ) \} ) = \sigma ( \eta ) $.

In order to prove the remaining part, let $ \omega _1 , \ldots , \omega _N \in \{ 0 , \ldots , \ell -1 \} $ and $ N \in \mathbb{N} $.
Observe that for $ \nu ^- $-a.a. $ \xi $ we have that
\begin{eqnarray*}
	\mu ^+ _\xi \left( B ^T _{\xi, r} ( z ) \cap R _{(\omega _1 , \ldots , \omega _N )} \right)	&  = &	\int \mu ^\ext _{(\xi, \tilde{z})} \left( B ^T _{\xi, r} ( z ) \cap R _{(\omega _1 , \ldots , \omega _N )} \right) \, d \left( \nu ^+ _\xi \circ q _\xi ^{-1} \right) ( \tilde{z} ) \\
		&  = &	\int _{B _r ( z )} \nu ^\ext _{(\xi, \tilde{z})} \left(  I _{(\omega _1 , \ldots , \omega _N )} \right) \, d \left( \nu ^+ _\xi \circ q _\xi ^{-1} \right) ( \tilde{z} ) 
\end{eqnarray*}
holds for all $ z \in \mathbb{R} $ and $ r > 0 $.
Thus, for any such $ \xi \in [0,1] $, the convergence of \eqref{eq:density_+} holds for $ \nu _\xi ^+ $-a.a. $ x $ by Lemma \ref{lem:Boreldensity}.
In view of \eqref{eq:disintegral_vert} the convergence holds thus for $ \nu ^\ext $-a.a. $ ( \xi, x ) $.
\end{proof}

\begin{proposition}	\label{prop:Gibbs_comparison_cond}
We have
\[
	\frac{\nu ^\ext _{(\xi, q_\xi (x) )}}{\nu _{(\xi, q _\xi (x) )}} \in [C _\phi ^{-6} , C _\phi ^6] 
\]
for $ \nu ^\ext $-a.a. $ ( \xi, x ) $.
\end{proposition}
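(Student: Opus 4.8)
The plan is to compare the two conditional measures on the semiring $ \mathcal{S} _\tau $ — which, apart from $ \emptyset $, consists precisely of the full cylinders $ I _{(\omega _1 , \ldots , \omega _N )} $ and generates $ \mathcal{B} ( [0,1] ) $ — and then to invoke the approximation lemma. Concretely, since $ \nu _{(\xi, q _\xi (x))} $ and $ \nu ^\ext _{(\xi, q _\xi (x))} $ are probability measures, it suffices to show that for $ \nu ^\ext $-a.a. $ ( \xi , x ) $ and every tuple $ ( \omega _1 , \ldots , \omega _N ) $ one has $ \nu ^\ext _{(\xi, q _\xi (x))} ( I _{(\omega _1 , \ldots , \omega _N )} ) \in [ C _\phi ^{-6} , C _\phi ^6 ] \cdot \nu _{(\xi, q _\xi (x))} ( I _{(\omega _1 , \ldots , \omega _N )} ) $; Lemma \ref{lem:approximation} then upgrades this to the asserted bound on the Radon--Nikodym derivative.

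To prove this cylinder estimate I would represent both sides as limits of the same type of tube ratio. On the one hand, \eqref{eq:density_+} already gives $ \nu ^\ext _{(\xi, q _\xi (x))} ( I _{(\omega _1 , \ldots , \omega _N )} ) = \lim _{r \to 0} \mu ^+ _\xi ( B ^T _{\xi, r} ( q _\xi (x) ) \cap R _{(\omega _1 , \ldots , \omega _N )} ) \, / \, \mu ^+ _\xi ( B ^T _{\xi, r} ( q _\xi (x) ) ) $ for $ \nu ^\ext $-a.a. $ ( \xi , x ) $. On the other hand, combining the defining relation \eqref{eq:nu_def}, the density statement \eqref{eq:density}, and the atom-freeness of Proposition \ref{prop:mu_atomfree} (which lets one replace $ Z _{(\omega _1 , \ldots , \omega _N )} $ by $ I _{(\omega _1 , \ldots , \omega _N )} $ through the coding $ \chi $, up to a $ \tilde\nu _{(\xi, z)} $-null set) shows that $ \nu _{(\xi, z)} ( I _{(\omega _1 , \ldots , \omega _N )} ) = \lim _{r \to 0} \mu ( B ^T _{\xi, r} ( z ) \cap R _{(\omega _1 , \ldots , \omega _N )} ) \, / \, \mu ( B ^T _{\xi, r} ( z ) ) $ for $ \mu \circ ( \pi ^{ss} _\xi ) ^{-1} $-a.a. $ z $. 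Since $ \mu $ is carried by the graph of $ W $, where $ \pi ^{ss} _\xi $ restricts to $ q _\xi $, we have $ \mu \circ ( \pi ^{ss} _\xi ) ^{-1} = \nu \circ q _\xi ^{-1} $; using moreover $ \nu ^+ _\xi \sim \nu $ with density in $ [ C _\phi ^{-3} , C _\phi ^3 ] $ (Proposition \ref{prop:Gibbs_comparison}) and the disintegration $ \nu ^\ext = \int \delta _\xi \otimes \nu ^+ _\xi \, d \nu ^- ( \xi ) $ of \eqref{eq:disintegral_vert}, this ``almost every $ z $'' statement transfers to: for $ \nu ^\ext $-a.a. $ ( \xi , x ) $ it holds at $ z = q _\xi (x) $, simultaneously for all (countably many) tuples.

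With both representations in hand the comparison is immediate: by Proposition \ref{prop:Gibbs_comparison} the density $ d \mu ^+ _\xi / d \mu $ lies in $ [ C _\phi ^{-3} , C _\phi ^3 ] $ for $ \nu ^- $-a.a. $ \xi $, so for each fixed $ r > 0 $ the numerators and the denominators of the two tube ratios agree up to a factor $ C _\phi ^3 $, whence the ratios themselves agree up to a factor $ C _\phi ^6 $; letting $ r \to 0 $ yields the cylinder estimate. The one step I expect to need genuine care is the measure-theoretic transfer in the previous paragraph — ensuring that the density characterization of $ \nu _{(\xi, z)} $ is valid at the particular value $ z = q _\xi (x) $ for $ \nu ^\ext $-almost every $ ( \xi , x ) $, and not merely for $ \nu \circ q _\xi ^{-1} $-almost every $ z $ with $ \xi $ fixed — which is exactly where the equivalence $ \nu ^+ _\xi \sim \nu $ and the vertical disintegration \eqref{eq:disintegral_vert} are used; the rest is bookkeeping.
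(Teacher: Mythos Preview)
Your proposal is correct and matches the paper's own proof essentially line for line: the paper also compares the two conditional measures on cylinders via \eqref{eq:density}, \eqref{eq:nu_def}, \eqref{eq:density_+} together with Proposition \ref{prop:Gibbs_comparison}, and then invokes Lemma \ref{lem:approximation}. If anything you are more explicit than the paper about two points it leaves implicit --- the use of atom-freeness to pass between $Z_{(\omega_1,\ldots,\omega_N)}$ and $I_{(\omega_1,\ldots,\omega_N)}$, and the measure-theoretic transfer from ``$\nu\circ q_\xi^{-1}$-a.e.\ $z$'' to ``$\nu^{\mathrm{ext}}$-a.e.\ $(\xi,x)$ at $z=q_\xi(x)$'' via \eqref{eq:disintegral_vert} and $\nu_\xi^+\sim\nu$.
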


\begin{proof}
From \eqref{eq:density}, \eqref{eq:nu_def} and \eqref{eq:density_+} together with Proposition \ref{prop:Gibbs_comparison} we have for $ \nu ^\ext $-a.a. $ ( \xi, x ) $ that
\[
	 \nu _{(\xi, q_\xi (x) )} \left(  I _{(\omega _1 , \ldots , \omega _N )} \right) \in [C _\phi ^{-6} , C _\phi ^6] \cdot \nu ^\ext _{(\xi, q_\xi (x) )} \left(  I _{(\omega _1 , \ldots , \omega _N )} \right)
\]
holds for all $  I _{(\omega _1 , \ldots , \omega _N )} \in \mathcal{S} _\tau $.
Thus the claim follows by Lemma \ref{lem:approximation}.
\end{proof}

\begin{proposition}	\label{prop:cond_entropy}
We have
\[
	\nu _{q \circ B ^{-1} (\xi, x)} ^\ext \left( I _N ( \tau (x) ) \right) = \frac{\nu _{q (\xi, x)} ^\ext \left( I _{N + 1} (x) \right)}{\nu _{q (\xi, x)} ^\ext \left( I _1 (x) \right)}
\]
for all $ N \in \mathbb{N} $ and $ \nu ^\ext $-a.a. $ ( \xi , x ) $.
\end{proposition}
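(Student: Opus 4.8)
The plan is to establish the cocycle-type relation by tracking how the conditional measures $\nu^\ext_{(\xi,z)}$ transform under the Baker map $B$. The key observation is that the strong stable fibres are $F$-invariant in the sense of \eqref{eq:lss_inv}: applying $F$ to a strong stable fibre through $(\xi,v,y)$ produces a strong stable fibre on the hyperplane $B(\xi,v)$, with the $x$-coordinate transformed by $\rho_{k(\xi)}$. On the base, this means that the projection $\Pi^{ss}$ intertwines $F$ on $[0,1]^2\times\mathbb{R}$ with a skew map over $B$. First I would write down precisely how $\mu^\ext$ (the $F$-invariant lift of $\nu^\ext$) decomposes under $\Pi^{ss}$, namely via Proposition \ref{prop:decomposition_ext}, and then push this decomposition forward by $F$, using that $\mu^\ext$ is $F$-invariant. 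Comparing with the decomposition at the image point $B(\xi,x)$ and invoking the uniqueness of conditional distributions (Lemma \ref{lem:ext_thm}) should identify $\nu^\ext_{q\circ B^{-1}(\xi,x)}$ with a suitably reweighted and shifted copy of $\nu^\ext_{q(\xi,x)}$.

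More concretely, I would work at the level of the symbolic measures $\tilde\nu^\ext_{(\xi,z)}$ on $\{0,\dots,\ell-1\}^{\mathbb{N}}$, since they are the objects the cylinder values \eqref{eq:nu_def_ext} are defined on. The map $B$ acts on codings essentially by deleting the first symbol (or prepending one, depending on direction), so the cylinder $Z_{(\omega_1,\dots,\omega_N)}$ pulls back under this operation to cylinders with one more coordinate prescribed, which is exactly the source of the index shift $N\mapsto N+1$ and the appearance of $I_1(x)$ in the denominator: the conditional measure of $I_1(x)$ is precisely the normalising constant absorbing the symbol that gets cut off. I would verify the relation first on cylinder sets $I_N(\tau(x))$, where it reduces to the Radon–Nikodym derivative identity \eqref{eq:density_ext} transported along $F$, and then extend to all of $\mathcal{S}_\tau$ by additivity and finally to all Borel sets by Lemma \ref{lem:approximation}. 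The "$\nu^\ext$-a.a. $(\xi,x)$" qualifier is handled by noting that $\nu^\ext$ is $B$-invariant, so sets of full measure are preserved under $B^{-1}$, and the exceptional set where the Radon–Nikodym derivatives fail to behave well can be discarded.

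The main obstacle I expect is bookkeeping the base-point transformation correctly: the conditional measure $\nu^\ext_{(\xi,z)}$ is indexed by the \emph{projected} point $z=q_\xi(x)=\pi^{ss}_\xi(x,W(x))$ on the $\{0\}\times\mathbb{R}$ fibre, not by $x$ itself, so one must check that $q\circ B^{-1}(\xi,x)$ is indeed the correct label for the image fibre — this requires combining \eqref{eq:lss_inv}, the definition of $q$, and the identity \eqref{eq:W_inv_F} that $W$ is carried to $W$ by $F$. A secondary subtlety is that the densities $\rho^\ext_{(\omega_1,\dots,\omega_N)}$ are only defined $\mu^\ext\circ(\Pi^{ss})^{-1}$-a.e., so the pushforward argument must be phrased in terms of integrated identities (as in the proof of Proposition \ref{prop:decomposition_ext}) rather than pointwise, and only at the end can one pass to an almost-everywhere pointwise statement. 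Once the intertwining is set up cleanly, the index shift and the $I_1(x)$ normalisation fall out automatically.
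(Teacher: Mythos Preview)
Your proposal is correct and takes essentially the same approach as the paper: use the $F$-invariance of $\mu^\ext$, the intertwining \eqref{eq:lss_inv} and \eqref{eq:W_inv_F}, and uniqueness of conditional distributions to identify the two families of fibre measures, then read off the cylinder values. The paper formalises your ``push forward the decomposition and compare'' step by explicitly introducing the refined partition $F\eta := \{ F(\eta(F^{-1}(\xi,x,y)))\} = \{\eta(\xi,x,y)\cap([0,1]\times R_{k(x)})\}$ and exhibiting both $\bigl(\delta_\xi\otimes \mu^{F\eta}_{\Pi^{ss}(\xi,x,y)}\bigr)$ and $\bigl((\delta_{\rho_{k(x)}(\xi)}\otimes\mu^\ext_{\Pi^{ss}\circ F^{-1}(\xi,x,y)})\circ F^{-1}\bigr)$ as conditional distributions of $\mu^\ext$ with respect to $F\eta$; your bookkeeping worries about the label $q\circ B^{-1}(\xi,x)$ are handled automatically once this partition is written down. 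One small slip: you cite Lemma~\ref{lem:ext_thm} for uniqueness of conditional distributions, but that lemma is the Carath\'eodory extension on cylinders; the uniqueness you need is the standard Rokhlin disintegration uniqueness, which the paper invokes implicitly.
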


\begin{proof}
Let $ \eta ( \xi , x , y ) := ( (\Pi) ^{ss} ) ^{-1} ( \Pi ^{ss} ( \xi , x , y ) ) $ and $ \eta := \left\{ \eta ( \xi , x , y) : ( \xi , x , y) \in [0,1] ^2 \times \mathbb{R} \right\} $.
Further, let
\[
	F \eta ( \xi , x , y)	 := 	 F \left( \eta ( F ^{-1} (\xi,x,y)) \right) =  \eta ( \xi , x , y ) \cap \left( [0,1] \times R _{k (x)} \right)
\]
and $ F \eta  := \left\{  F \eta ( \xi , x , y ) : ( \xi , x ) \in [0,1] ^2 \times \mathbb{R} \right\} $.
We define
\[
	\mu ^{F \eta} _{(\xi, z)} ( A ) := \frac{\mu _{(\xi, z)} ^\ext \left(   R _{k (x)} \cap A   \right) }{\mu _{(\xi, z)} ^\ext \left( R _{k (x)} \right) }
\]
for $ (\xi, z ) \in [0,1] \times \mathbb{R} $ and $ A \in \mathcal{B} ( [0,1] \times \mathbb{R} ) $.
By virtue of the $F$-invariance of $ \mu ^\ext $ it is straightforward to verify that both
\[
 \left( \delta _\xi \otimes \mu ^{F \eta} _{\Pi ^{ss} (\xi, x, y)} \right) _{(\xi, x , y) \in [0,1] ^2 \times \mathbb{R}} \quad \mbox{ and } \quad  \left( \left( \delta _{\rho _{k(x)} (\xi)} \otimes \mu _{\Pi ^{ss} \circ F ^{-1} (\xi, x, y)} ^\ext  \right) \circ F ^{-1} \right) _{(\xi, x , y) \in [0,1] ^2 \times \mathbb{R}}
 \]
are conditional probability distributions of $ \mu ^\ext $ with respect to $  F \eta $.

Thus due to their uniqueness we have for $ \nu ^\ext $-a.a. $ ( \xi ,x  ) $ that
\begin{eqnarray*}
	\mu ^{F \eta} _{q (\xi , x )} \left( R _{\omega _0 , \omega _1, \ldots , \omega _N)} \right)	& = &	\delta _\xi \otimes \mu ^{F \eta} _{q (\xi , x )} \left( [0,1] \times R _{( \omega _0 , \omega _1 , \ldots , \omega _N)} \right)	\\
	& = &	\left( \delta _{\rho _{k(x)} (\xi)} \otimes \mu _{q \circ B ^{-1} (\xi, x )} ^\ext  \right) \left( F ^{-1} \left( [0,1] \times R _{(\omega _0 , \omega _1, \ldots , \omega _N)} \right) \right) \\
	& = &	\left( \delta _{\rho _{k(x)} (\xi)} \otimes \mu _{q \circ B ^{-1} (\xi, x )} ^\ext  \right) \left(  I _{\omega _0} \times R _{(\omega _1, \ldots , \omega _N)}  \right) \\
	& = &	\delta _{\rho _{k(x)} (\xi)} ( I _{\omega _0} ) \cdot \mu _{q \circ B ^{-1} (\xi, x )} ^\ext \left( R _{( \omega _1, \ldots , \omega _N)} \right) 
\end{eqnarray*}
holds for all $ \omega _0 , \ldots , \omega _N \in \{ 0 , \ldots , \ell -1 \} $ and $ N \in \mathbb{N} $.
Inserting $ ( \omega _0 , \ldots, \omega _N ) = [x] _{N+1} $ finishes the proof.
\end{proof}

\begin{proposition}	\label{prop:h_ss_conv}
we have
\begin{equation*}
	\lim _{N \rightarrow \infty } \frac{\log \nu _{ (\xi, q _\xi (x))} \left( I _N ( x ) \right)}{N}	 = \lim _{N \rightarrow \infty } \frac{\log \nu _{ (\xi, q _\xi (x))} ^\ext \left( I _N ( x ) \right)}{N}	 =	\int \log \nu _{(\xi, q _\xi ( x ))} ^\ext \left( I _1 (x) \right) \, d \nu ^\ext ( \xi, x ) 	
\end{equation*}
for $ \nu ^\ext $-a.a. $ ( \xi, x )$.
\end{proposition}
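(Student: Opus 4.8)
The plan is to reduce the middle expression to a Birkhoff average and then invoke the ergodic theorem; the two outer expressions follow almost for free. The first equality is essentially automatic: by Proposition \ref{prop:Gibbs_comparison_cond} the density $\nu^\ext_{(\xi, q_\xi(x))}/\nu_{(\xi, q_\xi(x))}$ lies in $[C_\phi^{-6}, C_\phi^6]$ for $\nu^\ext$-a.a. $(\xi,x)$, so for such $(\xi,x)$ the quantities $\log \nu_{(\xi, q_\xi(x))}(I_N(x))$ and $\log \nu^\ext_{(\xi, q_\xi(x))}(I_N(x))$ differ by at most $6\log C_\phi$ for every $N$; dividing by $N$ and letting $N \to \infty$ kills the discrepancy, so it suffices to treat the middle limit and to show it exists.

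For the middle limit I would put $\psi(\xi, x) := \log \nu^\ext_{(\xi, q_\xi(x))}(I_1(x))$. Since $B$ is invertible with $B^{-1}(\xi, x) = (\rho_{k(x)}(\xi), \tau(x))$, the second coordinate of $B^{-j}(\xi, x)$ is $\tau^j(x)$, and Proposition \ref{prop:cond_entropy} can be rewritten as $\log \nu^\ext_{q(B^{-1}(\xi,x))}(I_N(\tau(x))) = \log \nu^\ext_{q(\xi,x)}(I_{N+1}(x)) - \psi(\xi, x)$. A straightforward induction on $N$ then yields the cocycle-type identity
\[
	\log \nu^\ext_{(\xi, q_\xi(x))}(I_N(x)) = \sum_{j=0}^{N-1} \psi\bigl(B^{-j}(\xi, x)\bigr) , \qquad N \in \mathbb{N},
\]
valid for $\nu^\ext$-a.a. $(\xi, x)$.

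Next I would verify $\psi \in L^1(\nu^\ext)$. Certainly $\psi \le 0$. Applying Lemma \ref{lem:ss_conditional_ext} with $f(x, \xi, z) := -\log \nu^\ext_{(\xi, z)}(I_1(x)) \ge 0$ identifies $\int (-\psi)\, d\nu^\ext$ with the $\nu^\ext$-average over $(\xi, x)$ of $\sum_{i=0}^{\ell - 1} \nu^\ext_{(\xi, q_\xi(x))}(I_i)\bigl(-\log \nu^\ext_{(\xi, q_\xi(x))}(I_i)\bigr)$, i.e.\ the Shannon entropy of the finite partition $\{I_0, \ldots, I_{\ell - 1}\}$ with respect to $\nu^\ext_{(\xi, q_\xi(x))}$; this never exceeds $\log \ell$, so $\|\psi\|_{L^1(\nu^\ext)} \le \log \ell < \infty$. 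Finally, $\nu^\ext$ is $B$-invariant by construction and ergodic, since $\nu$ is a Gibbs measure, hence ergodic, and $(B, \nu^\ext)$ is the natural extension of $(\tau, \nu)$. Applying Birkhoff's ergodic theorem to the $\nu^\ext$-preserving map $B^{-1}$ and to $\psi \in L^1(\nu^\ext)$, the sum in the identity above, divided by $N$, converges $\nu^\ext$-a.e.\ to $\int \psi\, d\nu^\ext = \int \log \nu^\ext_{(\xi, q_\xi(x))}(I_1(x))\, d\nu^\ext(\xi, x)$, the asserted value; in particular the middle limit exists.

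I expect the only genuinely delicate point to be the integrability of $\psi$: a priori nothing forbids $\nu^\ext_{(\xi, q_\xi(x))}(I_1(x))$ from being extremely small on a slowly shrinking set, and it is exactly the conditional-entropy bound coming out of Lemma \ref{lem:ss_conditional_ext} that excludes this (and, incidentally, forces $\psi > -\infty$, hence $\nu^\ext_{(\xi, q_\xi(x))}(I_N(x)) > 0$, along $\nu^\ext$-a.e.\ orbit). Once $\psi \in L^1$ is in hand, the cocycle identity and the ergodic-theorem step are routine.
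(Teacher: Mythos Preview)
Your argument is correct and follows the same route as the paper: Proposition~\ref{prop:Gibbs_comparison_cond} for the first equality, and Proposition~\ref{prop:cond_entropy} together with Birkhoff's ergodic theorem for the second. You supply considerably more detail than the paper's two-line proof---in particular the explicit telescoping identity and the Shannon-entropy bound for integrability of $\psi$ (via Lemma~\ref{lem:ss_conditional_ext}, extended from bounded to nonnegative $f$ by monotone convergence), a point the paper leaves implicit.
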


\begin{proof}
The second equality follows from Proposition \ref{prop:cond_entropy} due to Birkhoff ergodic theorem, whereas the first one can be derived by Proposition \ref{prop:Gibbs_comparison_cond}.
\end{proof}

In view of Proposition \ref{prop:h_ss_conv} it is natural to define the conditional entropy with respect to the the strong stable fibres as
\begin{equation}
		h ^{ss} _\mu 	:=	- \int \log \nu _{(\xi, q _\xi ( x ))} ^\ext \left( I _1 (x) \right) \, d \nu ^\ext ( \xi, x ) .	\label{eq:h_ss}
\end{equation}

\subsection{Dimension and entropy formula}
We shall state and prove several formulas.

The following proposition is a version of the maximal inequality.
Note that this does not follow from the classical Besicovitch covering theorem directly since the distance from $ q _\xi (x) $ to the centre of $ H _r (\xi, x ) $ depends not only on $ \xi $ but also on $ x $.

\begin{proposition}	\label{prop:Hardy}
There is a $ C _h > 0 $ such that
\[
	\nu \left\{ x \in [0,1]  : \sup _{r > 0}  \frac{1}{ \nu \circ q _\xi ^{-1}  ( H _r ( \xi , x ))}   \int _{H _r ( \xi , x  )} \, g \, d \left( \nu \circ q _\xi ^{-1} \right)  > \lambda  \right\} \leqslant \frac{C _h}{\lambda} \int g \, d \left( \nu \circ q _\xi ^{-1} \right)
\]
holds for all $ \xi \in [0,1] $, $ g \in L ^1 _{\nu \circ q _\xi ^{-1}} $ and $ \lambda > 0 $, where $ H _r ( \xi ,x ) := \pi ^{ss} _\xi \left( \Sigma _r ( \xi , x ) \right) $.
\end{proposition}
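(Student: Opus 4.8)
The plan is to reduce the inequality to the one-dimensional Besicovitch covering theorem (Lemma~\ref{lem:Besicovitch}), after re-indexing the family of sets $H_r(\xi,x)$ by their actual centres. We may assume $g\geqslant 0$, since only $|g|$ enters the maximal function, and we abbreviate $\bar\nu:=\nu\circ q_\xi^{-1}$.

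First I would pin down the geometry of $H_r(\xi,x)$. Since two strong stable fibres over the $\xi$-hyperplane are either disjoint or identical (uniqueness for the defining initial value problem), every $(v,y)\in\Sigma_r(\xi,x)$ lies on the same fibre as $(\xi,x,y')$ with $y':=\ell^{ss}_{(\xi,v,y)}(x)$, so $\pi_\xi^{ss}(v,y)=\pi_\xi^{ss}(x,y')$ with $|y'-W(x)|\leqslant r$, and conversely every such $y'$ arises this way; hence $H_r(\xi,x)=\{\pi_\xi^{ss}(x,y'):|y'-W(x)|\leqslant r\}$. As $y'\mapsto\pi_\xi^{ss}(x,y')$ is continuous and, by Proposition~\ref{prop:l_parallel}, strictly monotone, $H_r(\xi,x)$ is an interval, and Proposition~\ref{prop:l_parallel} (at base point $0$, against $y'=W(x)$) gives $B_{C_s^{-1}r}(q_\xi(x))\subseteq H_r(\xi,x)\subseteq B_{C_s r}(q_\xi(x))$. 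Thus $H_r(\xi,x)$ behaves like a ball \emph{centred at $q_\xi(x)$}, not at $x$ --- which is precisely why Lemma~\ref{lem:Besicovitch} cannot be applied to the family indexed by $x\in[0,1]$.

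Next, let $E$ be the set on the left-hand side; for each $x\in E$ fix $r_x>0$ with $\int_{H_{r_x}(\xi,x)}g\,d\bar\nu>\lambda\,\bar\nu(H_{r_x}(\xi,x))$. I would then re-index: put $Z:=q_\xi(E)\subseteq\mathbb{R}$, which is bounded since $W$, hence $q_\xi$, is bounded; choose for each $z\in Z$ some $x_z\in E$ with $q_\xi(x_z)=z$, and set $H^z:=H_{r_{x_z}}(\xi,x_z)$. By the previous step $\{H^z\}_{z\in Z}$ is a family of intervals satisfying conditions (a) and (b) of Lemma~\ref{lem:Besicovitch} with the uniform constant $M=C_s^2$ (condition (b) being automatic, each $H^z$ being an interval containing its index point $z$). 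Lemma~\ref{lem:Besicovitch} then extracts a countable subfamily $(H^{z_k})_k$ covering $Z$ with overlap at most $\theta_{1,C_s^2}$. Since $E\subseteq q_\xi^{-1}(Z)$, passing to a Borel hull of $Z$ gives $\nu(E)\leqslant\bar\nu^*(Z)$, and therefore
\[
	\nu(E)\;\leqslant\;\bar\nu^*(Z)\;\leqslant\;\sum_k\bar\nu(H^{z_k})\;<\;\frac1\lambda\sum_k\int_{H^{z_k}}g\,d\bar\nu\;=\;\frac1\lambda\int\Bigl(\sum_k\mathbf{1}_{H^{z_k}}\Bigr)g\,d\bar\nu\;\leqslant\;\frac{\theta_{1,C_s^2}}{\lambda}\int g\,d\bar\nu ,
\]
so the claim holds with $C_h:=\theta_{1,C_s^2}$.

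The one point that genuinely needs care is the observation that $H_r(\xi,x)$ is centred at $q_\xi(x)$ rather than at $x$; this is what dictates re-indexing the covering family by the points of $q_\xi(E)\subseteq\mathbb{R}$. A secondary and harmless technicality is measurability (of the maximal function, and of $q_\xi(E)$ --- recall $W$, and hence $q_\xi$, may be nowhere continuous), which is handled by working with outer measure together with the fact that $q_\xi$ pushes $\nu$ forward to $\bar\nu$; the remaining steps --- the choice of $r_x$, the covering, and the passage from a sum of measures to $\theta_{1,C_s^2}$ times the integral via bounded overlap --- are routine.
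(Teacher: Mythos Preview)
Your proof is correct and follows essentially the same route as the paper: re-index the family $\{H_r(\xi,x)\}$ by points $z\in q_\xi(E)$, use Proposition~\ref{prop:l_parallel} to verify the ball-like hypotheses of Lemma~\ref{lem:Besicovitch} with $M=C_s^2$, extract a subcover, and bound $\nu(E)$ via $E\subseteq q_\xi^{-1}\bigl(\bigcup_k H^{z_k}\bigr)$. The only cosmetic differences are that the paper invokes conclusion~(3) of Lemma~\ref{lem:Besicovitch} (the $\xi_{1,C_s^2}$ disjoint families) rather than conclusion~(2) (bounded multiplicity $\theta_{1,C_s^2}$), and that you spell out more carefully why $H_r(\xi,x)$ is an interval sandwiched between balls around $q_\xi(x)$ --- a point the paper states without justification.
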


\begin{proof}
Let $ \xi \in [0,1] $, $ g \in L ^1 _{\nu \circ q _\xi ^{-1}} $ and $ \lambda > 0 $ be fixed.
We consider
\[
	E :=  \left\{  x \in [0,1] : \sup _{r > 0}  \frac{1}{ \nu \circ q _\xi ^{-1}  ( H _r ( \xi , x ))}  \int _{H _r ( \xi , x )} \, g \, d \left(  \nu \circ q _\xi ^{-1}  \right)  > \lambda \right\} \mbox{, and}
\]
\[
	\mathcal{F} _z := \left\{ ( x , r ) \in q _\xi ^{-1} (z) \times (0, \infty ) : \frac{1}{ \nu \circ q _\xi ^{-1}  ( H _r ( \xi , x ))}  \int _{H _r ( \xi , x )} \, g \, d \left(  \nu \circ q _\xi ^{-1}  \right)  > \lambda \right\}
\]
for $ z \in \mathbb{R} $.
As $ \mathcal{F} _z \neq \emptyset $ for each $ z \in q _\xi ( E ) $, by the axiom of choice we can parametrise $ z \mapsto ( x _z, r _z ) $ so that $ (x _z, r _z ) \in \mathcal{F} _z $.
Let $ H ( z ) := H _{r _z} ( \xi, x _z ) $.
Since we have $ B _{ C _s ^{-1} \, r _z } (z) \subseteq H (z) \subseteq B _{  C _s \, r _z } ( z ) $ for each $ z \in q _\xi ( E ) $ by Proposition \ref{prop:l_parallel}, $ ( H (z) ) _{z \in q_\xi ( E ) } $ is a cover of $ q _\xi ( E ) $, which satisfies the assumptions (a) and (b) of Lemma \ref{lem:Besicovitch} with $ M := C _s ^2 $.
Thus by that lemma there is a subcover $ ( H ( z _i ) ) _{i \in J} $ with $ J \subseteq \mathbb{N} $ satisfying (1)-(3) of the lemma.
Since $ q _\xi ( E ) \subseteq \bigcup _{i \in J} H ( z _i ) $ by (1), we have by (3)
\begin{eqnarray*}
	\nu   ( E )	 \quad \leqslant \quad 	\nu  \left( \bigcup _{i \in J}  q _\xi ^{-1} H ( z _i ) \right) 
		& \leqslant &	\sum _{i \in J}  \nu \circ q _\xi ^{-1}  \left( H ( z _i ) \right) \\
		& \leqslant &	\frac{1}{\lambda} \sum _{i \in J} \int _{H (z _i)} g \, d \left(  \nu \circ q _\xi ^{-1}  \right) \quad \leqslant \quad	\frac{\xi _{1 , M}}{\lambda} \int g \, d \left(  \nu \circ q _\xi ^{-1}  \right) .
\end{eqnarray*}
\end{proof}

\begin{proposition}	\label{prop:mu_stabil}
For any $ \xi \in [0,1] $ and $ A \in \mathcal{B} \left( [0,1] \times \mathbb{R} \right) $ we have that
\[
	\lim _{r \rightarrow 0} \frac{\mu ( \Sigma _r ( \xi, x) \cap A )}{ \mu  ( \Sigma _r ( \xi , x ))} = \lim _{r \rightarrow 0} \frac{\mu  ( B ^T _{\xi ,r} \left( q _\xi (x) ) \cap A \right)}{ \mu \left( B ^T _{\xi, r} ( q _\xi (x) ) \right)}
\]
holds for $ \nu  $-a.a. $ x$.
\end{proposition}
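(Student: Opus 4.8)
The plan is to recognise both quotients in the statement as averages of one and the same $ L ^1 $-function, taken with respect to the measure $ m := \nu \circ q _\xi ^{-1} = \mu \circ ( \pi ^{ss} _\xi ) ^{-1} \in \mathcal{P} ( \mathbb{R} ) $, over two families of shrinking neighbourhoods of the point $ q _\xi (x) $, and then to show each average tends to the value of that function at $ q _\xi (x) $. Fixing $ \xi $ and $ A $, I would put $ g ( z ) := \mu _{(\xi, z)} ( A ) $, so that $ 0 \leqslant g \leqslant 1 $ and hence $ g \in L ^1 _m $. The observations needed are that each $ \mu _{(\xi, z)} $ is carried by the strong stable fibre $ ( \pi ^{ss} _\xi ) ^{-1} ( z ) $; that $ \Sigma _r ( \xi, x ) $, being a union of whole strong stable fibres, equals $ ( \pi ^{ss} _\xi ) ^{-1} ( H _r ( \xi, x ) ) $ with $ q _\xi (x) \in H _r ( \xi, x ) $; and that $ B ^T _{\xi, r} ( q _\xi (x) ) = ( \pi ^{ss} _\xi ) ^{-1} ( [ q _\xi (x) - r , q _\xi (x) + r ] ) $ by definition. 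Feeding these into the disintegration of Proposition \ref{prop:decomposition} (so $ \mu _{(\xi, z)} ( \Sigma _r ( \xi, x ) \cap A ) = \mathbf{1} _{H _r ( \xi, x )} ( z ) \, \mu _{(\xi, z)} ( A ) $, and similarly for $ B ^T $) and using the change of variables $ m = \nu \circ q _\xi ^{-1} $, one obtains
\[
	\mu ( \Sigma _r ( \xi, x ) \cap A ) = \int _{H _r ( \xi, x )} g \, d m , \quad \mu ( \Sigma _r ( \xi, x ) ) = m ( H _r ( \xi, x ) ) ,
\]
and the analogous identities with $ [ q _\xi (x) - r , q _\xi (x) + r ] $ in place of $ H _r ( \xi, x ) $. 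Hence it suffices to prove, for $ \nu $-a.a.\ $ x $, that $ \frac{1}{m ( H _r ( \xi, x ) )} \int _{H _r ( \xi, x )} g \, d m $ and $ \frac{1}{m ( B _r ( q _\xi (x) ) )} \int _{B _r ( q _\xi (x) )} g \, d m $ both converge to $ g ( q _\xi (x) ) $ as $ r \rightarrow 0 $.

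The ball version is immediate: Lemma \ref{lem:Boreldensity} applied to $ m $ on $ \mathbb{R} $ with $ H _\delta ( z ) = B _\delta ( z ) $ gives the convergence for $ m $-a.a.\ $ z $, which transfers to $ \nu $-a.a.\ $ x $ along $ q _\xi $. For the $ H _r $-version one cannot quote that lemma directly, since the neighbourhoods $ H _r ( \xi, x ) $ are parametrised by $ x $ and not by $ z = q _\xi (x) $; instead I would re-run its proof with $ ( H _r ( \xi, x ) ) _x $ replacing balls and with Proposition \ref{prop:Hardy} replacing the Besicovitch maximal inequality. The set $ \mathcal{D} \subseteq L ^1 _m $ of $ h $ for which $ \frac{1}{m ( H _r ( \xi, x ) )} \int _{H _r ( \xi, x )} h \, d m \rightarrow h ( q _\xi (x) ) $ holds $ \nu $-a.e.\ contains $ C _b ( \mathbb{R} ) $, because $ q _\xi (x) \in H _r ( \xi, x ) \subseteq B _{C _s r} ( q _\xi (x) ) $ by Proposition \ref{prop:l_parallel} (the squeeze already used in the proof of Proposition \ref{prop:Hardy}), so $ H _r ( \xi, x ) $ collapses to $ \{ q _\xi (x) \} $. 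And $ \mathcal{D} $ is $ L ^1 _m $-closed: for $ h _k \in C _b ( \mathbb{R} ) $ with $ h _k \rightarrow h $ in $ L ^1 _m $, the quantity $ \limsup _{r \rightarrow 0} \bigl| \frac{1}{m ( H _r ( \xi, x ) )} \int _{H _r ( \xi, x )} h \, d m - h ( q _\xi (x) ) \bigr| $ is at most $ \sup _{r > 0} \frac{1}{m ( H _r ( \xi, x ) )} \int _{H _r ( \xi, x )} | h - h _k | \, d m + | ( h - h _k ) ( q _\xi (x) ) | $, and Proposition \ref{prop:Hardy} together with Chebyshev's inequality for $ ( h - h _k ) \circ q _\xi $ (whose $ \nu $-law is the $ m $-law of $ h - h _k $) forces this $ \limsup $ to vanish $ \nu $-a.e.\ after $ k \rightarrow \infty $. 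Thus $ \mathcal{D} = L ^1 _m \ni g $, and combining with the ball version both sides equal $ g ( q _\xi (x) ) = \mu _{(\xi, q _\xi (x))} ( A ) $ for $ \nu $-a.a.\ $ x $.

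The hard part is precisely this $ H _r $-version: the sets $ H _r ( \xi, x ) $ are not balls and, more to the point, are indexed by $ x $ rather than by the point $ q _\xi (x) $ at which the density is read off, so no off-the-shelf differentiation theorem applies — which is exactly why the tailored maximal inequality of Proposition \ref{prop:Hardy} was isolated beforehand. A routine point to check en route is that $ m ( H _r ( \xi, x ) ) > 0 $ for small $ r $ and $ \nu $-a.a.\ $ x $, and that $ \mathrm{diam} \, H _r ( \xi, x ) \rightarrow 0 $; both follow from the two-sided inclusion $ B _{C _s ^{-1} r} ( q _\xi (x) ) \subseteq H _r ( \xi, x ) \subseteq B _{C _s r} ( q _\xi (x) ) $ supplied (via Proposition \ref{prop:l_parallel}) in the proof of Proposition \ref{prop:Hardy}.
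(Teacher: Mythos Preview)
Your proof is correct and follows essentially the same route as the paper's: both identify each ratio as an average of a fixed $L^1$-function over a shrinking neighbourhood of $q_\xi(x)$ in $\mathbb{R}$, handle the ball version via the Borel density theorem, and handle the $H_r(\xi,x)$-version by the standard ``continuous functions plus $L^1$-closedness via maximal inequality'' argument powered by Proposition~\ref{prop:Hardy}. The only cosmetic difference is that you take the function to be $g(z)=\mu_{(\xi,z)}(A)$ via the disintegration of Proposition~\ref{prop:decomposition}, whereas the paper writes it as the Radon--Nikodym derivative $\rho_A = d\bigl(\mu((\pi^{ss}_\xi)^{-1}(\cdot)\cap A)\bigr)/d(\mu\circ(\pi^{ss}_\xi)^{-1})$; these agree $m$-a.e., so the arguments are interchangeable.
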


\begin{proof}
Let $ \xi $ and $ A $ be fixed.
Then there exists the Radon–Nikodym derivative
\[
	\rho _A := \frac{d \left( \mu  \left( (  \pi ^{ss} _\xi ) ^{-1} ( \, \cdot \, )  \cap A \right) \right) }{d \left( \mu \circ ( \pi ^{ss} _\xi ) ^{-1} \right)} .
\]
Note that $ \mu \circ ( \pi ^{ss} _\xi ) ^{-1} = \nu  \circ q _\xi ^{-1} $.
Since
\begin{eqnarray*}
	\lim _{r \rightarrow 0} \frac{\mu  ( B ^T _{\xi , r} \left( q _\xi (x) ) \cap A \right)}{ \mu \left( B ^T _{\xi, r} ( q _\xi (x) ) \right)}	& = &	\lim _{r \rightarrow 0}  \frac{1}{\nu \circ q _\xi ^{-1} ( B _r ( q _\xi (x) ) )}  \int _{B _r ( q _\xi (x) )} \rho _A \, d \left( \nu \circ q _\xi ^{-1} \right) \\
	& = &	\rho _A \circ q _\xi (x)
\end{eqnarray*}
for $ \nu $-a.a. $ x $ by Lemma \ref{lem:Boreldensity}, it suffices to show that the left hand side of the claim also converges to $ \rho _A \circ q _\xi  $.
In order to prove this we need another density theorem.
Let $ g \in L ^1 _{\nu  \circ q _\xi ^{-1}} $.
We claim that
\[
	\lim _{r \rightarrow 0}  \frac{1}{\nu \circ q _\xi ^{-1} ( H _r ( \xi , x ))}  \int _{H _r ( \xi, x )} \, g \, d \left( \nu \circ q _\xi ^{-1} \right) = g \circ q _\xi
\]
holds for $ \nu $-a.a. $ x $, where $ H _r ( \xi ,x ) := \pi ^{ss} _\xi \left( \Sigma _r ( \xi, x ) \right) $.
The proof follows then by taking $ g = \rho _A $.
The following argument mimics the proof of \cite[Lemma 4.1.2]{YL185}.
Since the claim is clearly true for any continuous $ g $, it suffices to show that the set of the functions $ g $ which satisfy the above equality is norm closed in $ L ^1 _{\nu \circ q _\xi ^{-1}} $.
Recall that for an arbitrary $ g \in L ^1 _{\nu \circ q _\xi ^{-1}} $ there are continuous functions $ ( g _k ) _k $ such that $  g _k \rightarrow g   $ in $ L ^1 $.
Furthermore, we have then
\begin{eqnarray*}
	&&	\limsup _{r \rightarrow 0} \left| \frac{1}{\nu \circ q _\xi ^{-1} ( H _r ( \xi , x ))}  \int _{H _r ( \xi, x )} \, g \, d \left( \nu  \circ q _\xi ^{-1} \right) - g ( q _\xi (x) ) \right|	\\
	& \leqslant &	\sup _{r > 0} \frac{1}{\nu  \circ q _\xi ^{-1} ( H _r ( \xi , x ))}  \int _{H _r ( \xi, x )} \, | g - g _k | \, d \left( \nu \circ q _\xi ^{-1} \right) \\
	&&	\quad + \limsup _{r \rightarrow 0} \left| \frac{1}{\nu \circ q _\xi ^{-1} ( H _r ( \xi , x ))}  \int _{H _r ( \xi, x )} \,  g _k  \, d \left( \nu  \circ q _\xi ^{-1} \right) - g _k ( q _\xi (x) ) \right|	+ | g _k ( q _\xi (x) ) - g ( q _\xi ( x ) ) | \\
	&  =& \sup _{r > 0} \frac{1}{\nu \circ q _\xi ^{-1} ( H _r ( \xi , x ))}  \int _{H _r ( \xi, x )} \, | g - g _k | \, d \left( \nu \circ q _\xi ^{-1} \right) + \left| g _k ( q _\xi (x) ) - g ( q _\xi ( x ) ) \right| .
\end{eqnarray*}
Thus it follows that
\begin{eqnarray*}
	&&	\nu \left\{ x \in [0,1] : \limsup _{r \rightarrow 0} \left| \frac{1}{\nu \circ q _\xi ^{-1} ( H _r ( \xi , x ))}  \int _{H _r ( \xi, x )} \, g \, d \left( \nu \circ q _\xi ^{-1} \right) - g ( q _\xi (x) )  \right| > 2 \lambda \right\}  \\
	& \leqslant & \nu \left\{ \sup _{r > 0} \frac{1}{\nu \circ q _\xi ^{-1} ( H _r ( \xi , x ))}  \int _{H _r ( \xi, x )} \, | g - g _k | \, d \left( \nu \circ q _\xi ^{-1} \right)  > \lambda \right\}  + \nu \circ q _\xi ^{-1} \left\{ \left| g _k - g  \right| > \lambda \right\} \\
	& \leqslant &	 \frac{C _h}{\lambda} \| g - g _k \| _{L ^1 } + \frac{1}{\lambda} \| g _k - g \| _{L ^1} \quad \stackrel{k \rightarrow \infty}{\longrightarrow} 0 
\end{eqnarray*}
for each $ \lambda > 0 $ by Proposition \ref{prop:Hardy} and Markov's inequality.
Thus the claim is proved.
\end{proof}

We introduce the quantity
\[
	d _{M , r} (\xi , x ) := \frac{\mu   \left( \Sigma _r ( \xi , x ) \cap R _M (x) \right)}{\mu   \left( \Sigma _r ( \xi , x ) \right)}
\]
for $ M \in \mathbb{N} $, $ r > 0 $ and $ (\xi, x) \in [0,1] ^2 $.

\begin{proposition}		\label{prop:d_conv}
Let $ \xi \in [0,1] $ and $ M \in \mathbb{N} $.
Then we have
\[
	\lim _{r \rightarrow 0} d _{M, r} ( \xi , x ) = \nu _{\left( \xi , q _\xi (x) \right)} ( I _M (x) )
\]
for $\nu $-a.a. $ x \in [0,1] $.
\end{proposition}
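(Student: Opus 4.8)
The plan is to deduce the statement from Proposition~\ref{prop:mu_stabil}; the only subtlety is that there the set with which one intersects is fixed, whereas $R_M(x)$ varies with $x$, but this is harmless because $R_M(x)$ takes only finitely many values. Fix $\xi\in[0,1]$ and $M\in\mathbb{N}$. For each word $(\omega_1,\ldots,\omega_M)\in\{0,\ldots,\ell-1\}^M$, Proposition~\ref{prop:mu_stabil} applied with $A=R_{(\omega_1,\ldots,\omega_M)}$ gives, for $\nu$-a.a.\ $x$,
\[
	\lim_{r\to 0}\frac{\mu\left(\Sigma_r(\xi,x)\cap R_{(\omega_1,\ldots,\omega_M)}\right)}{\mu\left(\Sigma_r(\xi,x)\right)}
	=\lim_{r\to 0}\frac{\mu\left(B^T_{\xi,r}(q_\xi(x))\cap R_{(\omega_1,\ldots,\omega_M)}\right)}{\mu\left(B^T_{\xi,r}(q_\xi(x))\right)} .
\]
Since there are only $\ell^M$ words, the union of the corresponding exceptional sets is still $\nu$-null, so this identity holds simultaneously for all $(\omega_1,\ldots,\omega_M)$ outside a single $\nu$-null set.

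Next I would identify the right-hand side. By \eqref{eq:density} and \eqref{eq:nu_def}, for $\mu\circ(\pi^{ss}_\xi)^{-1}$-a.a.\ $z$ the corresponding limit at $z$ equals $\rho_{\xi,(\omega_1,\ldots,\omega_M)}(z)=\tilde\nu_{(\xi,z)}(Z_{(\omega_1,\ldots,\omega_M)})$, and moreover, as recorded in the proof of Proposition~\ref{prop:decomposition}, $\tilde\nu_{(\xi,z)}(Z_{(\omega_1,\ldots,\omega_M)})=\nu_{(\xi,z)}(I_{(\omega_1,\ldots,\omega_M)})$ for $\mu\circ(\pi^{ss}_\xi)^{-1}$-a.a.\ $z$ (this uses that $\chi^{-1}(I_{(\omega_1,\ldots,\omega_M)})\triangle Z_{(\omega_1,\ldots,\omega_M)}$ is finite together with Proposition~\ref{prop:mu_atomfree}). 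Since $\mu\circ(\pi^{ss}_\xi)^{-1}=\nu\circ q_\xi^{-1}$ (noted in the proof of Proposition~\ref{prop:mu_stabil}), a statement valid for $\nu\circ q_\xi^{-1}$-a.a.\ $z$ holds with $z=q_\xi(x)$ for $\nu$-a.a.\ $x$. Intersecting over the finitely many words once more, we conclude that, for $\nu$-a.a.\ $x$,
\[
	\lim_{r\to 0}\frac{\mu\left(\Sigma_r(\xi,x)\cap R_{(\omega_1,\ldots,\omega_M)}\right)}{\mu\left(\Sigma_r(\xi,x)\right)}=\nu_{(\xi,q_\xi(x))}\left(I_{(\omega_1,\ldots,\omega_M)}\right)
\]
for every $(\omega_1,\ldots,\omega_M)\in\{0,\ldots,\ell-1\}^M$.

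Finally, specialising to the word $(\omega_1,\ldots,\omega_M)=[x]_M$, for which $R_{(\omega_1,\ldots,\omega_M)}=R_M(x)$ and $I_{(\omega_1,\ldots,\omega_M)}=I_M(x)$, gives $\lim_{r\to 0}d_{M,r}(\xi,x)=\nu_{(\xi,q_\xi(x))}(I_M(x))$ for $\nu$-a.a.\ $x$, as claimed.

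The main obstacle is the first reduction: one has to circumvent the fact that Proposition~\ref{prop:mu_stabil} is phrased for a single fixed measurable set, while the set $R_M(x)$ appearing here follows the base point. The resolution — replacing $R_M(x)$ by the finite collection $\{R_{(\omega_1,\ldots,\omega_M)}\}_{(\omega_1,\ldots,\omega_M)}$ and taking a finite union of null sets before specialising to $[x]_M$ — is the only genuinely new point; the bookkeeping passing between almost-everywhere statements in $z$ (w.r.t.\ $\nu\circ q_\xi^{-1}$) and in $x$ (w.r.t.\ $\nu$), and the coding-versus-interval identification, are routine, the latter being already established in the proof of Proposition~\ref{prop:decomposition}.
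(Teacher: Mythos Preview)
Your proof is correct and follows essentially the same approach as the paper: apply Proposition~\ref{prop:mu_stabil} together with \eqref{eq:density} and \eqref{eq:nu_def} to each of the finitely many fixed sets $R_{(\omega_1,\ldots,\omega_M)}$, then insert $(\omega_1,\ldots,\omega_M)=[x]_M$. Your write-up is in fact more explicit than the paper's about the finite-union-of-null-sets step and the passage from $\nu\circ q_\xi^{-1}$-a.e.\ $z$ to $\nu$-a.e.\ $x$, but these are precisely the details the paper leaves implicit.
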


\begin{proof}
From \eqref{eq:density} and \eqref{eq:nu_def} together with Proposition \ref{prop:mu_stabil} we have for $ \nu $-a.a. $ x $ that
\[
	\lim _{r \rightarrow 0} \frac{\mu ( \Sigma _r ( \xi, x) \cap R _{(\omega _1, \ldots , \omega _M )} )}{ \mu  ( \Sigma _r ( \xi , x ))} = \nu _{\left( \xi , q _\xi (x) \right)} ( I _{(\omega _1, \ldots , \omega _M )} )
\]
for all $ \omega _1 , \ldots , \omega _M \in \{ 0 , \ldots , \ell - 1 \} $.
Now, inserting $ (\omega _1, \ldots , \omega _M ) = [x] _M $ finishes the proof.
\end{proof}

\begin{proposition}	\label{prop:maximal_ineq}
We have
\[
	\int \sup _{r>0} \left| \log d _{M, r} ( \xi , x )  \right| d \nu ^\ext (\xi, x) < \infty
\]
for each $ M \in \mathbb{N} $.
\end{proposition}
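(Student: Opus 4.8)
The plan is as follows. Since $0 < d_{M,r}(\xi,x) \le 1$ — the upper bound is immediate, and the lower bound will fall out of the Gibbs estimate described below — we have $|\log d_{M,r}| = -\log d_{M,r} \ge 0$, so it suffices to bound $\int \sup_{r>0}(-\log d_{M,r})\,d\nu^\ext$. First I would fix a uniform $r_0 > 0$ so large that $\Sigma_r(\xi,x) \supseteq \mathrm{supp}(\mu) = \mathrm{graph}(W)$ for all $(\xi,x)$ and all $r \ge r_0$; this is possible because $W$, the fibres $\ell^{ss}_{(\cdot)}$ and the maps $q_\xi$ are uniformly bounded on the relevant compact sets (boundedness of $W$ and Proposition \ref{prop:l_parallel}). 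For $r \ge r_0$ one then has $d_{M,r}(\xi,x) = \mu(R_M(x)) = \nu(I_M(x))$, and \eqref{eq:Gibbs} gives the uniform bound $-\log\nu(I_M(x)) \le \log C_\phi + M(\|\phi\|_\infty + |P(\phi)|)$. So the contribution of $r \ge r_0$ to the supremum is bounded, and, using Proposition \ref{prop:Gibbs_comparison} to write $\nu^\ext = \int \delta_\xi \otimes \nu^+_\xi \, d\nu^-(\xi)$ with $d\nu^+_\xi/d\nu \le C_\phi^3$, it remains to bound $\sup_{\xi\in[0,1]} \int_{[0,1]} \sup_{0<r<r_0}(-\log d_{M,r}(\xi,x))\,d\nu(x)$.

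For fixed $\xi$ I would set up two competing estimates of $d_{M,r}(\xi,x)$ for $x$ in a depth-$M$ cylinder $J = I_M(x)$. On the one hand, using $\Sigma_r(\xi,x) = (\pi^{ss}_\xi)^{-1}(H_r(\xi,x))$ and $\mu\circ(\pi^{ss}_\xi)^{-1} = \nu\circ q_\xi^{-1}$ (both easy from the definitions and Proposition \ref{prop:l_parallel}), one rewrites
\[
	d_{M,r}(\xi,x) = 1 - \frac{1}{\nu\circ q_\xi^{-1}(H_r(\xi,x))} \int_{H_r(\xi,x)} g_{\xi,J^c}\, d(\nu\circ q_\xi^{-1}),
\]
where $g_{\xi,J^c}$ is the Radon–Nikodym derivative of $(\nu|_{J^c})\circ q_\xi^{-1}$ with respect to $\nu\circ q_\xi^{-1}$, so $0 \le g_{\xi,J^c} \le 1$ with $\int g_{\xi,J^c}\, d(\nu\circ q_\xi^{-1}) = \nu(J^c)$. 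On the other hand, Proposition \ref{prop:Moss} together with the near-parallelism of the strong stable fibres (Proposition \ref{prop:l_parallel}) shows that, for $r$ below a uniform threshold, the cylinder $I_{N^\ast(r,x)}(x)$ matched to the transversal scale — i.e.\ with $\lambda^{N^\ast(r,x)}(x) \asymp r$ — is contained both in $I_M(x)$ and in $q_\xi^{-1}(H_r(\xi,x))$, so that $d_{M,r}(\xi,x) \ge \nu(I_{N^\ast(r,x)}(x)) \ge C_\phi^{-1} e^{-N^\ast(r,x)(\|\phi\|_\infty + |P(\phi)|)}$. This bound controls $-\log d_{M,r}$ as long as $r$ is not too small; for $r$ small one instead invokes the pointwise convergence $d_{M,r}(\xi,x) \to \nu_{(\xi,q_\xi(x))}(I_M(x))$ from Proposition \ref{prop:d_conv}, whose limit is bounded below by a constant $c_M = c_M(C_\phi)>0$ thanks to the multiplicative structure of the conditional measures (Proposition \ref{prop:cond_entropy}) and Proposition \ref{prop:Gibbs_comparison_cond}, given a uniform lower bound on the one-step conditional $\nu^\ext_{(\xi,q_\xi(x))}(I_1(x))$.

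Finally I would combine these: if $\sup_{0<r<r_0}(-\log d_{M,r}(\xi,x)) > t$ for $t$ large, the two bounds force the existence of an intermediate scale $r$ at which the averaging error $\frac{1}{\nu\circ q_\xi^{-1}(H_r(\xi,x))}\int_{H_r(\xi,x)} g_{\xi,J^c}\, d(\nu\circ q_\xi^{-1}) - g_{\xi,J^c}(q_\xi(x))$ exceeds $\tfrac{1}{2}c_M$. The $\nu$-measure of such $x$ is estimated by Proposition \ref{prop:Hardy} (the maximal inequality for the sets $H_r(\xi,x)$) and the differentiation Lemma \ref{lem:Boreldensity}: writing $g_{\xi,J^c} = g' + g''$ with $g'$ continuous and $\|g''\|_{L^1(\nu\circ q_\xi^{-1})}$ as small as we wish, the averages of $g'$ converge uniformly (since $H_r(\xi,x)\subseteq B_{C_s r}(q_\xi(x))$), while the $g''$-contribution is controlled by Proposition \ref{prop:Hardy} and Markov's inequality. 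Feeding this into the layer-cake identity $\int_J \sup_{0<r<r_0}(-\log d_{M,r})\, d\nu = \int_0^\infty \nu\{x\in J : \sup_{0<r<r_0}(-\log d_{M,r}(\xi,x)) > t\}\,dt$ and summing over the finitely many depth-$M$ cylinders $J$ gives the assertion. The hard part is this last step: Proposition \ref{prop:Hardy} carries a constant $C_h > 1$, so the crude maximal bound on $\nu\{x\in J : \sup_r(-\log d_{M,r}) > t\}$ fails to decay in $t$; the whole point of the Gibbs lower bounds above is to confine the troublesome $x$ to the intermediate-scale regime, where it is the averaging error — not the total transversal mass — that must be small, and there Proposition \ref{prop:Hardy} does the job.
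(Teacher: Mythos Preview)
Your proposal has a real gap in the final ``combining'' step. Even if everything up to there were correct, splitting $g_{\xi,J^c}=g'+g''$ with $g'$ continuous and $\|g''\|_{L^1}$ small is exactly the standard device that proves a.e.\ convergence of averages from a weak-type maximal inequality --- but it does \emph{not} produce any quantitative decay of the tail $\nu\{x\in J:\sup_r(-\log d_{M,r})>t\}$ in $t$. Your argument shows this tail is eventually $\le \tfrac{4(C_h+1)}{c_M}\|g''\|_{L^1}$, which can be made as small as you like, but the threshold in $t$ at which the bound kicks in depends on the modulus of continuity of $g'$ and hence on how small you have already forced $\|g''\|_{L^1}$ to be. You get $\nu\{\ldots>t\}\to 0$, not integrability in $t$; the layer-cake integral still diverges. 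Restricting to $r\le r_t$ (the ``intermediate-scale regime'') does not help, because Proposition~\ref{prop:Hardy} is insensitive to the range of $r$. A second, independent issue is the claimed uniform lower bound $c_M>0$ on $\nu_{(\xi,q_\xi(x))}(I_M(x))$: the paper establishes neither a uniform lower bound on $\nu^\ext_{(\xi,q_\xi(x))}(I_1(x))$ nor, consequently, on the $M$-fold product you invoke via Proposition~\ref{prop:cond_entropy}.

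The paper's proof sidesteps all of this with a direct Besicovitch-type maximal argument that gives exponential decay in one stroke. Fix a cylinder $J=I_{(\omega_1,\ldots,\omega_M)}$, fix $\xi$, and let $E=\{x\in J:\inf_{r>0} d_{M,r}(\xi,x)<e^{-n}\}$. For each $z\in q_\xi(E)$ pick $(x_z,r_z)$ with $x_z\in J$, $q_\xi(x_z)=z$, and $\mu(\Sigma_{r_z}(\xi,x_z)\cap R_J)<e^{-n}\mu(\Sigma_{r_z}(\xi,x_z))$; apply Lemma~\ref{lem:Besicovitch} to the intervals $H(z)=\pi^{ss}_\xi(\Sigma_{r_z}(\xi,x_z))$ exactly as in Proposition~\ref{prop:Hardy} to extract a cover $(H(z_i))_i$ of $q_\xi(E)$ with multiplicity $\le \xi_{1,C_s^2}$. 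The crucial point --- which your approach never exploits --- is that since $E\subseteq J$, one bounds $\nu^+_\xi(E)$ by $\mu^+_\xi(\bigcup_i \Sigma_{r_i}(\xi,x_i)\cap R_J)$, and then (passing to $\mu$ via Proposition~\ref{prop:Gibbs_comparison})
\[
\nu^+_\xi(E)\;\le\;C_\phi^3\sum_i \mu\bigl(\Sigma_{r_i}(\xi,x_i)\cap R_J\bigr)\;<\;C_\phi^3 e^{-n}\sum_i \mu\bigl(\Sigma_{r_i}(\xi,x_i)\bigr)\;\le\;C_\phi^3\,\xi_{1,C_s^2}\,e^{-n}.
\]
Summing over the $\ell^M$ cylinders and integrating in $\xi$ gives $\nu^\ext\{\inf_r d_{M,r}<e^{-n}\}\le C\,\ell^M e^{-n}$, and the layer-cake integral converges. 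The whole trick is that the ratio defining $d_{M,r}$ already hands you the factor $e^{-n}$ on each piece of the cover, so no differentiation theorem, no Gibbs lower bound on $d_{M,r}$, and no information about the limiting conditional measure is needed.
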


\begin{proof}
It suffices to prove $ \sum _{n=1} ^\infty \nu ^\ext \left\{ (\xi, x ) \in [0,1] ^2 : \inf _{r > 0} d _{M, r} (\xi, x) < e ^{-n} \right\} < \infty $.
Observe that
\begin{eqnarray*}
	&&	\nu ^\ext \left\{ ( \xi, x) \in [0,1] ^2 : \inf _{r > 0} d _{M, r} (\xi, x) < e ^{-n} \right\}		\\
	& =	&	\sum _{\omega _1, \ldots , \omega _M \in \{ 0 , \ldots , \ell - 1 \}} \nu ^\ext \left\{ (\xi, x) \in [0,1] \times I _{(\omega _1 , \ldots , \omega _M)} : \inf _{r > 0} \frac{\mu  \left(\Sigma _r ( \xi , x )  \cap R _{(\omega _1 , \ldots , \omega _M)} \right)}{\mu \left( \Sigma _r ( \xi , x )  \right)} < e ^{-n} \right\} .
\end{eqnarray*}
Let $ (\omega _1 , \ldots , \omega _M) $ and $ n $ be fixed.
Further, let $ \xi \in [0,1] $ be any of those for which the first assertion of Proposition \ref{prop:Gibbs_comparison} is true.
Then we consider
\[
	E := \left\{ x \in I _{(\omega _1 , \ldots , \omega _M)} : \inf _{r > 0} \frac{\mu  \left(\Sigma _r ( \xi , x )  \cap R _{(\omega _1 , \ldots , \omega _M)} \right)}{\mu \left( \Sigma _r ( \xi , x )  \right)} < e ^{-n} \right\}  \mbox{, and}
\]
\[
\mathcal{F} _y := \left\{ (x,r) \in q _\xi ^{-1} (y) \cap I _{(\omega _1 , \ldots , \omega _M)} \times ( 0 , \infty ) : \frac{\mu  \left(\Sigma _r ( \xi , x )  \cap R _{(\omega _1 , \ldots , \omega _M)} \right)}{\mu \left( \Sigma _r ( \xi , x )  \right)} < e ^{-n}  \right\} .
\]
Now, exactly the same argument as in the proof of Proposition \ref{prop:Hardy}, where we also named the corresponding objects $ E $ and $ \mathcal{F} _z $, yields a cover $ ( H (z _i ) ) _{i \in J} $ of $ q _\xi ( E ) $ with $ J \subseteq \mathbb{N} $ satisfying the same properties.
By the construction we have $ H ( z _i ) = \pi ^{ss} _\xi ( \Sigma _{r _i } ( \xi , x _i ) ) $ if we define $ x _i := x _{z_i} $ and $ r _i := r _{z _i } $ by using the parametrisation there.
Recall that $ ( x _i , r _i ) \in \mathcal{F} _{z _i} $.
Since this cover satisfies in particular the multiplicity condition (3) of Lemma \ref{lem:Besicovitch} with the constant $ \xi _{1 , C _s ^2} > 0 $, we have
\begin{eqnarray*}
	\nu ^+ _\xi  ( E )	& \leqslant &	\nu ^+ _\xi  \left( q _\xi ^{-1} \left( \bigcup _{i \in J } H (z _i ) \right) \cap I _{(\omega _1 , \ldots , \omega _M)}  \right) \\
	& =&	\mu ^+ _\xi  \left( \bigcup _{i \in J} \Sigma _{ r _i } ( \xi , x _i ) \cap R _{(\omega _1 , \ldots , \omega _M)}  \right) \\
	& \leqslant &	 \sum _{i \in J } \mu ^+ _\xi   \left( \Sigma _{ r _i } ( \xi , x _i ) \cap R _{(\omega _1 , \ldots , \omega _M)} \right) \\
	& \leqslant &	C _\phi ^3 \, \sum _{i \in J } \mu    \left( \Sigma _{ r _i } ( \xi , x _i ) \cap R _{(\omega _1 , \ldots , \omega _M)} \right) \\
	& \leqslant &	C _\phi ^3 \, e ^{-n} \, \sum _{i \in J} \mu \left( \Sigma _{ r _i } ( \xi , x _i ) \right) \\
	& \leqslant &	C _\phi ^3 \, \xi _{1, C _s ^2} \, e ^{-n}  .
\end{eqnarray*}
As this is true for all $ (\omega _1 , \ldots , \omega _M) \in \{ 0, \ldots , \ell -1 \} ^M $ and $ n \in \mathbb{N} $, and for $ \nu ^- $-a.a. $ \xi $, by \eqref{eq:disintegral_vert} we obtain
\begin{eqnarray*}
	&&	\sum _{n=1} ^\infty \nu ^\ext \left\{ ( \xi, x ) \in [0,1] ^2 : \inf _{r > 0} d _r (\xi, x) < e ^{-n} \right\}	\\
	&= &	\sum _{\omega _1, \ldots , \omega _M \in \{ 0 , \ldots , \ell - 1 \}}  \sum _{n=1} ^\infty \int  \nu ^+ _\xi \left\{ x \in I _{(\omega _1 , \ldots , \omega _M)} : \inf _{r > 0} d _r (\xi, x) < e ^{-n} \right\} \, d \nu ^- ( \xi )  \leqslant \frac{C _\phi ^3 \, \xi _{1, C _s ^2} \, e}{e - 1} \, \ell ^M  .
\end{eqnarray*}
\end{proof}

\begin{proposition}	\label{prop:ledrappier_telescoping}
We have
\begin{eqnarray*}
	\mu   \left( \Sigma _{\lambda ^{M N} (x)} ( \xi , x ) \right)	 & = &	 \mu   \left( \Sigma _1  ( B ^{-M N} ( \xi , x ) )  \right) \cdot \prod _{k=0} ^{N-1} \frac{\mu \left( \Sigma _{\lambda ^{M(N-k)} ( \tau ^{M k} (x))} ( B ^{-M k} ( \xi , x ) ) \right)}{\mu   \left( \Sigma _{\lambda ^{M (N-k-1)} ( \tau ^{M (k+1)} (x))} ( B ^{-M ( k+1 )} ( \xi , x ) ) \right)} \\
	& \in &	[C  _\phi ^{-3 N} , C _\phi ^{3N}] \cdot \mu   \left( \Sigma _1  ( B ^{-M N} ( \xi , x ) )  \right) \cdot  \prod _{k=0} ^{N-1} \frac{\nu   ( I _M ( \tau ^{M k}(x)))}{d _{M, \lambda ^{M(N-k)} (\tau ^{M k} (x))} (B ^{- M k} ( \xi , x))}
\end{eqnarray*}
for all $ ( \xi , x ) \in [0,1] ^2 $ and $ M , N \in \mathbb{N} $.
\end{proposition}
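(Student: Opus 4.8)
The plan is to combine a pure telescoping with a single one-step comparison estimate, the latter being where the self-similar structure of $W$ and the Gibbs property enter. For $0\le k\le N$ write $b_k:=\mu\big(\Sigma_{\lambda^{M(N-k)}(\tau^{Mk}(x))}(B^{-Mk}(\xi,x))\big)$. Since $\tau^0=\mathrm{Id}$, $B^0=\mathrm{Id}$ and $\lambda^0\equiv1$ we have $b_0=\mu(\Sigma_{\lambda^{MN}(x)}(\xi,x))$ and $b_N=\mu(\Sigma_1(B^{-MN}(\xi,x)))$, while the product in the statement is exactly $\prod_{k=0}^{N-1}b_k/b_{k+1}=b_0/b_N$; this yields the first displayed line at once. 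For the second line it suffices to prove, for each fixed $k$,
\[
	\frac{b_k}{b_{k+1}}\in[C_\phi^{-3},C_\phi^{3}]\cdot\frac{\nu(I_M(\tau^{Mk}(x)))}{d_{M,\lambda^{M(N-k)}(\tau^{Mk}(x))}(B^{-Mk}(\xi,x))}
\]
and then to multiply the $N$ inclusions. Abbreviate $(\eta,z):=B^{-Mk}(\xi,x)$ (so $z=\tau^{Mk}(x)$), $(\eta',z'):=B^{-M}(\eta,z)=B^{-M(k+1)}(\xi,x)$ (so $z'=\tau^M(z)$ and $\tau^M(\eta')=\eta$), $r:=\lambda^{M(N-k)}(z)$ and $r':=\lambda^{M(N-k-1)}(z')$. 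The cocycle identity for $\lambda^{\bullet}$ gives $r=\lambda^M(z)\,r'$, and by the definition of $d_{M,r}$ we have $b_k=\mu(\Sigma_r(\eta,z))=\mu\big(\Sigma_r(\eta,z)\cap R_M(z)\big)/d_{M,r}(\eta,z)$; hence the inclusion reduces to showing $\mu\big(\Sigma_r(\eta,z)\cap R_M(z)\big)\in[C_\phi^{-3},C_\phi^{3}]\cdot\nu(I_M(z))\cdot\mu\big(\Sigma_{r'}(\eta',z')\big)$.

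The key step is a geometric identity. Let $\psi\colon[0,1]\times\mathbb{R}\to[0,1]\times\mathbb{R}$ send $(x,y)$ to the pair of second and third coordinates of $F^M(\eta',x,y)$; since those coordinates of $F$ invert $G$, the map $\psi$ is the inverse branch of $G^M$ with image $R_M(z)$, and on the graph of $W$ it acts by $(w,W(w))\mapsto(\rho^\ast(w),W(\rho^\ast(w)))$, where $\rho^\ast$ is the inverse branch of $\tau^M$ taking $[0,1]$ bijectively onto $I_M(z)$. I would show $\psi\big(\Sigma_{r'}(\eta',z')\big)=\Sigma_r(\eta,z)\cap R_M(z)$. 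Iterating \eqref{eq:lss_inv}, the map $\psi$ carries the $\eta'$-strong stable fibre through $(\eta',z',y_0)$ onto the $\eta$-strong stable fibre through $F^M(\eta',z',y_0)$, reparametrising the base by $\rho^\ast$; and \eqref{eq:W_inv_F}, \eqref{eq:Fn} together with $\rho^\ast(z')=z$ give $F^M(\eta',z',y_0)=(\eta,z,\lambda^M(z)y_0+W_M(z))$. Using the self-similarity $W(z)=W_M(z)+\lambda^M(z)W(z')$, for $(v,y)$ lying on the $\eta'$-fibre with value $y_0:=\ell^{ss}_{(\eta',v,y)}(z')$ at $z'$ the image $\psi(v,y)$ lies on the $\eta$-fibre whose value at $z$ is $\lambda^M(z)y_0+W_M(z)$, so the distance of that fibre to $W$ at $z$ equals $\lambda^M(z)\,|y_0-W(z')|$; since $r=\lambda^M(z)r'$ this means $(v,y)\in\Sigma_{r'}(\eta',z')\iff\psi(v,y)\in\Sigma_r(\eta,z)$, and together with $\mathrm{image}(\psi)=R_M(z)$ and the injectivity of $\psi$ there, the displayed equality follows. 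I expect this step to be the main obstacle: the identity must hold \emph{exactly}, with no distortion loss, which works only because $\psi$ is an honest inverse branch and the vertical factor $\lambda^M(z)$ enters through \eqref{eq:Fn} without distortion.

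It then remains to evaluate $\mu\big(\psi(\Sigma_{r'}(\eta',z'))\big)$. Since $\mu=(\mathrm{Id},W)^\ast\nu$ is carried by the graph of $W$, which $\psi$ maps into itself via $w\mapsto\rho^\ast(w)$, for any Borel set $A$ we get $\mu(\psi(A))=\nu(\rho^\ast(S^\ast))=\nu\big(I_M(z)\cap\tau^{-M}(S^\ast)\big)$ where $S^\ast:=\{w:(w,W(w))\in A\}$ (here one uses the injectivity of $\psi$ and $\rho^\ast(S^\ast)=I_M(z)\cap\tau^{-M}(S^\ast)$); with $A=\Sigma_{r'}(\eta',z')$ and the geometric identity this gives $\mu\big(\Sigma_r(\eta,z)\cap R_M(z)\big)=\nu\big(I_M(z)\cap\tau^{-M}(S^\ast)\big)$. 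Proposition \ref{prop:Gibbs} bounds the right-hand side inside $[C_\phi^{-3},C_\phi^{3}]\cdot\nu(I_M(z))\cdot\nu(S^\ast)$, and $\nu(S^\ast)=\mu(\Sigma_{r'}(\eta',z'))=b_{k+1}$; this is exactly the inclusion required. Multiplying these over $k=0,\dots,N-1$ and using $b_0=b_N\prod_{k=0}^{N-1}b_k/b_{k+1}$ completes the proof.
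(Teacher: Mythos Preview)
Your proof is correct and follows essentially the same route as the paper. The paper also reduces the second line to the one-step inclusion $\mu\big(\Sigma_r(\eta,z)\cap R_M(z)\big)\in[C_\phi^{-3},C_\phi^{3}]\cdot\nu(I_M(z))\cdot\mu\big(\Sigma_{r'}(\eta',z')\big)$, proves it by showing the set equality
\[
\{v:(v,W(v))\in\Sigma_r(\eta,z)\cap R_M(z)\}=I_M(z)\cap\tau^{-M}\{w:(w,W(w))\in\Sigma_{r'}(\eta',z')\}
\]
via the invariance \eqref{eq:W_inv_F}, \eqref{eq:lss_inv} and the cocycle relation for $\lambda$, and then applies Proposition~\ref{prop:Gibbs}; your map $\psi$ merely packages this same computation as a set identity in $[0,1]\times\mathbb{R}$ before intersecting with the graph.
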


\begin{proof}
The fist line is only a rewriting.
For the second line, observe that we have
\begin{eqnarray*}
&&	\left\{ v \in [0,1] : ( v , W(v) ) \in  \Sigma _{\lambda ^{M(N-k)} ( \tau ^{M k} (x))} ( B ^{-M k} ( \xi , x ) ) \cap R _M \left( \tau ^{M k} (x) \right)  \right\} \\
& = &	\left\{ v \in I _M ( \tau ^{M k} (x) ) : \left| \ell _{\left( \rho _{[x] _{M k} (\xi)} , v , W(v) \right)} ^{ss} ( \tau ^{M k} (x) ) - W ( \tau ^{M k} (x) ) \right| \leqslant \lambda ^{M ( N -k )} ( \tau ^{M k} (x)) \right\} \\
	& = &	 \left\{ v \in I _M ( \tau ^{M k} (x) ) :  \left| \ell _{\left( B ^{-M} \left( \rho _{[x] _{M k} (\xi)} ,  v \right) , W( \tau ^M (v)) \right)} ^{ss}  ( \tau ^{M (k+1)} (x) ) - W ( \tau ^{M (k+1)} (x) ) \right|  \right. \\
	&&	\qquad \qquad \qquad \qquad \qquad  \left. \leqslant \frac{\lambda ^{M ( N -k )} ( \tau ^{M k} (x))}{\lambda ^M ( \tau ^{M k} (x))} \right\} \\
	& = &	 \left\{ v \in I _M ( \tau ^{M k} (x) ) :  \left| \ell _{\left(  \rho _{[x] _{M (k+1)}} (\xi)  , \tau ^M (v) , W(\tau ^M (v)) \right)} ^{ss} ( \tau ^{M (k+1)} (x) ) - W ( \tau ^{M (k+1)} (x) ) \right| \right. \\
	&& \quad \qquad \qquad \qquad \qquad   \left. \leqslant \lambda ^{M(N-k-1)} (\tau ^{M(k+1)} (x)) \right\} \\
	& = &	I _M ( \tau ^{M k} (x) ) \cap \tau ^{-M} \left\{ v \in [0,1] : ( v , W(v) ) \in \Sigma _{\lambda ^{M(N-k-1)} ( \tau ^{M (k+1)} (x))} ( B ^{-M (k+1)} ( \xi , x ) ) \right\}
\end{eqnarray*}
for $ k = 0 , \ldots , \ell -1 $ since $ [v] _M = [\tau ^{Mk} x ] _M $.
Thus by Proposition \ref{prop:Gibbs} we have
\begin{eqnarray*}
	&&	\mu \left( \Sigma _{\lambda ^{M(N-k)} ( \tau ^{M k} (x))} ( B ^{-M k} ( \xi , x ) ) \cap R _M \left( \tau ^{M k} (x) \right) \right) \\
	& \in & [C  _\phi ^{-3} , C _\phi ^3]  \cdot \nu ( I _M ( \tau ^{M k} (x) ) ) \cdot  \mu \left( \Sigma _{\lambda ^{M(N-k-1)} ( \tau ^{M (k+1)} (x))} ( B ^{-M (k+1)} ( \xi , x ) )  \right)
\end{eqnarray*}
for $ k = 0 , \ldots , N - 1 $.
The claim follows by inserting these into the first line.
\end{proof}

\begin{proposition}	\label{prop:hM}
Let $ M \in \mathbb{N} $.
Then we have
\[
	\lim _{N \rightarrow \infty} \frac{1}{N} \sum _{k=0} ^{N-1} \log d _{M, \lambda ^{M(N-k)} (\tau ^{M k} (x))} (B ^{- M k} ( \xi , x)) = \int \log \nu _{ \left( \xi , q _\xi (x) \right)} ( I _M (x) )  \, d \nu   ^\ext (\xi, x)
\]
for $ \nu  ^\ext $-a.a. $ (\xi ,x ) $.
\end{proposition}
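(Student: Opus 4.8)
The plan is to recognise the Cesàro sum as a Maker-type ergodic average for the measure-preserving transformation $ T := B ^{-M} $ of $ ( [0,1] ^2 , \nu ^\ext ) $, which is ergodic since the Gibbs measure $ \nu $ is mixing (hence $ \nu ^\ext $ is mixing under $ B $, so all powers of $ B $ are ergodic). Because the second coordinate of $ B ^{-Mk} ( \xi , x ) $ equals $ \tau ^{Mk} (x) $, writing $ ( \xi _k , x _k ) := T ^k ( \xi , x ) = B ^{-Mk} ( \xi , x ) $ gives $ x _k = \tau ^{Mk} (x) $. Define, for $ n \in \mathbb{N} $ and $ ( \eta , w ) \in [0,1] ^2 $,
\[
	g _n ( \eta , w ) := \log d _{M , \lambda ^{Mn} (w)} ( \eta , w ) , \qquad g _\infty ( \eta , w ) := \log \nu _{(\eta , q _\eta (w))} ( I _M (w) ) .
\]
Then the left-hand side of the claim is exactly $ \frac{1}{N} \sum _{k=0} ^{N-1} g _{N-k} ( T ^k ( \xi , x ) ) $ and its right-hand side is $ \int g _\infty \, d \nu ^\ext $, so it suffices to prove $ \frac{1}{N} \sum _{k=0} ^{N-1} g _{N-k} \circ T ^k \to \int g _\infty \, d \nu ^\ext $ for $ \nu ^\ext $-a.a.\ $ ( \xi , x ) $.

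Three ingredients are needed. First, $ g _n \to g _\infty $ $ \nu ^\ext $-a.e.: since $ \lambda ^{Mn} \to 0 $ uniformly, Proposition \ref{prop:d_conv} gives $ g _n ( \xi , \cdot ) \to g _\infty ( \xi , \cdot ) $ $ \nu $-a.e.\ for each $ \xi $, and this upgrades to $ \nu ^\ext $-a.e.\ convergence via the disintegration \eqref{eq:disintegral_vert} together with $ \nu ^+ _\xi \sim \nu $ from Proposition \ref{prop:Gibbs_comparison}. Second, the uniform domination $ \sup _n | g _n | \leqslant \sup _{r > 0} | \log d _{M , r} | =: h \in L ^1 ( \nu ^\ext ) $, which is Proposition \ref{prop:maximal_ineq}; in particular $ | g _\infty | \leqslant h $, so $ g _\infty \in L ^1 ( \nu ^\ext ) $. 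Third, Birkhoff's theorem for $ ( T , \nu ^\ext ) $. With these in hand I would argue directly. Put $ G _L := \sup _{n \geqslant L} | g _n - g _\infty | $; then $ G _L \leqslant 2 h \in L ^1 $ and $ G _L \downarrow 0 $ a.e., so $ \int G _L \, d \nu ^\ext \to 0 $ by dominated convergence. Splitting the orbit sum at $ k = N - L $,
\[
	\frac{1}{N} \sum _{k=0} ^{N-1} \left| g _{N-k} ( T ^k ) - g _\infty ( T ^k ) \right| \leqslant \frac{1}{N} \sum _{k=0} ^{N-L} G _L ( T ^k ) + \frac{1}{N} \sum _{k = N-L+1} ^{N-1} 2 h ( T ^k ) ,
\]
where the first bound uses $ N - k \geqslant L $ and the second uses only $ | g _m | , | g _\infty | \leqslant h $. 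By Birkhoff the first term on the right converges to $ \int G _L \, d \nu ^\ext $, and the second converges to $ 0 $, being $ \frac{1}{N} \sum _{k=0} ^{N-1} 2 h ( T ^k ) - \frac{1}{N} \sum _{k=0} ^{N-L} 2 h ( T ^k ) $, a difference of averages of $ 2 h $ both tending to $ \int 2 h \, d \nu ^\ext $. Performing this on a common full-measure set over all $ L \in \mathbb{N} $, and combining with $ \frac{1}{N} \sum _{k=0} ^{N-1} g _\infty ( T ^k ) \to \int g _\infty \, d \nu ^\ext $ (Birkhoff again), we obtain $ \limsup _N \bigl| \frac{1}{N} \sum _{k=0} ^{N-1} g _{N-k} ( T ^k ) - \int g _\infty \, d \nu ^\ext \bigr| \leqslant \int G _L \, d \nu ^\ext $ for every $ L $, hence the claim.

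The only genuinely delicate point is the first ingredient: upgrading the ``for each $ \xi $, for $ \nu $-a.a.\ $ x $'' statement of Proposition \ref{prop:d_conv} to ``for $ \nu ^\ext $-a.a.\ $ ( \xi , x ) $''. This needs the equivalence $ \nu ^+ _\xi \sim \nu $ (Proposition \ref{prop:Gibbs_comparison}), so that the $ \nu $-null exceptional set for a fixed $ \xi $ is also $ \nu ^+ _\xi $-null, followed by Fubini through \eqref{eq:disintegral_vert}. Everything else is the routine dominated-Birkhoff bookkeeping displayed above, which is essentially Maker's ergodic theorem; I would also record explicitly that ergodicity of $ ( B ^{-M} , \nu ^\ext ) $, needed so that the Birkhoff limits are the stated constants, follows from the mixing of the Gibbs measure $ \nu $.
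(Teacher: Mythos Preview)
Your proposal is correct and follows essentially the same approach as the paper's proof: both reduce to a Maker-type ergodic average for $B^{-M}$, using Proposition \ref{prop:d_conv} for the pointwise convergence, Proposition \ref{prop:maximal_ineq} for the $L^1$-domination, and dominated convergence to kill the remainder. The paper parametrises the remainder by a continuous radius $D_R := \sup_{r \in (0,R)} |\log d_{M,r} - g_M|$ rather than your discrete $G_L$, and is slightly less explicit about the last few terms of the sum, but otherwise the arguments coincide.
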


\begin{proof}
Let $ g _M ( \xi , x ) :=  \log \nu _{\left( \xi , q _\xi (x) \right)} ( I _M (x) )  $.
By Birkhoff ergodic theorem we have
\[
	\lim _{N \rightarrow \infty} \frac{1}{N} \sum _{k=0} ^{N-1} g _M (B ^{-M k} ( \xi , x)) = \int g _M  \, d \nu   ^\ext 
\]
for $ \nu ^\ext $-a.a. $ (\xi ,x ) $.
Furthermore, again by Birkhoff ergodic theorem we have
\begin{eqnarray*}
	&&	\limsup _{N \rightarrow \infty} \left| \frac{1}{N} \sum _{k=0} ^{N-1}  \log d _{M, \lambda ^{M(N-k)} (\tau ^{M k} (x))} (B ^{- M k} ( \xi , x))  -   \frac{1}{N} \sum _{k=0} ^{N-1} g _M   (B ^{-M k} ( \xi , x))  \right| \\
	& \leqslant &	\limsup _{N \rightarrow \infty} \frac{1}{N} \sum _{k=0} ^{N-1} \left| \log d _{M, \lambda ^{M(N-k)} (\tau ^{M k} (x))} (B ^{- M k} ( \xi , x))  - g _M   (B ^{-M k} ( \xi , x))  \right| \\
	& \leqslant &	\limsup _{N \rightarrow \infty} \frac{1}{N} \sum _{k=0} ^{N-1} D _R (B ^{-M k} ( \xi , x)) \quad =	\int D _R \, d \nu   ^\ext
\end{eqnarray*}
for $ \nu ^\ext $-a.a. $ (\xi ,x ) $, where $ D _R ( \xi , x ) := \sup _{r \in ( 0 , R )} \left| \log d _{M, r} ( \xi ,x ) - g _M ( \xi , x ) \right| $.
Letting $ R \rightarrow 0 $ finishes the proof since the last integral goes to zero by Propositions \ref{prop:d_conv} and \ref{prop:maximal_ineq} due to the dominated convergence theorem.
\end{proof}

\begin{proposition}	\label{prop:vertical_limit_estimate}
We have
\begin{eqnarray*}
	&&	\limsup _{r \rightarrow 0} \left| \frac{\log \mu   ( \Sigma _r ( \xi , x ) )}{\log r} - \frac{ \int \log \nu _{ \left( \xi , q _\xi ( x ) \right)}  ( I _M (x) )  \,  d \nu   ^\ext (\xi, x) -  \int  \log \nu   ( I _M (x) ) \, d \nu   (x) }{- M \int \log \lambda \, d \nu  } \right| \\
	& &	\quad \leqslant  \frac{3 \log C _\phi }{- M \int \log \lambda \, d \nu   }
\end{eqnarray*}
for $ \nu ^\ext $-a.a. $ (\xi ,x ) $.
\end{proposition}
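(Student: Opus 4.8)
The plan is to evaluate the quotient $\log\mu(\Sigma_r(\xi,x))/\log r$ along the geometric sequence of radii $r_N:=\lambda^{MN}(x)$ and then transfer the estimate to all $r\to 0$; here $M$ is fixed throughout. Since $r_{N+1}/r_N=\lambda^M(\tau^{MN}(x))\in[(\inf\lambda)^M,(\sup\lambda)^M]\subset(0,1)$, the sequence $r_N$ decreases to $0$, $\log r_N=(\log\lambda)_{MN}(x)\to-\infty$, and $\log r_{N+1}/\log r_N\to 1$. First I would apply Proposition~\ref{prop:ledrappier_telescoping} and take logarithms, obtaining
\[
\log\mu(\Sigma_{r_N}(\xi,x))=\theta_N+\log\mu\bigl(\Sigma_1(B^{-MN}(\xi,x))\bigr)+\sum_{k=0}^{N-1}\log\nu(I_M(\tau^{Mk}(x)))-\sum_{k=0}^{N-1}\log d_{M,\lambda^{M(N-k)}(\tau^{Mk}(x))}(B^{-Mk}(\xi,x)),
\]
with $|\theta_N|\leqslant 3N\log C_\phi$. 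I then divide by $\log r_N$ and analyse the four summands.

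The boundary term $\log\mu(\Sigma_1(B^{-MN}(\xi,x)))$ must be shown to stay bounded. I would argue that $\mu(\Sigma_1(\xi',x'))\geqslant c>0$ uniformly in $(\xi',x')$: since $\gamma=1/(\tau'\lambda)<1$, the series \eqref{eq:X3_def} defining $X_3$ is geometrically dominated and, $W,\lambda',g'$ being bounded, $|X_3|\leqslant C_3$ on $[0,1]^2\times[-\|W\|_\infty,\|W\|_\infty]$; hence for $v\in I_{N_0}(x')$ one has $|\ell^{ss}_{(\xi',v,W(v))}(x')-W(v)|\leqslant C_3|I_{N_0}(x')|$, which together with Proposition~\ref{prop:Moss} and the uniform bounds $|I_N(x')|\leqslant C(\inf\tau')^{-N}$, $\sup_{x'}\lambda^N(x')\to 0$ lets me fix $N_0$ (independent of $\xi',x'$) so that $(v,W(v))\in\Sigma_1(\xi',x')$ for all $v\in I_{N_0}(x')$; the Gibbs property \eqref{eq:Gibbs} then gives $\mu(\Sigma_1(\xi',x'))\geqslant\nu(I_{N_0}(x'))\geqslant C_\phi^{-1}e^{N_0(\inf\phi-P(\phi))}=:c$. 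As $\mu(\Sigma_1(\cdot))\leqslant 1$, the term $\log\mu(\Sigma_1(B^{-MN}(\xi,x)))$ stays bounded, hence divided by $\log r_N\to-\infty$ tends to $0$.

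Next I would invoke the ergodic theorems for the three remaining quantities. Writing $\log\nu(I_M(\tau^{Mk}(x)))=\psi(B^{-Mk}(\xi,x))$ with $\psi(\xi',x'):=\log\nu(I_M(x'))$, Birkhoff's theorem for $B^{-M}$ with respect to $\nu^\ext$ (whose second marginal is $\nu$ by \eqref{eq:ext_def}) gives $\tfrac1N\sum_{k=0}^{N-1}\log\nu(I_M(\tau^{Mk}(x)))\to\int\log\nu(I_M(x))\,d\nu(x)$; Proposition~\ref{prop:hM} gives $\tfrac1N\sum_{k=0}^{N-1}\log d_{M,\lambda^{M(N-k)}(\tau^{Mk}(x))}(B^{-Mk}(\xi,x))\to\int\log\nu_{(\xi,q_\xi(x))}(I_M(x))\,d\nu^\ext(\xi,x)$; and Birkhoff for $B^{-1}$ gives $\tfrac1N\log r_N=\tfrac{M}{MN}(\log\lambda)_{MN}(x)\to M\int\log\lambda\,d\nu$. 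Dividing the displayed identity by $\log r_N$ and combining these, $\log\mu(\Sigma_{r_N}(\xi,x))/\log r_N$ minus the target value $L$ (the fraction appearing in the statement) equals $\theta_N/\log r_N+o(1)$; since $|\theta_N|\leqslant 3N\log C_\phi$ and $|\log r_N|/N\to-M\int\log\lambda\,d\nu>0$, I obtain $\limsup_N|\theta_N/\log r_N|\leqslant 3\log C_\phi/(-M\int\log\lambda\,d\nu)$, so the asserted estimate holds along the sequence $(r_N)_N$, and in particular $a_N:=\log\mu(\Sigma_{r_N}(\xi,x))/\log r_N$ is a bounded sequence.

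Finally, to pass to arbitrary $r\to 0$, I would fix $N$ with $r_{N+1}\leqslant r<r_N$; monotonicity of $\Sigma_\bullet(\xi,x)$ in the radius and of $\log$ give $\log\mu(\Sigma_{r_N}(\xi,x))/\log r_{N+1}\leqslant\log\mu(\Sigma_r(\xi,x))/\log r\leqslant\log\mu(\Sigma_{r_{N+1}}(\xi,x))/\log r_N$, that is $a_N\cdot(\log r_N/\log r_{N+1})\leqslant\log\mu(\Sigma_r(\xi,x))/\log r\leqslant a_{N+1}\cdot(\log r_{N+1}/\log r_N)$. Since $\log r_{N\pm 1}/\log r_N\to 1$ and $(a_N)$ is bounded, the $\limsup$ and $\liminf$ over $r\to 0$ coincide with those over $N\to\infty$, so the estimate transfers — this is the mechanism of Proposition~\ref{prop:pwd_helper}, adapted from balls to the tubes $\Sigma_r$. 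The main obstacle is organisational rather than conceptual: one must track signs carefully through the division by the negative quantity $\log r_N$ so that the Gibbs constant $C_\phi$ enters only through the stated factor $3\log C_\phi/(-M\int\log\lambda\,d\nu)$, and separately secure the uniform lower bound $\mu(\Sigma_1(\cdot))\geqslant c$; neither step is deep, but each is easy to get wrong.
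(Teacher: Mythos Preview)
Your proposal is correct and follows essentially the same route as the paper: evaluate along $r_N=\lambda^{MN}(x)$ via Proposition~\ref{prop:ledrappier_telescoping}, control the boundary term by a uniform lower bound $\inf_{(\xi',x')}\mu(\Sigma_1(\xi',x'))>0$ from Proposition~\ref{prop:Moss} plus the Gibbs property, apply Proposition~\ref{prop:hM} and Birkhoff for the Birkhoff sums and for $\tfrac1N\log r_N$, and transfer to arbitrary $r\to 0$ by the $\log r_{N+1}/\log r_N\to 1$ sandwich (the mechanism of Proposition~\ref{prop:pwd_helper}). The paper's proof is terser but structurally identical; you have only unpacked the steps it leaves implicit.
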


\begin{proof}
Similarly to Proposition \ref{prop:pwd_helper}, one can show
\[
	\limsup _{r \rightarrow 0} \left| \frac{\log \mu   ( \Sigma _r ( \xi , x ) )}{\log r} - a \right| =	\limsup _{N \rightarrow \infty} \left| \frac{\log \mu   ( \Sigma _{\lambda ^{M N} (x)} ( \xi , x ) )}{\log \lambda ^{ M N} (x)} - a \right|
\]
for any $ (\xi, x )  \in [0,1] ^2 $ and $ a \in \mathbb{R} $.
Furthermore, we have $ \log \mu ( \Sigma _1 ) \in L ^1 _{\nu ^\ext} $ since $ \inf _{(\xi, x)} \mu ( \Sigma _1 (\xi, x) ) > 0 $ by Proposition \ref{prop:Moss} and the Gibbs property.
Thus the claim follows from Proposition \ref{prop:ledrappier_telescoping} since we have
\begin{eqnarray*}
	&& \lim _{N \rightarrow \infty} \frac{ \log \left(	\prod _{k=0} ^{N-1} \frac{\nu   ( I _M ( \tau ^{M k}(x)))}{d _{M, \lambda ^{M(N-k)} (\tau ^{M k} (x))} (B ^{- M k} ( \xi , x))} \right)}{\log \lambda ^{ M N} (x) } \\
	&& \quad= 	\frac{ \int \log \nu _{ \left( \xi , q _\xi ( x ) \right)}  ( I _M (x) )  \,  d \nu   ^\ext (\xi, x) -  \int  \log \nu   ( I _M (x) ) \, d \nu   (x) }{- M \int \log \lambda \, d \nu  } 
\end{eqnarray*}
by Proposition \ref{prop:hM} and Birkhoff ergodic theorem.
\end{proof}

Recall that $ h _\nu $ and $ h ^{ss} _\mu $ are defined in \eqref{eq:entropy_def} and \eqref{eq:h_ss}, respectively.

\begin{lemma}	\label{lem:pre-YL}
We have
\[
	\lim _{r \rightarrow 0} \frac{\log \mu   ( B ^T _{\xi, r} ( q _\xi (x) ) )}{\log r} = \lim _{r \rightarrow 0} \frac{\log \mu   ( \Sigma _r ( \xi , x ) )}{\log r}  = \frac{ h  _\nu - h _{\mu } ^{ss}  }{ - \int \log \lambda \, d \nu  }
\]for $ \nu ^\ext $-a.a. $ ( \xi, x ) $.

In particular, $ \nu \circ q _\xi ^{-1} $ is exact dimensional for $ \nu ^- $-a.a. $ \xi $ with
\begin{equation*}
	\dim _H ( \nu \circ q _\xi ^{-1} ) = \frac{ h  _\nu - h _\mu  ^{ss}  }{ - \int \log \lambda \, d \nu  } .
\end{equation*}
\end{lemma}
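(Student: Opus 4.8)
The strategy is to read off the value of the limit by letting $M\to\infty$ in Proposition~\ref{prop:vertical_limit_estimate}, and then to transfer the resulting equality from the slanted tubes $\Sigma_r(\xi,x)$ to the sets $B^T_{\xi,r}(q_\xi(x))$, which are genuine $\pi^{ss}_\xi$-preimages of intervals. First I would identify the two entropy averages occurring there. The term $\frac1M\int\log\nu(I_M(x))\,d\nu(x)$ converges to $-h_\nu$ by the definition \eqref{eq:entropy_def} (the limit exists by subadditivity of the partition entropies, or directly from the Gibbs bound \eqref{eq:Gibbs} and Birkhoff's theorem applied to $\phi$). For $\frac1M\int\log\nu_{(\xi,q_\xi(x))}(I_M(x))\,d\nu^\ext(\xi,x)$, Proposition~\ref{prop:Gibbs_comparison_cond} lets me replace $\nu_{(\xi,q_\xi(x))}$ by $\nu^\ext_{(\xi,q_\xi(x))}$ up to a factor $C_\phi^{\pm6}$, and iterating Proposition~\ref{prop:cond_entropy} yields the telescoping identity $\log\nu^\ext_{(\xi,q_\xi(x))}(I_M(x))=\sum_{k=0}^{M-1} g_1(B^{-k}(\xi,x))$ with $g_1(\xi,x):=\log\nu^\ext_{(\xi,q_\xi(x))}(I_1(x))$, valid for $\nu^\ext$-a.a.\ $(\xi,x)$ and all $M$. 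Since $g_1\in L^1_{\nu^\ext}$ — after disintegrating $\nu^\ext$ via Proposition~\ref{prop:decomposition_ext}, $-\int g_1\,d\nu^\ext$ is an average of Shannon entropies of $\ell$-point distributions and hence bounded by $\log\ell$ — the $B$-invariance of $\nu^\ext$ gives $\frac1M\int\log\nu^\ext_{(\xi,q_\xi(x))}(I_M(x))\,d\nu^\ext=\int g_1\,d\nu^\ext=-h^{ss}_\mu$ (cf.\ \eqref{eq:h_ss}) for every $M$.

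Consequently the bounding fraction in Proposition~\ref{prop:vertical_limit_estimate} converges, as $M\to\infty$, to $c:=\frac{h_\nu-h^{ss}_\mu}{-\int\log\lambda\,d\nu}$, while its error term $\frac{3\log C_\phi}{-M\int\log\lambda\,d\nu}$ tends to $0$. Intersecting over $M\in\mathbb{N}$ the full-measure sets on which the estimate holds and applying the triangle inequality then forces $\lim_{r\to0}\frac{\log\mu(\Sigma_r(\xi,x))}{\log r}=c$ for $\nu^\ext$-a.a.\ $(\xi,x)$. To pass to $B^T_{\xi,r}(q_\xi(x))$, I would use that for $z=\pi^{ss}_\xi(v,y)$ the strong stable fibre through $(\xi,v,y)$ is the one through $(\xi,0,z)$, and the one through $(\xi,x,W(x))$ is the one through $(\xi,0,q_\xi(x))$; hence $|\ell^{ss}_{(\xi,v,y)}(x)-W(x)|=|\ell^{ss}_{(\xi,0,z)}(x)-\ell^{ss}_{(\xi,0,q_\xi(x))}(x)|$, which by Proposition~\ref{prop:l_parallel} lies in $[C_s^{-1},C_s]\cdot|z-q_\xi(x)|$. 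This gives $\Sigma_r(\xi,x)\subseteq B^T_{\xi,C_sr}(q_\xi(x))\subseteq\Sigma_{C_s^2r}(\xi,x)$, so by the usual squeezing argument (using $\log(C_sr)/\log r\to1$) the limit $\lim_{r\to0}\frac{\log\mu(B^T_{\xi,r}(q_\xi(x)))}{\log r}$ also exists and equals $c$, which is the first displayed equality.

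For the \emph{in particular} part, I would use $\mu\circ(\pi^{ss}_\xi)^{-1}=\nu\circ q_\xi^{-1}$ together with $B^T_{\xi,r}(q_\xi(x))=(\pi^{ss}_\xi)^{-1}(B_r(q_\xi(x)))$ to rewrite the above as $\lim_{r\to0}\frac{\log(\nu\circ q_\xi^{-1})(B_r(q_\xi(x)))}{\log r}=c$ for $\nu^\ext$-a.a.\ $(\xi,x)$. Disintegrating $\nu^\ext=\int\delta_\xi\otimes\nu^+_\xi\,d\nu^-(\xi)$ by \eqref{eq:disintegral_vert}, this says that for $\nu^-$-a.a.\ $\xi$ the pointwise dimension $d_{\nu\circ q_\xi^{-1}}(q_\xi(x))$ exists and equals $c$ for $\nu^+_\xi$-a.a.\ $x$; since $\nu^+_\xi$ and $\nu$ are mutually absolutely continuous with bounded densities by Proposition~\ref{prop:Gibbs_comparison}, the same holds for $\nu$-a.a.\ $x$, equivalently for $(\nu\circ q_\xi^{-1})$-a.a.\ $z$. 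Hence $\nu\circ q_\xi^{-1}$ is exact dimensional with $\dim_H(\nu\circ q_\xi^{-1})=c$.

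The main obstacle is the second entropy limit in the first paragraph, namely the identity $\lim_M\frac1M\int\log\nu_{(\xi,q_\xi(x))}(I_M(x))\,d\nu^\ext=-h^{ss}_\mu$, since this forces one to combine the telescoping identity (Proposition~\ref{prop:cond_entropy}), the comparison between the two conditional measures (Proposition~\ref{prop:Gibbs_comparison_cond}), the $B$-invariance of $\nu^\ext$, and the integrability of $g_1$; the geometric comparison between $\Sigma_r(\xi,x)$ and $B^T_{\xi,r}(q_\xi(x))$ is then routine once Proposition~\ref{prop:l_parallel} is invoked.
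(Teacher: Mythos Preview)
Your proposal is correct and follows essentially the same route as the paper: you let $M\to\infty$ in Proposition~\ref{prop:vertical_limit_estimate}, identify the entropy averages via the telescoping of Proposition~\ref{prop:cond_entropy} combined with the $B$-invariance of $\nu^\ext$ and the comparison of Proposition~\ref{prop:Gibbs_comparison_cond}, then pass from $\Sigma_r$ to $B^T_{\xi,r}$ through the sandwich coming from Proposition~\ref{prop:l_parallel}, and finish the exact-dimensionality via \eqref{eq:disintegral_vert} and Proposition~\ref{prop:Gibbs_comparison}. The only additions beyond the paper are your explicit justification that $g_1\in L^1_{\nu^\ext}$ (via the Shannon bound $\log\ell$) and the spelled-out derivation of the inclusion $\Sigma_r\subseteq B^T_{\xi,C_sr}\subseteq\Sigma_{C_s^2 r}$, both of which are fine.
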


\begin{proof}
The second equality in the first claim follows from Proposition \ref{prop:vertical_limit_estimate} by letting $ M \rightarrow \infty $ since we have
\begin{eqnarray*}
	- M \cdot h ^{ss} _\mu	&  = &	M \int \log \nu ^\ext _{ \left( \xi , q _\xi ( x ) \right)}  ( I _1 (x) )  \,  d \nu   ^\ext (\xi, x) \\
		&  =&	\int \log \nu ^\ext  _{ \left( \xi , q _\xi ( x ) \right)}  ( I _M (x) )  \,  d \nu   ^\ext (\xi, x) \\
		& \in &	 [\log C _\phi ^{-6} , \log C _\phi ^6 ] + \int \log \nu  _{ \left( \xi , q _\xi ( x ) \right)}  ( I _M (x) )  \,  d \nu   ^\ext (\xi, x)
\end{eqnarray*}
by Propositions \ref{prop:Gibbs_comparison_cond} and \ref{prop:cond_entropy}.
On the other hand, the first equality is true in view of $\Sigma _{C _s ^{-1} r} ( \xi ,x ) \subseteq B ^T _{\xi, r} ( q _\xi (x) ) \subseteq  \Sigma _{C _s r} ( \xi ,x ) $, where $ C _s > 0 $ denotes the constant of Proposition \ref{prop:l_parallel}.

Now, we prove the remaining claim on the dimension.
Let $ \gamma _2 := -( h  _\nu - h _\mu  ^{ss}  )/ \int \log \lambda \, d \nu  $.
As $ \mu   ( B ^T _{\xi, r} ( y ) ) = \nu \circ q _\xi ^{-1} ( B _r (y) ) $, we have by \eqref{eq:disintegral_vert}
\begin{eqnarray*}
	&&	\int \nu ^+ _\xi \circ q _\xi ^{-1}  \left\{ y \in \mathbb{R} : \lim _{r \rightarrow 0} \frac{\log \nu \circ q _\xi ^{-1} ( B _r (y) )}{\log r} = \gamma _2 \right\} \, d \nu ^- (\xi) \\
	&  =&	\nu ^\ext \left\{ (\xi, x) \in [0,1] ^2 : \lim _{r \rightarrow 0} \frac{\log \nu \circ q _\xi ^{-1} ( B _r (q _\xi (x)) )}{\log r} = \gamma _2 \right\}	\quad = 1 .
\end{eqnarray*}
Thus by Proposition \ref{prop:Gibbs_comparison} we can conclude
\[
	\nu \circ q _\xi ^{-1}  \left\{ y \in \mathbb{R} : \lim _{r \rightarrow 0} \frac{\log \nu \circ q _\xi ^{-1} ( B _r (y) )}{\log r} = \gamma _2 \right\} = 1
\]
for $ \nu ^- $-a.a. $ \xi $.
\end{proof}

\begin{lemma}	\label{lem:dim_cond}
We have
\[
	\lim _{r \rightarrow 0} \frac{\log \nu _{(x, q_\xi (x) )} ( B _r (x) )}{\log r} = \frac{h ^{ss} _\mu}{\int \log \tau ' \, d \nu} 
\]
for $ \nu ^\ext $-a.a. $(\xi, x) $.

In particular, $ \nu  _{(x, q_\xi (x) )} $ and $ \mu _{(x, q_\xi (x) )} $ are exact dimensional for $ \nu ^\ext $-a.a. $ ( \xi, x ) $ with
\[
	\dim _H (  \nu _{(x, q_\xi (x) )} ) = \dim _H (  \mu _{(x, q_\xi (x) )} ) =  \frac{h ^{ss} _\mu}{\int \log \tau ' \, d \nu} .
\]
\end{lemma}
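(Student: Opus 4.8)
The plan is to compute the pointwise dimension of $ \nu _{(\xi, q _\xi (x))} $ at the base point $ x $ by comparing Euclidean balls with the dynamical intervals $ I _N (x) $, and then to bootstrap this into exact dimensionality of both $ \nu _{(\xi, q _\xi (x))} $ and $ \mu _{(\xi, q _\xi (x))} $. Write $ c := h ^{ss} _\mu / \int \log \tau ' \, d \nu $ for the target value, which by \eqref{eq:h_ss} is finite and non-negative. The decisive inclusions are $ I _N (x) \subseteq B _{| I _N (x) |} (x) $ (trivial) and, for a fixed $ t \in (0,1) $, $ B _{| I _N (x) | \, t ^N} (x) \subseteq I _N (x) $ for all $ N $ past an $ x $-dependent threshold, which is Proposition \ref{prop:helper2}. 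Since the second marginal of $ \nu ^\ext $ is $ \nu $ (put $ M = 0 $ in \eqref{eq:ext_def}) and $ \nu $ is ergodic, Propositions \ref{prop:h_ss_conv} and \ref{prop:log_I} provide, for $ \nu ^\ext $-a.a. $ (\xi, x) $, that $ \frac{1}{N} \log \nu _{(\xi, q _\xi (x))} ( I _N (x) ) \to - h ^{ss} _\mu $ and $ \frac{1}{N} \log | I _N (x) | \to - \int \log \tau ' \, d \nu $, a finite negative limit.

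Next I would pass from intervals to balls via Proposition \ref{prop:pwd_helper}. The sequences $ \beta _N := | I _N (x) | $ and $ \beta _N ^{(t)} := | I _N (x) | \, t ^N $ are decreasing null sequences whose log-ratios $ \log \beta _{N+1} / \log \beta _N $ tend to $ 1 $ (using $ 0 < \int \log \tau ' \, d \nu < \infty $), so the upper and lower pointwise dimensions of $ \nu _{(\xi, q _\xi (x))} $ at $ x $ may be evaluated along them with $ K = 1 $. From $ I _N (x) \subseteq B _{\beta _N}(x) $ and dividing by $ \log \beta _N < 0 $ one gets $ \overline{d}(x) \leqslant \limsup _N \frac{\log \nu _{(\xi, q _\xi (x))}(I _N (x))}{\log | I _N (x) |} = c $; from $ B _{\beta ^{(t)} _N}(x) \subseteq I _N (x) $ for large $ N $ one gets $ \underline{d}(x) \geqslant \liminf _N \frac{\log \nu _{(\xi, q _\xi (x))}(I _N (x))}{\log(| I _N (x) | \, t ^N)} = \frac{h ^{ss} _\mu}{\int \log \tau ' \, d \nu - \log t} $. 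Taking $ t \uparrow 1 $ along a countable sequence then yields $ \underline{d}(x) \geqslant c $, hence $ d _{\nu _{(\xi, q _\xi (x))}}(x) = c $ for $ \nu ^\ext $-a.a. $ (\xi, x) $. Removing the spurious factor $ t ^N $ of Proposition \ref{prop:helper2}, which shifts the scaling exponent by $ \log t $, is the one genuinely non-routine point in this computation.

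What I expect to require the most care is upgrading this base-point identity to exact dimensionality of the whole measure $ \nu _{(\xi, q _\xi (x))} $. Disintegrating the $ \nu ^\ext $-a.e. equality over $ \xi $ via \eqref{eq:disintegral_vert}, and using $ \nu ^+ _\xi \sim \nu $ with bounded density (Proposition \ref{prop:Gibbs_comparison}), the set $ A _\xi := \{ x : d _{\nu _{(\xi, q _\xi (x))}}(x) = c \} $ has $ \nu ( A _\xi ) = 1 $ for $ \nu ^- $-a.a. $ \xi $. Choosing in Lemma \ref{lem:ss_conditional} an integrand depending only on its first argument gives $ \nu = \int \nu _{(\xi, q _\xi (x))} \, d \nu ( x ) $, whence $ \nu _{(\xi, q _\xi (x))}(A _\xi) = 1 $ for $ \nu $-a.a. $ x $; moreover Proposition \ref{prop:decomposition} together with \eqref{eq:cond_on_graph} and the fact that $ \mu _{(\xi, z)} $ is carried by $ ( \pi ^{ss} _\xi ) ^{-1}(z) \cap \mathrm{graph}(W) $ shows $ \nu _{(\xi, q _\xi (x))} $ is carried by the fibre $ q _\xi ^{-1}(q _\xi(x)) $, on which $ q _\xi $, and hence the conditional measure, is constant. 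Combining, for $ \nu _{(\xi, q _\xi (x))} $-a.a. $ \tilde x $ one has $ \tilde x \in A _\xi $ and $ q _\xi(\tilde x) = q _\xi(x) $, so $ d _{\nu _{(\xi, q _\xi (x))}}(\tilde x) = d _{\nu _{(\xi, q _\xi (\tilde x))}}(\tilde x) = c $; this is exact dimensionality with $ \dim _H ( \nu _{(\xi, q _\xi (x))} ) = c $. Finally, since $ \mu _{(\xi, z)} = ( \mathrm{Id}, \ell ^{ss} _{(\xi, 0, z)} ) ^\ast \nu _{(\xi, z)} $ is the push-forward of $ \nu _{(\xi, z)} $ under $ v \mapsto ( v, \ell ^{ss} _{(\xi, 0, z)}(v) ) $, a map that is bi-Lipschitz onto its image (identity first component; the second solves an ODE with the bounded right-hand side $ X _3 $ of \eqref{eq:X3_def}, hence Lipschitz), and bi-Lipschitz maps preserve pointwise dimensions, $ \mu _{(\xi, q _\xi (x))} $ is exact dimensional with the same dimension $ c $, which finishes the proof.
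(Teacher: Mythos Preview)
Your proof is correct and, for the computation of the pointwise dimension at the base point $x$, follows the paper's argument essentially verbatim: Propositions \ref{prop:log_I} and \ref{prop:h_ss_conv} give the ratio along dynamical intervals, Proposition \ref{prop:helper2} supplies the two-sided comparison with balls, and Proposition \ref{prop:pwd_helper} handles the passage between the scales $|I_N(x)|$ and $|I_N(x)|\,t^N$, with $t\uparrow 1$ at the end.

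The one place where you diverge from the paper is the upgrade to exact dimensionality. The paper applies Lemma \ref{lem:ss_conditional_ext} to the indicator of the good set, obtaining $\nu^\ext_{(\xi,q_\xi(x))}\{\tilde x:\,d_{\nu_{(\xi,q_\xi(x))}}(\tilde x)=\gamma_1\}=1$ for $\nu^\ext$-a.a.\ $(\xi,x)$, and then transfers this to $\nu_{(\xi,q_\xi(x))}$ via the density comparison of Proposition \ref{prop:Gibbs_comparison_cond}. You instead stay with the non-extended conditionals: from Lemma \ref{lem:ss_conditional} you get $\nu=\int\nu_{(\xi,q_\xi(x))}\,d\nu(x)$, hence $\nu_{(\xi,q_\xi(x))}(A_\xi)=1$ for $\nu$-a.a.\ $x$, and then exploit that $\nu_{(\xi,z)}$ depends only on $(\xi,z)$ together with \eqref{eq:cond_on_graph} to conclude that $\nu_{(\xi,q_\xi(x))}$-a.e.\ $\tilde x$ lies both in $A_\xi$ and on the same fibre, so $\nu_{(\xi,q_\xi(\tilde x))}=\nu_{(\xi,q_\xi(x))}$. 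This is a legitimate and slightly more self-contained route, since it avoids the extended conditional measures $\nu^\ext_{(\xi,z)}$ and Proposition \ref{prop:Gibbs_comparison_cond} altogether; the paper's route, on the other hand, packages the fibre-constancy step into the single disintegration identity of Lemma \ref{lem:ss_conditional_ext}. Both arguments are short and neither offers a real advantage over the other.
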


\begin{proof}
By Propositions \ref{prop:log_I} and \ref{prop:h_ss_conv} we have
\[
 \lim _{N \rightarrow \infty} \frac{\log \nu _{(x, q_\xi (x) )} ( I _N (x) ) }{\log | I _N (x) |} = \frac{h ^{ss} _\mu}{\int \log \tau ' \, d \nu}
\]
for $ \nu ^\ext $-a.a. $ ( \xi , x ) $.
Therefore, for the first claim it suffices to show that
\begin{equation*}
		d _{ \nu _{(x, q_\xi (x) )} } ( x ) = \frac{\log \nu _{(x, q_\xi (x) )} ( I _N (x) ) }{\log | I _N (x) |} 
\end{equation*}
holds for $ \nu ^\ext $-a.a. $ ( \xi , x ) $.
Let $ t \in (0,1) $ be arbitrary.
By Proposition \ref{prop:helper2} for $ \nu $-a.a. $ x $ there is a $ N _x $ such that
\[
	B _{| I _N (x) | t ^N } ( x ) \subseteq I _N ( x ) \subseteq B _{|I _N (x) |} (x) 
\]
holds for all $ N \geqslant N _x $.
From this follows for such  $ x $ and any $ \xi \in [0,1] $
\begin{eqnarray*}
	\overline{d} _{ \nu _{(x, q_\xi (x) )} } ( x ) 	&  =& 	\limsup _{N \rightarrow \infty}	\frac{\log \nu _{(x, q_\xi (x) )} ( B _{| I _N (x) |} (x) )}{\log | I _N (x) |}	\\
	& \leqslant &	\lim _{N \rightarrow \infty} \frac{\log \nu _{(x, q_\xi (x) )} ( I _N (x) )}{\log | I _N (x) |} \\
	& \leqslant & \liminf _{N \rightarrow \infty} \frac{\log \nu _{(x, q_\xi (x) )} ( B _{| I _N (x) | t ^N} (x) )}{\log ( | I _N (x) | t ^N)} \left( 1 + \frac{\log t}{\log \min _i | I _i |} \right) \\
		& = & \underline{d} _{ \nu _{(x, q_\xi (x) )} } ( x )  \left( 1 + \frac{\log t}{\log \min _i | I _i |} \right)
\end{eqnarray*}
by virtue of Proposition \ref{prop:pwd_helper}, as $ \lim _{N \rightarrow \infty} \frac{\log | I _{N+1} (x) |}{\log | I _N (x) |} =  \lim _{N \rightarrow \infty} \frac{\log ( | I _{N+1} (x) | t ^{N+1} )}{\log ( | I _N (x) | t ^N )} = 1 $.
Letting $ t \nearrow 1 $ finishes this proof.

Next, we consider the dimension of $ \nu _{(x, q_\xi (x) )} $.
Let $ \gamma _1 := h ^{ss} _\mu / \int \log \tau ' \, d \nu $. 
By Lemma \ref{lem:ss_conditional_ext} we have
\begin{eqnarray*}
	&&	\int \nu ^\ext_{(\xi, q_\xi(x) )}  \left\{ \tilde{x} \in [0,1] : \lim _{r \rightarrow 0} \frac{\log \nu _{(\xi, q_\xi(x) )} ( B _r (\tilde{x}) )}{\log r} = \gamma _1 \right\} \, d \nu ^\ext (\xi, x) \\
	&  =&	\nu ^\ext \left\{ ( \xi , x ) \in [0,1] ^2 : \lim _{r \rightarrow 0} \frac{\log \nu _{(\xi, q_\xi(x) )} ( B _r (x) )}{\log r} = \gamma _1 \right\}	\quad = 1 .
\end{eqnarray*}
Thus by Proposition \ref{prop:Gibbs_comparison_cond} we can conclude
\[
	\nu _{(\xi, q_\xi(x) )}  \left\{ \tilde{x} \in [0,1] : \lim _{r \rightarrow 0} \frac{\log \nu _{(\xi, q_\xi(x) )} ( B _r (\tilde{x}) )}{\log r} = \gamma _1 \right\}  = 1
\]
for $ \nu ^\ext $-a.a. $ ( \xi, x ) $.

Finally, the pointwise dimension of $ \mu _{(x, q_\xi (x) )} $ at $ ( v , \ell ^{ss} _{(\xi, x, W(x))} (v) ) $ coincides with that of $ \nu _{(x, q_\xi (x) )} $ at $ v $ for all $ v \in [0,1] $ and $ ( \xi , x ) \in [0,1] ^2 $ since the strong stable fibres are bi-Lipschitz continuous.
\end{proof}

The next lemma describes the product structure of $ \mu $ from the dimension theoretic point of view.
Its proof mimics that of \cite[Proposition 3.7]{Barani14} partially.

\begin{lemma}	\label{lem:mu_product}
$ \mu $ is exact dimensional with
\[
	\dim _H ( \mu )	 = 	\dim _H ( \mu _{(\xi, q_\xi (x) )} ) + \dim _H ( \nu \circ q _\xi ^{-1} ) 
\]
for $ \nu ^\ext $-a.a. $ ( \xi ,x ) $.
\end{lemma}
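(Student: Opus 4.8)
The plan is to sandwich balls on $ \mathrm{graph} ( W ) $ between the adapted neighbourhoods
\[
	Q _r ( \xi , x ) := \Sigma _r ( \xi , x ) \cap \big( B _r ( x ) \times \mathbb{R} \big)
\]
and to compute their $ \mu $-measure through the disintegration of Proposition \ref{prop:decomposition}. First I would record that there are constants $ 0 < c \leqslant 1 \leqslant C $, independent of $ ( \xi , x ) $ and $ r $, with
\[
	B _{c r} ( x , W ( x ) ) \cap \mathrm{graph} ( W )  \ \subseteq \  Q _r ( \xi , x )  \ \subseteq \  B _{C r} ( x , W ( x ) ) .
\]
Both inclusions follow from the fact that the strong stable fibre through a point $ ( v , W ( v ) ) $ is its own graph with slope bounded by $ \| X _3 \| _\infty $, so that $ | \ell ^{ss} _{( \xi , v , W ( v ) )} ( x ) - W ( x ) | $ and $ | W ( v ) - W ( x ) | $ differ by at most $ \| X _3 \| _\infty | v - x | $; one then reads off the right inclusion with $ C = \| X _3 \| _\infty + 1 $ and the left one with $ c = ( \| X _3 \| _\infty + 1 ) ^{-1} $. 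Since $ \mu $ is carried by $ \mathrm{graph} ( W ) $, the same elementary sandwiching as in Proposition \ref{prop:pwd_helper} gives $ \underline{d} _\mu ( x , W ( x ) ) = \liminf _{r \to 0} \frac{\log \mu ( Q _r ( \xi , x ) )}{\log r} $ and likewise for $ \overline{d} _\mu $ with $ \limsup $, so it suffices to prove that $ \lim _{r \to 0} \frac{\log \mu ( Q _r ( \xi , x ) )}{\log r} $ exists and is constant for $ \nu ^\ext $-a.a. $ ( \xi , x ) $.

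Reparametrising Proposition \ref{prop:decomposition} via $ q _\xi $ gives $ \mu = \int \mu _{( \xi , z )} \, d ( \nu \circ q _\xi ^{-1} ) ( z ) $, hence
\[
	\mu ( Q _r ( \xi , x ) ) = \int \mu _{( \xi , z )} ( Q _r ( \xi , x ) ) \, d ( \nu \circ q _\xi ^{-1} ) ( z ) .
\]
Since $ \mu _{( \xi , z )} $ is carried by the single strong stable fibre $ \{ ( v , \ell ^{ss} _{( \xi , 0 , z )} ( v ) ) : v \in [ 0 , 1 ] \} $, its intersection with $ Q _r ( \xi , x ) $ decouples into a horizontal and a transversal condition: $ \mu _{( \xi , z )} ( Q _r ( \xi , x ) ) = \mathbf{1} _{E _r} ( z ) \cdot \nu _{( \xi , z )} ( B _r ( x ) ) $, where $ E _r := \{ z : | \ell ^{ss} _{( \xi , 0 , z )} ( x ) - W ( x ) | \leqslant r \} $ satisfies $ B _{C _s ^{-1} r} ( q _\xi ( x ) ) \subseteq E _r \subseteq B _{C _s r} ( q _\xi ( x ) ) $ by Proposition \ref{prop:l_parallel} (using $ \ell ^{ss} _{( \xi , 0 , q _\xi ( x ) )} ( x ) = W ( x ) $). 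The task therefore becomes to show
\[
	\lim _{r \to 0} \frac{\log \int _{E _r} \nu _{( \xi , z )} ( B _r ( x ) ) \, d ( \nu \circ q _\xi ^{-1} ) ( z )}{\log r} = \gamma _1 + \gamma _2
\]
for $ \nu ^\ext $-a.a. $ ( \xi , x ) $, where $ \gamma _1 := h ^{ss} _\mu / \int \log \tau ' \, d \nu = \dim _H ( \mu _{( \xi , q _\xi ( x ) )} ) $ and $ \gamma _2 := ( h _\nu - h ^{ss} _\mu ) / ( - \int \log \lambda \, d \nu ) = \dim _H ( \nu \circ q _\xi ^{-1} ) $ are the a.e.-constant values furnished by Lemmas \ref{lem:dim_cond} and \ref{lem:pre-YL} (with $ \dim _H ( \mu _{( \xi , z )} ) = \dim _H ( \nu _{( \xi , z )} ) = \gamma _1 $ for $ ( \nu \circ q _\xi ^{-1} ) $-a.a. $ z $).

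For the bound giving $ d _\mu \geqslant \gamma _1 + \gamma _2 $ I would use exact dimensionality of $ \nu \circ q _\xi ^{-1} $ to get $ ( \nu \circ q _\xi ^{-1} ) ( E _r ) \leqslant ( \nu \circ q _\xi ^{-1} ) ( B _{C _s r} ( q _\xi ( x ) ) ) \leqslant r ^{\gamma _2 - \varepsilon} $ for small $ r $, and combine it with an estimate $ \nu _{( \xi , z )} ( B _r ( x ) ) \leqslant r ^{\gamma _1 - \varepsilon} $ valid for $ ( \nu \circ q _\xi ^{-1} ) $-a.a. $ z \in E _r $; for the bound giving $ d _\mu \leqslant \gamma _1 + \gamma _2 $ I would discard most of $ E _r $, retaining a sub-interval of $ ( \nu \circ q _\xi ^{-1} ) $-measure at least $ r ^{\gamma _2 + \varepsilon} $ on a positive fraction of whose points $ \nu _{( \xi , z )} ( B _r ( x ) ) \geqslant r ^{\gamma _1 + \varepsilon} $, using exact dimensionality again together with a Fubini/Egorov argument to make the two ``good'' events coexist. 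Letting $ \varepsilon \to 0 $ yields the common value $ \gamma _1 + \gamma _2 $, which, being constant, shows $ \mu $ exact dimensional with the asserted product formula.

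The main obstacle is the control of $ \nu _{( \xi , z )} ( B _r ( x ) ) $ for $ z $ ranging over $ E _r $: because the strong stable fibres are only \emph{almost} parallel rather than genuinely parallel, the conditional measures $ \nu _{( \xi , z )} $ along a transversal are not literal translates of one another, so the fibrewise pointwise-dimension statement of Lemma \ref{lem:dim_cond} cannot be invoked for each $ z $ separately. Instead I would upgrade it to a statement robust under integration over the transversal by rerunning the Besicovitch-type maximal-function arguments already employed in Propositions \ref{prop:Hardy}, \ref{prop:mu_stabil} and \ref{prop:maximal_ineq} — now applied to the functions $ z \mapsto \nu _{( \xi , z )} ( I _N ( x ) ) $ — and by replacing balls with the cylinder-versus-ball comparisons of Propositions \ref{prop:pwd_helper} and \ref{prop:helper2}; this is precisely the step where the Gibbs property of $ \nu $, rather than self-affinity as in \cite{Barani14}, is used.
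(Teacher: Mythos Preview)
Your decomposition
\[
	\mu ( Q _r ( \xi , x ) ) = \int _{E _r} \nu _{( \xi , z )} ( B _r ( x ) ) \, d ( \nu \circ q _\xi ^{-1} ) ( z )
\]
is correct and is essentially the starting point the paper uses as well (for the inequality $ \underline{d} _\mu \geqslant \gamma _1 + \gamma _2 $ the paper also pushes $ B _r $ into $ \Sigma _{C _0 r} $ and disintegrates). The gap is precisely the point you flag as ``the main obstacle'', and your proposed cure---running a maximal inequality for $ z \mapsto \nu _{( \xi , z )} ( I _N ( x ) ) $---does not do the job: the difficulty is not a lack of uniformity in $ r $, it is that $ x $ is simply not a generic point for $ \nu _{( \xi , z )} $ once $ z \neq q _\xi ( x ) $, so no fibrewise dimension statement about $ \nu _{( \xi , z )} $ at \emph{its own} typical points says anything about $ \nu _{( \xi , z )} ( B _r ( x ) ) $.

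The paper bypasses this by never asking for that estimate. For the lower bound one first restricts to a set $ \mathcal{G} $ (via Egorov) on which the two dimension estimates \eqref{eq:proof_prod_1}--\eqref{eq:proof_prod_2} hold uniformly, and then by a density argument replaces $ B _r ( x , W ( x ) ) $ by $ B _r ( x , W ( x ) ) \cap ( \mathcal{G} _\xi \times \mathbb{R} ) $ at the cost of a factor $ 2 $. Now if a fibre $ \{ z = \mathrm{const} \} $ meets this intersection at all, it does so at some $ ( \tilde{x} , W ( \tilde{x} ) ) $ with $ ( \xi , \tilde{x} ) \in \mathcal{G} $ and $ | \tilde{x} - x | \leqslant r $; since this forces $ z = q _\xi ( \tilde{x} ) $, one has $ \nu _{( \xi , z )} ( B _r ( x ) ) \leqslant \nu _{( \xi , q _\xi ( \tilde{x} ) )} ( B _{2 r} ( \tilde{x} ) ) \leqslant r ^{\gamma _1 - \varepsilon} $ by \eqref{eq:proof_prod_1}. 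In other words: one moves the \emph{centre}, not the \emph{measure}, and the needed uniformity is built into the Egorov set. Your ``Fubini/Egorov'' hint is in the right spirit, but the concrete mechanism---trading $ x $ for a nearby good $ \tilde{x} $ on each fibre---is the missing idea.

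For the upper bound $ \overline{d} _\mu \leqslant \gamma _1 + \gamma _2 $ the paper does \emph{not} use the disintegration at all; your proposal to exhibit many $ z \in E _r $ with $ \nu _{( \xi , z )} ( B _r ( x ) ) \geqslant r ^{\gamma _1 + \varepsilon} $ runs into the same obstruction as above, and I do not see how Egorov alone produces such $ z $'s. The paper instead works on a single fibre: for $ \nu _{( \xi , q _\xi ( x ) )} $-typical $ v $, it covers $ B _{\beta ^N} ( v ) $ by cylinders $ I _N ( u _i ) $, counts them via \eqref{eq:proof_prod_03} and \eqref{eq:proof_prod_05}, and then---crucially using Proposition~\ref{prop:Moss} together with \eqref{eq:proof_prod_01}---shows that the graph of $ W $ over each $ I _N ( u _i ) $ sits inside $ B _{\tilde{C} \beta ^{N ( 1 - \varepsilon )}} ( v , W ( v ) ) $. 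Summing $ \nu ( I _N ( u _i ) ) $ over $ i $ gives the required lower bound on $ \mu $ of that ball. This counting argument is what replaces your integral lower bound; it is where the structure of $ W $ (via Proposition~\ref{prop:Moss}) and the entropy comparison $ h _\nu $ versus $ h ^{ss} _\mu $ genuinely enter.
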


\begin{proof}
Let $ \beta := \exp ( \int \log \lambda \, d \nu ) $, $ \gamma _1 :=  h ^{ss} _\mu / \int \log \tau ' \, d \nu $ and $ \gamma _2 := - ( h ^+ _\mu - h ^{ss} _\mu )  / \int \log \lambda \, d \nu  $.
Further, let $ C _0 := \| X _3 \| _\infty + 1 $ so that
\begin{equation}
	B _r ( x , W(x) ) \subseteq \{ ( v ,y ) \in \Sigma _{C _0 r} ( \xi ,x ) : | x - v | \leqslant r  \}	\label{eq:proof_prod_5}
\end{equation}
holds for all $ x \in [0,1] $ and $ r > 0 $.
We shall prove that the lower and upper pointwise dimension are almost surely bounded by $ \gamma _1 + \gamma _2 $ from below and above, respectively.

For the lower bound, let $ \varepsilon > 0 $ be arbitrary.
For $ N _0 \in \mathbb{N} $ let $ \mathcal{G} _{N _0} $ denote the set of those $ ( \xi, x ) \in [0,1] ^2 $ which satisfy
\begin{eqnarray}
 \nu _{(x, q_\xi (x) )} ( B _{2 \beta ^N} ( x ) ) 	& \in &	[\beta ^{ N ( \gamma _1 + \varepsilon )} , \beta ^{ N ( \gamma _1 - \varepsilon )}] \mbox{, and} \label{eq:proof_prod_1} \\
 \mu  ( \Sigma _{C _0 \beta ^N} ( \xi, x) ) 	& \in &	[\beta ^{N (\gamma _2 + \varepsilon)} , \beta ^{N (\gamma _2 - \varepsilon)}  ]  \label{eq:proof_prod_2} 
\end{eqnarray}
for all $ N \geqslant N _0 $.
Then we have $ \lim _{N \rightarrow \infty} \int \nu ^+ _\xi ( ( \mathcal{G} _N ) _\xi ) \, d \nu ^- ( \xi ) = \lim _{N \rightarrow \infty} \nu ^\ext ( \mathcal{G} _N ) = 1 $ by \eqref{eq:disintegral_vert}, Lemmas \ref{lem:pre-YL} and \ref{lem:dim_cond}, where $ (\mathcal{G} _N ) _\xi := \{ x \in [0,1 ] : (\xi, x ) \in \mathcal{G} _N \} $.
By the monotone convergence theorem and Proposition \ref{prop:Gibbs_comparison}, we have thus $ \lim _{N \rightarrow \infty} \nu  ( (\mathcal{G} _N ) _\xi ) = 1 $ for $ \nu ^- $-a.a. $ \xi $.

Let such a $ \xi \in [0,1] $ be fixed.
Then we choose a $ N _0 \in \mathbb{N} $ so large that $ \nu  ( (\mathcal{G} _{N _0} ) _\xi ) \geqslant 1 - \varepsilon $.
By Lemma \ref{lem:Boreldensity} and Egorov's theorem there are $ N _1 \geqslant N _0 $ and $ \mathcal{G} ' \subseteq (\mathcal{G} _{N _0} ) _\xi  $ with $ \nu ( \mathcal{G} ' ) > 1 - 2 \varepsilon $ such that
\begin{equation}
	\mu \left( B _{\beta ^N} ( x , W(x) ) \cap (\mathcal{G} _{N _0} ) _\xi \times \mathbb{R} \right) \geqslant \frac{1}{2} \mu  \left( B _{\beta ^N}  (x, W(x) ) \right)	\label{eq:proof_prod_3}
\end{equation}
holds for all $ x \in \mathcal{G} ' $ and $ N \geqslant N _1 $.

We claim that
\begin{equation}
	\mu _{(\xi, z )} \left( B _{\beta ^N} ( x , W(x) ) \cap (\mathcal{G} _{N _0} ) _\xi \times \mathbb{R} \right) \leqslant \beta ^{N (\gamma _2 - \varepsilon)} 	\label{eq:proof_prod_4}
\end{equation}
holds for all $ z \in \mathbb{R} $, $ x \in \mathcal{G} ' $ and $ N \geqslant N _1 $.
If $  \mu _{(\xi, z )} \left( B _{\beta ^N} ( x , W(x) ) \cap (\mathcal{G} _{N _0} ) _\xi \times \mathbb{R} \right)  = 0 $, the claim is trivial.
Thus we assume that there is a $ ( \tilde{x} , \ell ^{ss} _{(\xi, 0, z)} ( \tilde{x} ) ) \in B _{\beta ^N} ( x , W(x) ) \cap (\mathcal{G} _{N _0} ) _\xi \times \mathbb{R}  $.
In this case, $	\mu _{(\xi, z )} \left( B _{\beta ^N} ( x , W(x) ) \cap (\mathcal{G} _{N _0} ) _\xi \times \mathbb{R} \right)  \leqslant \nu _{(\xi, z )} ( B _{2 \beta ^N} ( \tilde{x} ) )   = \nu _{(\xi, q _\xi ( \tilde{x} ) )} ( B _{2 \beta ^N} ( \tilde{x} ) )) \leqslant \beta ^{N (\gamma _2 - \varepsilon)}  $ by \eqref{eq:proof_prod_1}.

Now, from \eqref{eq:proof_prod_5}, \eqref{eq:proof_prod_2}, \eqref{eq:proof_prod_3} and \eqref{eq:proof_prod_4} together with Lemma \ref{lem:ss_conditional} follows
\begin{eqnarray*}
	\mu  \left( B _{\beta ^N} ( x , W(x) ) \right)	& \leqslant & 2 \mu \left( B _{\beta ^N} ( x , W(x) ) \cap (\mathcal{G} _{N _0} ) _\xi \times \mathbb{R} \right)	\\
		& \leqslant &	2 \int _{\pi ^{ss} _\xi (\Sigma _{C _0 \beta ^N} ( \xi ,x ))}  \mu _{(\xi, z )} \left( B _{\beta ^N} ( x , W(x) ) \cap (\mathcal{G} _{N _0} ) _\xi \times \mathbb{R} \right) \, d (\nu \circ q _\xi ^{-1}) ( z ) \\
		& \leqslant &	2 \beta ^{N (\gamma _1 + \gamma _2 - 2 \varepsilon)}
\end{eqnarray*}
for all $ x \in \mathcal{G}  ' $ and $ N \geqslant N _1 $.
Therefore by Proposition \ref{prop:pwd_helper} we have
\[
	\liminf _{r \rightarrow 0} \frac{\log \mu \left( B _r ( x ,W(x) ) \right) }{\log r} \geqslant \gamma _1 + \gamma _2 - 2 \varepsilon
\]
for $ x \in \mathcal{G}  ' $.
As $ \nu ( \mathcal{G} ' ) > 1 - 2 \varepsilon $ and $ \varepsilon > 0 $ was arbitrary, the lower estimate is proved $ \nu $-a.s..

Now, we prove the upper bound.
Again, let $ \varepsilon > 0 $ be arbitrary.
For $ ( \xi ,x ) \in [0,1] ^2 $ and $ N _2 \in \mathbb{N} $ let $ \mathcal{F} _{(\xi, x ) , N _2} $ denote the set of those $ v \in [0,1] $ which satisfy
\begin{eqnarray}
	W ( v ) 	& = &	\ell ^{ss} _{(\xi, x , q _\xi (x) )} ( v ) , \label{eq:proof_prod_01}\\
 \lambda ^N (v) & \in &	[\beta ^{ N ( 1 + \varepsilon)} ,\beta ^{ N ( 1 - \varepsilon)} ] , \label{eq:proof_prod_02} \\
 \nu _{(x, q_\xi (x) )} ( B _{\beta ^N} ( v ) ) 	& \in &	[\beta ^{N (\gamma _1 + \varepsilon)} , \beta ^{N(\gamma _1 - \varepsilon)} ] , \label{eq:proof_prod_03} \\
   \nu ( I _N (v) )	& \in &	 [e ^{- N (  h _\nu  + \varepsilon )} , e ^{- N (  h _\nu - \varepsilon )}], \label{eq:proof_prod_04}  \\
  \nu _{ ( \xi , q _\xi (x) )} ( I _N ( v ) ) 	& \in &	 [e ^{- N (  h ^{ss} _\mu + \varepsilon )} , e ^{- N (  h ^{ss} _\mu - \varepsilon )}] \label{eq:proof_prod_05} \mbox{, and} \\
   | I _N (v) |	& \in &	 [ 0, \beta ^N ] \label{eq:proof_prod_055}  
\end{eqnarray}
for all $ N \geqslant N _2 $.
Then we have $ \lim _{N \rightarrow \infty} \int \nu _{(\xi, q _\xi (x))} ( \mathcal{F} _{ ( \xi , x ) , N} ) \,d \nu  ( \xi, x ) = 1 $ by Lemma \ref{lem:ss_conditional} in view of \eqref{eq:cond_on_graph}, Birkhoff ergodic theorem, Lemma \ref{lem:dim_cond}, \eqref{eq:SMB}, Proposition \ref{prop:h_ss_conv}, Proposition \ref{prop:log_I} and the fact that $ \exp (- \int \log \tau ' \, d \nu ) < \beta $.
By the monotone convergence theorem and Proposition \ref{prop:Gibbs_comparison_cond}, we have therefore $ \lim _{N \rightarrow \infty} \nu _{(\xi, q _\xi (x))} ( \mathcal{F} _{ ( \xi , x ) , N} ) = 1 $ for $ \nu ^\ext $-a.a. $ ( \xi, x) $.

Let such a $ ( \xi ,x ) $ be fixed and let $ N _2 $ be so large that $ \nu _{(\xi, q _\xi (x))} ( \mathcal{F} _{ ( \xi , x ) , N _2} )  > 1 - \varepsilon $.
By Lemma \ref{lem:Boreldensity} and Egorov's theorem, there are $ N _3 \geqslant N _2 $ and $  \mathcal{F} ' \subseteq \mathcal{F} _{ ( \xi , x ) , N _2}  $ with $  \nu _{(\xi, q _\xi (x))} ( \mathcal{F} ' ) > 1 - 2 \varepsilon $ such that
\begin{equation}
	\nu _{(\xi, q _\xi (x))} \left( B _{\beta ^N} (v) \cap \mathcal{F} _{ ( \xi , x ) , N _2} \right)  \geqslant	\frac{1}{2} \nu _{(\xi, q _\xi (x))} \left( B _{\beta ^N} (v) \right)	\label{eq:proof_prod_06}
\end{equation}
holds for all $ v \in \mathcal{F} ' $ and $ N \geqslant N _3 $.
Let a $ N \geqslant N _3 $ be fixed.
Then we choose $ ( u _i ) _{i=0} ^{\ell ^N - 1} $ so that $ I _N ( u _i ) \neq I _N ( u _j ) $ for $ i \neq j $ and that $ u _i \in \mathcal{F} _{ ( \xi , x ) , N _2} $ whenever $ I _N ( u _i ) \cap \mathcal{F} _{ ( \xi , x ) , N _2} \neq \emptyset $.
By \eqref{eq:proof_prod_03}, \eqref{eq:proof_prod_06} and \eqref{eq:proof_prod_05} we obtain
\begin{eqnarray}
	\beta ^{ N ( \gamma _1 + \varepsilon)} 	\quad  \leqslant \quad	\nu _{(\xi, q _\xi (x))} \left( B _{\beta ^N} (v) \right) 
		& \leqslant &	2 \, \nu _{(\xi, q _\xi (x))} \left( B _{\beta ^N} (v) \cap \mathcal{F} _{ ( \xi , x ) , N _2} \right) \nonumber \\
		& \leqslant &	 2 \sum _{i \in A _v} \nu  _{(\xi, q _\xi (x))} ( I _N ( u _i ) ) \quad \leqslant \quad \sharp A _v \cdot  e ^{- N (  h ^{ss} _\mu - \varepsilon )} 		\label{eq:proof_prod_07}
\end{eqnarray}
for all $ v \in \mathcal{F} ' $, where $ A _v :=  \{ i  : I _N (u_i) \cap B _{\beta ^N} (v) \cap \mathcal{F} _{ ( \xi , x ) , N _2} \neq \emptyset \} $ and $ \sharp A _v $ denotes the cardinality of $ A _v $.
On the other hand, by Proposition \ref{prop:Moss} and \eqref{eq:proof_prod_02} we have
\[
	\{ (w , W(w) ) : w \in I _N (u _i ) \}	\subseteq I _N (u _i ) \times B _{C _m \lambda ^N ( u _i)} ( W (u _i ) ) \subseteq I _N (u _i ) \times B _{C _m \beta ^{N ( 1 - \varepsilon)}} ( W (u _i ) ) 
\]
Furthermore, $ | W ( v ) - W ( u _i ) | = \left| \ell ^{ss} _{(\xi, x, W(x))} (v) - \ell ^{ss} _{(\xi, x, W(x))} (u _i) \right| \leqslant C _0 \beta ^N $
holds for all $ v \in \mathcal{F} ' $ and $ i \in A _v $ by \eqref{eq:proof_prod_01} since $ | v - u _i | \leqslant \beta ^N $ by the definition of $ A _v $.
Thus we have
\begin{equation}
	\{ (w , W(w) ) : w \in I _N (u _i ) \}	\subseteq B _{\tilde{C} \, \beta ^{N ( 1 - \varepsilon )}} ( v , W(v) )	\label{eq:proof_prod_08}
\end{equation}
for all $ v \in \mathcal{F} ' $ and $ i \in A _v $, where $ \tilde{C} := 1 + C _m + C _0  $.
Now, from \eqref{eq:proof_prod_04}, \eqref{eq:proof_prod_07} and \eqref{eq:proof_prod_08} follows
\begin{equation*}
	\mu \left( B _{\tilde{C} \, \beta ^{N ( 1 - \varepsilon )}} (v , W(v) ) \right) \geqslant \sum _{i \in A _v} \nu  ( I _N ( u _i ) ) \geqslant \sharp A _v \cdot e ^{- N (  h  _\nu + \varepsilon )} \geqslant \beta ^{ N ( \gamma _1 + \varepsilon)} \cdot e ^{- N (  h  _\nu - h ^{ss} _\mu + 2 \varepsilon )}
\end{equation*}
for all $ v \in \mathcal{F} ' $.

As the above consequence is true for all $ v \in \mathcal{F} ' $ and $ N \geqslant N _3 $, by Proposition \ref{prop:pwd_helper} we have
\begin{eqnarray*}
	\limsup _{r \rightarrow 0 } \frac{\log \mu ( B _r (v, W(v)) )}{\log r} 	&=&	 \limsup _{N \rightarrow \infty} \frac{\log \mu \left( B _{\tilde{C} \, \beta ^{N ( 1 - \varepsilon )}} ( v , W(v) ) \right)  }{\log \beta ^{N ( 1 - \varepsilon )}} \\
	& \leqslant &	\left( \gamma _1 + \varepsilon  + \frac{ h ^{ss} _\mu - h _\nu - 2 \varepsilon}{- \log \beta} \right) \frac{1}{1 - \varepsilon}
\end{eqnarray*}
for all $ v \in \mathcal{F} ' $.
As $ \nu _{(\xi, q _\xi (x))} ( \mathcal{F} ' ) \geqslant 1 - 2 \varepsilon $ and $ \varepsilon > 0 $ was arbitrary, letting $ \varepsilon \rightarrow 0 $ yields in view of Lemma \ref{lem:pre-YL} that
\[
	\limsup _{r \rightarrow 0 } \frac{\log \mu \left( B _r ( v , W(v) ) \right) }{\log r} \leqslant \gamma _1 + \gamma _2
\]
holds for $ \nu  _{(\xi, q _\xi (x) )} $-a.a. $ v $.
Finally, since this is true for $ \nu ^\ext $-a.a. $ ( \xi , x ) $, by Lemma \ref{lem:ss_conditional} we can conclude
\begin{eqnarray*}
	&&	\nu \left\{ x \in [0,1] : \limsup _{r \rightarrow 0 } \frac{\log \mu \left( B _r ( x , W(x) ) \right) }{\log r} \leqslant \gamma _1 + \gamma _2 \right\} \\
	&  =&	\int \nu _{(\xi, q _\xi (x) )} \left\{ v \in [0,1] : \limsup _{r \rightarrow 0 } \frac{\log \mu \left( B _r ( v , W(v) ) \right) }{\log r} \leqslant \gamma _1 + \gamma _2 \right\} \, d \nu (x) \quad = 1 .
\end{eqnarray*}
\end{proof}

\begin{proof}[Proof of Theorem \ref{thm:entropy_formula}]
This follows from Lemmas \ref{lem:pre-YL}, \ref{lem:dim_cond} and \ref{lem:mu_product}.
\end{proof}

\section{Ledrappier's lemma}	\label{sec:L}

In this section, let $ \nu \in \mathcal{P} ( [0,1] ) $ be an invariant measure, that is not necessarily a Gibbs measure.
Instead we suppose the dimension theoretic product structure of $ \mu $ which is always satisfied in case of Gibbs measures as Lemma \ref{lem:mu_product} together with Proposition \ref{prop:Gibbs_comparison} shows.
More precisely, suppose that $ \nu _{(\xi, q _\xi (x))} $ and $ \nu \circ q _\xi ^{-1} $ are for $ \nu ^- \otimes \nu $-a.a. $ ( \xi, x ) $ exact dimensional with constant dimensions, say,
\begin{equation}
	\gamma _1 := \dim _H ( \nu _{(\xi, q _\xi (x))} ) \quad \mbox{and} \quad  \gamma _2 := \dim _H ( \nu \circ q _\xi ^{-1} ) ,	\label{eq:gammas}
\end{equation}
and that $ \mu $ is also exact dimensional with
\begin{equation}
	\dim _H ( \mu ) =  \gamma _1 + \gamma _2 . \label{eq:sigma}
\end{equation}

%

Recall that $ \Theta : [0,1] ^2 \rightarrow \mathbb{R} $ is defined in \ref{eq:Theta_Def}.
Now the lemma says the following.

\begin{lemma}[Ledrappier's Lemma]	\label{lem:L}
Under the above assumption we have
\[
\begin{cases}
	\dim _H ( \nu \circ q _\xi ^{-1} ) = 1 & \mbox{if } \dim _H ( \mu ) \geqslant 1 \\
	 \dim _H ( \nu _{(\xi, q _\xi (x))} )  = 0	 & \mbox{if } \dim _H ( \mu ) < 1 
\end{cases} ,
\]
whenever the distribution of $ \Theta (\, \cdot \, , x ) $ under $ \nu ^- $ has Hausdorff dimension $ 1 $ for $ \nu $-a.e. $ x \in [0,1 ] $.
\end{lemma}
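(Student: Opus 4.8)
The plan is to derive the stated dichotomy from the single identity $\dim_H(\nu\circ q_\xi^{-1})=\min\{1,\dim_H(\mu)\}$, proved by a Marstrand‑type projection argument. Write $\gamma_1,\gamma_2$ as in \eqref{eq:gammas} and $s:=\dim_H(\mu)=\gamma_1+\gamma_2$ as in \eqref{eq:sigma}. Since $\nu\circ q_\xi^{-1}$ is a measure on the line and $\gamma_1\geqslant 0$, one trivially has $\gamma_2\leqslant\min\{1,s\}$, so it suffices to prove $\gamma_2\geqslant\min\{1,s\}$; this already gives the lemma (if $s\geqslant 1$ it forces $\gamma_2=1$, i.e. $\dim_H(\nu\circ q_\xi^{-1})=1$, and if $s<1$ it forces $\gamma_2=s$, i.e. $\gamma_1=\dim_H(\nu_{(\xi,q_\xi(x))})=0$). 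So I would assume, for contradiction, that $\gamma_2<\min\{1,s\}$; then in particular $\gamma_1=s-\gamma_2>0$ and $\gamma_2<1$.

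First I would fix $x$ in a set of full $\nu$-measure such that the distribution of $\Theta(\cdot,x)$ under $\nu^-$ has Hausdorff dimension $1$, the point $(x,W(x))$ has local $\mu$-dimension $s$, and, for $\nu^-$-a.e. $\xi$, the measure $\nu\circ q_\xi^{-1}$ is exact dimensional of dimension $\gamma_2$ with $q_\xi(x)$ a point of local dimension $\gamma_2$ for it. For $\xi\in[0,1]$ let $\mathrm{pr}_\xi:\mathbb{R}^2\to\mathbb{R}$, $\mathrm{pr}_\xi(u,v):=v-\Theta(\xi,x)\,u$, denote the linear projection along the direction $(1,\Theta(\xi,x))^T$, so that $\mathrm{pr}_\xi(u,W(u))=W(u)-\Theta(\xi,x)\,u$. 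Next I would compare $\nu\circ q_\xi^{-1}=\mu\circ(\pi^{ss}_\xi)^{-1}$ with the linear projection $\mathrm{pr}_\xi{}_\ast\mu$. Starting from the representation of $\pi^{ss}_\xi$ used in the proof of Proposition \ref{prop:l_parallel} and the uniform (in $\xi$) Hölder regularity of $X_3$ — hence of the leaves $v\mapsto\ell^{ss}_{(\xi,\cdot,\cdot)}(v)$, which are uniformly $C^{1+\alpha}$ for some $\alpha\in(0,1]$ — one obtains, for $u,v$ in a fixed neighbourhood of $x$,
\[
	q_\xi(u)-q_\xi(v)=c_\xi(v)\,\big(\mathrm{pr}_\xi(u,W(u))-\mathrm{pr}_\xi(v,W(v))\big)+O\!\left(|u-v|^{1+\alpha}\right),
\]
with $c_\xi\asymp 1$ and the error uniform in $\xi$. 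Localising at matched scales (a neighbourhood of $x$ of radius comparable to $r^{1/(1+\alpha)}$ when examining balls of radius $r$ in the target line) and using the exact dimensionality of $\mu$, $\nu_{(\xi,q_\xi(x))}$ and $\nu\circ q_\xi^{-1}$ to absorb the Hölder-order error, one concludes that the lower pointwise dimension $\underline d_{\mathrm{pr}_\xi{}_\ast\mu}$ is $\leqslant\gamma_2$ at the (typical) value $\mathrm{pr}_\xi(x,W(x))$, for $\nu^-$-a.e. $\xi$; in particular $\dim_\ast(\mathrm{pr}_\xi{}_\ast\mu)\leqslant\gamma_2$, where $\dim_\ast$ denotes the lower Hausdorff dimension of a measure.

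Then I would invoke the exceptional-set estimate for projections of measures (Falconer, Mattila): since $\mu$ is exact dimensional of dimension $s$ and $\gamma_2<\min\{1,s\}$, the set of directions $\theta$ with $\dim_\ast(\mathrm{pr}_\theta{}_\ast\mu)\leqslant\gamma_2$ has Hausdorff dimension at most $1+\gamma_2-s=1-\gamma_1<1$ when $s\leqslant 1$, and at most $2-s<1$ when $s>1$ (the latter because for $s>1$ the projection fails to be absolutely continuous only along a set of directions of dimension $\leqslant 2-s$, while $\gamma_2<1$ forces non–absolute-continuity). By the previous step, the direction of $(1,\Theta(\xi,x))^T$ lies in this exceptional set for $\nu^-$-a.e. $\xi$; hence the distribution of $\Theta(\cdot,x)$ under $\nu^-$ is carried by a set of Hausdorff dimension $<1$. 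But a measure of Hausdorff dimension $1$ assigns zero mass to every set of Hausdorff dimension $<1$ — a contradiction. Therefore $\gamma_2=\min\{1,s\}$, and the two cases of the lemma follow.

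The step I expect to be the main obstacle is the comparison in the second paragraph: controlling, uniformly in $\xi$, the deviation of the strong stable leaves from straight lines, and checking that this $(1+\alpha)$-Hölder-order deviation does not alter the pointwise dimension of the projected measure. This uses Proposition \ref{prop:l_parallel} — which says that, for fixed $\xi$, the leaves form a uniformly bi-Lipschitz deformation of a parallel family of curves — together with the exact dimensionality hypotheses to handle the error terms at matched scales. The remainder is either elementary bookkeeping or a citation of the classical projection theorems.
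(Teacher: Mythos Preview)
Your overall architecture --- reduce to $\gamma_2\geqslant\min\{1,s\}$, linearise the strong-stable projection at $(x,W(x))$, and invoke exceptional-set theorems for linear projections --- is natural, and the comparison formula you write down is essentially correct (with $\alpha$ replaced by $\min\{\alpha,\beta\}$, $\beta$ the H\"older exponent of $W$; this is exactly Proposition~\ref{prop:L_and_l}). But the conclusion you draw from it is too strong, and this is a genuine gap.

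The linearisation is only valid on a ball of radius $R\asymp r^{1/(1+\alpha)}$ around $(x,W(x))$: outside that ball the $O(|u-x|^{1+\alpha})$ error swamps $r$, so you cannot compare $\{\,|q_\xi(u)-q_\xi(x)|\leqslant r\,\}$ with $\{\,|\mathrm{pr}_\xi(u,W(u))-\mathrm{pr}_\xi(x,W(x))|\leqslant r\,\}$ globally. What you \emph{can} say is
\[
(\mathrm{pr}_\xi{}_\ast\mu)\big(B_r(\mathrm{pr}_\xi(x,W(x)))\big)\ \geqslant\ \mu\big(B^T_{\xi,Cr}(q_\xi(x))\cap B_{R}(x,W(x))\big),
\]
and the right-hand side --- this is precisely the quantity $b(\xi,x,r,t)$ the paper studies, with $t=1/(1+\alpha)$ --- is only $\asymp r^{\,t\gamma_1+\gamma_2}$, because the conditional measures on the leaves give $R^{\gamma_1}=r^{t\gamma_1}$ rather than $1$. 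So you obtain $\underline d_{\mathrm{pr}_\xi{}_\ast\mu}\leqslant t\gamma_1+\gamma_2$, \emph{not} $\leqslant\gamma_2$. The claim that exact dimensionality lets you ``absorb the H\"older-order error'' and reach $\gamma_2$ is exactly where the argument breaks: the localisation costs you $t\gamma_1$ and nothing you have assumed recovers it.

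With only $u:=t\gamma_1+\gamma_2$ in hand, the black-box step fails in the main case $s\geqslant 1$ whenever $u\geqslant 1$ (which certainly occurs, e.g.\ $\gamma_2$ close to $1$ and $\gamma_1>0$): then $\dim_\ast(\mathrm{pr}_\xi{}_\ast\mu)\leqslant u$ is vacuous, and you cannot conclude that $\mathrm{pr}_\xi{}_\ast\mu$ fails absolute continuity, so Falconer's $2-s$ bound does not apply. (Also, the bound ``$1+\gamma_2-s$'' you quote for $s\leqslant 1$ is not the standard Kaufman estimate, which gives $\leqslant\gamma_2$; but even granting it with $\gamma_2$ replaced by $u$ the case $u\geqslant1$ remains.)

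The paper circumvents this by \emph{not} reducing to linear projections. It keeps the same localised quantity $b(\xi,x,r,t)$, proves your upper estimate $\limsup\frac{\log b}{\log r}\leqslant t\gamma_1+\gamma_2$ (Section~4.3), and pairs it with a \emph{matching lower estimate}: after averaging in $\xi$ against $\nu^-$ and using the hypothesis $\dim_H\mathbb P_x=1$ together with an elementary Marstrand inequality (Proposition~\ref{prop:Marstrand}), one gets $\limsup\frac{\log b}{\log r}\geqslant a+t(s-a)$ with $a=\min\{1,s\}$. Comparing the two sides for a single $t<1$ forces $(1-t)(1-\gamma_2)\leqslant0$ when $s\geqslant1$ and $(1-t)\gamma_1\leqslant0$ when $s<1$. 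The point is that the $t\gamma_1$ loss appears symmetrically on both sides and cancels; reducing to linear projections throws away the side that would have cancelled it.
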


\subsection{Outline of the proof of Lemma \ref{lem:L}}
As the procedure of the proof is a bit tricky, we sketch our approach fist.
We use the following convention.
\begin{itemize}
 \item Let $ B _r ( x,y ) := B _r ((x,y)) $.
 \item Let $ \| \cdot \|  $ denote the euclidean norm of $ \mathbb{R} ^2 $.
 \item Let $ A _\xi := \{ x \in [0,1] : ( \xi, x ) \in A \} $ for $ \xi \in [0,1] $ and $ A \subseteq [0,1] ^2 $.
 \item Let $ \mathbb{P} _x \in \mathcal{P} ( \mathbb{R} ) $ denote the distribution of $ \Theta (\, \cdot \, , x ) $ under $ \nu ^- $.
 \item Let $ \alpha \in ( 0,1 ) $ be the minimum of the Hölder exponents of $ g' $, $ \lambda ' $ and $ \log \gamma $.
 \item Let $ a := \min \{ 1 , \gamma _1 + \gamma _2 \} $.
 \item Let $ | \cdot | _\alpha $ and $ | \cdot | _{\mathrm{pw} , \alpha} $ denote the $ \alpha $-Hölder and the peacewise $ \alpha $-Hölder seminorm, respectively, i.e. for $ \varphi : [0,1] \rightarrow \mathbb{R} $ let
\[
	| \varphi | _\alpha :=  \sup _{x,\tilde{x} \in (0,1)} \frac{| \varphi (x) - \varphi ( \tilde{x} ) |}{|x - \tilde{x}| ^\alpha} \quad \mbox{and} \quad | \varphi | _{\mathrm{pw}, \alpha} := \max _{i \in \{ 0 , \ldots , \ell -1\}} \sup _{x,\tilde{x} \in I _i ^\circ} \frac{| \varphi (x) - \varphi ( \tilde{x} ) |}{|x - \tilde{x}| ^\alpha} .
\]
\end{itemize}

Suppose that we have $ \dim _H ( \mathbb{P} _x  ) = 1 $ for $ \nu  $-a.e. $ x \in [0,1 ]$.

Let $ \delta \in ( 0 , \gamma _1 + \gamma _2 ) $ and $ t \in ( \frac{1}{1 + \alpha} ,  1 ) $ be fixed.
In view of \eqref{eq:sigma} and the above assumption on $ \mathbb{P} _x $ together with Fubini's theorem, we can choose an $ E > 0 $ so large that the set
\begin{equation*}
	G := \left\{ ( \xi , x ) \in [0, 1] ^2 :  \begin{array}{l}
	\mathbb{P} _x \left( B _\eta \left( \Theta ( \xi, x ) \right)  \right) \leqslant E \,  \eta ^{1 - 2 \delta }	\mbox{ and}	\\
	\mu \left( B _\eta  (x,W(x))  \right) \leqslant E \, \eta ^{\gamma _1 + \gamma _2 - a \delta} 
	\end{array}	
	\mbox{ for } \forall \eta > 0 \right\}	\label{eq:Sigma}
\end{equation*}
has a positive $ \nu ^- \otimes \nu $ measure.
Then we consider
\begin{eqnarray*}
	b ( \xi , x , r , t )	& := &	\mu \left( G _\xi \times \mathbb{R} \cap \Sigma _r ( \xi , x ) \cap B _{r ^t} ( x , W(x) ) \right) \\
	& = &	\nu \left\{ v \in G _\xi :  \left| \ell ^{ss} _{\left( \xi ,v , W(v) \right)} (x) - W(x)  \right| \leqslant r \mbox{ and } \| (v, W(v)) - (x, W(x))  \| \leqslant r ^t  \right\} .
\end{eqnarray*}
In the following subsections, as \eqref{eq:b_lawer} and \eqref{eq:b_upper}, we shall prove that
\begin{equation}
	a ( 1  - 2 \delta ) + t ( \gamma _1 + \gamma _2 - a ) \leqslant	\limsup _{r \rightarrow 0} \frac{\log b ( \xi , x , r , t )}{\log r}	\leqslant t \gamma _1 + \gamma _2 + 3 \delta	\label{eq:b_estimate}
\end{equation}
holds on some set of $ ( \xi , x ) $ with positive $ \nu ^- \otimes \nu $ measure.

Once this has been proved, we are able to conclude the proof as follows.

\begin{proof}[Proof of Lemma \ref{lem:L}]
Assume first that $ \gamma _1 + \gamma _2 \geqslant 1 $.
Then the inequality \eqref{eq:b_estimate} implies
\begin{equation*}
	1 + t ( \gamma _1 + \gamma _2 - 1) \leqslant t \gamma _1 + \gamma _2 
\end{equation*}
for a $ t \in  ( \frac{1}{1 + \alpha} ,  1 ) $ since $  \delta \in ( 0 , \gamma _1 + \gamma _2 ) $ was arbitrary.
This can be rewritten as $ 1 - \gamma _2 \leqslant t (1 - \gamma _2 ) $.
The last expression is only possible in case $ \gamma _2 = 1 $ since $ \gamma _2 = \dim _H (\mu \circ q _\xi ^{-1}) \in [0,1] $.

Now, assume that $ \gamma _1 + \gamma _2 < 1 $.
In this case the inequality \eqref{eq:b_estimate} implies
\begin{equation*}
	 \gamma _1 + \gamma _2  \leqslant t \gamma _1 + \gamma _2 ,
\end{equation*}
for a $ t \in  ( \frac{1}{1 + \alpha} ,  1 ) $, or equivalently, $ \gamma _1 \leqslant t \gamma _1 $.
Thus $ \gamma _1 = 0 $.
\end{proof}

\subsection{Lower estimate}

The goal of this subsection is to prove the lower estimate of \eqref{eq:b_estimate}.

Let $ x \in [0, 1] $.
By Fubini's theorem we have
\begin{eqnarray*}
	&  &	\int _{ \{ \xi \in [0,1] : ( \xi, x ) \in G \} }  b ( \xi , x , r , t ) \, d \nu ^- ( \xi ) \\
		& = &	\int _{ \{ ( v , W(v) ) \in B _{r ^t} (x, W(x)) \} } \nu ^- \left\{ \xi \in [0,1] : v , x \in G _\xi \mbox{ and } \left| \ell ^{ss} _{\left( \xi ,v , W(v) \right)} (x) - W(x)  \right| \leqslant r  \right\} \, d \nu  (v).
\end{eqnarray*}

Let $ L _{(x, y)} ^\theta :[0,1] \rightarrow \mathbb{R} $ be the linear function through $ ( x , y ) $ with slope $ \theta $, i.e.
\[
	L _{(x, y)} ^\theta ( v ) := \theta ( v - x ) + y .
\]
We show now several properties of this projection function.

\begin{proposition}	\label{prop:L_and_l}
Let $ Y > 0 $.
Then there is a $ C _Y > 0 $ such that
\[
	\left|  L _{\left( x , y \right)} ^{X _3 ( \xi , x , y) } (v) - \ell ^{ss} _{(\xi, x, y)} ( v ) \right| \leqslant C _Y | x - v | ^{1 + \alpha}
\]
for all $ ( \xi ,x ) \in [0,1] ^2 $, $ v \in [0,1] $ and $ y \in [-Y , Y ] $.
\end{proposition}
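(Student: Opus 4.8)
The plan is to show that $v \mapsto \ell^{ss}_{(\xi,x,y)}(v)$ agrees with its tangent line at $v = x$ up to an error of order $1 + \alpha$, the extra Hölder gain being inherited from the regularity of $\tau', \lambda', g'$ through the series \eqref{eq:X3_def}. Set $D(v) := \ell^{ss}_{(\xi,x,y)}(v) - L^{X_3(\xi,x,y)}_{(x,y)}(v)$. Since $\ell^{ss}_{(\xi,x,y)}(x) = y = L^{X_3(\xi,x,y)}_{(x,y)}(x)$ we have $D(x) = 0$, and using the defining ODE of $\ell^{ss}_{(\xi,x,y)}$ together with the fact that $L^\theta_{(x,y)}$ has constant derivative $\theta$ gives
\[
	D'(v) = X_3\left( \xi , v , \ell^{ss}_{(\xi,x,y)}(v) \right) - X_3(\xi, x, y)
\]
wherever $D'$ is defined. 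Thus the statement reduces to integrating a modulus-of-continuity estimate for $X_3$ near $(\xi, x, y)$.

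First I would record a priori bounds. Put $\Lambda := \inf \tau' > 1$ and $\gamma_{\max} := \sup \gamma < 1$. The identity $\gamma \lambda = 1 / \tau'$ lets one group the factors $\gamma^k \lambda^k$ occurring in \eqref{eq:Fn} and \eqref{eq:X3_def} into reciprocals $1 / (\tau^k)' \le \Lambda^{-k}$, which already makes \eqref{eq:X3_def} converge geometrically, uniformly in $(\xi,x,y)$ on bounded $y$-ranges. Moreover, by \eqref{eq:Fn} the map $X_3$ is \emph{affine} in its last argument: $X_3(\xi, x, y) = - y \, A(\xi, x) - R(\xi, x)$, where $A$ is the uniformly bounded function from the proof of Proposition \ref{prop:l_parallel} and $R(\xi, x) := \sum_{n \ge 1} \gamma^n(\rho_{[\xi]_n}(x)) \left( W_{n-1}(\rho_{[\xi]_{n-1}}(x)) \lambda'(\rho_{[\xi]_n}(x)) + g'(\rho_{[\xi]_n}(x)) \right)$ is likewise uniformly bounded by the same grouping. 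Consequently the initial value problem for $\ell^{ss}_{(\xi,x,y)}$ is linear, so there are $M_Y, N_Y > 0$ with $\ell^{ss}_{(\xi,x,y)}([0,1]) \subseteq [-M_Y, M_Y]$ and $\left| \left( \ell^{ss}_{(\xi,x,y)} \right)' \right| \le N_Y$, hence $\left| \ell^{ss}_{(\xi,x,y)}(v) - y \right| \le N_Y |v - x|$, for all $\xi, x \in [0,1]$ and $|y| \le Y$.

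The core step is the uniform Hölder estimate
\[
	\left| A(\xi, v) - A(\xi, x) \right| + \left| R(\xi, v) - R(\xi, x) \right| \le C \, |v - x|^\alpha \qquad ( \forall \, \xi \in [0,1] ) .
\]
I would prove it by differencing the two series term by term, using: (i) each inverse-branch composition $\rho_{[\xi]_m}$ is a contraction with Lipschitz constant $\le \Lambda^{-m}$ whose image lies in the single interval $\overline{I_{k(\tau^{m-1} \xi)}}$, so the piecewise $\alpha$-Hölder seminorms $| \cdot |_{\mathrm{pw}, \alpha}$ of $g', \lambda', \log \tau', \log \lambda$ (all finite by the choice of $\alpha$ and $C^{1+}$-regularity) are available; (ii) writing the $\gamma$- and $(\gamma\lambda)$-products as exponentials of the Birkhoff sums $(\log \gamma)_n(\rho_{[\xi]_n}(\cdot))$ resp. $-(\log \tau')_k(\rho_{[\xi]_k}(\cdot))$, these exponents have $\alpha$-Hölder differences bounded by a constant times $\left( \sum_{m \ge 1} \Lambda^{- m \alpha} \right) |v - x|^\alpha = O(|v - x|^\alpha)$, while the values stay $\le \gamma_{\max}^n$ resp. $\le \Lambda^{-k}$; (iii) the same regrouping that produced geometric convergence bounds the term-wise Hölder differences by $C \sigma^n |v - x|^\alpha$ for a fixed $\sigma \in (0,1)$. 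The one fiddly point is the nested sum $W_{n-1}(\rho_{[\xi]_{n-1}}(x)) = \sum_{j=0}^{n-2} \left( \prod_{m = n-j}^{n-1} \lambda(\rho_{[\xi]_m}(x)) \right) g(\rho_{[\xi]_{n-1-j}}(x))$: one absorbs the outer factor $\gamma^n(\rho_{[\xi]_n}(x))$ into this product, turning $j$ of its $\lambda$-factors into $1 / \tau'$, which yields $\left| \gamma^n(\rho_{[\xi]_n}(x)) W_{n-1}(\rho_{[\xi]_{n-1}}(x)) \right| \le \| g \|_\infty \sum_{j=0}^{n-2} \gamma_{\max}^{n-j} \Lambda^{-j} \le C \sigma^n$ (and likewise for its Hölder difference), which works no matter where $\gamma_{\max} \Lambda$ lies relative to $1$. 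I expect this combinatorial bookkeeping — rather than any conceptual difficulty — to be the main obstacle.

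Finally, splitting
\[
	D'(v) = - \left( \ell^{ss}_{(\xi,x,y)}(v) - y \right) A(\xi, v) - y \left( A(\xi, v) - A(\xi, x) \right) - \left( R(\xi, v) - R(\xi, x) \right) ,
\]
the first term is at most $N_Y \| A \|_\infty |v - x|$ and the other two at most $(Y + 1) C |v - x|^\alpha$ by the Hölder estimate; since $|v - x| \le 1$ and $\alpha \le 1$ this gives $|D'(s)| \le C_Y |s - x|^\alpha$ with a constant depending only on $Y$. Integrating, $\left| D(v) \right| = \left| \int_x^v D'(s) \, ds \right| \le C_Y |v - x|^{1 + \alpha} / (1 + \alpha) \le C_Y |x - v|^{1 + \alpha}$, which is the claim.
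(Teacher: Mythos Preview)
Your argument is correct and follows the same route as the paper: both reduce the estimate to showing that $v \mapsto X_3(\xi, v, \ell^{ss}_{(\xi,x,y)}(v))$ is $\alpha$-H\"older with a constant depending only on $Y$, decompose $X_3$ via its affine dependence on the third variable into the pieces $A$ and $R$, and control the resulting series term by term using the piecewise H\"older regularity of $g',\lambda',\log\gamma$ along the contracted inverse branches $\rho_{[\xi]_m}$. Your handling of the $W_{n-1}$ factor is a bit more elaborate than necessary---the paper simply uses that $\|W_{n-1}\|_\infty$ and $\|\lambda^{n-1}\|_\infty$ are bounded uniformly in $n$ (since $\|\lambda\|_\infty<1$) together with $\gamma^n\le\|\gamma\|_\infty^n$---but this does no harm.
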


\begin{proof}
Since $ \left(  L _{\left( x , y \right)} ^{X _3 ( \xi , x , y ) } \right) ' (v) = X _3 ( \xi , x , y ) $ and $ \left( \ell ^{ss} _{(\xi, x, y)} \right) ' ( v ) = X _3 \left( \xi , v , \ell ^{ss} _{(\xi, x, y)} (v ) \right) $, it suffices to show
\[
	\sup _{(\xi, x , y) \in[0,1] ^2 \times \mathbb{R}}\left| v \mapsto  X _3 \left( \xi , v , \ell ^{ss} _{(\xi, x, y)} (v ) \right) \right| _\alpha < \infty .
\]
Recall that by \eqref{eq:X3_def} we have
\begin{eqnarray}
	&&	 X _3 \left( \xi , v , \ell ^{ss} _{(\xi, x, y)} (v ) \right) 	\label{eq:X3_separate} \\
	 & = &	 - \sum _{n=1} ^\infty \gamma ^n \left( \rho _{[\xi] _n} (v) \right) \cdot F ^{n-1} _{(\xi, x)} \left( \ell ^{ss} _{(\xi, x, y)} (v ) \right) \cdot \lambda ' \left( \rho _{[\xi] _n} (v) \right) - \sum _{n=1} ^\infty \gamma ^n \left( \rho _{[\xi] _n} (v) \right) \cdot g' \left( \rho _{[\xi] _n} (v) \right) . \nonumber
\end{eqnarray}

First, we consider the latter sum in the above expression.
Since there is a $ C _1 > 0 $ such that
\begin{eqnarray*}
		\log \gamma ^n \left( \rho _{[\xi] _n} (v) \right) - \log \gamma ^n \left( \rho _{[\xi] _n} (\tilde{v}) \right)	&= 	&	\sum _{j=1} ^n \log \gamma  \left( \rho _{[\xi] _j} (v) \right) - \log \gamma  \left( \rho _{[\xi] _j} (\tilde{v}) \right) \\
	& \leqslant & | \log \gamma | _{\mathrm{pw}, \alpha} \sum _{j=1} ^n | \rho _{[\xi] _j} (v) - \rho _{[\xi] _j} (\tilde{v}) | ^\alpha \\
	& \leqslant &	 C _1 \, | v - \tilde{v} | ^\alpha
\end{eqnarray*}
holds for all $ \xi, v , \tilde{v} \in [0,1]$ and $ n \in \mathbb{N}$, we have
\begin{eqnarray}
	 \gamma ^n \left( \rho _{[\xi] _n} (v) \right) - \gamma ^n \left( \rho _{[\xi] _n} (\tilde{v}) \right)	& = & \gamma ^n \left( \rho _{[\xi] _n} (v) \right) \left( 1 - \frac{\gamma ^n \left( \rho _{[\xi] _n  } (\tilde{v}) \right)}{\gamma ^n \left( \rho _{[\xi] _n} (v) \right)} \right) \nonumber \\
	 & \leqslant & \gamma ^n \left( \rho _{[\xi] _n} (v) \right) \left( e^{\left| \log \gamma ^n \left( \rho _{[\xi] _n} (v) \right) - \log \gamma ^n \left( \rho _{[\xi] _n} (\tilde{v}) \right) \right|} - 1 \right) \nonumber \\
	 & \leqslant & C _2 \, \| \gamma \| _\infty ^n \, | v - \tilde{v} | ^\alpha	\label{eq:gamma_distorsion}
\end{eqnarray}
for $ C _2 := (e ^{C _1} -1 ) C _1 ^{-1} $, where we used the inequality $ e ^x - 1 \leqslant ( e ^M - 1 ) M ^{-1} x  $ for $ 0 \leqslant x \leqslant M $.
Further, from
\begin{eqnarray*}
	&& \gamma ^n \left( \rho _{[\xi] _n} (v) \right) \, g' \left( \rho _{[\xi] _n} (v) \right) - \gamma ^n \left( \rho _{[\xi] _n} (\tilde{v}) \right) \, g' \left( \rho _{[\xi] _n} (\tilde{v}) \right)	\\
	 & \leqslant &	 \gamma ^n \left( \rho _{[\xi] _n} (v) \right) \left( g' \left( \rho _{[\xi] _n} (v) \right) -   g' \left( \rho _{[\xi] _n} (\tilde{v}) \right)	\right) +  \left( \gamma ^n \left( \rho _{[\xi] _n} (v) \right) - \gamma ^n \left( \rho _{[\xi] _n} (\tilde{v}) \right) \right) g' \left( \rho _{[\xi] _n} (\tilde{v}) \right)	\\
	 & \leqslant &	( | g' | _{\mathrm{pw}, \alpha} + C _2 ) \, \| \gamma \| _\infty ^n \, | v - \tilde{v} | ^\alpha
\end{eqnarray*}
follows
\[
	\left| v \mapsto  \sum _{n=1} ^\infty \gamma ^n \left( \rho _{[\xi] _n} (v) \right) \cdot g' \left( \rho _{[\xi] _n} (v) \right) \right| _\alpha  \leqslant \frac{ | g' | _{\mathrm{pw}, \alpha} + C _2 }{1 - \| \gamma \| _\infty} . 
\]

Now, we estimate the first sum of \eqref{eq:X3_separate}.
Observe that by \eqref{eq:Fn} we can reformulate it as
\begin{eqnarray*}
	&&	\sum _{n=1} ^\infty \gamma ^n \left( \rho _{[\xi] _n} (v) \right) \cdot F ^{n-1} _{(\xi, x)} \left( \ell ^{ss} _{(\xi, x, y)} (v ) \right) \cdot \lambda ' \left( \rho _{[\xi] _n} (v) \right) \\
	& = &	  \ell ^{ss} _{(\xi, x, y)} (v ) \sum _{n=1} ^\infty \gamma ^n \left( \rho _{[\xi] _n} (v) \right) \, \lambda ^{n-1} \left( \rho _{[\xi] _{n-1}} (x) \right) \, \lambda ' \left( \rho _{[\xi] _n} (v) \right)  \\
	&&\qquad + \sum _{n=1} ^\infty \gamma ^n \left( \rho _{[\xi] _n} (v) \right) \, W _{n-1} \left( \rho _{[\xi] _{n-1}} (x) \right) \, \lambda ' \left( \rho _{[\xi] _n} (v) \right) .	
\end{eqnarray*}
Clearly, $  \left| \left( \ell ^{ss} _{(\xi, x, y)} \right) ' (v) \right| \leqslant \| X _3 \| _\infty $ holds for all $ v \in [0,1] $ and $ ( \xi, x, y) \in [0,1]^2\times \mathbb{R} $.
Thus, by \eqref{eq:gamma_distorsion} and since $ \| W _{n-1} \| _\infty $, $ \| \lambda ^{n-1} \| _\infty $ and $ | \lambda ' | _{\mathrm{pw}, \alpha} $ are uniformly bounded with respect to $ n $, we can obtain a bounded of the $\alpha$-Hölder seminorm  of the above expression by applying the triangle inequality several times.
\end{proof}

The next simple geometrical result originates in a work of Marstrand \cite{Marstrand54}.

\begin{proposition}	\label{prop:Marstrand}
For $ M > 0 $ there is a $ D _M > 0 $ such that
\[
	\mathrm{diam} \left\{ \theta \in [ -M , M ] : \left| L _{\left( x , y \right)} ^\theta (x') - y' \right| \leqslant r \right\} \leqslant \frac{D _M \, r}{\| (x, y) - (x', y') \|}
\]
for all $ ( x , y ) $, $ (x', y') \in [0,1] \times \mathbb{R} $ and $ r > 0 $.
\end{proposition}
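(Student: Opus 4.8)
The plan is to write $L^\theta_{(x,y)}(x') - y' = \theta(x'-x) + (y-y')$ and to work directly with the set
\[
	S := \left\{ \theta \in [-M,M] : \left| \theta(x'-x) + (y-y') \right| \leqslant r \right\} .
\]
If $S$ is empty or a single point, the asserted inequality holds trivially (with $\mathrm{diam}(\emptyset) := 0$), so I may assume $\mathrm{diam}(S) > 0$. I would first dispose of the degenerate case $x = x'$: then $S \neq \emptyset$ forces $|y - y'| \leqslant r$, hence $\| (x,y) - (x',y') \| = |y - y'| \leqslant r$, and since $\mathrm{diam}(S) \leqslant \mathrm{diam}[-M,M] = 2M$, the claim holds as soon as $D_M \geqslant 2M$.

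For the main case $x \neq x'$ I would extract two pieces of information. First, subtracting the defining inequalities for any two $\theta_1, \theta_2 \in S$ gives $|\theta_1 - \theta_2|\,|x-x'| \leqslant 2r$, so that
\[
	\mathrm{diam}(S) \leqslant \frac{2r}{|x-x'|} \qquad \text{and also} \qquad \mathrm{diam}(S) \leqslant 2M .
\]
Second, choosing any $\theta \in S$ and applying the triangle inequality yields the a priori bound $|y-y'| \leqslant |\theta|\,|x-x'| + r \leqslant M|x-x'| + r$, whence, using $(a+b)^2 \leqslant 2a^2 + 2b^2$ and $\sqrt{a+b} \leqslant \sqrt{a} + \sqrt{b}$,
\[
	\| (x,y)-(x',y') \| = \sqrt{|x-x'|^2 + |y-y'|^2} \leqslant \sqrt{(1+2M^2)|x-x'|^2 + 2r^2} \leqslant \sqrt{1+2M^2}\,|x-x'| + \sqrt{2}\,r .
\]
Multiplying the first bound on $\mathrm{diam}(S)$ against the term proportional to $|x-x'|$, and the bound $\mathrm{diam}(S) \leqslant 2M$ against the term proportional to $r$, I obtain
\[
	\mathrm{diam}(S) \cdot \| (x,y)-(x',y') \| \leqslant \frac{2r}{|x-x'|} \cdot \sqrt{1+2M^2}\,|x-x'| + 2M \cdot \sqrt{2}\,r = \bigl( 2\sqrt{1+2M^2} + 2\sqrt{2}\,M \bigr) r ,
\]
so the proposition follows with $D_M := 2\sqrt{1+2M^2} + 2\sqrt{2}\,M$, which in particular exceeds $2M$ and hence also covers the degenerate case.

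The argument is elementary and I do not expect a genuinely hard step; the one point that needs care — and the reason the naive estimate $\mathrm{diam}(S) \leqslant 2r/|x-x'|$ by itself is not enough — is that the denominator $\| (x,y)-(x',y') \|$ can be arbitrarily large compared with $|x-x'|$. This is exactly offset by the observation that nonemptiness of $S$ forces the vertical gap $|y-y'|$ to be $O(|x-x'| + r)$, so the Euclidean distance cannot exceed $|x-x'|$ by more than an $O(r)$ term; balancing the two available upper bounds for $\mathrm{diam}(S)$ against the two terms of that distance estimate then closes the proof.
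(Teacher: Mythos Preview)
Your proof is correct. The paper's own proof consists of the single sentence ``The proof is elementary,'' so your argument is in fact considerably more detailed than what the paper provides; there is nothing further to compare.
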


\begin{proof}
The proof is elementary.
\end{proof}

We continue the proof of the lower estimate.
Observe that we have
\begin{eqnarray*}
	\left| L _{\left(v , W(v) \right)} ^{\Theta ( \xi , v ) } (x) - W (x) \right|	&  \leqslant &	\left|  L _{\left( v , W(v) \right)} ^{\Theta ( \xi , v ) } (x) - \ell ^{ss} _{\left( \xi, v, W(v) \right)} ( x ) \right| + \left|  \ell ^{ss} _{\left( \xi, v, W(v) \right)} ( x ) - W ( x ) \right| \\
		& \leqslant &	C _{\| W \| _\infty} |v - x| ^{1 + \alpha} + r \\
	& \leqslant &	C _{\| W \| _\infty} \left \| \left( v , W(v) \right) - \left( x , W (x) \right) \right \| ^{1 + \alpha} + r \\
	& \leqslant &	C _{\| W \| _\infty} (r ^t) ^{1 + \alpha} + r	\\
	& \leqslant	&	(C _{\| W \| _\infty} + 1) \, r
\end{eqnarray*}
by Proposition \ref{prop:L_and_l}, whenever both $ (v, W(x)) \in B _{r ^t} \left( x , W(x) \right) $ and $ \left|  \ell ^{ss} _{\left( \xi, v, W(v) \right)} ( x ) - W ( x ) \right| \leqslant r $ are satisfied.
Thus we have
\begin{eqnarray*}
	&& \left\{ \xi \in [0,1] : v , x \in G _\xi \mbox{ and } \left| \ell ^{ss} _{\left( \xi ,v , W(v) \right)} (x) - W(x)  \right| \leqslant r \right\} \\
	& \subseteq & \left\{ \xi \in [0,1] : v , x \in G _\xi \mbox{ and } \left| L _{\left( v , W(v) \right)} ^{\Theta ( \xi , v )}(x) - W(x)  \right| \leqslant ( C _{\| W \| _\infty} + 1 ) \, r \right\} \\
		& \subseteq & \left\{ \xi \in [0, 1] :
		\begin{array}{l}
				\mathbb{P} _v \left( B _\eta \left( \Theta ( \xi , v ) \right) \right) \leqslant E \, \eta ^{1 - 2 \delta } \mbox{ for } \forall \eta > 0 \mbox{, and} \\
				 \left| L _{\left(v , W(v) \right)} ^{\Theta ( \xi , v ) } ( x ) - W (x) \right| \leqslant (C _{\| W \| _\infty} + 1 ) \, r
		\end{array}	\right\}
\end{eqnarray*}
for $ r  > 0 $ and $ v , x  \in [0,1] $ such that $ ( v , W(v) ) \in B _{r ^t} ( x , W(x) ) $.
Recall that $ a := \{ 1 , \gamma _1 + \gamma _2 \} \in [0,1] $.
For those $ r,  v , x $ we have therefore
\begin{eqnarray*}
	&&	\nu ^- \left\{ \xi \in [0,1] : v , x \in G _\xi \mbox{ and } \left| \ell ^{ss} _{\left( \xi ,v , W(v) \right)} (x) - W(x)  \right| \leqslant r \right\} \\
	& \leqslant &	\nu ^- \left\{ \xi \in [0, 1] :
		\begin{array}{l}
				\mathbb{P} _v \left( B _\eta \left( \Theta ( \xi , v ) \right) \right) \leqslant E \, \eta ^{1 - 2 \delta } \mbox{ for } \forall \eta > 0 \mbox{ and} \\
				 \left| L _{\left(v , W(v) \right)} ^{\Theta ( \xi , v ) } (x) - W (x) \right| \leqslant (C _{\| W \| _\infty} + 1 ) \, r
		\end{array}	\right\} \\
	& = &	\mathbb{P} _v \left\{ \theta \in \left[ - \| \Theta \| _\infty , \| \Theta \| _\infty \right] :
		\begin{array}{l}
				\mathbb{P} _v \left( B _\eta ( \theta )  \right) \leqslant  E \, \eta ^{1 - 2 \delta } \mbox{ for } \forall \eta > 0 \mbox{ and} \\
				 \left| L _{\left(v , W(v) \right)} ^\theta (x) - W (x) \right| \leqslant (C _{\| W \| _\infty} + 1 ) \, r
		\end{array}	\right\} \\
		& \leqslant &		\mathbb{P} _v \left\{ \cdots \right\} ^a \\
	& \leqslant &	E ^a \, \left( \frac{ D _{\| \Theta \| _\infty} (C _{\| W \| _\infty} + 1 ) \, r}{ \left\| \left( v , W(v) \right) - \left( x, W(x) \right) \right\|} \right) ^{a  (1 - 2 \delta)} \quad  =: 	 \frac{ Q \, r ^{a  ( 1 - 2 \delta)}}{ \left\| \left( v , W(v) \right) - \left( x, W(x) \right) \right\| ^{a ( 1 - 2 \delta)}} 
\end{eqnarray*}
by Proposition \ref{prop:Marstrand}.
Now, there is a $ Q' >0 $ such that for all $ r > 0 $ and $ ( \xi , x ) \in G $ we have
\begin{eqnarray*}
	&&	\int _{ \{ \xi \in [0,1] : ( \xi, x ) \in G \} }  b ( \xi , x , r , t ) \, d \nu ^- ( \xi ) 	\\
	& \leqslant &	Q \, r ^{a ( 1 - 2 \delta)} \int _{B _{r ^t} (x, W(x)) } \frac{ d \mu (v, y)}{ \left\| \left( v , y \right) - \left( x, W(x) \right) \right\| ^{a ( 1 - 2 \delta)}}  \\
		& \leqslant &	Q \, r ^{a ( 1 - 2 \delta)} \sum _{n \geqslant \lfloor - t \log r \rfloor} \int _{B _{e ^{-n}} \left( x , W (x) \right) \setminus B  _{ e ^{-n-1}  } \left( x , W(x) \right) } \frac{ d \mu ( v, y )}{ \left\| \left( v , y \right) - \left( x, W(x) \right) \right\| ^{a (1 - 2 \delta )}}  \\
		& \leqslant &	Q  \, r ^{a ( 1 - 2 \delta)} \sum _{n \geqslant \lfloor - t \log r \rfloor}  \frac{\mu \left( B _{e ^{-n}} \left( x , W(x) \right)\right)}{ e ^{(-n-1) a (1 - 2 \delta)}} \\
		& \leqslant &	Q  \,  r ^{a ( 1 - 2 \delta)} \sum _{n \geqslant \lfloor - t \log r \rfloor}  \frac{ E \,  (e ^{-n}) ^{\gamma _1 + \gamma _2 - a \delta} }{ e ^{(-n-1) a (1 - 2 \delta)}} \\
		& \leqslant &	Q' \, r ^{a ( 1 - 2 \delta ) + t ( \gamma _1 + \gamma _2 - a + a \delta )} \\
		& \leqslant &	Q' \, r ^{a ( 1 - 2 \delta ) + t ( \gamma _1 + \gamma _2 - a )}.
\end{eqnarray*}
As the integration by $ \nu $ yields
\[
	\int _G  b ( \xi , x , r , t ) \, d ( \nu ^- \otimes \nu ) ( \xi , x ) \leqslant Q' \,  r ^{a ( 1 - 2 \delta ) + t ( \gamma _1 + \gamma _2 - a )} ,
\]
we can derive by Fatou lemma
\[
	\int _G \liminf _{r \rightarrow 0} \frac{ b ( \xi , x , r , t )}{ r ^{a ( 1 - 2 \delta ) + t ( \gamma _1 + \gamma _2 - a )} } \, d ( \nu ^- \otimes \nu ) ( \xi , x ) \leqslant Q' ,
\]
which in turn implies
\begin{eqnarray}
	a ( 1 - 2 \delta ) + t ( \gamma _1 + \gamma _2 - a )	\leqslant	\limsup _{r \rightarrow 0} \frac{\log b ( \xi ,x  , r , t )}{\log r} 	\label{eq:b_lawer}
\end{eqnarray}
for $ \nu ^- \otimes \nu $-a.a. $ ( \xi ,x ) \in G $.

\subsection{Upper estimate}
The goal of this subsection is to prove the upper estimate of \eqref{eq:b_estimate}.

We can chose $ \varepsilon _1 > 0 $ so that both
\[
	G ' := \left\{ ( \xi , x ) \in G :  
	\begin{array}{l}
	\mu \left( B _r \left( x, W(x) \right)  \cap G _\xi \times \mathbb{R} \right)	\geqslant  r ^{\gamma _1 + \gamma _2 + \delta} \mbox{ and} \\
	 \nu _{\left( \xi, q _\xi (x) \right)} \left( B _r (x) \right) \leqslant r ^{ \gamma _1 - \delta } 
	\end{array}
	\mbox{ for } \forall r \in ( 0 , \varepsilon _1 ) \right\}
\]
and
\[
	G '' := \left\{ ( \xi ,x ) \in G ' : 
	\begin{array}{r}
	\nu _{\left( \xi, q _\xi (x) \right)} \left( B _r (x)  \cap G ' _\xi  \cap ( W - \ell ^{ss} _{(\xi, x, W(x))} ) ^{-1} (0)  \right) \geqslant r ^{  \gamma _1 + \delta } \\
	\mbox{ for } \forall r  \in ( 0 , \varepsilon _1 ^{\frac{1}{1 + \alpha}} )
	\end{array}
	 \right\}
\]
have positive $ \nu ^- \otimes \nu $ measures in view of \eqref{eq:gammas}, \eqref{eq:sigma} and Lemma \ref{lem:Boreldensity} together with the fact that by \eqref{eq:cond_on_graph} we have
\[
	\nu ^- \otimes \nu \left\{ (\xi, x) \in [0,1] ^2 : \nu _{\left( \xi, q _\xi (x) \right)} \left( ( W - \ell ^{ss} _{(\xi, x, W(x))} ) ^{-1} (0)  \right) = 1 \right\} = 1 .
\]

Let $ ( \xi , x ) \in G '' $ and $ r \in ( 0 , \varepsilon _1 ) $ be fixed.

By Vitali covering theorem there are $ N _r \in \mathbb{N}  $ and $ x _1 , \ldots , x _{N _r} \in G ' _\xi \cap B _{(r/2) ^t} (x) \cap ( W - \ell ^{ss} _{(\xi, x, W(x))} ) ^{-1} (0)  $ such that
\[
	G ' _\xi \cap B _{(r/2) ^t} (x) \cap ( W - \ell ^{ss} _{(\xi, x, W(x))} ) ^{-1} (0)  \subseteq	\bigcup _{i=1} ^{N _r} B _r ( x  _i ) 
\]
with $ B _{r/3} (x_1) , \ldots , B _{r/3} (x _{N _r} ) $ being disjoint.
As we have $ \nu _{\left( x , q _\xi (x ) \right)} = \nu _{\left( x _i , q _\xi (x _i ) \right)} $ for $ i = 1 , \ldots , N _r $, taking the measure $ \nu _{\left( x , q _\xi (x ) \right)} $ of both sides of the above inclusion yields
\[
	\left( \frac{r}{2} \right) ^{t ( \gamma _1 + \delta)} \leqslant \sum _{i=1} ^{N _r} r ^{\gamma _1 - \delta} ,
\]
i.e.
\begin{equation}
	N _r \geqslant 2 ^{-t( \gamma _1 + \delta)}  r ^{(t-1)\gamma _1 + 2 \delta} .	\label{eq:onehand}
\end{equation}

On the other hand, by Proposition \ref{prop:l_parallel} there are $ c \in ( 0, 1/3 ) $ and $ \varepsilon _2 \in ( 0, 1 ) $ depending on $ t $ and $ \| X _3 \| _\infty $, such that $
	B _{c r} \left( u , \ell ^{ss} _{(\xi, x, W(x))} (u) \right) \subseteq \Sigma _r ( \xi , x ) \cap B _{r ^t} ( x , W(x) )  $ holds for all $ r \in ( 0, \varepsilon _2 ) $ and $ u \in B ( x , (r/2) ^t  ) $.
In particular, we have
\begin{equation}
	B _{c \, r } ( x _i , W(x _i ) ) \subseteq \Sigma _r ( \xi, x ) \cap B _{r ^t} ( x , W(x) )	\label{eq:B_small}
\end{equation}
for $ r \in ( 0, \varepsilon _2 ) $ and $ i = 1 , \ldots , N _r $.
As the balls $ B _{c r} \left( x _1 , W ( x _1 ) \right) , \ldots , B _{c r} \left( x _{N _r} , W ( x_{N _r} ) \right) $ are disjoint, from \eqref{eq:onehand} and \eqref{eq:B_small} follows
\begin{eqnarray*}
	b ( \xi ,x , r , t )	&  =& \mu \left( G _\xi \times \mathbb{R} \cap \Sigma _r ( \xi , x ) \cap B _{r ^t} ( x , W(x) ) \right)\\
	& \geqslant	 &	\mu \left( \bigcup _{i=1} ^{N _r} B _{c r} \left( x _i , W ( x_i ) \right) \cap G _\xi \times \mathbb{R} \right) \\
	& \geqslant &  N _r  (c r) ^{\gamma _1 + \gamma _2 + \delta} \\
	& \geqslant &	c ^{\gamma _1 + \gamma _2 + \delta} 2 ^{-t( \gamma _1 + \delta)}  r ^{(t-1) \gamma _1 + \gamma _1 + \gamma _2 + 3 \delta } \\
		& := &	Q''' \,  r ^{t \gamma _1 + \gamma _2 + 3 \delta } .
\end{eqnarray*}
Consequently, we have
\begin{equation}
	\limsup _{r \rightarrow 0} \frac{\log b (\xi , x , r , t )}{\log r} \leqslant t \gamma _1 + \gamma _2 + 3 \delta .	\label{eq:b_upper}
\end{equation}
for all $ (\xi ,x ) \in G '' $.

\section{Hausdorff dimension of $ \Theta (\cdot , x )$}	\label{sec:Tsujii}

The assumption of Theorems \ref{thm:mu_dim} and \ref{thm:main_theorem} that the distribution of $ \Theta ( \cdot , x ) $ under $ \nu ^- $ has Hausdorff dimension $ 1 $ for $ \nu $-a.a. $ x \in [0,1] $ is generally of course not true, see also Remark \ref{rem:degenerate}.
We study the condition for a simple case that is relevant to Theorem \ref{thm:example} and \ref{thm:example2}.
%
For a given probability vector $ \bp = ( p_i ) _{i=0} ^{\ell -1} $ we define the Bernoulli measure $ \nu _\bp \in \mathcal{P} ( [0,1] ) $ as the $ \tau $-invariant measure determined by
\[
	\nu _\bp ( I _N (x) ) := \prod _{i=0} ^N p _{k (\tau ^i(x))} 
\]
for all $ x \in [0,1] $ and $ N \in \mathbb{N} $.
Clearly, Bernoulli measures are Gibbs measures.
In addition, they satisfy $ \nu _\bp ^- = \nu _\bp $ and $ \nu _\bp ^\ext = \nu _\bp \otimes \nu _\bp $.

Henceforth we consider only the Bernoulli measure $ \nu _\bp $, i.e. we consider the distribution of $ \Theta ( x , \cdot ) $ under $ \nu _\bp $ for $ \nu _\bp $-a.a. $ x \in [0,1] $.
In fact, under the assumption of Theorem \ref{thm:example} or \ref{thm:example2} the equilibrium measure $ \nu _{\tau, \lambda} $ associated with the Bowen equation \eqref{eq:Pressure} is a Bernoulli measure as the following lemma says.

\begin{lemma}	\label{lem:bernoulli}
Under the assumption of Theorem \ref{thm:example} or \ref{thm:example2}, $ \nu _{\tau, \lambda} $ is the Bernoulli measure with $ \bp = ( | I _i | ^{s( \tau , \lambda)}  \gamma _i ^{-1} ) _{i=0} ^{\ell -1}$, i.e. the unique $ \tau $-invariant measure satisfying $
	\nu _{\tau, \lambda} ( I _i ) =  | I _i | ^{s( \tau , \lambda)}  \gamma _i ^{-1} $ for $ i \in \{ 0 , \ldots , \ell -1 \} $, where $ \gamma _i := | I _i | \lambda _i $ or $ \gamma _i := | I _i | ^{1 - \theta} $ in case of Theorem \ref{thm:example} or \ref{thm:example2}, respectively.
	
Furthermore, in the latter case we have $ s ( \tau ,\lambda ) = 2 - \theta $, and $ \nu _\bp $ is the Lebesgue measure.
\end{lemma}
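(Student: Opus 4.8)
The plan is to reduce the thermodynamic formalism to that of a full Bernoulli shift, by exploiting that --- under the hypotheses of either Theorem~\ref{thm:example} or Theorem~\ref{thm:example2} (the hypotheses on $ g $ being irrelevant here, since $ \nu _{\tau,\lambda} $ depends only on $ \tau $ and $ \lambda $) --- the potential
\[
	\psi _s := (1-s) \log \tau ' + \log \lambda
\]
(with $ \lambda $ replaced by $ t \lambda $ in the setting of Theorem~\ref{thm:example}) is \emph{piecewise constant} with respect to the partition $ ( I _i ^\circ ) _{i=0} ^{\ell -1} $. Indeed, $ \tau $ being piecewise linear with full branches forces $ \tau ' \equiv | I _i | ^{-1} $ on $ I _i ^\circ $; and $ \log \lambda $ is constant on each $ I _i ^\circ $, by hypothesis in the first case and because $ \log \lambda = - \theta \log \tau ' $ in the second. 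Thus $ \psi _s $ takes a constant value $ \psi _s ^{(i)} $ on $ I _i ^\circ $, and a one-line computation gives $ e ^{\psi _s ^{(i)}} = | I _i | ^{s} \gamma _i ^{-1} $, where $ \gamma _i $ is the value on $ I _i $ of the function $ \gamma = ( \tau ' \lambda ) ^{-1} $, i.e.\ $ \gamma _i $ as in the statement.

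Next I would compute the pressure. For a piecewise constant potential on a full-branch map, the Birkhoff sum $ ( \psi _s ) _N $ is constant on each $ N $-cylinder $ I _{(\omega _1 , \ldots , \omega _N)} $, equal to $ \sum _{j=1} ^N \psi _s ^{(\omega _j)} $, so that
\[
	\sum _{\omega _1 , \ldots , \omega _N \in \{ 0 , \ldots , \ell -1 \}} e ^{\sum _{j=1} ^N \psi _s ^{(\omega _j)}} = \left( \sum _{i=0} ^{\ell -1} e ^{\psi _s ^{(i)}} \right) ^N
\]
and hence $ P ( \psi _s ) = \log \sum _{i} e ^{\psi _s ^{(i)}} = \log \sum _i | I _i | ^s \gamma _i ^{-1} $. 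Consequently the Bowen equation \eqref{eq:Pressure} defining $ s ( \tau , \lambda ) $ is \emph{exactly} the assertion that $ \bp = ( | I _i | ^{s ( \tau , \lambda)} \gamma _i ^{-1} ) _{i=0} ^{\ell -1} $ is a probability vector. To identify $ \nu _{\tau , \lambda} $ with $ \nu _\bp $, I would then observe that $ \nu _\bp $ is $ \tau $-invariant with $ \nu _\bp ( I _N (x) ) = e ^{( \psi _{s ( \tau , \lambda)} ) _N (x)} $ for all $ x $ and $ N $ (using $ P ( \psi _{s(\tau,\lambda)} ) = 0 $); whence, by definition \eqref{eq:entropy_def} and $ \tau $-invariance, $ h _{\nu _\bp} = - \int \psi _{s ( \tau , \lambda)} \, d \nu _\bp $, i.e.\ $ h _{\nu _\bp} + \int \psi _{s ( \tau , \lambda)} \, d \nu _\bp = 0 = P ( \psi _{s ( \tau , \lambda)} ) $. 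By the variational principle together with uniqueness of the equilibrium state, this forces $ \nu _{\tau , \lambda} = \nu _\bp $, which is the first claim (and shows in passing that Bernoulli measures are Gibbs with $ C _\phi = 1 $).

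For the second part, under the hypotheses of Theorem~\ref{thm:example2} we have $ \gamma _i = | I _i | ^{1 - \theta} $, so the probability-vector condition becomes $ \sum _i | I _i | ^{s - 1 + \theta} = 1 $. Since the $ I _i $ partition $ [0,1] $ we have $ \sum _i | I _i | = 1 $ with each $ | I _i | \in (0,1) $, so $ s \mapsto \sum _i | I _i | ^{s - 1 + \theta} $ is strictly decreasing and equals $ 1 $ precisely when $ s - 1 + \theta = 1 $; hence $ s ( \tau , \lambda ) = 2 - \theta $, and then $ p _i = | I _i | ^{(2 - \theta) - 1 + \theta} = | I _i | $. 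Finally, $ \nu _\bp $ assigns to the cylinder $ I _{(\omega _1 , \ldots , \omega _N)} $ the value $ \prod _{j} | I _{\omega _j} | $; but, $ \tau $ being piecewise linear, $ \rho _{(\omega _1 , \ldots , \omega _N)} $ is affine with $ | I _{(\omega _1 , \ldots , \omega _N)} | = \prod _j | I _{\omega _j} | $, so Lebesgue measure assigns the same values on $ \mathcal{S} _\tau $; since $ \mathcal{S} _\tau $ is a semiring generating $ \mathcal{B} ([0,1]) $, the two measures coincide, i.e.\ $ \nu _\bp $ is Lebesgue measure.

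All of the above is routine computation; the single place requiring an external --- but entirely standard --- input is the uniqueness of the equilibrium state for the piecewise expanding full-branch map $ \tau $ and the branchwise-Hölder (here even piecewise constant) potential $ \psi _{s ( \tau , \lambda)} $, i.e.\ the Ruelle--Perron--Frobenius theory that is already tacitly invoked in the paper through the phrase ``the equilibrium measure''. The only mild subtlety to watch is the bookkeeping of the factor $ t $ in the setting of Theorem~\ref{thm:example}, where $ \lambda $ must be replaced by $ t \lambda $ throughout.
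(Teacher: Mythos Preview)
Your proof is correct and follows essentially the same approach as the paper's: both reduce to the thermodynamic formalism of the full shift on $\ell$ symbols, observe that the potential $(1-s)\log\tau'+\log\lambda$ depends only on the first symbol (is piecewise constant), compute the pressure as $\log\sum_i |I_i|^s\gamma_i^{-1}$, and identify the equilibrium state as the corresponding Bernoulli measure. Your version is merely more explicit in verifying that $\nu_\bp$ attains the pressure and that $\nu_\bp$ coincides with Lebesgue measure on $\mathcal{S}_\tau$, whereas the paper appeals directly to the standard fact (citing \cite{Barreira08}) that a one-coordinate potential on the full shift has a Bernoulli equilibrium state.
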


\begin{proof}
By the canonical coding of $ [0,1] $ w.r.t. $ ( I _i ) _{i=0} ^{\ell -1} $ we can apply the variational principle for the one-sided shift space with $ \ell $ symbols.
Since the pull-back of the potential $ (1-s) \log \tau ' + \log \lambda  = s \log | I _{k(\cdot)} | - \log \gamma _{k(\cdot)} $ depends only on the first symbol, the equilibrium measure on the shift space as well as the corresponding one on $ [0,1] $ are both Bernoulli measures.
We can also calculate the topological pressure
\[
	P ( ( 1 - s  ) \log \tau ' + \log \lambda ) = \log \left( \sum _{i=0} ^{\ell - 1} | I _i | ^{s( \tau , \lambda)}  \gamma _i ^{-1}  \right) ,
\]
which gives the parameter of the Bernoulli measure.
We refer to e.g. \cite[Chapter 3]{Barreira08} for the terminology we used and some related observations.

Finally, under the assumption of Theorem \ref{thm:example2} the Bowen equation is $ P ( ( 1 - s - \theta ) \log \tau ' )  = 0 $.
Thus $ 1 - s - \theta  = -1 $ and the equilibrium state $ \nu _{\tau, \lambda} $ is the Lebesgue measure.
\end{proof}

\subsection{Case of self-similar measure}

Here we prove Theorem \ref{thm:example}.
Suppose $ \ell = 2 $ and $ \tau $ is piecewise linear.
Suppose also that $ \lambda $ and $ g $ satisfy $ \lambda (x) := \gamma _{k(x)} \, | I _{k(x)} | $ and $  g ' (x) =: a _{k(x)} $ for given constants $ \gamma _0, \gamma _1 \in (0,1) $ and $ a _0, a_1 \in \mathbb{R} $.

Observe that under these conditions $ \Theta $ does not depend on $ x $, i.e. we can write $ \tilde{\Theta} := \Theta ( \cdot , x ) $ for all $ x $.
We consider the parametrisation $ t \mapsto t \lambda $ and thus $
	W _{\tau , t \lambda} ( x ) := \sum _{n=0} ^\infty t ^n \lambda ^n (x) g ( \tau ^n (x) ) $  for those $ t \in ( 0 , \infty) $ which satisfy $ t \lambda _i \in ( | I _i | , 1 ) $ for $ i = 0 , 1 $.
Correspondingly, let $ \tilde{\Theta} _t $ denote the '$ \tilde{\Theta}  $ with respect to the parameter $ t $, i.e. $
	\tilde{\Theta} _t ( \xi ) = - \sum _{n = 1} ^\infty t ^{-n} \gamma ^n ( \xi ) a _{k(\xi)} $, where $ \gamma ( \xi ) := \gamma _{k(\xi)} $.

\begin{proposition}	\label{prop:Hochman}
Under the above assumption, if $ \gamma _0 a _0 \neq \gamma _1 a _1 $, there is a set $ E \subset \mathbb{R}$ of Hausdorff dimension $ 0 $ such that the distribution of $ \tilde{\Theta} _t $ under $ \nu _\bp $ has Hausdorff dimension $ 1 $ for any probability vector $ \bp $ and $ t \in  ( \max \{ \gamma _0 , \gamma _1 \} , \infty) \setminus E $ whenever
\begin{equation}
	h _{\nu _\bp} \geqslant - \int \log \left(  t ^{-1} \gamma _{k(\xi)} \right) \, d \nu _\bp (\xi) . \label{eq:Hochman_cond}
\end{equation}
\end{proposition}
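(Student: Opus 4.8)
The plan is to identify the pushforward $(\tilde{\Theta}_t)_*\nu_\bp$ as a self-similar measure belonging to a real-analytic one-parameter family of iterated function systems on $\mathbb{R}$, and then to quote Hochman's theorem on the dimension of self-similar measures together with its parametric version, which supplies the zero-dimensional exceptional set $E$. First I would telescope the absolutely convergent series defining $\tilde{\Theta}_t$: since $g$ is piecewise linear we have $g'\equiv a_i$ on $I_i$, and since $\lambda$, hence $\gamma=(\tau'\lambda)^{-1}$, is piecewise constant, the defining sum satisfies the one-step recursion
\[
	\tilde{\Theta}_t(\xi) = S^t_{k(\xi)}\big(\tilde{\Theta}_t(\tau(\xi))\big), \qquad S^t_i(u) := t^{-1}\gamma_i\,(u - a_i) ,
\]
whose linear part $t^{-1}\gamma_i$ depends only on the current symbol while the translation is a rational function of $t$ with $t^{-1}$-coefficient $-\gamma_i a_i$; the maps $S^t_0,S^t_1$ are contractions precisely for $t>\max\{\gamma_0,\gamma_1\}$. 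Because, under $\nu_\bp$, the symbol sequence $(k(\tau^n(\xi)))_{n\geq 0}$ is i.i.d.\ with distribution $\bp$, this recursion shows that $\mathbb{P}_{\bp,t}:=(\tilde{\Theta}_t)_*\nu_\bp$ is exactly the self-similar measure of the IFS $\mathcal{S}_t:=\{S^t_0,S^t_1\}$ with probability weights $\bp$.

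Next I would compute the similarity dimension,
\[
	\dim_s\mathbb{P}_{\bp,t} = \frac{-\sum_i p_i\log p_i}{-\sum_i p_i\log(t^{-1}\gamma_i)} = \frac{h_{\nu_\bp}}{-\int\log(t^{-1}\gamma_{k(\xi)})\,d\nu_\bp(\xi)},
\]
so that assumption \eqref{eq:Hochman_cond} is exactly the statement $\dim_s\mathbb{P}_{\bp,t}\geq 1$. (Note that \eqref{eq:Hochman_cond} forces both $p_0,p_1>0$: if one weight vanished the left-hand side would be $0$ while the right-hand side is positive for $t>\max\{\gamma_0,\gamma_1\}$; so $\mathcal{S}_t$ really is a two-map system with positive weights.) By Hochman's theorem, whenever $\mathcal{S}_t$ satisfies exponential separation one has $\dim_H\mathbb{P}_{\bp,t}=\min\{1,\dim_s\mathbb{P}_{\bp,t}\}$, which under \eqref{eq:Hochman_cond} equals $1$; and by the parametric version of Hochman's theorem, for a real-analytic \emph{non-degenerate} family $\{\mathcal{S}_t\}$ the set
\[
	E:=\{\,t>\max\{\gamma_0,\gamma_1\} : \mathcal{S}_t \text{ fails exponential separation}\,\}
\]
has Hausdorff (indeed packing) dimension $0$. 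Since exponential separation is a property of the IFS alone, $E$ does not depend on $\bp$.

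It then remains to verify the non-degeneracy of $\{\mathcal{S}_t\}$, i.e.\ that no pair of distinct finite words yields the same affine map for all $t$ — this is the one place where the hypothesis $\gamma_0 a_0\neq\gamma_1 a_1$ is used. The contraction ratio of $S^t_v$ for a length-$n$ word $v$ is $t^{-n}\prod_j\gamma_{v_j}$, so words of different lengths cannot give identical families; for $u,w$ of equal length one computes $S^t_v(0)=-\sum_{k}t^{-k}\big(\prod_{j\le k}\gamma_{v_j}\big)a_{v_k}$, and equating the coefficients of $t^{-1}$ in $S^t_u(0)$ and $S^t_w(0)$ gives $\gamma_{u_1}a_{u_1}=\gamma_{w_1}a_{w_1}$, hence $u_1=w_1$ by hypothesis; peeling off the first symbol and iterating forces $u=w$. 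Consequently, for every $t\in(\max\{\gamma_0,\gamma_1\},\infty)\setminus E$ and every $\bp$ satisfying \eqref{eq:Hochman_cond} we obtain $\dim_H((\tilde{\Theta}_t)_*\nu_\bp)=\min\{1,\dim_s\mathbb{P}_{\bp,t}\}=1$, as claimed.

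The main obstacle is the reduction in the first step: one must carry out the telescoping carefully enough to see that $\tilde{\Theta}_t$ is genuinely a self-affine coordinate for the Bernoulli dynamics, with linear parts depending only on the current symbol and translations depending analytically on $t$, and that the resulting IFS contracts exactly on the stated parameter range. Once this is in place the dimension statement is a black-box consequence of Hochman's theorem, and the only further point requiring attention is the non-degeneracy verification, where $\gamma_0 a_0\neq\gamma_1 a_1$ enters; everything else is routine bookkeeping.
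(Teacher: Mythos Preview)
Your proof is correct and follows essentially the same route as the paper: both identify $(\tilde\Theta_t)_*\nu_\bp$ as the self-similar measure for the IFS $\{t^{-1}\gamma_i(u-a_i)\}_{i=0,1}$ with weights $\bp$, compute the similarity dimension to see that \eqref{eq:Hochman_cond} is exactly $\dim_s\geq 1$, and then invoke Hochman's parametric theorem. The only cosmetic differences are that the paper reparametrises by $s=\max\{\gamma_0,\gamma_1\}\cdot t^{-1}\in(0,1)$ before citing \cite[Theorems~1.7--1.8]{Hochman14}, whereas you work directly in $t$, and that you spell out the coefficient-matching argument for non-degeneracy while the paper just asserts it follows from $\gamma_0 a_0\neq\gamma_1 a_1$.
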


\begin{proof}
We consider the parametrisation $ s =  \max \{ \gamma _0 , \gamma _1 \} \cdot t ^{-1} $.
Observe that, for $ s \in (- 1 , 1) \setminus \{ 0 \} $, the distribution of $ \tilde{\Theta} _{\max \{ \gamma _0 , \gamma _1 \} \cdot s ^{-1}} $ under $ \nu _\bp $ is a self-similar measure with respect to the probability vector $ \bp $ and the IFS $ \Phi _s := \{ \gamma _i (s) \cdot (x - a _i) : i = 0,1 \} $ in the sense of \cite{Hochman14}, where
\[
	\gamma _i (s) := \frac{\gamma _i \cdot s}{\max \{ \gamma _0 , \gamma _1 \}} .
\]
As the separation condition on $ \Phi _s $ of \cite[Theorem 1.8]{Hochman14} is satisfied due to the assumption $ \gamma _0 a _0 \neq \gamma _1 a _1 $, by \cite[Theorem 1.7]{Hochman14} there is a $ \tilde{E} \subset ( -1 , 1 ) \setminus \{ 0 \} $ with Hausdorff dimension $ 0 $ such that the distribution of $ \tilde{\Theta} _{\max \{ \gamma _0 , \gamma _1 \} \cdot s ^{-1}} $ under $ \nu _\bp $ has Hausdorff dimension $ \min \{ 1 , \frac{h _{\nu _\bp} }{- \int \log \left( t ^{-1} \gamma _{k(\xi)} \right) \, d \nu _\bp (\xi)  }\} $ for any probability vector $ \bp $ and $ s \in (-1,1) \setminus \tilde{E} $.
As a locally bi-Lipshitz continuous transformation preserves the Hausdorff dimension, the claim is satisfied by letting $ E := \{ t \in ( \max \{ \gamma _0 , \gamma _1 \}  , \infty ) : \max \{ \gamma _0 , \gamma _1 \} \cdot s ^{-1} \in \tilde{E}  \} $.
\end{proof}

\begin{proof}[Proof of Theorem \ref{thm:example}]
Let $ E \subset \mathbb{R} $ be the set of Proposition \ref{prop:Hochman}.
Observe that the function $ W _{\tau  , t \lambda } $ satisfies $ ( \tau ' ) ^{-1} < t \lambda  < 1 $, as $  t \in \left( \max \{ \gamma _0 , \gamma _1 \} , \infty \right) $.
Thus by Theorem \ref{thm:main_theorem} and Lemma \ref{lem:bernoulli} we have $ \dim _H ( \mathrm{graph} ( W _{\tau, t \lambda } ) ) = \dim _B ( \mathrm{graph} ( W _{\tau, t \lambda } ) ) = s ( \tau , t \lambda ) $ for all $ t \in \left( \max \{ \gamma _0 , \gamma _1 \} , \infty \right)  \setminus E $ whenever the condition \eqref{eq:Hochman_cond} is satisfied.
Now we shall check it for each $ t \in \left( \max\{ \gamma _0 , \gamma _1 \} , \min  \{ \frac{\gamma _0}{\sqrt{| I _0 |}} , \frac{\gamma _1}{\sqrt{| I _1 |}}  \} \right] $.
Recall that $ P (( 1 - s ( \tau , t \lambda ) ) \log \tau ' + \log ( t \lambda ) ) = 0 $ by the definition.
In view of $ ( \lambda \tau ' ) ^{-1} = \gamma  < t $ we have $ P ( - s ( \tau , t \lambda ) \log \tau ' ) \leqslant P (  - s ( \tau , t \lambda ) \log \tau ' + \log ( \tau '\, t \lambda ) )  = 0 $, which implies $ s ( \tau , t \lambda ) \geqslant 1 $.
Hence by Proposition \ref{prop:log_I}, from the equilibrium expression (or from \eqref{eq:s_formula}) follows
\begin{eqnarray*}
	h _{\nu _{\nu _{\tau, t \lambda}}} &  =&	 ( s ( \tau , t \lambda) - 1 ) \int \log \tau '  d \nu _{\tau, t \lambda} - \int \log \left( t \lambda \right)\, d \nu _{\tau, t \lambda} \\
	& \geqslant &	 - \int \log \left( t \lambda \right)\, d \nu _{\tau, t \lambda} \\
	& = &	 - \int \log \left( t ^{-1} \gamma  \right)  \, d \nu _{\tau, t \lambda} + \int \log \left( \gamma / ( t ^2 \, \lambda ) \right) d \nu _{\tau, t \lambda} \\
	&  =& - \int \log \left( t ^{-1} \gamma  \right)  \, d \nu _{\tau, t \lambda} + \int \log \left(  \gamma _{k(\cdot)} ^2 / (t ^2 |I _{k(\cdot)} | ) \right) d \nu _{\tau, t \lambda} \quad \geqslant \quad - \int \log \left( t ^{-1} \gamma  \right)  \, d \nu _{\tau, t \lambda} ,
\end{eqnarray*}
since $ \gamma / \lambda = \gamma ^2 \tau ' $ and $ t \leqslant \min  \{ \frac{\gamma _0}{\sqrt{| I _0 |}} , \frac{\gamma _1}{\sqrt{| I _1 |}}  \} $.
\end{proof}

\subsection{Sufficient condition through transversality}
In \cite{Tsujii01} Tsujii introduced $( \varepsilon, \delta )$-transversality to study the $L ^2 $-absolute continuity of a sort of SRB-measures that corresponds the distribution of $ \Theta $ of this note.
This relation is pointed out in \cite{Baranski14}.
In order to check the condition of Theorem \ref{thm:example2} we develop his method.
Thus we assume the setting of that theorem.
In particular, let $ \ell \geqslant 2 $, let $ \tau $ be piecewise linear and $ \lambda := ( \tau ' ) ^{-\theta} $ for a $ \theta \in ( 0 , 1 ) $.
Note, however, that $ g $ does not need to be the specific functions as in that theorem until we require it explicitly.

As already proved in Lemma \ref{lem:bernoulli} the measure $ \nu _{\tau , \lambda} $ is nothing but the Lebesgue measure $ m $ on $ [0,1] $.
Thus $ \nu _{\tau , \lambda } = \nu _{\bp _c} $ for the critical probability vector $ \bp _c := ( | I _0 | , \ldots , | I _{\ell -1} | ) $.

Observe that $ \lambda (x) = \lambda _{k(x)} $ and $ \gamma (x) = ( \tau ' \lambda ) ^{-1} (x) = \gamma _{k(x)} $, where $ \lambda _i := | I _i | ^\theta $ and $ \gamma _i := | I _i | ^{1 - \theta } $ for $ i = 0, \ldots , \ell - 1 $.
In particular, $ \gamma ( \rho _{k(\xi)} (x) ) = \gamma ( \xi ) $.
Thus $ \Theta ( \xi ,x ) = \sum _{n=1} ^\infty \gamma ^n ( \xi ) \, g' ( \rho _{[\xi] _n} (x) ) $ is differentiable so that the following consideration makes sense.

Let $ \varepsilon , \delta > 0 $ and $ \xi , \eta \in [0,1] $.
We say that $ \Theta ( \xi , \cdot ) $ and $ \Theta ( \eta , \cdot ) $ are $( \varepsilon, \delta )$-transversal, if for each $ x \in [0,1] $ holds either
\[
	\left| \Theta ( \xi , x ) - \Theta ( \eta , x ) \right| > \varepsilon	\quad \mbox{ or } \quad \left| \frac{\partial \Theta }{\partial x} ( \xi , x ) - \frac{\partial \Theta }{\partial x}( \eta , x ) \right| > \delta .
\]

Observe that the distribution of $ (\xi , x  ) \mapsto  ( x , \Theta ( \xi , x ) ) $ under $ \nu _\bp \otimes \nu _\bp $ is an invariant ergodic measure of the dynamical system $ f : [0,1] \times \mathbb{R} \rightarrow [0,1] \times \mathbb{R} $ defined by
\[
	f ( x , y ) := \left( \tau (x) , \, \gamma (x) \cdot \left( y -  g' (x) \right) \right) .
\]
Indeed, we have the invariance
\[
	f ( x , \Theta ( \xi , x ) ) = \left( \tau (x) , \Theta \circ B ^{-1} (\xi , x )  \right) .
\]

Let $ \zeta _\bp :=  \nu _\bp \otimes \nu _\bp  \circ ( \mathrm{Id} , \Theta )^{-1} $ and $ \zeta _{\bp , x} := \nu _\bp \circ \Theta ( \cdot , x ) ^{-1} $ so that $ \zeta _\bp = \int \delta _{\{ x \}} \times \zeta _{\bp , x} \, d \nu _\bp ( x ) $.
Slightly abusing the notation, we define
\[
	f \zeta _{\bp, x} (A) :=   \zeta _{\bp, x} \circ f ( x , \cdot ) ^{-1} .
\]
Furthermore, let $ m $ denote (also) the Lebesgue measure on $ \mathbb{R} $.
For $ i , j \in \{ 0, \ldots , \ell -1 \} $ and $ r > 0 $ we define
\begin{eqnarray*}
	I _\bp ( r )	& := &	\frac{1}{r ^2} \int _{[0,1]} \| \zeta _{\bp ,x} \| _r ^2 \, d \nu _\bp (x) , \mbox{ and} \\
	I _\bp ( r ; i , j )	& := &	 \frac{1}{r ^2} \int _{[0,1]}  \left( f  \zeta _{\bp , \rho _i (x)} , f \zeta _{\bp , \rho _j (x)}  \right) _r \,d  \nu _\bp (x) ,
\end{eqnarray*}
where
\[
	( \nu , \tilde{\nu} ) _r := \int _{\mathbb{R}} \nu ( B _r ( z ) ) \, \tilde{\nu} ( B _r ( z ) ) \, d m ( z ) 
\]
and $ \| \nu \| _r ^2 := ( \nu , \nu ) _r $ for $ \nu , \tilde{\nu} \in \mathcal{P} ( \mathbb{R} ) $ and $ r > 0 $.

\begin{proposition}	\label{prop:selfsimilar_measure}
We have
\[
	\zeta _{\bp, x} ( A ) = \sum _{i = 0} ^{\ell -1} p _i \, f \zeta _{\bp, \rho _i (x)}  ( A )
\]
for all $ A \in \mathcal{B} ( \mathbb{R} ) $.
\end{proposition}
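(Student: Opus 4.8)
The plan is to reduce the identity to two ingredients already available: the first-symbol (Markov) decomposition of the Bernoulli measure $\nu_\bp$, and the $f$-invariance relation $f(x,\Theta(\xi,x)) = (\tau(x),\Theta\circ B^{-1}(\xi,x))$ recorded just above, with $B^{-1}(\xi,x) = (\rho_{k(x)}(\xi),\tau(x))$.

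First I would establish the decomposition
\[
	\nu _\bp = \sum _{i=0} ^{\ell - 1} p _i \, ( \rho _i ) _\ast \nu _\bp .
\]
This is immediate from the defining multiplicativity of $\nu_\bp$: for a monotonicity interval $S = I _N ( x ) \subseteq I _i ^\circ$ one has $\tau ( S ) \in \mathcal{S} _\tau$ and $\nu _\bp ( S ) = p _i \, \nu _\bp ( \tau ( S ) ) = p _i \, ( \rho _i ) _\ast \nu _\bp ( S )$, while $( \rho _j ) _\ast \nu _\bp ( S ) = 0$ for $j \neq i$ because $\rho _j$ takes values in $\overline{I _j}$; since such $S$ generate $\mathcal{B} ( [0,1] )$ as a semiring, Lemma \ref{lem:approximation} yields the claimed equality of measures.

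Applying this decomposition to the definition $\zeta _{\bp, x} ( A ) = \nu _\bp \{ \xi \in [0,1] : \Theta ( \xi , x ) \in A \}$ gives
\[
	\zeta _{\bp, x} ( A ) = \sum _{i=0} ^{\ell - 1} p _i \, \nu _\bp \left\{ \eta \in [0,1] : \Theta ( \rho _i ( \eta ) , x ) \in A \right\} ,
\]
so it remains to identify the $i$-th summand with $f \zeta _{\bp, \rho _i (x)} ( A )$. For this I would substitute $( \xi , x ) \mapsto ( \eta , \rho _i (x) )$ into the invariance relation: since $\tau ( \rho _i (x) ) = x$ and $k ( \rho _i (x) ) = i$, it becomes $f ( \rho _i (x) , \Theta ( \eta , \rho _i (x) ) ) = ( x , \Theta ( \rho _i ( \eta ) , x ) )$, and comparing second coordinates with $f ( x' , y ) = ( \tau ( x' ) , \gamma ( x' ) ( y - g' ( x' ) ) )$ gives $\Theta ( \rho _i ( \eta ) , x ) = f ( \rho _i (x) , \cdot ) \bigl( \Theta ( \eta , \rho _i (x) ) \bigr)$, where $f ( \rho _i (x) , \cdot )$ denotes the affine fibre map. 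Consequently
\[
	\nu _\bp \left\{ \eta : \Theta ( \rho _i ( \eta ) , x ) \in A \right\} = \nu _\bp \left\{ \eta : \Theta ( \eta , \rho _i (x) ) \in f ( \rho _i (x) , \cdot ) ^{-1} ( A ) \right\} = \zeta _{\bp, \rho _i (x)} \circ f ( \rho _i (x) , \cdot ) ^{-1} ( A ) ,
\]
which is precisely $f \zeta _{\bp, \rho _i (x)} ( A )$ by definition; summing over $i$ proves the proposition.

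The argument is essentially bookkeeping, so I do not expect a genuine obstacle. The only points needing a little care are keeping straight which inverse branch occurs in the base versus the fibre when substituting into the invariance relation, and observing that the finitely many branch endpoints are $\nu_\bp$-null, so that $k ( \rho _i (x) ) = i$ wherever it is used; for the stated identity one may simply restrict attention to those $x$ with $\rho _i (x) \in I _i ^\circ$ for every $i$, the remaining $x$ forming a $\nu_\bp$-null set.
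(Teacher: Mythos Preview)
Your proof is correct and follows essentially the same route as the paper: both arguments combine the Bernoulli first-symbol decomposition of $\nu_\bp$ with the invariance relation $f(x,\Theta(\xi,x)) = (\tau(x),\Theta\circ B^{-1}(\xi,x))$. The only cosmetic difference is that the paper splits $\{\xi\in[0,1]\}$ into the pieces $I_i$ and pushes forward by $\tau$, whereas you phrase the same step as $\nu_\bp = \sum_i p_i (\rho_i)_\ast \nu_\bp$ and substitute $\xi = \rho_i(\eta)$; your added remark on branch endpoints is a nice point the paper leaves implicit.
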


\begin{proof}
For $ A \in \mathcal{B} ( \mathbb{R} ) $ we have
\begin{eqnarray*}
	\zeta _{\bp, x} ( A )	& = &	\nu _\bp ( \{ \xi \in [0,1] : \Theta ( \xi , x ) \in A \} ) \\
		& = &	\nu _\bp \left( \left\{ \xi \in [0,1]: \Theta \circ B ^{-1} \left( \tau ( \xi ) , \rho _{k (\xi)} (x) \right) \in A \right\} \right) \\
		& = &	\sum _{i = 0} ^{\ell -1} \nu _\bp \left( \left\{ \xi \in I _i : \Theta \circ B ^{-1} \left( \tau ( \xi ) , \rho _i ( x ) \right) \in A \right\} \right) \\
		& = &	\sum _{i = 0} ^{\ell -1} p _i  \, \nu _\bp ( \{ \xi \in [0,1] : \Theta \circ B ^{-1} ( \xi , \rho _i (x) ) \in A \} ) \\
		& = &	 \sum _{i = 0} ^{\ell -1}  p _i \, \nu _\bp \left( \{ \xi \in [0,1] :  f ( \rho _i (x) , \Theta  ( \xi , \rho _i (x) ) ) \in \{ x \} \times A \} \right) \\
		& = &	\sum _{i = 0} ^{\ell -1} p _i \, \zeta _{\bp, \rho _i (x)} ( \{ y \in \mathbb{R} :  f ( \rho _i (x) ,y ) \in \{ x \} \times A \} ) \\
		& = &	\sum _{i = 0} ^{\ell -1}  p _i \, f  \zeta _{\bp, \rho _i (x)} ( A ) .
\end{eqnarray*}
\end{proof}

\begin{proposition}	\label{prop:transversal_ineq}
Let $ 0 \leqslant i < j \leqslant \ell -1 $.
If $ \Theta ( \xi , \cdot ) $ and $ \Theta ( \eta , \cdot ) $ are $ ( \varepsilon , \delta ) $-transversal for all $ ( \xi , \eta ) \in I_i \times I _j $, then we have
\[
	I _{\bp _c} ( r ; i ,j ) \leqslant 8 \delta ^{-1}  \max \{ 4 \alpha / \varepsilon , 1 \} 
\]
for all $ r \in ( 0 , \varepsilon / 4 ) $, where $ \alpha :=  \| \frac{\partial \Theta}{\partial x} \| _\infty$.
\end{proposition}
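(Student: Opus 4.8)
The plan is to unfold the $L^2$-type inner product $(\,\cdot\,,\cdot\,)_r$ into an integral of indicator functions of sublevel sets and then apply $(\varepsilon,\delta)$-transversality directly. \textbf{Step 1 (unfolding the inner product).} Since $\tau$ is piecewise linear, $\nu_{\bp_c}=m$ is Lebesgue measure on $[0,1]$. Applying the invariance $f(x',\Theta(\xi,x'))=(\tau(x'),\Theta\circ B^{-1}(\xi,x'))$ with $x'=\rho_i(x)$, and using $\tau\circ\rho_i=\mathrm{Id}$, $\gamma(\rho_i(x))=\gamma_i$, and $B^{-1}(\xi,\rho_i(x))=(\rho_i(\xi),x)$, one gets $\gamma_i\bigl(\Theta(\xi,\rho_i(x))-g'(\rho_i(x))\bigr)=\Theta(\rho_i(\xi),x)$, so that $f\zeta_{\bp_c,\rho_i(x)}$ is the distribution of $\xi\mapsto\Theta(\rho_i(\xi),x)$ under $m$. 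Using $m(B_r(a)\cap B_r(b))=(2r-|a-b|)^+$ and Fubini's theorem,
\[
	(f\zeta_{\bp_c,\rho_i(x)},f\zeta_{\bp_c,\rho_j(x)})_r=\int_{[0,1]^2}\bigl(2r-|\Theta(\rho_i(\xi),x)-\Theta(\rho_j(\eta),x)|\bigr)^+\,d\xi\,d\eta .
\]
Then I would apply the layer-cake identity $(2r-t)^+=\int_0^{2r}\mathbf{1}_{\{t<s\}}\,ds$ — which is exactly what yields the sharp constant — so that, with $\phi_{\xi,\eta}(x):=\Theta(\rho_i(\xi),x)-\Theta(\rho_j(\eta),x)$,
\[
	I_{\bp_c}(r;i,j)=\frac{1}{r^2}\int_0^{2r}\left(\int_{[0,1]^2}m\{x\in[0,1]:|\phi_{\xi,\eta}(x)|<s\}\,d\xi\,d\eta\right)ds .
\]

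\textbf{Step 2 (the transversal sublevel estimate).} Fix $\xi,\eta$; for a.e.\ such pair $\rho_i(\xi)\in I_i$ and $\rho_j(\eta)\in I_j$, so the hypothesis gives that for every $x$ either $|\phi_{\xi,\eta}(x)|>\varepsilon$ or $|\phi_{\xi,\eta}'(x)|>\delta$, while $\|\phi_{\xi,\eta}'\|_\infty\leqslant 2\alpha$. I would partition $[0,1]$ into $N:=\lceil 4\alpha/\varepsilon\rceil$ equal intervals $Q_1,\dots,Q_N$ (taking $N=1$, i.e.\ $Q_1=[0,1]$, when $4\alpha\leqslant\varepsilon$), so that $\phi_{\xi,\eta}$ oscillates by at most $2\alpha/N\leqslant\varepsilon/2$ on each $Q_m$. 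On a $Q_m$ with $\sup_{Q_m}|\phi_{\xi,\eta}|\leqslant\varepsilon$, transversality forces $|\phi_{\xi,\eta}'|>\delta$ on all of $Q_m$, hence $\phi_{\xi,\eta}$ is strictly monotone there and $m\{x\in Q_m:|\phi_{\xi,\eta}(x)|<s\}\leqslant 2s/\delta$; on a $Q_m$ where $|\phi_{\xi,\eta}|$ exceeds $\varepsilon$ somewhere, the oscillation bound gives $|\phi_{\xi,\eta}|>\varepsilon/2$ throughout $Q_m$, which contributes nothing once $s\leqslant 2r<\varepsilon/2$. Summing over $m$ yields $m\{x:|\phi_{\xi,\eta}(x)|<s\}\leqslant 2Ns/\delta$ for all $s\leqslant 2r$, uniformly in $\xi,\eta$.

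\textbf{Step 3 (conclusion).} Inserting this into the formula of Step 1,
\[
	I_{\bp_c}(r;i,j)\leqslant\frac{1}{r^2}\int_0^{2r}\frac{2Ns}{\delta}\,ds=\frac{4N}{\delta},
\]
and since $N=\lceil 4\alpha/\varepsilon\rceil\leqslant 4\alpha/\varepsilon+1\leqslant 2\max\{4\alpha/\varepsilon,1\}$ (the case $4\alpha/\varepsilon<1$ being covered by $N=1$), this is $\leqslant 8\delta^{-1}\max\{4\alpha/\varepsilon,1\}$. The hypothesis $r<\varepsilon/4$ enters precisely in requiring $2r<\varepsilon/2$.

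I expect the main obstacle to be Step 1: correctly identifying $f\zeta_{\bp_c,\rho_i(x)}$ as the push-forward of Lebesgue measure under $\xi\mapsto\Theta(\rho_i(\xi),x)$ via the conjugacy furnished by the invariance of $f$, and being careful enough with the elementary bookkeeping — in particular using the layer-cake representation rather than the coarser $(2r-t)^+\leqslant 2r\,\mathbf{1}_{\{t<2r\}}$, which is what makes the constant $8$ instead of $16$. The geometric estimate of Step 2 is the one-dimensional transversality argument of Tsujii and should be routine once the partition into $N\sim 4\alpha/\varepsilon$ pieces is set up.
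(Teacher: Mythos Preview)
Your proof is correct and is precisely the argument the paper defers to: the paper's own proof consists only of the observation that $I_{\bp_c}(r;i,j)=r^{-2}\int(f\zeta_{\bp,\rho_i(x)},f\zeta_{\bp,\rho_j(x)})_r\,dm(x)$ together with a reference to \cite[Proposition~6]{Tsujii01} for the bound on the integrand, and your Steps~1--3 carry out exactly that computation. The identification of $f\zeta_{\bp_c,\rho_i(x)}$ with the law of $\xi\mapsto\Theta(\rho_i(\xi),x)$ under Lebesgue measure, via the invariance relation $f(x,\Theta(\xi,x))=(\tau(x),\Theta\circ B^{-1}(\xi,x))$, is correct, and the layer-cake plus partition-into-$N$-pieces argument is Tsujii's one-dimensional transversality estimate reproduced with the right constants.
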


\begin{proof}
Observe $  I _{\bp _c} ( r ; i ,j )  = \frac{1}{r ^2} \int  \left( f  \zeta _{\bp , \rho _i (x)} , f \zeta _{\bp , \rho _j (x)}  \right) _r \,d  m (x) $.
As the integral part is bounded by $ 8 \delta ^{-1} r ^2  \max \{ 4 \alpha / \varepsilon , 1 \} $ analogously to \cite[Proposition 6]{Tsujii01}, the claim follows.
\end{proof}

\begin{proposition}	\label{prop:Tsujii_ident}
We have
\[
	\| f \zeta _{\bp , \rho _i (x)} \| _r ^2 =  \gamma ( \rho _i (x) 
) \cdot \| \zeta _{\bp, \rho _i (x)} \| _{r / \gamma ( \rho _i (x) )} ^2
\]
for all $ r > 0 $, $ x \in [0,1] $ and $i \in \{ 0, \ldots , \ell - 1\} $.
\end{proposition}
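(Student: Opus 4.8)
The plan is to reduce this scaling identity to a one-dimensional change of variables. Abbreviate $c := \gamma(\rho_i(x))$ and $a := g'(\rho_i(x))$. Since $\tau'\lambda > 1$ forces $\gamma = (\tau'\lambda)^{-1} \in (0,1)$, in particular $c \in (0,1)$ is positive. Reading off the second coordinate of $f$, the map $f(\rho_i(x),\cdot)$ is the affine map $\varphi(y) := c\,(y-a)$, so that $f\zeta_{\bp,\rho_i(x)}$ is nothing but the push-forward $\varphi_\ast\zeta_{\bp,\rho_i(x)}$.

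First I would record how $\varphi$ transports closed balls. Its inverse $\varphi^{-1}(w) = c^{-1}w + a$ is an increasing affine bijection of $\mathbb{R}$, hence $\varphi^{-1}(B_r(z)) = [\,c^{-1}(z-r)+a,\ c^{-1}(z+r)+a\,] = B_{r/c}(c^{-1}z + a)$, and therefore
\[
	f\zeta_{\bp,\rho_i(x)}\bigl(B_r(z)\bigr) = \zeta_{\bp,\rho_i(x)}\bigl(B_{r/c}(c^{-1}z + a)\bigr) \qquad (z \in \mathbb{R},\ r > 0).
\]

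Next I would substitute this into the definition $\|\mu\|_r^2 = \int_{\mathbb{R}} \mu(B_r(z))^2\, d m(z)$ and perform the linear change of variables $w := c^{-1}z + a$, i.e. $z = c(w-a)$ with $d m(z) = c\, d m(w)$, obtaining
\[
	\|f\zeta_{\bp,\rho_i(x)}\|_r^2 = \int_{\mathbb{R}} \zeta_{\bp,\rho_i(x)}\bigl(B_{r/c}(c^{-1}z+a)\bigr)^2\, d m(z) = c\int_{\mathbb{R}} \zeta_{\bp,\rho_i(x)}\bigl(B_{r/c}(w)\bigr)^2\, d m(w) = c\,\|\zeta_{\bp,\rho_i(x)}\|_{r/c}^2,
\]
which is precisely the asserted equality once $c$ is restored to $\gamma(\rho_i(x))$. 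I do not expect a genuine obstacle here: the only two points requiring a little care are that $\gamma > 0$, so that $\varphi$ preserves orientation and preimages of balls are again balls (with radius scaled by $c^{-1}$ rather than something worse), and the appearance of the Jacobian factor $c$ from the change of variables, which is exactly what produces the prefactor $\gamma(\rho_i(x))$ on the right-hand side.
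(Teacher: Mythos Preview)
Your proof is correct and follows essentially the same approach as the paper: the paper records the identity $f\zeta_{\bp,\rho_i(x)}(B_r(z)) = \zeta_{\bp,\rho_i(x)}\bigl(B_{r/\gamma(\rho_i(x))}(z/\gamma(\rho_i(x)) + g'(\rho_i(x)))\bigr)$ and then invokes the substitution formula, which is exactly what you spell out in detail.
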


\begin{proof}
As
\[
	f \zeta _{\bp, \rho _i (x)} \left( B _r (z ) \right) = \zeta _{\bp, \rho _i ( x )} \left( B _{r / \gamma ( \rho _i (x) 
)} \left( \frac{z}{\gamma ( \rho _i (x) 
)} + g ' \left( \rho _i (x) \right) \right) \right) ,
\]
the claim follows by the substitution formula of the integral.
\end{proof}

\begin{proposition}	\label{prop:key_ineq}
Suppose that $ \Theta ( \xi , \cdot ) $ and $ \Theta ( \eta , \cdot ) $ are $ ( \varepsilon , \delta ) $-transversal for all $ ( \xi, \eta ) \in I _i \times I _j $ and $ 0 \leqslant i < j \leqslant \ell -1 $.
Then we have 
\[
	I _{\bp _c} ( r  ) \leqslant \beta \, I _{\bp _c} \left( \frac{r }{ \min _{[0,1]}  \gamma  } \right) +  8 \delta ^{-1}    \max \{ 4 \alpha / \varepsilon , 1 \} 
\]
for all $ k \in \mathbb{N} _0 $ and $ r \in \left( 0 , \varepsilon  / 4 \right) $, where $ \alpha :=  \| \frac{\partial \Theta}{\partial x} \| _\infty$ and $ \beta := \frac{\max _i \frac{| I _i | ^2 }{\lambda _i}}{(\min _{[0,1]}  \gamma ) ^2 } $.
\end{proposition}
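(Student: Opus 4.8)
The plan is to expand the correlation integral $\|\zeta_{\bp_c,x}\|_r^2=(\zeta_{\bp_c,x},\zeta_{\bp_c,x})_r$ by means of the self-similarity relation of Proposition~\ref{prop:selfsimilar_measure}. Since $\bp_c=(|I_0|,\dots,|I_{\ell-1}|)$, that relation reads $\zeta_{\bp_c,x}=\sum_i|I_i|\,f\zeta_{\bp_c,\rho_i(x)}$, so expanding bilinearly and integrating against $m=\nu_{\bp_c}$ splits $I_{\bp_c}(r)$ into a diagonal part $\tfrac1{r^2}\sum_i|I_i|^2\int_{[0,1]}\|f\zeta_{\bp_c,\rho_i(x)}\|_r^2\,dm(x)$ and an off-diagonal part which, directly from the definition of $I_{\bp_c}(r;i,j)$, equals $\sum_{i\neq j}|I_i||I_j|\,I_{\bp_c}(r;i,j)$.

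For the off-diagonal part I would use that $(\cdot,\cdot)_r$ is symmetric, hence $I_{\bp_c}(r;i,j)=I_{\bp_c}(r;j,i)$, and invoke the transversality hypothesis: for $i<j$ the pair $\Theta(\xi,\cdot),\Theta(\eta,\cdot)$ is $(\varepsilon,\delta)$-transversal on $I_i\times I_j$, so Proposition~\ref{prop:transversal_ineq} gives $I_{\bp_c}(r;i,j)\le 8\delta^{-1}\max\{4\alpha/\varepsilon,1\}$ for all $r\in(0,\varepsilon/4)$. Thus the off-diagonal part is at most $8\delta^{-1}\max\{4\alpha/\varepsilon,1\}\cdot 2\sum_{i<j}|I_i||I_j|$, and $2\sum_{i<j}|I_i||I_j|=1-\sum_i|I_i|^2\le1$, which already yields the additive constant in the claim.

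For the diagonal part I would rewrite each summand with Proposition~\ref{prop:Tsujii_ident}: since $\gamma(\rho_i(x))=\gamma_i:=|I_i|^{1-\theta}$ is constant on $I_i$, one has $\|f\zeta_{\bp_c,\rho_i(x)}\|_r^2=\gamma_i\,\|\zeta_{\bp_c,\rho_i(x)}\|_{r/\gamma_i}^2$. Integrating in $x$ and substituting $u=\rho_i(x)$ — where $\rho_i\colon[0,1]\to\overline{I_i}$ is affine of slope $|I_i|$ because $\tau$ is piecewise linear — turns $\int_0^1\|\zeta_{\bp_c,\rho_i(x)}\|_{r/\gamma_i}^2\,dm(x)$ into $\tfrac1{|I_i|}\int_{I_i}\|\zeta_{\bp_c,u}\|_{r/\gamma_i}^2\,dm(u)$; since $|I_i|^2\cdot\gamma_i\cdot\tfrac1{|I_i|}=|I_i|\gamma_i$, summing over $i$ recombines the pieces into the single integral $\tfrac1{r^2}\int_0^1|I_{k(u)}|\gamma_{k(u)}\,\|\zeta_{\bp_c,u}\|_{r/\gamma_{k(u)}}^2\,dm(u)$. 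The essential point is to perform this recombination \emph{before} pulling out any constant, and then to bound the integrand pointwise: $|I_{k(u)}|\gamma_{k(u)}=|I_{k(u)}|^{2-\theta}=|I_{k(u)}|^2/\lambda_{k(u)}\le\max_i|I_i|^2/\lambda_i$, and $\|\zeta_{\bp_c,u}\|_{r/\gamma_{k(u)}}^2\le\|\zeta_{\bp_c,u}\|_{r/\min_{[0,1]}\gamma}^2$ since $\gamma_{k(u)}\ge\min_{[0,1]}\gamma$ and $r'\mapsto\|\nu\|_{r'}^2$ is nondecreasing — the last fact being immediate from the Fubini identity $\|\nu\|_{r'}^2=\iint\max\{2r'-|y-y'|,0\}\,d\nu(y)\,d\nu(y')$. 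This bounds the diagonal part by $\bigl(\max_i|I_i|^2/\lambda_i\bigr)r^{-2}\int_0^1\|\zeta_{\bp_c,u}\|_{r/\min_{[0,1]}\gamma}^2\,dm(u)=\beta\,I_{\bp_c}\bigl(r/\min_{[0,1]}\gamma\bigr)$, and summing the diagonal and off-diagonal estimates gives the proposition. (The parameter $k$ plays no role in this one-step inequality; an iterated form, with $\beta^k$ and scale $r/(\min_{[0,1]}\gamma)^k$, follows by applying it $k$ times, up to the constraint that the scales $r/(\min_{[0,1]}\gamma)^j$ stay below $\varepsilon/4$.)

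The main obstacle I expect is precisely retaining the sharp constant $\beta$ with a $\max_i$ rather than a $\sum_i$: a term-by-term comparison of each $I_{\bp_c}(r/\gamma_i)$ with $I_{\bp_c}(r/\min_{[0,1]}\gamma)$ via the monotonicity of $I_{\bp_c}$ would only deliver the weaker constant $\bigl(\sum_i|I_i|^2/\lambda_i\bigr)/(\min_{[0,1]}\gamma)^2$, so one must treat the diagonal contribution as a single integral over $[0,1]$ and estimate its integrand pointwise.
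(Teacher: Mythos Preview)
Your proof is correct and follows essentially the same route as the paper: expand $I_{\bp_c}(r)$ via the self-similarity relation into diagonal and off-diagonal parts, bound the off-diagonal part by Proposition~\ref{prop:transversal_ineq} together with $\sum_{i\ne j}|I_i||I_j|\le 1$, and handle the diagonal part by Proposition~\ref{prop:Tsujii_ident} plus the substitution $u=\rho_i(x)$ before recombining into a single integral over $[0,1]$ and estimating pointwise. Your explicit justification of the monotonicity of $r'\mapsto\|\nu\|_{r'}^2$ and your remark on why one must recombine before bounding (to obtain $\max_i$ rather than $\sum_i$) make transparent a step the paper leaves implicit; the observation that $k$ is inert in the stated inequality is also correct.
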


\begin{proof}
Let $ \bp = \bp _c $.
By Propositions \ref{prop:selfsimilar_measure} and \ref{prop:transversal_ineq} we have
\begin{eqnarray*}
	I _\bp ( r )	& = &	\sum _{i,j =0} ^{\ell -1} p _i p _j \, I _\bp ( r ; i , j ) \\
	& \leqslant &	\sum _{i=0} ^{\ell -1} p _i p _i \, I _\bp ( r ; i , i ) + \sum _{i \neq j} p _i p _j \, I _\bp ( r ; i , j ) \\
		& \leqslant &	\sum _{i=0} ^{\ell -1} p _i p _i \, I _\bp ( r ; i , i ) +  8 \delta ^{-1}   \max \{ 4 \alpha / \varepsilon , 1 \}  .
\end{eqnarray*}
Furthermore, we have
\begin{eqnarray*}
	\sum _{i=0} ^{\ell -1} p _i p _i \, I _\bp ( r ; i , i ) 	&  = &	\sum _{i=0} ^{\ell -1} \frac{p _i ^2 }{r ^2}  \int \gamma ( \rho _i (x) 
) \cdot \| \zeta _{\bp, \rho _i (x)} \| _{r / \gamma ( \rho _i (x) 
)} ^2 \, d m (x) \\
	&  = &	\sum _{i=0} ^{\ell -1} \frac{p _i ^2}{r ^2}  \int _{I _i} \gamma (x) \cdot \| \zeta _{\bp, x} \| _{r / \gamma (x)} ^2 \cdot \tau ' (x) \, d m (x) \\
		&  \leqslant &	\sum _{i=0} ^{\ell -1} \frac{\max _i \frac{| I _i | ^2 }{\lambda _i}}{(\min _{[0,1]}  \gamma ) ^2 }  \left( \frac{\min _{[0,1]} \gamma}{r} \right) ^2  \int _{I _i} \| \zeta _{\bp, x} \| _{r /( \min _{[0,1]} \gamma )} ^2 d m (x) \\
	& \leqslant &	\beta \, I _\bp \left( \frac{r }{ \min _{[0,1]} \gamma } \right)
\end{eqnarray*}
by Proposition \ref{prop:Tsujii_ident}.
Thus the claim is proved.
\end{proof}

\begin{lemma}	\label{lem:abs_one_tsujii}
Under the assumption of Proposition \ref{prop:key_ineq}, if $ \beta < 1 $, then $ \liminf _{r \rightarrow 0} I _{\bp _c} (r) < \infty $.
In this case, the distribution of $ \Theta ( \cdot , x ) $ under $ \nu _{\bp _c} $ has a conditional $ L ^2 $-densities $ h _x $ w.r.t. $ m $ for $ \nu _{\bp _c} $-a.a. $ x $ such that $ \int \| h _x \| _2 ^2 \, d \nu _{\bp _c} (x) < \infty $.

In particular, the distribution of $ \Theta ( \cdot , x ) $ under $ \nu _{\bp _c} $ has Hausdorff dimension $ 1 $ for $ \nu _{\bp _c} $-a.a. $ x  $.
\end{lemma}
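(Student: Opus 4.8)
The plan is to run Tsujii's renormalisation scheme on the integrals $I_{\bp_c}(r)$, then to convert the resulting uniform bound into an $L^2$-regularity statement for the fibre measures $\zeta_{\bp_c,x}$, and finally to read off the dimension. \textbf{Step 1 (iterating Proposition \ref{prop:key_ineq}).} First I record the crude a priori bound $\|\zeta_{\bp_c,x}\|_r^2=\int_{\mathbb{R}}\zeta_{\bp_c,x}(B_r(z))^2\,dm(z)\leqslant\int_{\mathbb{R}}\zeta_{\bp_c,x}(B_r(z))\,dm(z)=2r$, whence $I_{\bp_c}(R)\leqslant 2/R$ for every $R>0$. Now fix $r\in(0,\varepsilon/4)$ and let $n=n(r)$ be the least integer with $r/(\min_{[0,1]}\gamma)^n\geqslant\varepsilon/4$; then $r/(\min_{[0,1]}\gamma)^j\in(0,\varepsilon/4)$ for $j<n$, so $n$ applications of Proposition \ref{prop:key_ineq} give
\[
	I_{\bp_c}(r)\leqslant\beta^n\,I_{\bp_c}\!\left(\frac{r}{(\min_{[0,1]}\gamma)^n}\right)+8\delta^{-1}\max\{4\alpha/\varepsilon,1\}\sum_{j=0}^{n-1}\beta^j\leqslant\frac{8}{\varepsilon}\,\beta^n+\frac{8\delta^{-1}\max\{4\alpha/\varepsilon,1\}}{1-\beta},
\]
using $I_{\bp_c}(r/(\min_{[0,1]}\gamma)^n)\leqslant 8/\varepsilon$. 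Since $n(r)\to\infty$ as $r\to 0$ and $\beta<1$, the first term vanishes, so $\limsup_{r\to 0}I_{\bp_c}(r)<\infty$; in particular $\liminf_{r\to 0}I_{\bp_c}(r)<\infty$.

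\textbf{Step 2 ($L^2$-densities).} Put $(\zeta_{\bp_c,x})_r(z):=\frac{1}{2r}\zeta_{\bp_c,x}(B_r(z))$, the density of $\zeta_{\bp_c,x}*U_r$, where $U_r$ is the uniform probability measure on $[-r,r]$, so that $I_{\bp_c}(r)=4\int_{[0,1]}\|(\zeta_{\bp_c,x})_r\|_{L^2(m)}^2\,d\nu_{\bp_c}(x)$. Choosing $r_k\downarrow 0$ with $\sup_k I_{\bp_c}(r_k)=:4L<\infty$, the functions $\Phi_k(x,z):=(\zeta_{\bp_c,x})_{r_k}(z)$ are bounded in $L^2(\nu_{\bp_c}\otimes m)$, hence $\Phi_k\rightharpoonup\Phi$ weakly in $L^2(\nu_{\bp_c}\otimes m)$ along a subsequence. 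Testing against functions $\psi(x)\phi(z)$ with $\psi$ bounded measurable and $\phi$ continuous with compact support, and using $\int\phi\,d(\zeta_{\bp_c,x}*U_{r_k})\to\int\phi\,d\zeta_{\bp_c,x}$ together with dominated convergence in $x$, one identifies $\int_{\mathbb{R}}\Phi(x,z)\phi(z)\,dm(z)=\int\phi\,d\zeta_{\bp_c,x}$ for $\nu_{\bp_c}$-a.e. $x$. Hence $\zeta_{\bp_c,x}$ is absolutely continuous with density $h_x:=\Phi(x,\cdot)\in L^2(m)$ for $\nu_{\bp_c}$-a.e. $x$, and by Fubini $\int_{[0,1]}\|h_x\|_2^2\,d\nu_{\bp_c}(x)=\|\Phi\|_{L^2}^2\leqslant\liminf_k\|\Phi_k\|_{L^2}^2\leqslant L<\infty$.

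\textbf{Step 3 (dimension) and main obstacle.} For $\nu_{\bp_c}$-a.e. $x$ the measure $\zeta_{\bp_c,x}=\nu_{\bp_c}\circ\Theta(\cdot,x)^{-1}$ is an absolutely continuous probability measure on $\mathbb{R}$, so by Lebesgue differentiation $\zeta_{\bp_c,x}(B_r(z))/(2r)\to h_x(z)$ as $r\to 0$ at $m$-a.e. $z$; as $\{h_x>0\}$ carries full $\zeta_{\bp_c,x}$-mass, this gives $\lim_{r\to 0}\log\zeta_{\bp_c,x}(B_r(z))/\log r=1$ for $\zeta_{\bp_c,x}$-a.e. $z$, i.e. $\zeta_{\bp_c,x}$ is exact dimensional with Hausdorff dimension $1$. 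The substantive estimate has already been carried out in Proposition \ref{prop:key_ineq}, so what remains is largely bookkeeping; the one delicate point I expect is the fibre-wise extraction in Step 2, where one must be sure the weak-$L^2$ limit on the product space is genuinely the field of densities of the $\zeta_{\bp_c,x}$ — this is why I test only against product functions and invoke dominated convergence. A minor extra subtlety in Step 1 is that Proposition \ref{prop:key_ineq} is available only for $r<\varepsilon/4$, so the iteration must be truncated at the finite, $r$-dependent depth $n(r)$ and closed off with the bound $I_{\bp_c}(R)\leqslant 2/R$.
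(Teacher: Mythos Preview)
Your proof is correct and follows essentially the same route as the paper: iterate Proposition~\ref{prop:key_ineq} to bound $I_{\bp_c}(r)$, deduce $L^2$-densities for the fibre measures via a compactness argument, and read off dimension $1$ from Lebesgue differentiation. The paper is more terse---it iterates along the single geometric sequence $r_k=\varepsilon(\min\gamma)^k/8$ to get only $\liminf_{r\to 0}I_{\bp_c}(r)<\infty$ and then cites \cite[Corollary~5]{Tsujii01} for the $L^2$-density conclusion---whereas you supply the a priori bound $I_{\bp_c}(R)\leqslant 2/R$, iterate to depth $n(r)$ for every small $r$ (yielding the stronger $\limsup<\infty$), and spell out the weak-$L^2(\nu_{\bp_c}\otimes m)$ compactness argument that the paper outsources. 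One small point worth making explicit in your Step~2: the identification $\int\phi(z)\Phi(x,z)\,dm(z)=\int\phi\,d\zeta_{\bp_c,x}$ a priori holds only outside an $x$-null set depending on $\phi$; to get a single full-measure set of $x$ you should pass through a countable dense family in $C_c(\mathbb{R})$.
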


\begin{proof}
For example, we can take a sequence $ r _k := \varepsilon \, ( \min _{[0,1]} \gamma ) ^k / 8 $ so that
\[
	I _\bp ( r _k ) \leqslant \beta ^k I ( \varepsilon / 8 ) + \frac{8 \delta ^{-1}    \max \{ 4 \alpha / \varepsilon , 1 \}}{1 - \beta} .
\]
Since we have thus $ \liminf _{r \rightarrow 0} I _\bp (r) < \infty $, the remaining part of the fist claim can be concluded analogously to the proof of \cite[Corollary 5]{Tsujii01}.

Finally, the absolute continuity to Lebesgue measure implies the full Hausdorff dimension due to Lebesgue differentiation theorem.
\end{proof}

Now, we consider the transversality in case $ g ( x ) = \cos ( 2 \pi x ) $.
We extend an idea of \cite{Baranski14} to find explicit parameters.
Observe we have
\begin{eqnarray*}
	\Theta ( \xi ,x )	 & = &	 - 2 \pi \sum _{n=1} ^\infty \gamma ^n ( \xi ) \, \sin \left( 2 \pi \rho _{[\xi] _n} (x) \right)	, \mbox{ and}\\
	 \frac{\partial \Theta}{\partial x} ( \xi ,x )	& = &	( 2 \pi ) ^2 \sum _{n=1} ^\infty \left( \frac{\gamma}{\tau '} \right) ^n ( \xi ) \, \cos \left( 2 \pi \rho _{[\xi] _n} (x) \right) .
\end{eqnarray*}

\begin{lemma}	\label{lem:transversal_cos}
Suppose that $ g (x) = \cos(2 \pi x ) $.
If
\[
	G ( \min  \gamma , \max \gamma ) + G ( \min \frac{\gamma}{\tau '} , \max \frac{\gamma}{\tau '} ) < \delta _0 ,
\]
then there is a $ \delta > 0 $ such that $ \Theta ( \xi , \cdot ) $ and $ \Theta ( \eta , \cdot ) $ are $ ( \delta , \delta ) $-transversal for all $ ( \xi, \eta ) \in I _i \times I _j $ and $ 0 \leqslant i < j \leqslant \ell -1 $, where $ \delta _0 $ and $ G $ are defined in \eqref{eq:G}.
\end{lemma}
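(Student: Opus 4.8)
The plan is to transfer Tsujii's transversality estimate, in the form used in \cite{Baranski14}, to the present piecewise-linear setting with intervals of unequal length. Since $\tau$ is piecewise linear and $\lambda = (\tau')^{-\theta}$, both $\gamma = (\tau'\lambda)^{-1}$ and $\gamma/\tau' = (\tau')^{\theta-2}$ are piecewise constant; write $\gamma_i$ and $(\gamma/\tau')_i$ for their values on $I_i$, and note $\max\gamma < 1$ and $\max\frac{\gamma}{\tau'} < 1$, so every geometric series below converges (and $\delta_0 > 0$, since otherwise the hypothesis is vacuous). Fix $0 \leqslant i < j \leqslant \ell - 1$, $\xi \in I_i$, $\eta \in I_j$ and $x \in [0,1]$.

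First I would isolate the $n = 1$ terms of the two series. From the formulas recorded above, $\Theta(\xi, x) = -2\pi\gamma_i\sin(2\pi\rho_i(x)) + 2\pi\, T_u(\xi,x)$ and $\frac{\partial\Theta}{\partial x}(\xi,x) = (2\pi)^2(\gamma/\tau')_i\cos(2\pi\rho_i(x)) + (2\pi)^2\, T_v(\xi,x)$, where $|T_u(\xi,x)| \leqslant \sum_{n\geqslant 2}(\max\gamma)^n = \frac{(\max\gamma)^2}{1-\max\gamma}$ and $|T_v(\xi,x)| \leqslant \frac{(\max\frac\gamma{\tau'})^2}{1 - \max\frac\gamma{\tau'}}$, with the analogous expansions at $\eta$ (coefficients $\gamma_j$, $(\gamma/\tau')_j$, argument $\rho_j(x)$). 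Put $S := \sin(2\pi\rho_i(x)) - \sin(2\pi\rho_j(x))$ and $C := \cos(2\pi\rho_i(x)) - \cos(2\pi\rho_j(x))$. The product-to-sum identities give $S = 2\cos(\pi(\rho_i(x)+\rho_j(x)))\sin(\pi(\rho_i(x)-\rho_j(x)))$ and $C = -2\sin(\pi(\rho_i(x)+\rho_j(x)))\sin(\pi(\rho_i(x)-\rho_j(x)))$, hence the uniform lower bound $S^2 + C^2 = 4\sin^2(\pi(\rho_i(x)-\rho_j(x))) \geqslant 4\delta_0$.

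Next I would solve for $S$ and $C$ in the differences. Using $\gamma_i\sin(2\pi\rho_i(x)) - \gamma_j\sin(2\pi\rho_j(x)) = \gamma_i S + (\gamma_i - \gamma_j)\sin(2\pi\rho_j(x))$ and dividing by $\gamma_i \geqslant \min\gamma$, one gets
\[
	|S| \;\leqslant\; \frac{|\Theta(\xi,x) - \Theta(\eta,x)|}{2\pi\min\gamma} + \frac{1}{\min\gamma}\Big(\frac{2(\max\gamma)^2}{1-\max\gamma} + (\max\gamma - \min\gamma)\Big) \;=\; \frac{|\Theta(\xi,x) - \Theta(\eta,x)|}{2\pi\min\gamma} + 2\sqrt{G(\min\gamma, \max\gamma)},
\]
since $\sqrt{G(s,t)} = \frac1s\big(\frac{t^2}{1-t} + \frac{t-s}{2}\big)$, and the same manipulation for the derivative yields
\[
	|C| \;\leqslant\; \frac{\big|\frac{\partial\Theta}{\partial x}(\xi,x) - \frac{\partial\Theta}{\partial x}(\eta,x)\big|}{(2\pi)^2\min\frac\gamma{\tau'}} + 2\sqrt{G\Big(\min\tfrac\gamma{\tau'}, \max\tfrac\gamma{\tau'}\Big)}.
\]
If $\Theta(\xi,\cdot)$ and $\Theta(\eta,\cdot)$ fail to be $(\delta,\delta)$-transversal, then at some $x$ both differences on the right are $\leqslant \delta$, so $S^2 + C^2$ is at most $\big(\tfrac{\delta}{2\pi\min\gamma} + 2\sqrt{G(\min\gamma,\max\gamma)}\big)^2 + \big(\tfrac{\delta}{(2\pi)^2\min\frac\gamma{\tau'}} + 2\sqrt{G(\min\frac\gamma{\tau'},\max\frac\gamma{\tau'})}\big)^2$, which tends to $4\big(G(\min\gamma,\max\gamma) + G(\min\tfrac\gamma{\tau'},\max\tfrac\gamma{\tau'})\big) < 4\delta_0$ as $\delta \to 0$. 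Hence there is a $\delta > 0$, depending only on $\gamma$, $\gamma/\tau'$ and $\delta_0$ and thus uniform over $i,j,\xi,\eta$, for which this bound is strictly below $4\delta_0$, contradicting $S^2 + C^2 \geqslant 4\delta_0$; this $\delta$ witnesses $(\delta,\delta)$-transversality for all $(\xi,\eta) \in I_i \times I_j$ with $i < j$.

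I expect the only delicate point to be the bookkeeping in the third step: the error in each coordinate has to be split exactly into a tail part bounded by $\frac{2(\max\,\cdot)^2}{1-\max\,\cdot}$ and a leading-coefficient-mismatch part bounded by $\max\,\cdot - \min\,\cdot$, and their normalized sum identified with $2\sqrt{G(\cdot,\cdot)}$. The stray factor $2$ is harmless precisely because the leading-order estimate carries a factor $4$ ($S^2 + C^2 \geqslant 4\delta_0$, not merely $\geqslant \delta_0$); beyond this, no new difficulty arises over \cite{Tsujii01} and \cite{Baranski14} apart from carrying the unequal lengths $|I_i|$ through the estimates.
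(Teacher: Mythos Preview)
Your argument is correct and follows essentially the same route as the paper: isolate the $n=1$ terms, bound the tails by geometric series, and use the product-to-sum identities together with $\cos^2+\sin^2=1$ to arrive at the inequality $G(\min\gamma,\max\gamma)+G(\min\tfrac{\gamma}{\tau'},\max\tfrac{\gamma}{\tau'})\geqslant\delta_0$ from a hypothetical failure of transversality. The only cosmetic difference is that the paper decomposes $\gamma_i\sin X-\gamma_j\sin Y$ symmetrically into $(\gamma_i+\gamma_j)$ and $(\gamma_i-\gamma_j)$ pieces before extracting the common factor $|\sin(\pi(\rho_i-\rho_j))|$, whereas you write it as $\gamma_i S+(\gamma_i-\gamma_j)\sin Y$ and instead compute $S^2+C^2=4\sin^2(\pi(\rho_i-\rho_j))$; the factor $4$ you carry cancels against the factor $2$ absorbed in your $2\sqrt{G}$, so the final numerical comparison is identical.
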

\begin{proof}
Let $ \xi \in I _i $ and $ \eta \in I _ j $ for some $ 0 \leqslant i < j \leqslant \ell -1 $.
From
\begin{eqnarray*}
		\gamma _i \sin ( 2 \pi \rho _i (x) ) - \gamma _j  \sin ( 2 \pi \rho _j (x) ) 
	& = &	( \gamma _i   - \gamma _j ) \sin \left( 2 \pi \frac{\rho _i (x) + \rho _j (x)}{2}  \right) \cos \left( 2 \pi \frac{\rho _i (x) - \rho _j (x)}{2}  \right) \\
	&& \qquad	+ ( \gamma _i + \gamma _j ) \cos \left( 2 \pi \frac{\rho _i (x) + \rho _j (x)}{2}  \right) \sin \left( 2 \pi \frac{\rho _i (x) - \rho _j (x)}{2}  \right)
\end{eqnarray*}
follows
\begin{eqnarray*}
	( 2 \pi ) ^ {-1} | \Theta ( \xi , x ) - \Theta ( \eta , x ) | 
	& \geqslant &	| \gamma _i \sin ( 2 \pi \rho _i (x) ) - \gamma _j  \sin ( 2 \pi \rho _j (x) ) | - 2 \sum _{n=2} ^\infty ( \max  \gamma ) ^n \\
	& \geqslant &	 ( \gamma _i  + \gamma _j  ) \left| \sin \left( 2 \pi \frac{\rho _i (x) - \rho _j (x)}{2}  \right) \right|  \left| \cos \left( 2 \pi \frac{\rho _i (x) + \rho _j (x)}{2}  \right) \right|  \\
	&&	\quad 	- \frac{2 ( \max  \gamma  ) ^2}{1 - ( \max  \gamma  ) } - | \gamma _i  - \gamma _j  | \\
		& \geqslant &	 2 ( \min \gamma ) \left| \sin \left( 2 \pi \frac{\rho _i (x) - \rho _j (x)}{2}  \right) \right| \left| \cos \left( 2 \pi \frac{\rho _i (x) + \rho _j (x)}{2}  \right) \right| \\
		&&	\qquad	- \frac{2 ( \max \gamma  ) ^2}{1 -( \max \gamma  ) } - ( \max  \gamma  - \min \gamma  ) .
\end{eqnarray*}
Furthermore, from
\begin{eqnarray*}
	&&	\gamma _i  \, \rho _i ' (x) \, \cos ( 2 \pi \rho _i  (x) ) - \gamma _j \, \rho _j ' (x) \,  \cos ( 2 \pi \rho _j (x) ) \\
	& = &	\left( \gamma _i  \, \rho _i ' (x)  - \gamma _j  \, \rho _j ' (x) \right) \cos \left( 2 \pi \frac{\rho _i (x) + \rho _j (x)}{2}  \right) \cos \left( 2 \pi \frac{\rho _i (x) - \rho _j (x)}{2}  \right) \\
	&& \qquad	+ \left( \gamma _i  \, \rho _i ' (x) + \gamma _j  \, \rho _j ' (x)  \right) \sin \left( 2 \pi \frac{\rho _i (x) + \rho _j (x)}{2}  \right) \sin \left( 2 \pi \frac{\rho _i (x) - \rho _j (x)}{2}  \right)
\end{eqnarray*}
follows
\begin{eqnarray*}
	&  &	( 2 \pi ) ^{-2} | \Theta ' ( \xi , x ) - \Theta ' ( \eta , x ) | \\
	& \geqslant &	\left| \gamma _i \, \rho _i ' (x) \, \cos ( 2 \pi \rho _i (x) ) - \gamma _j  \, \rho _j ' (x) \, \cos ( 2 \pi \rho _j (x) ) \right|  - 2 \sum _{n=2} ^\infty \left( \max \frac{\gamma}{\tau '} \right) ^n     \\
	& \geqslant &	\left( \gamma _i \, \rho _i ' (x)  + \gamma _j  \, \rho _j ' (x)  \right) \left| \sin \left( 2 \pi \frac{\rho _i (x) - \rho _j (x)}{2}  \right) \right|  \left| \sin \left( 2 \pi \frac{\rho _i (x) + \rho _j (x)}{2}  \right) \right| \\
	&&	\qquad - \frac{2 \left( \max  \frac{\gamma}{\tau '}  \right) ^2}{1 - \left(  \max  \frac{\gamma}{\tau '}  \right)} - \left| \gamma _i (x) \, \rho _i ' (x)  -   \gamma _j (x) \, \rho _j ' (x)  \right|      \\
			& \geqslant &	2 \left( \min \frac{\gamma}{\tau '} \right) \left| \sin \left( 2 \pi \frac{\rho _i (x) - \rho _j (x)}{2}  \right) \right|  \left| \sin \left( 2 \pi \frac{\rho _i (x) + \rho _j (x)}{2}  \right) \right| \\
	&&	\qquad - \frac{2 \left( \max  \frac{\gamma}{\tau '}  \right) ^2}{1 - \left(  \max  \frac{\gamma}{\tau '}  \right)} - \left( \max \frac{\gamma}{\tau '}  -  \min \frac{\gamma}{\tau '} \right)   .
\end{eqnarray*}
By squaring and summing up both equalities above we obtain
\begin{eqnarray*}
	& &	\left( \frac{( 2 \pi ) ^ {-1} | \Theta ( \xi , x ) - \Theta ( \eta , x ) | + \frac{2 ( \max \gamma  ) ^2}{1 -( \max \gamma  ) } + ( \max  \gamma  - \min \gamma  )}{2 (\min  \gamma ) } \right) ^2 \\
	& &  + \left( \frac{ ( 2 \pi ) ^{-2} | \Theta ' ( \xi , x ) - \Theta ' ( \eta , x ) | + \frac{2 \left( \max  \frac{\gamma}{\tau '}  \right) ^2}{1 - \left(  \max  \frac{\gamma}{\tau '}  \right)} + \left( \max \frac{\gamma}{\tau '}  -  \min \frac{\gamma}{\tau '} \right)  }{2 \left(  \min  \frac{\gamma}{\tau '} \right) } \right) ^2 \\
	& \geqslant &	  \sin ^2 \left( 2 \pi \frac{\rho _i (x) - \rho _j (x)}{2}  \right) 	\quad \geqslant \delta _0 .
\end{eqnarray*}

Suppose that there is a $ ( \omega , \eta ) \in I _i \times I _j $ with $ i \neq j $ such that $ \Theta ( \xi , \cdot ) $ and $ \Theta ( \eta , \cdot ) $ are not $ (\delta, \delta ) $-transversal for any $ \delta > 0 $.
Then the above inequality implies
\[
	\left( \frac{\frac{2 ( \max \gamma  ) ^2}{1 -( \max \gamma  ) } + ( \max  \gamma  - \min \gamma  )}{2 (\min  \gamma ) } \right) ^2   + \left( \frac{  \frac{2 \left( \max  \frac{\gamma}{\tau '}  \right) ^2}{1 - \left(  \max  \frac{\gamma}{\tau '}  \right)} + \left( \max \frac{\gamma}{\tau '}  -  \min \frac{\gamma}{\tau '} \right)  }{2 \left(  \min  \frac{\gamma}{\tau '} \right) } \right) ^2 \geqslant \delta _0
\]
As this contradicts the assumption of the lemma, this finishes the proof.
\end{proof}

\begin{proof}[Proof of Theorem \ref{thm:example2}]
In Lemma \ref{lem:bernoulli} we proved $ s ( \tau , \lambda ) = 2 - \theta $.
Thus the claim follows from Theorem \ref{thm:main_theorem} together with Lemmas \ref{lem:bernoulli}, \ref{lem:abs_one_tsujii} and \ref{lem:transversal_cos} by the insertion of $ \gamma = ( \tau ' ) ^\theta $ and $ \gamma / \tau ' = ( \tau ' ) ^{\theta - 1} $.
\end{proof}

\bibliography{dynamical}

\begin{thebibliography}{10}

\bibitem{Baranski14}
K.~Barański, B.~Bárány, and J.~Romanowska.
\newblock On the dimension of the graph of the classical {W}eierstrass
  function.
\newblock {\em Advances in Mathematics}, 265(0):32 -- 59, 2014.

\bibitem{Barreira08}
L.~Barreira.
\newblock {\em Dimension and Recurrence in Hyperbolic Dynamics}.
\newblock Springer, 2008.

\bibitem{Bedford89}
T.~Bedford.
\newblock The box dimension of self-affine graphs and repellers.
\newblock {\em Nonlinearity}, 2:53--71, 1989.

\bibitem{Barani14}
B.~Bárány.
\newblock On the {L}edrappier-{Y}oung formula for self-affine measures.
\newblock {\em Preprint}, 2014.

\bibitem{Guzman75}
M.~de~Guzmán.
\newblock {\em Differentiation of Integrals in $R^n$}.
\newblock Springer-Verlag Berlin Heidelberg, 1975.

\bibitem{Einsiedler11}
M.~Einsiedler and T.~Ward.
\newblock {\em Ergodic Theory}.
\newblock Springer, 2011.

\bibitem{Hochman14}
M.~Hochman.
\newblock On self-similar sets with overlaps and inverse theorems for entropy.
\newblock {\em Annals of Mathematics}, 180:773--822, 2014.

\bibitem{Hunt98}
B.~R. Hunt.
\newblock The {H}ausdorff dimension of graphs of {W}eierstrass functions.
\newblock {\em Proceedings of the {A}merican mathematical society},
  126:791--800, 1998.

\bibitem{Kaplan84}
J.~L. Kaplan, J.~Mallet-Paret, and J.~A. Yorke.
\newblock The {L}yapunov dimension of a nowhere differentiable attracting
  torus.
\newblock {\em Ergodic Theory and Dynamical Systems}, 4:261--281, 6 1984.

\bibitem{Keller14}
G.~Keller.
\newblock An elementary proof for the dimension of the graph of the classical
  {W}eierstrass function.
\newblock {\em preprint}, 2014.

\bibitem{Klenke}
A.~Klenke.
\newblock {\em Probability Theory}.
\newblock Springer London, 2008.

\bibitem{Ledrappier92}
F.~Ledrappier.
\newblock On the dimension of some graphs.
\newblock {\em Contemporary Mathematics}, 135:285--293, 1992.

\bibitem{YL185}
F.~Ledrappier and L.-S. Young.
\newblock The metric entropy of diffeomorphisms: Part {I}: Characterization of
  measures satisfying {P}esin's entropy formula.
\newblock {\em Annals of Mathematics}, 122:509--539, 1985.

\bibitem{YL285}
F.~Ledrappier and L.-S. Young.
\newblock The metric entropy of diffeomorphisms: Part {II}: Relations between
  entropy, exponents and dimension.
\newblock {\em Annals of Mathematics}, 122:540--574, 1985.

\bibitem{YL88}
F.~Ledrappier and L.-S. Young.
\newblock Dimension formula for random transformations.
\newblock {\em Commun. Math. Phys.}, 117:529--548, 1988.

\bibitem{Marstrand54}
M.~Marstrand.
\newblock Some fundamental geometrical properties of plane sets of fractional
  dimensions.
\newblock {\em Proceedings of the London Mathematical Society}, page 257–302,
  1954.

\bibitem{Morse47}
A.~P. Morse.
\newblock Perfect blankets.
\newblock {\em Transactions of the American Mathematical Society}, 61:418--442,
  1947.

\bibitem{Moss12}
A.~Moss and C.~P. Walkden.
\newblock The {H}ausdorff dimension of some random invariant graphs.
\newblock {\em Nonlinearity}, 25:743–760, 2012.

\bibitem{Urbanski89}
F.~Przytycki and M.~Urbański.
\newblock On the {H}ausdorff dimension of some fractal sets.
\newblock {\em Studia Mathematica}, 93:155--186, 1989.

\bibitem{Rokhlin52}
V.~A. Rokhlin.
\newblock {\em On the fundamental ideas of measure theory}.
\newblock the {A}merican Mathematical Society, 1952.

\bibitem{Shen15}
W.~Shen.
\newblock {H}ausdorff dimension of the graphs of the classical {W}eierstrass
  functions.
\newblock {\em arXiv}, 2015.

\bibitem{Tsujii01}
M.~Tsujii.
\newblock Fat solenoidal attractors.
\newblock {\em Nonlinearity}, 14:1011–1027, 2001.

\end{thebibliography}
\bibliographystyle{abbrv}

\end{document}